\documentclass[a4paper]{amsart}

\usepackage{amsmath,amsthm,amssymb,enumerate}
\usepackage{epsfig}
\usepackage{amssymb}
\usepackage{mathtools}
\usepackage{amsmath}
\usepackage{amssymb}
\usepackage{amsmath,amsthm}
\usepackage[latin1]{inputenc}
\usepackage[T1]{fontenc}
\usepackage{path}
\usepackage{ae,aecompl}
\usepackage{amsfonts}
\usepackage{amsxtra}
\usepackage{euscript,mathrsfs}
\usepackage{color}
\usepackage[left=3.4cm,right=3.4cm,top=4cm,bottom=4cm]{geometry}
\usepackage[citecolor=blue,colorlinks=true]{hyperref}
\allowdisplaybreaks
\usepackage{enumitem}
\setenumerate{label={\rm (\alph{*})}}

\usepackage{amsfonts}
\usepackage{amsxtra}

\usepackage[frak,hyperref,theorem,section]{paper_diening}

\usepackage{float}          
\usepackage{algorithm}      
\usepackage{algpseudocode}  
\usepackage{xcolor}
\usepackage{mdframed}
\usepackage[most]{tcolorbox}

\newmdenv[
  linewidth=0.8pt,
  linecolor=black,
  backgroundcolor=gray!5,
  roundcorner=4pt,
  skipabove=10pt,
  skipbelow=10pt
]{scheme}

\tcbset{
  authorcomment/.style={
    enhanced,
    colback=blue!3,
    colframe=blue!50!black,
    coltitle=blue!20!black,
    boxrule=0.8pt,
    arc=4pt,
    left=6pt,
    right=6pt,
    top=6pt,
    bottom=6pt,
    fonttitle=\bfseries,
    title={Author's Comment},
  }
}

\newcommand{\Div}{\divergence}

\newcommand{\R}{\mathbb R}
\newcommand{\N}{\mathbb N}

\newcommand{\E}{\mathbb E}
\newcommand{\p}{\mathbb P}

\newcommand{\F}{\mathfrak F}

\newcommand{\uu}{\mathbf{u}}
\newcommand{\bfvarrho}{\boldsymbol{\varrho}}

\newcommand{\A}{\Delta}

\newcommand{\dd}{\mathrm d}
\newcommand{\dx}{\, \mathrm{d}x}

\newcommand{\dt}{\, \mathrm{d}t}
\newcommand{\dxt}{\,\mathrm{d}x\, \mathrm{d}t}
\newcommand{\dxs}{\,\mathrm{d}x\, \mathrm{d}\sigma}
\newcommand{\dif}{\mathrm{d}}

\newcommand{\mf}{\mathfrak{F}}

\newcommand{\prst}{\mathbb{P}}

\newcommand{\mn}{\mathbb{N}}

\newcommand{\mt}{\mathbb{T}^3}
\newcommand{\tor}{\mathbb{T}^3}

\DeclareMathOperator{\diver}{div}

\newcommand{\Vspace}{\mathbb V}
\newcommand{\Vdual}{\Vspace^{-1}}
\newcommand{\Qspace}{{\mathbb{L}^2}}

\usepackage{caption}
\usepackage{booktabs}
\usepackage{amsmath,amssymb}

\begin{document}

\title[Strong Rate of convergence for 3D stochastic NSEs]
{Error analysis for 3D Navier--Stokes equations with additive noise}

\author{Soumya Ranjan Behera}\address{Faculty of Mathematics, University of Duisburg-Essen, Thea-Leymann-Str. 9, 45127 Essen}\email{soumya.behera@uni-due.de}

\author{Dominic Breit}\address{Faculty of Mathematics, University of Duisburg-Essen, Thea-Leymann-Str. 9, 45127 Essen}\email{dominic.breit@uni-due.de}

\author{Abhishek Chaudhary}
\address{Mathematisches Institut der Universit\"at T\"ubingen,
                Auf der Morgenstelle 10,
                D-72076 T\"ubingen, Germany.}
\email{chaudhary@na.uni-tuebingen.de}
\begin{abstract}
We consider the three-dimensional stochastic Navier--Stokes equations with additive stochastic forcing. We study temporal discretizations based on a semi-implicit Euler as well as a Crank-Nicolson scheme. We prove that locally in time the former converges with rate 1 while the latter converges with rate 3/2.  These theoretical results are confirmed by extensive numerical simulations.
To the best of our knowledge this is the first time that numerical simulations for the three-dimensional stochastic Navier--Stokes equations have been performed.
\end{abstract}

\keywords{Stochastic Navier--Stokes equations \and local strong solutions \and error analysis \and temporal discretization  \and convergence rates}
\subjclass[2010]{65M15, 65C30, 60H15, 60H35}

\date{\today}

\maketitle

%
%
%
%
%
%
%
%
%
%

\section{Introduction}
Let $(\Omega,\F,(\F_t)_{t\geq0},\prst)$ be a stochastic basis with a complete, right-continuous filtration. We are interested in the numerical approximation of the  following three-dimensional stochastic Navier--Stokes equations 
\begin{align}\label{eq:SNS}
\left\{\begin{array}{rc}
\dd\uu=\nu\Delta\uu\dt-(\uu\cdot\nabla)\uu\dt-\nabla p\dt+\Phi\dd W
& \mbox{in $\mathcal O_T$,}\\
\Div \uu=0\qquad\qquad\qquad\qquad\qquad\,\,\,\,& \mbox{in $\mathcal O_T$,}\\
\uu(0)=\uu_0\,\qquad\qquad\qquad\qquad\qquad&\mbox{ \,in $\mt$,}\end{array}\right.
\end{align}
$\p$-a.s. in $\mathcal O_T:=(0,T)\times\mt$ equipped with periodic boundary conditions. Here $\mt$ is the three-dimensional, $T>0$, $\nu>0$ is the viscosity and $\uu_0$ is a given initial datum. The momentum equation is driven by a cylindrical Wiener process $W$ and the diffusion coefficient $\Phi$ takes values in the space of Hilbert-Schmidt operators; see Section \ref{sec:prob} for details.
There are various physical motivations to add stochastic components to the equations of fluid mechanics. 
The most important one is probably the mathematical description of phenomena of turbulence \cite{Bi,BE,Kr}.
From an analytical point of view most deterministic results have found their stochastic counterpart, see \cite{FL2,Ro} for an overview.
Also, some remarkable regularization effects have been obtained by a carefully chosen noise \cite{FL}. 

The primary objective of this work is to develop and analyse the semi-implicit Euler and Crank--Nicolson discretisations schemes for \eqref{eq:SNS} where the noise is additive in nature. Thereby enabling reliable and efficient simulations of stochastic fluid flows and facilitating a quantitative investigation of the influence of stochastic forcing on deterministic flow structures (see, Figures \ref{fig:lidcavity_IE_mean_T020_slices} \& \ref{fig:lidcavity_CN_mean_T020_slices} for the lid-driven cavity problem in 3D). A major challenge in the numerical approximation of \eqref{eq:SNS} arises from the driving Wiener process, whose sample paths are only H\"older continuous in time. Consequently, the limited temporal regularity of the noise restricts the attainable convergence order and necessitates sufficiently small time-step sizes to achieve accurate numerical simulations.
\begin{figure}[htbp]
    \centering
\includegraphics[width=\textwidth]{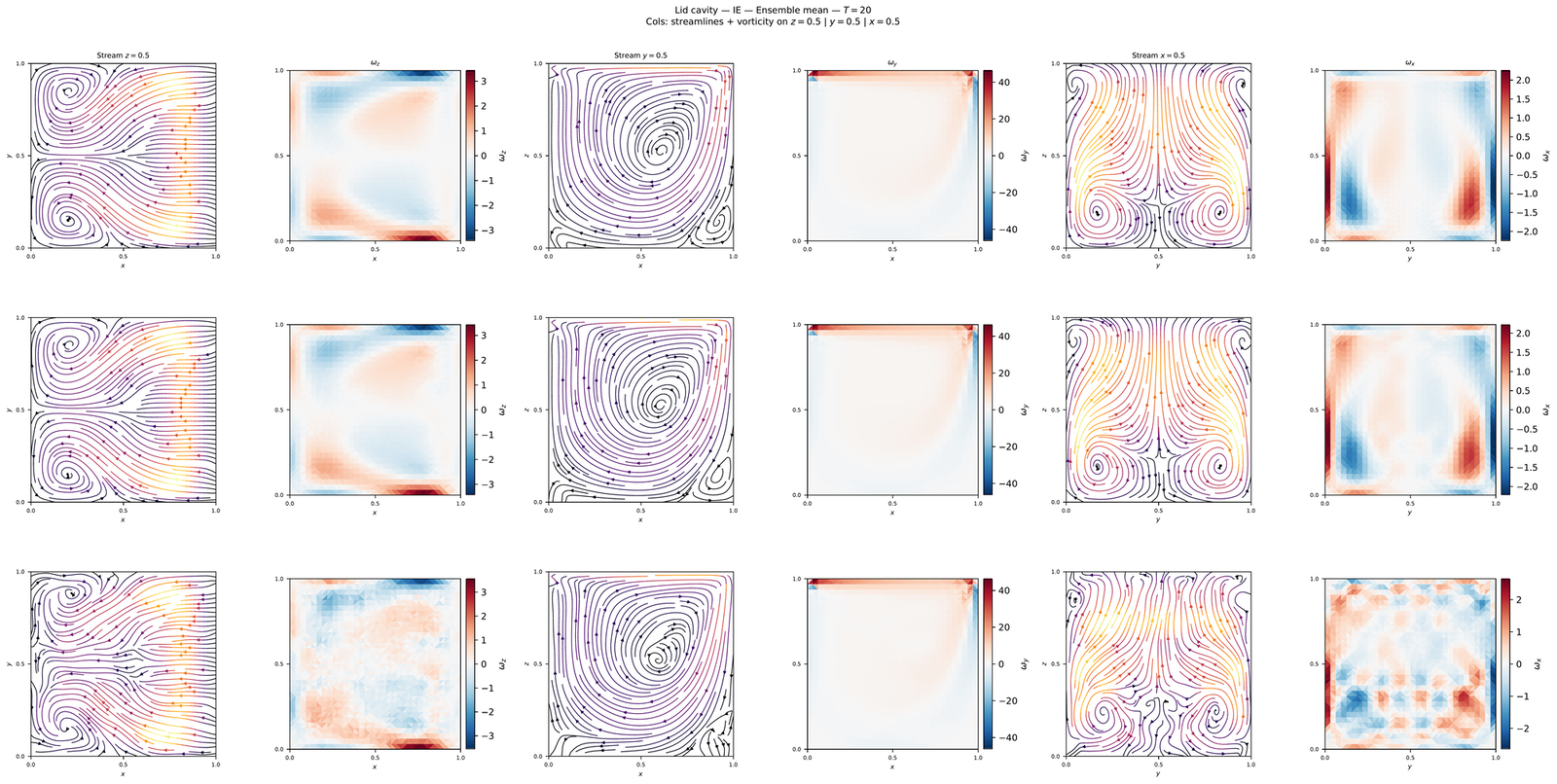}
\captionsetup{width=1\textwidth}
    \caption{Lid-driven cavity in 3D (see Section \ref{sec:Lid-3d}): Streamlines and vorticity (on the XY-plane 
$z{=}0.5$ in columns~1-2, XZ-plane 
$y{=}0.5$ in columns~3-4, YZ-plane 
$x{=}0.5$ in columns~5-6) of the deterministic solution ($\mu=0$) in row 1, streamlines and vorticity of the expected value of the solution with smaller noise ($\mu=10$) in row 2, and larger noise ($\mu=40$) in row 3 at $T=20$ for semi-implicit Euler Scheme.}
\label{fig:lidcavity_IE_mean_T020_slices}
\end{figure}

\begin{figure}[htbp]
    \centering
    \includegraphics[width=\textwidth]{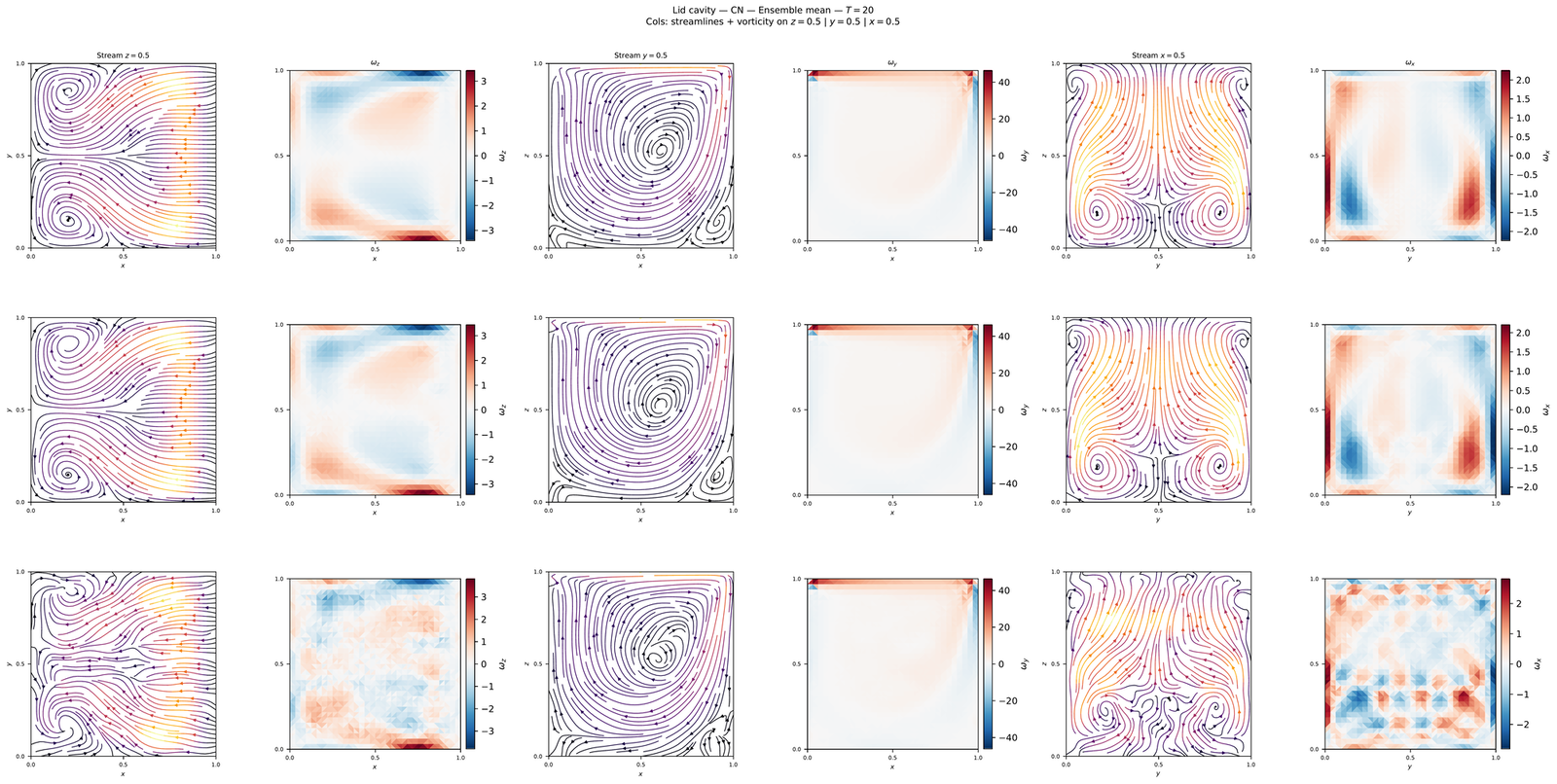}
    \caption{Lid-driven cavity in 3D: Streamlines and vorticity of solution at $T=20$ for Crank-Nicolson scheme}\label{fig:lidcavity_CN_mean_T020_slices}
\end{figure}
\subsection{The 2D problem}
For the two-dimensional stochastic Navier--Stokes equations there exists also a growing literature 
on the numerical approximation. For a (semi)-implicit Euler discretization in time it is shown in \cite{BrDo} and \cite{CP}
that for any $\xi>0$, and $m\in\{1,\dots,M\}$
\begin{align}\label{eq:perror}
&\mathbb P\bigg[\|\bfu(t_m)-\bfu_{m}\|_{L^2_x}^2+\sum_{n=1}^m \tau\|\nabla\bfu(t_n)-\nabla\bfu_{n}\|_{L^2_x}^2>\xi\,\tau^{2\alpha}\bigg]\rightarrow0
\end{align}
as $\tau\rightarrow0$, where $\alpha<\frac{1}{2}$. This means we have 
\begin{align}\label{1/2}
\textbf{convergence of order (almost) 1/2 for the Euler scheme}
\end{align}
 for the convergence in probability.\footnote{Under additional structural assumptions on the noise
also convergence in mean square is proved, that is rates for
the expectation of the error in the squared energy norm, see \cite{BeMi1,BeMi2} and, for the optimal rate \cite{FeVo}.} Here $\bfu$ is the solution to \eqref{eq:SNS} and $\bfu_{m}$ the approximation of $\bfu(t_m)$ with discretisation parameter $\tau=T/M$. These papers also contain results on the space-time approximation. But in this work we concentrate on the temporal error. It is more delicate for stochastic problems due to the limited time regularity which the solution inherits from the driving Wiener process.

In view of that the convergence rate 1/2 in \eqref{eq:perror} is optimal for general multiplicative noise. For additive noise, {\em i.e.}, noise which does not depend on the solution more can be said. This is based on the transformation
\begin{align}\label{zwei}
{\bf y}(t) = {\bf u}(t) - \int_0^t \Phi \, {\rm d}W(s) = {\bf u}(t) - \Phi W(t) \qquad (0 \leq t \leq T)\,,
\end{align}
which solves the {\em random} PDE
\begin{align}\nonumber
\partial_t {\bf y} & =  \nu \Delta{\bf y} - P_{\mathrm{HL}}\bigl[ ({\bf y}\cdot \nabla) {\bf y} \bigr]
+ \nu\Delta[\Phi W] - P_{\mathrm{HL}}[{\mathcal L}^W({\bf y})]\,, \\ \label{req:stochastic-NS}
{\rm div}\, {\bf y} & =  0\,, \\ \nonumber
{\bf y}(0) & = {\bf u}_0.
\end{align}
Here $P_{\mathrm{HL}}$ is the Helmholtz-Leray projection onto divergence free functions, and ${\mathcal L}^W :=
\sum_{i=1}^3 {\mathcal L}_i^W$ couples the transform ${\bf y}$ with the Wiener process $W$, with
$${\mathcal L}_1^W({\bf y}) = ({\bf y}\cdot \nabla)[\Phi W] \,, \qquad
{\mathcal L}_2^W({\bf y}) = ([\Phi W]\cdot\nabla) {\bf y}\,, \qquad
{\mathcal L}_3^W  = ([\Phi W]\cdot\nabla )[\Phi W]\, .
 $$
For sufficiently regular data $\Phi$ and $\bfu_0$
the time derivative of $\bfy$ exists as a measurable function. Using this fact
\begin{align}\label{1}
\textbf{convergence of order (almost) 1 for the Euler scheme}
\end{align}
 has been shown in \cite{BrPr2}. It is worth to mention that the difference between $\bfu$ and $\bfy$ is $\Phi W$ which is known. Hence approximating $\bfy$ is equivalent to approximating $\bfu$ and confirmed by numerical experiments in \cite{BrPrWi}. 

Even that can still be improved: Giving sufficient smoothness of the data, one can deduce from \eqref{req:stochastic-NS} that $\partial_t\bfy$ has the same regularity as the Wiener process $W$, {\em i.e.}, is $\alpha$-H\"older continuous in time for any $\alpha<1/2$. Thus $\bfy$ belongs to the class $C^{1,\alpha}_t$. These considerations motivated \cite{BBCP} (building up on ideas from \cite{CP1}), where even a discretization of strong order ${\mathcal O}(\tau^{1.5})$ is constructed. The starting point in \cite{BBCP} is again the random PDE (\ref{req:stochastic-NS}) for the transform ${\bf y}$ via (\ref{zwei}) . The  higher-order implicit scheme in \cite{BBCP} uses  a Crank-Nicolson discretisation for the time differentiable term, which appears in (\ref{req:stochastic-NS}) as $\nu \Delta{\bf y}$ and 
a mesh of order ${\mathcal O}(\tau^2)$ and an Euler-type discretisation
of the H\"older continuous term, which appears in (\ref{req:stochastic-NS}) as  $\Delta[\Phi W]$.
Summarising this,
\begin{align}\label{3/2}
\textbf{convergence of order (almost) 3/2 for the Crank-Nicolson scheme}
\end{align}
has been shown in \cite{BBCP} and confirmed by numerical experiments. 

\subsection{The 3D problem}
All the numerical results mentioned so far consider the two-dimensional problem. As far as the three-dimensional problem is concerned, regularity and uniqueness of solutions is a major open problem as in the deterministic case. Globally in time only the existence of weak martingale solutions is known \cite{FlGa}. Their numerical approximation has been studied in \cite{BCP}, where weak convergence in law of the approximate solution up to a subsequence is shown. Strong pathwise solutions only exist locally in time, where the existence time is a random variable from which only its positivity is known \cite{BeFr,GlZi,Kim,Mi}. Counterparts of \eqref{1/2}, \eqref{1} and \eqref{3/2} for the three-dimensional problem can only be expected locally in time up to the hypothetical blow-up time of the solution. Indeed, such a version of \eqref{1/2} has been verified in \cite{BrAd}. However, results in the spirit of \eqref{1} and \eqref{3/2} are still missing. Also, to the best of our knowledge, no numerical experiments have been performed yet for the three-dimensional problem. This is due to the fact that a Monte-Carlo simulation for \eqref{eq:SNS} is extremely costly. The aim of the present paper is to close these gaps. In particular,
\begin{itemize}
\item[1.] We prove locally in time \eqref{1}, {\em i.e.}, the Euler scheme converges with rate (almost) 1, see Theorem \ref{thm:main}. The proof follows a different strategy from
that of \cite{BrPr2}, where a sufficiently small time-step size \(\tau\) is required.  By adapting the approach of \cite{BBCP}, we avoid this smallness condition.  In addition, the present proof provides a structurally different and simpler error analysis.
\item[2.] We prove locally in time \eqref{3/2}, {\em i.e.}, the Crank-Nicolson scheme converges with rate (almost) 3/2, see Theorem \ref{thm:main-local}. The proof is inspired by the method of \cite{BBCP}.  However, in \cite{BBCP} the error analysis is carried out in a mean-square framework with high-probability set, whereas in the present 3D setting, the solution is available only locally up to a stopping time. Therefore, the argument cannot be transferred directly.  We instead prove the higher-order rate \(3/2\) on the stopped discrete velocity and discrete pressure by combining new pathwise-in-time Brownian quadrature bounds established in Lemmas~\ref{lem:BM-quadrature-sup} and \ref{lem:triple-BM-sup}. We also obtain a convergence rate for the discrete pressure; see Theorem~\ref{thm:pressure-local-3d}.
\item[3.] 
 Although the convergence analysis in theory is confined to periodic boundary condition (i.e. in $\mathbb{T}^3$) exploiting the regularity results, the numerical simulations carried out in Section \ref{sec: simulations} are considerably more general. In particular, the proposed semi-implicit Euler (IE) and Crank--Nicolson (CN) time discretisations are formulated within a mixed finite element framework. Indeed, the numerical experiments presented are performed using inf-sup stable Taylor--Hood $\mathbb{P}_2/\mathbb{P}_1$ finite elements together with homogeneous Dirichlet boundary conditions on the three-dimensional unit cube $(0,1)^3$. Moreover, owing to their variational structure, the proposed algorithms readily admit extensions to more sophisticated flow models, including multiphase, variable-density, and non-Newtonian fluids, without resorting to highly specialized temporal integration techniques.

 We first examine the numerical accuracy of the proposed semi-implicit Euler (section \ref{sec:IE}) and Crank-Nicolson schemes (Section \ref{theo-2}) through academic example, where experimental convergence rates for both the velocity and pressure are computed and validated. Subsequently, we investigate the three-dimensional stochastic lid-driven cavity problem, which serves as a canonical benchmark for assessing the influence of additive stochastic forcing on incompressible flow dynamics. We examine the effects of increasing noise intensity on both individual sample realisations and ensemble-averaged flow behaviour over long integration times.

 To the best of our knowledge, comparable ensemble-averaged visualizations
for the stochastic lid-driven cavity problem in three spatial dimensions
have not previously been reported; existing studies have primarily focused
on two-dimensional configurations or deterministic flows. Indeed, the computation of statistically reliable ensemble averages
as well as representative sample paths for both the IE and CN schemes in three-dimensional are done for the first time in stochastic
setup. The results are visualised through vorticity contours (see, Figures \ref{fig:lidcavity_IE_mean_T020_slices} \& \ref{fig:lidcavity_CN_mean_T020_slices}), volumetric
streamlines (see, Figures \ref{fig:lidcavity_IE_mean_T020_streamlines_mu0_mu40_back_oblique} \& \ref{fig:lidcavity_CN_mean_T020_streamlines_mu0_mu40_back_oblique}), and velocity-magnitude iso-surfaces (see, Figures \ref{fig:lidcavity_IE_mean_T020_isosurface_mu0_mu40_back_oblique} \& \ref{fig:lidcavity_CN_mean_T020_isosurface_mu0_mu40_back_oblique}), collectively providing
new insight into the manner in which additive stochastic forcing modifies
coherent vortical structures and recirculation patterns in three-dimensional incompressible flows.
\begin{figure}[htbp]
\centering
\begin{minipage}[t]{0.48\textwidth}
    \centering
    \includegraphics[width=\linewidth]{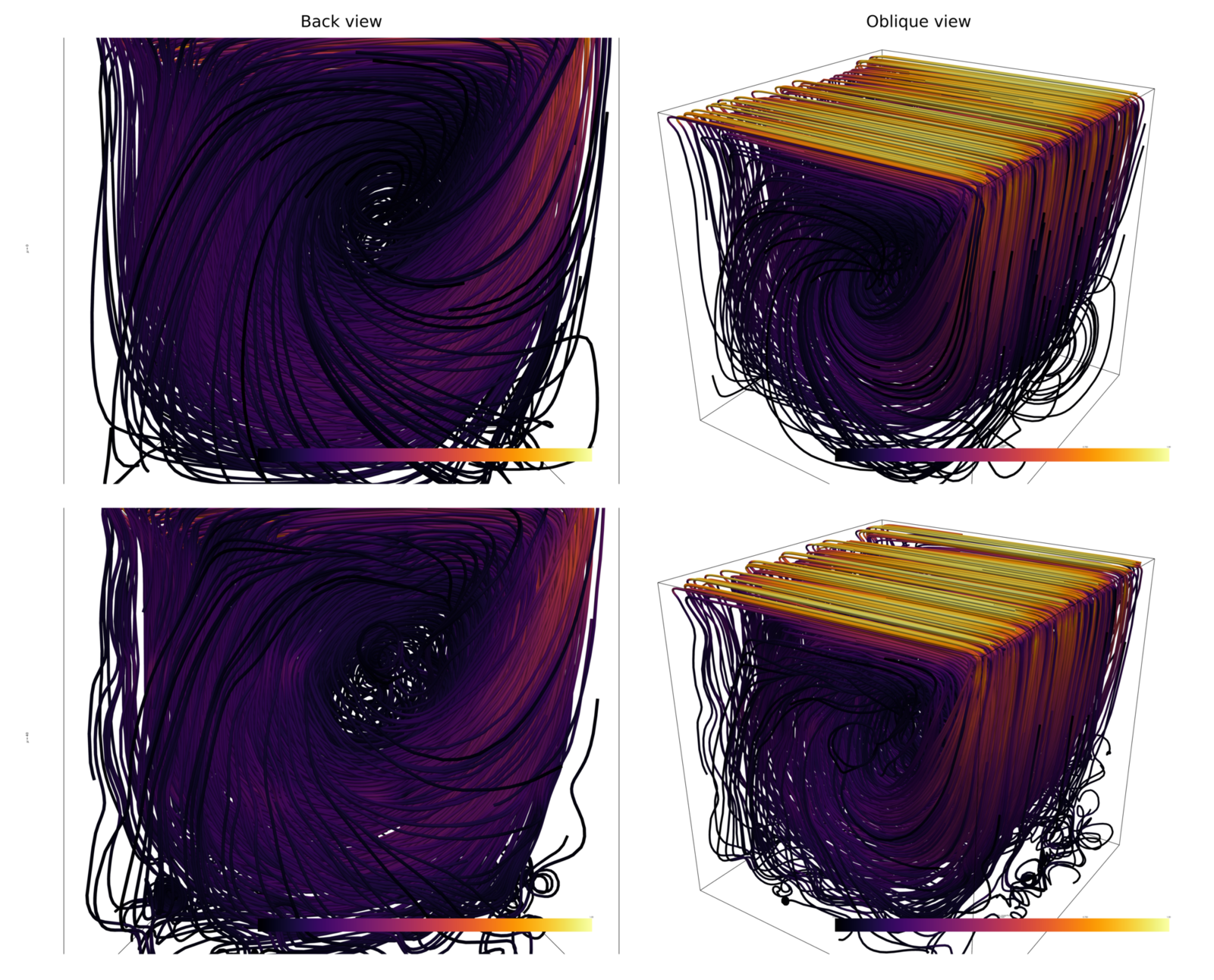}
    \captionsetup{width=1\textwidth}
    \caption{Lid-driven cavity (see Section \ref{sec:Lid-3d}): volumetric
streamlines for deterministic solution $(\mu = 0)$ in row 1 and volumetric
streamlines of the mean of solution with larger noise  $(\mu = 40)$ in row 2 at $T=20$ and $Re=500$ for IE scheme.}
\label{fig:lidcavity_IE_mean_T020_streamlines_mu0_mu40_back_oblique}
\end{minipage}
\hfill
\begin{minipage}[t]{0.48\textwidth}
    \centering
    \includegraphics[width=\linewidth]{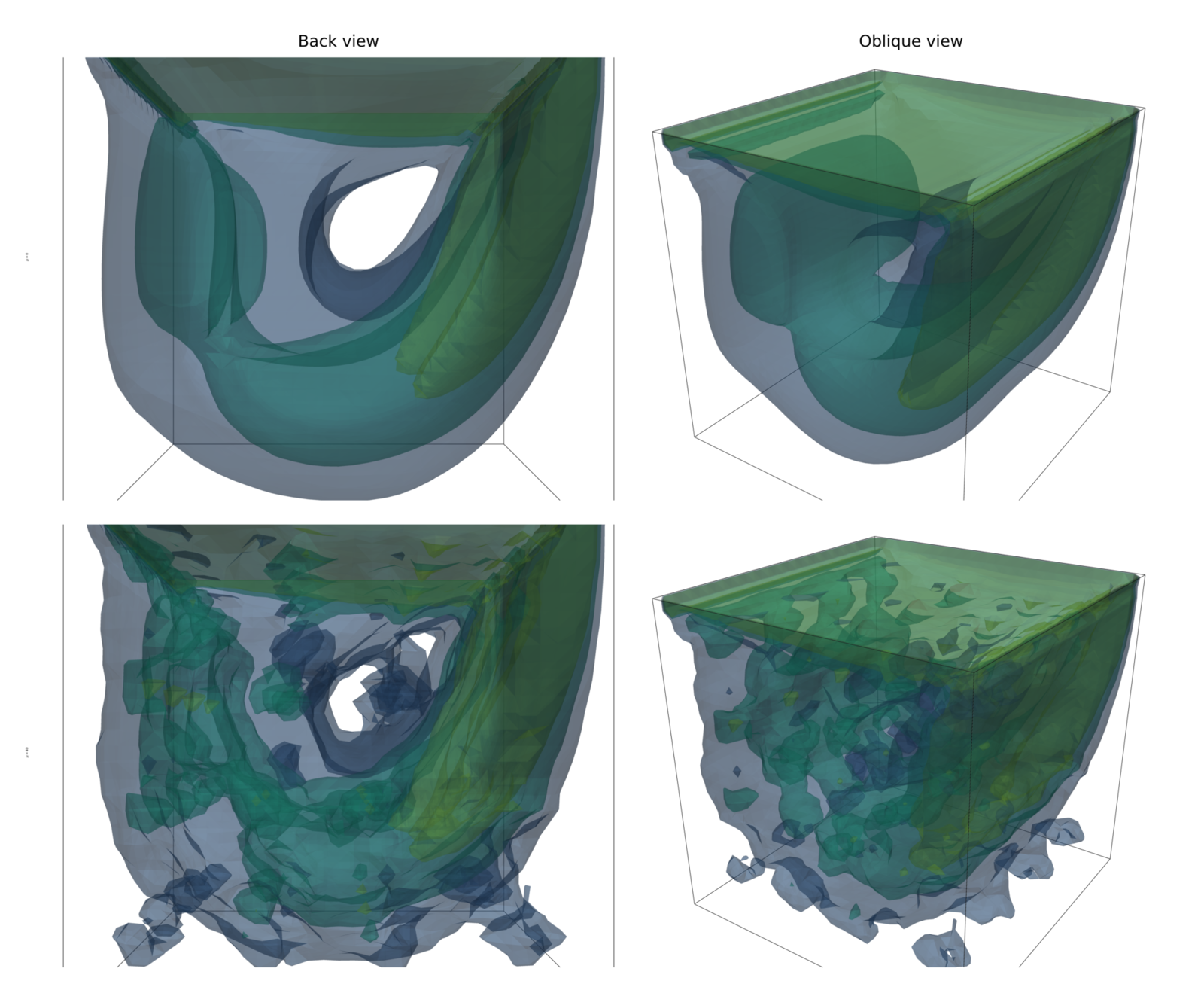}
    \captionsetup{width=1\textwidth}
    \caption{Lid-driven cavity: velocity-magnitude iso-surfaces for IE scheme}
\label{fig:lidcavity_IE_mean_T020_isosurface_mu0_mu40_back_oblique}
\end{minipage}
\end{figure}
\begin{figure}[htbp]
\centering
\begin{minipage}[t]{0.48\textwidth}
    \centering
    \includegraphics[width=\linewidth]{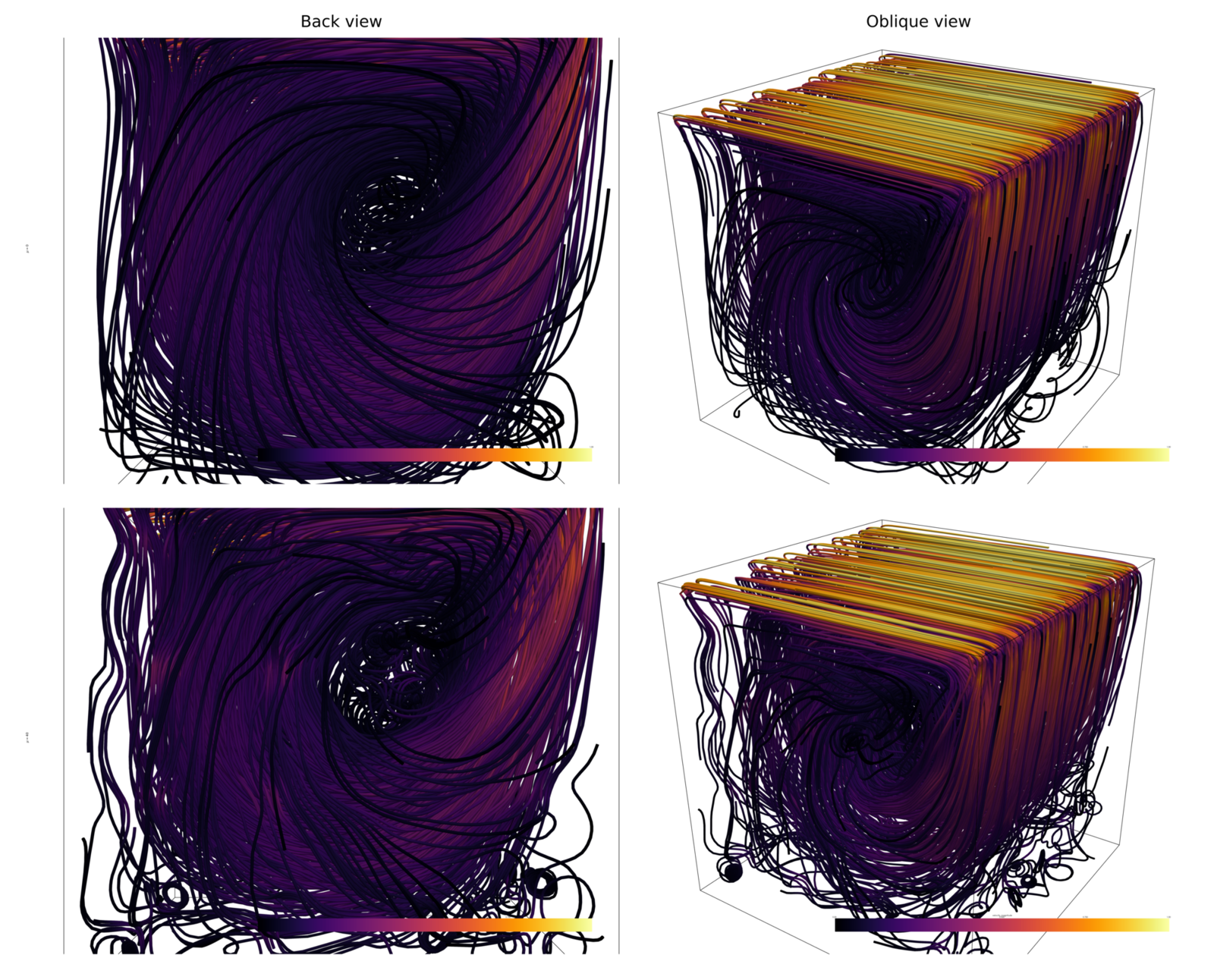}
    \captionsetup{width=1\textwidth}
    \caption{Lid-driven cavity: volumetric
streamlines for CN scheme}
\label{fig:lidcavity_CN_mean_T020_streamlines_mu0_mu40_back_oblique}
\end{minipage}
\hfill
\begin{minipage}[t]{0.48\textwidth}
    \centering
    \includegraphics[width=\linewidth]{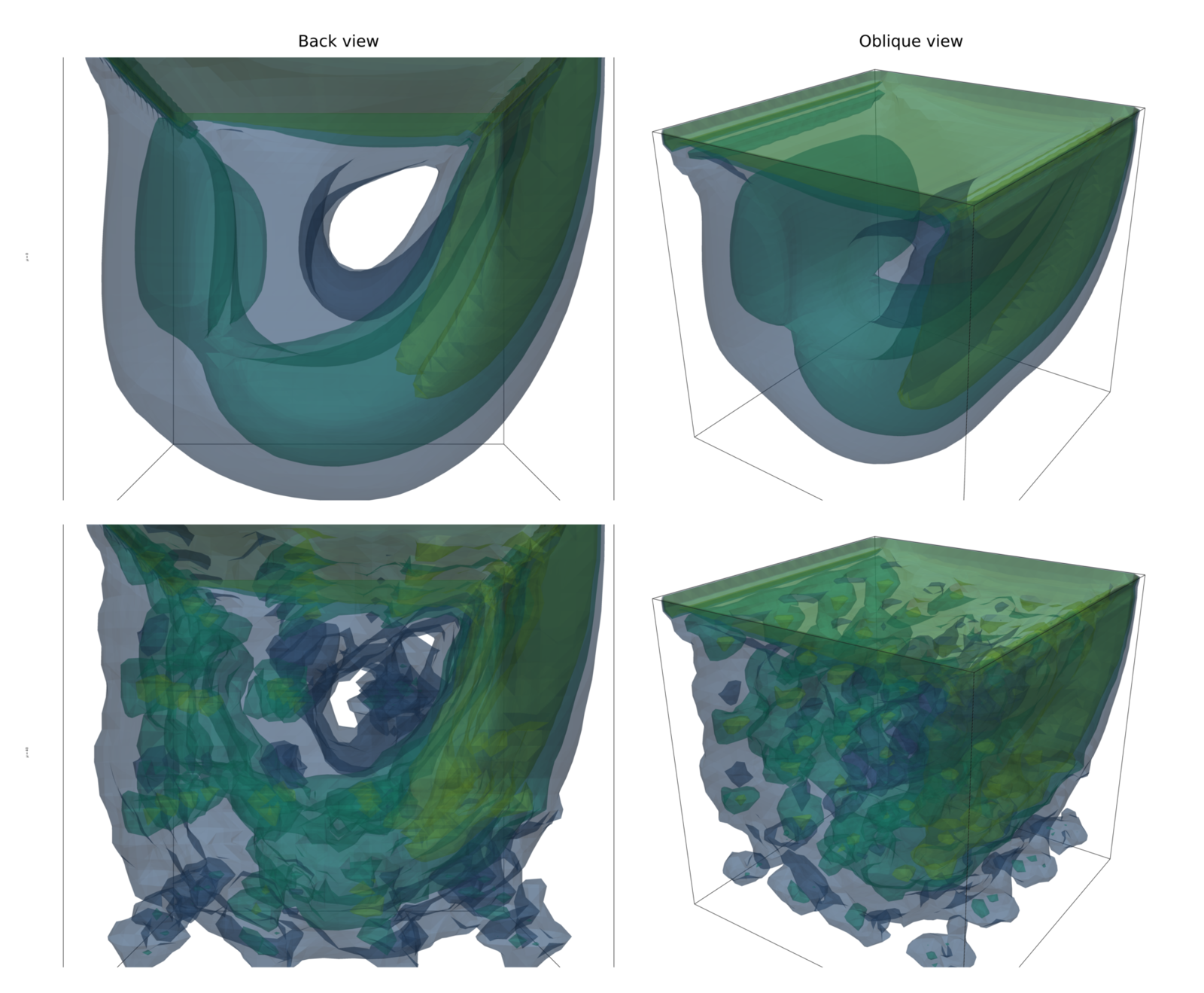}
    \captionsetup{width=1\textwidth}
    \caption{Lid-driven cavity: velocity-magnitude iso-surfaces for CN scheme}
\label{fig:lidcavity_CN_mean_T020_isosurface_mu0_mu40_back_oblique}
\end{minipage}
\end{figure}The associated computational complexity is efficiently managed through
an MPI-based ensemble parallelization strategy in a high performance computer (HPC), whereby each independent
realization is assigned to a dedicated MPI process. Since Monte-Carlo samples are statistically independent, no inter-process
communication is required during time-stepping, yielding an embarrassingly
parallel workload; see Section \ref{sec:CC}.
\end{itemize}
\begin{figure}[htbp]
    \centering
\includegraphics[width=\textwidth]{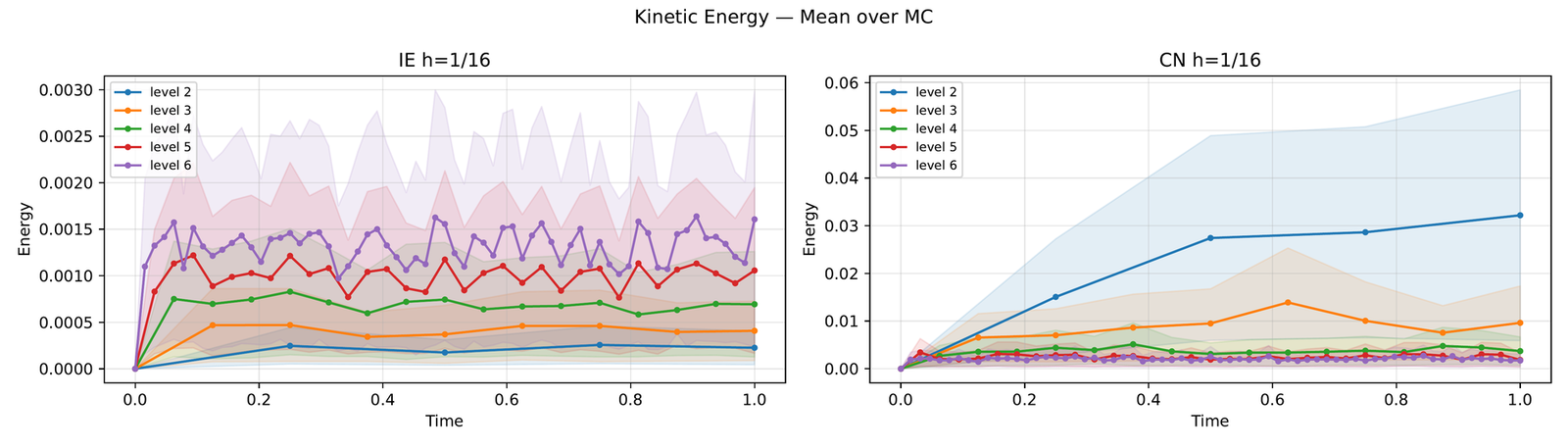}
    \caption{Temporal evolution of the kinetic energy obtained using IE and CN schemes.}\label{fig:kinetic_energy}
\end{figure}
Figures~\ref{fig:kinetic_energy} and~\ref{fig:potential_energy} present the evolution of the mean
kinetic and potential energies obtained using the IE and CN schemes for fixed  spatial discretization $h = 1/16$, while only the temporal step size is 
refined according to $
    \tau_{\ell} = 2^{-\ell}, \quad \ell \in \{2, 3, 4, 5, 6\}$. Here, Level~$\ell$ denotes the $\ell$-th temporal
refinement, obtained by successively halving the time-step size, so that
larger values of $\ell$ correspond to finer temporal meshes.

A noteworthy feature of the experimental findings is the opposite directional
convergence exhibited by the two schemes. For the IE scheme, the
energies increase monotonically with temporal refinement:
the coarsest temporal discretization (Level~2) yields the smallest energy values, while finer
levels (Levels~3--6) produce progressively larger energies. Conversely,
the CN discretisation displays the reverse behaviour, with the coarsest
mesh producing the largest energy values and successive refinements
yielding progressively smaller energies. In both cases, the difference
between consecutive refinement levels decreases as the time step is
refined. The numerical evidence therefore suggests that the IE
approximations approach the limiting energy from below, whereas the CN
approximations approach it from above. We emphasize that this is an
empirical observation based solely on the numerical experiments and is not
established theoretically in the present work.
\begin{figure}[htbp]
\includegraphics[width=\textwidth]{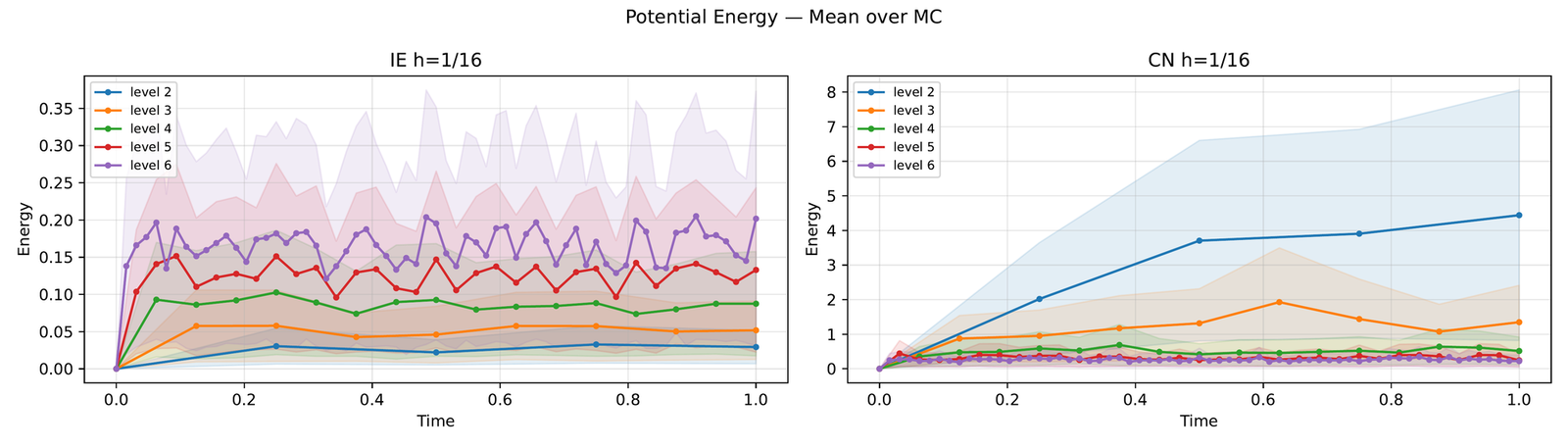}
    \caption{Temporal evolution of the potential energy}\label{fig:potential_energy}
\end{figure}
The observed behaviour reflects the different numerical dissipation inherent in the two temporal discretizations. The IE scheme introduces greater artificial damping, while the CN method exhibits superior energy preservation owing to its second-order, less dissipative nature. As the time step is refined, these discretization effects become progressively weaker, leading both methods to converge towards an identical limiting energy profile. 

The remaining part of the paper is organized as follows: Section \ref{sec:framework} contains the relevant notations, recalls the solution framework for \eqref{eq:SNS} and improved time regularity results. The semi-implicit Euler scheme and the higher order Crank-Nicolson scheme are proposed and the corresponding strong rate of convergence are studied in Sections \ref{sec:IE} and \ref{theo-2}, respectively. Computational studies are assembled in Section \ref{sec: simulations} and the Appendix \ref{appx} contains technical results for the approximation of Brownian integrals.
\section{Mathematical framework}
\label{sec:framework}

\subsection{Function spaces}
In this subsection we present the function space used in the main body of the paper. All function spaces over $\mt$ are considered subject to periodicity of the function and zero mean value.
By $\mathbb L^q:=\mathbb L^q(\mt)$ with $1\leq q\leq\infty$ we denote the standard Lebesgue spaces over $\mt$ of periodic functions.
In particular, we write $(\cdot,\cdot)$ for the $\mathbb L^2({\mathbb{T}^3})$ inner
product. Also, we write
\[
\mathbb H:=\mathbb H(\mt) :=\bigg\{{\bf v}\in\mathbb{L}^2(\mathbb{T}^3) : \int_{\mathbb{T}^2}\mathbf{v}\cdot\nabla q(x)\,{\rm d}x=0\,\,\,\forall q\in\mathbb H^1(\mt)\bigg\}.
\]
Note that we do not distinguish the notation between scalar- and vector-valued function spaces. However, vector-valued function are written in bold face.
By $\mathbb H^k:=\mathbb{H}^k({\mathbb{T}^3})$ for $k\in\N$ we denote Sobolev spaces with differentiability $k$ (and integrability $q=2$). The dual of $\mathbb{H}^k({\mathbb{T}^3})$ is denoted by $\mathbb{H}^{-k}({\mathbb{T}^3})$. We also set $\mathbb W^{k,p}:=\mathbb W^{k,p}(\mt)$
for Sobolev spaces with differentiability $k$ and integrability $q\in[1,\infty]$, such that $\mathbb H^k(\mt)=\mathbb W^{k,2}(\mt)$.  We will also use $\mathbb H^{s,2}:=\mathbb H^{s,2}(\mt)$ for $s \in \R$ to denote the space of distributions $v$ defined on $\mt$ with the finite norm
\begin{equation}
\left\| v \right\|^2_{\mathbb H^{s,2}(\mt)} = \sum_{k \in \mathbb{Z}^3} (1 + |k|^s)^{2} |c_k(v)|^2 < \infty.\notag
\end{equation}
Here, $c_k(v)$ are the Fourier coefficients of $v$ with respect to the standard trigonometric basis $\{ \exp(ik\cdot x) \}_{k \in\mathbb{Z}^3}$.

We use the standard solenoidal velocity space
\begin{align*}
\Vspace:=\mathbb V(\mt) := \{{\bf v}\in \mathbb{H}^1({\mathbb{T}^3}):\ \diver {\bf v}=0\}
\end{align*}
with dual space $\mathbb V^{-1}:=(\mathbb V(\mt))'$.
The transport-form trilinear form is defined by
\[
\mathcal C({\bf a},{\bf b},{\bf c})
:
= \int_{\mathbb{T}^3}({\bf a}(x)\!\cdot\!\nabla){\bf b}(x)\cdot {\bf c}(x)\,\mathrm{d}x,
\qquad {\bf a}\in \Vspace,{\bf b},{\bf c}\in\mathbb{H}^1({\mathbb{T}^3}).
\]
Finally, we consider smooth functions $C^\infty(\mt)=\bigcap_{k\in\N} \mathbb H^k(\mt)$ and smooth solenoidal functions $C^\infty_{\mathrm{div}}:=C^\infty(\mt)\cap\mathbb V$.

For a separable Banach space $(\mathbb X,\|\cdot\|_{\mathbb X})$, we denote by $L^p(I;X)$, the set of (Bochner-) measurable functions $u:I\rightarrow X$ such that the mapping $t\mapsto \|u(t)\|_{\mathbb X}$ belongs to $L^p(I)$. 
The set $C(\overline{I};\mathbb X)$ denotes the space of functions $u:\overline{I}\rightarrow \mathbb X$ which are continuous with respect to the norm topology on $(\mathbb X,\|\cdot\|_{\mathbb X})$. For $\alpha\in(0,1]$ we write
$C^{0,\alpha}(\overline{I};\mathbb X)$ for the space of H\"older-continuous functions with values in $X$. 
Similarly, for a probability space $\mathbb F:=(\Omega,\mathfrak F,\p)$ and a separable Banach space $(\mathbb X,\|\cdot\|_{\mathbb X})$ and $p\in[1,\infty]$
we write $L^p(\Omega,\mathfrak F,\p;\mathbb X)$ or short
$L^p(\Omega;\mathbb X)$ for the set of (Bochner-) measurable functions $v:\Omega\rightarrow \mathbb X$ such that the mapping $\omega\mapsto \|v(\omega)\|_{\mathbb X}$ belongs to $L^p(\Omega,\mathfrak F,\p)$.

\subsection{Probability setup}\label{sec:prob}
Let $(\Omega,\F,(\F_t)_{t\geq0},\prst)$ be a stochastic basis with a complete, right-continuous filtration. The process $W$ is a cylindrical Wiener process, that is, $W(t)=\sum_{k\geq1}W_j(t) \mathfrak e_j$ with $(W_j)_{j\geq1}$ being mutually independent real-valued standard Wiener processes relative to $(\F_t)_{t\geq0}$, and $(\mathfrak e_j)_{j\geq1}$ a complete orthonormal system in a separable Hilbert space $\mathfrak{U}$.
We assume that the diffusion coefficient $\varPhi$ belongs to the set of Hilbert-Schmidt operators $L_2(\mathfrak U;\mathbb X)$, where
$\mathbb X$ can take the role of various Hilbert spaces. Of particular importance are spaces of solenoidal vector fields, such as $\mathbb H(\mt)$, $\mathbb V(\mt)$ or $\mathbb H^{k}(\mt)$.

For a diffusion coefficient $\Phi\in L_2(\mathfrak U;\mathbb X)$ the stochastic integral
 \begin{align*}
\int_0^t \varPhi\,\dd W=\sum_{k\geq 1} \int_0^t\Phi\mathfrak e_k\dd W_k
\end{align*}
is well-defined with values in $\mathbb X$.

\subsection{The concept of solutions}

We give the definition of a strong pathwise solution to \eqref{eq:SNS} which exists up to a stopping time $\mathfrak t$. The velocity field belongs $\p$-a.s. to $C([0,\mathfrak t];\mathbb H^2(\mt))$.

\begin{definition}[Local strong pathwise solution]\label{def:strsol}
Let $(\Omega,\mf,(\mf_t)_{t\geq0},\prst)$ be a given stochastic basis with a complete right-continuous filtration and an $(\mf_t)$-cylindrical Wiener process $W$. Let $\uu_0$ be an $\mf_0$-measurable random variable with values in $\mathbb H\cap\mathbb H^2(\mt)$. The tuple $(\uu,\mathfrak t)$ is called a \emph{local strong pathwise solution} \index{incompressible Navier--Stokes system!weak pathwise solution} to \eqref{eq:SNS} with the initial condition $\uu_0$ provided
\begin{enumerate}
\item $\mathfrak t$ is a $\p$-a.s. strictly positive $(\mathfrak F_t)$-stopping time;
\item the velocity field $\uu$ is $(\mf_t)$-adapted and
$$\uu(\cdot\wedge \mathfrak t) \in C([0,T];\mathbb H^2)\cap L^2(0,T;\mathbb H^3) \quad\text{$\p$-a.s.},$$
\item the momentum equation
\begin{align}\label{eq:mom}
\begin{aligned}
&\int_{\mt}\uu(t\wedge \mathfrak t)\cdot\bfvarrho\dx-\int_{\mt}\uu_0\cdot\bfvarrho\dx
\\&=-\int_0^{t\wedge\mathfrak t}\int_{\mt}(\uu\cdot\nabla)\uu\cdot\bfvarrho\dx\,\dif s+\nu\int_0^{t\wedge\mathfrak t}\int_{\mt}\Delta\uu\cdot\bfvarrho\dx\,\dif s+\int_{\mt}\int_0^{t\wedge\mathfrak t}\Phi \, \dif W  \cdot\bfvarrho\,\dx\notag
\end{aligned}
\end{align}
holds $\p$-a.s. for all $\bfvarrho\in C^{\infty}_{\diver}(\mt)$ and all $t\geq0$.
\end{enumerate}
\end{definition}
We finally define a maximal strong pathwise solution.
\begin{definition}[Maximal strong pathwise solution]\label{def:maxsol}
Fix a stochastic basis with a cylindrical Wiener process and an initial condition as in Definition \ref{def:strsol} a triplet $$(\uu,(\mathfrak{t}_R)_{R\in\N},\mathfrak{t})$$ is a maximal strong pathwise solution to system \eqref{eq:SNS} provided

\begin{enumerate}
\item $\mathfrak{t}$ is a $\p$-a.s. strictly positive $(\mathfrak{F}_t)$-stopping time;
\item $(\mathfrak{t}_R)_{R\in\mn}$ is an increasing sequence of $(\mathfrak{F}_t)$-stopping times such that
$\mathfrak{t}_R<\mathfrak{t}$ on the set $[\mathfrak{t}<\infty]$,
$\lim_{R\to\infty}\mathfrak{t}_R=\mathfrak t$ $\p$-a.s., and
\begin{equation}\label{eq:blowup}
\mathfrak t_R:=\inf \big\{t\in[0,\infty):\,\,\|\uu(t)\|_{\mathbb H^2}\geq R\big\}\quad \text{on}\quad [\mathfrak{t}<\infty] ,\notag
\end{equation}
with the convention that $\mathfrak{t}_R=\infty$ if the set above is empty;
\item each triplet $(\uu,\mathfrak{t}_R)$, $R\in\mn$,  is a local strong pathwise solution in the sense  of Definition \ref{def:strsol}.
\end{enumerate}
\end{definition}
We finally present an existence theorem.
The proof follows the same strategy as in \cite[Proposition 3.2]{Kim}, where the stochastic Navier--Stokes equations are studied on the whole space $\mathbb{R}^3$ under the assumption of fractional regularity $\sigma \in (3/2,2)$. As noted on \cite[p. 2]{Kim}, the case $\sigma = 2$, which is the setting considered here, can be treated straightforwardly.
\begin{theorem}\label{thm:inc2d}
Given that $\varPhi \in L_2(\mathfrak U; \mathbb H\cap \mathbb H^ 2(\mathbb{T}^3))$ and
$\uu_0\in L^2(\Omega,\mathbb H\cap \mathbb H^2(\mt))$. Then there is a
unique maximal global strong pathwise solution to \eqref{eq:SNS} in the sense  of Definition \ref{def:maxsol}.
\end{theorem}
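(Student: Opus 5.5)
The plan is the classical truncation-and-gluing argument, in the spirit of \cite[Proposition 3.2]{Kim}, carried out in the Hilbert scale $\mathbb H^2\subset\mathbb H^1\subset\mathbb H$ on $\mt$. We first remove the noise by the additive transform \eqref{zwei}. The stochastic convolution $\mathbf z(t)=\int_0^t e^{\nu(t-s)\Delta}\Phi\,\dd W$ (or simply $\Phi W$) belongs, since $\Phi\in L_2(\mathfrak U;\mathbb H\cap\mathbb H^2)$, $\p$-a.s. to $C([0,T];\mathbb H^2)\cap L^2(0,T;\mathbb H^3)$. This uses the factorisation method and the maximal regularity of the Stokes operator on the torus, which coincides with $-\Delta$ on zero-mean solenoidal fields. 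It remains to solve pathwise the random PDE for $\mathbf y=\uu-\mathbf z$.

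Next we introduce a cut-off $\theta_R\in C^\infty_c([0,\infty))$ with $\theta_R=1$ on $[0,R]$ and $\theta_R=0$ on $[2R,\infty)$, and replace the nonlinearity by $\theta_R(\|\uu\|_{\mathbb H^{2}})P_{\mathrm{HL}}[(\uu\cdot\nabla)\uu]$. For this truncated equation we construct a global solution in $C([0,T];\mathbb H^2)\cap L^2(0,T;\mathbb H^3)$ by a Galerkin scheme in the Fourier basis. The a priori estimate comes from testing with $\Delta^2\mathbf y$. The key bound is
\[
\big|\big(\nabla^2[(\uu\cdot\nabla)\uu],\nabla^2\mathbf y\big)\big|\lesssim \|\uu\|_{\mathbb H^2}^2\|\uu\|_{\mathbb H^3}\|\mathbf y\|_{\mathbb H^3}\cdot\ldots,
\]
which follows from $\mathbb H^2\hookrightarrow L^\infty$ in 3D together with the cancellation $((\uu\cdot\nabla)\nabla^2\mathbf y,\nabla^2\mathbf y)=0$ for divergence-free $\uu$. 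On the support of $\theta_R$ we have $\|\uu\|_{\mathbb H^2}\le 2R$, so the nonlinearity is globally Lipschitz in the relevant norms, with constants depending on $R$ and on the noise norm of $\mathbf z$. Gronwall's lemma then yields pathwise bounds, compactness gives a limit, and a Lipschitz estimate in $\mathbb H^1$ for the difference of two solutions gives pathwise uniqueness. Because the construction is pathwise given $\mathbf z$, adaptedness follows from uniqueness and measurable dependence on $(\uu_0,\mathbf z)$. Strictly speaking, uniqueness plus a Yamada--Watanabe-type argument is needed if the solution is obtained only in law. The hypothesis $\uu_0\in L^2(\Omega;\mathbb H^2)$ is used by localising on the $\mathcal F_0$-sets $\{\|\uu_0\|_{\mathbb H^2}\le R/2\}$.

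We then define $\mathfrak t_R$ as in Definition \ref{def:maxsol} for the solution $\uu^R$ of the truncated problem. On $[0,\mathfrak t_R]$ the cut-off is inactive, so $(\uu^R,\mathfrak t_R)$ is a local strong pathwise solution. Uniqueness shows that $\uu^{R}=\uu^{R'}$ on $[0,\mathfrak t_R\wedge\mathfrak t_{R'}]$, so the $\mathfrak t_R$ increase and the solutions glue to a single $\uu$ on $[0,\mathfrak t)$, where $\mathfrak t:=\lim_R\mathfrak t_R$. Continuity in $\mathbb H^2$ together with $\|\uu_0\|_{\mathbb H^2}<\infty$ a.s. gives $\mathfrak t_R>0$ for large $R$, hence $\mathfrak t>0$ a.s. On $[\mathfrak t<\infty]$ the norm $\|\uu(t)\|_{\mathbb H^2}$ is unbounded as $t\uparrow\mathfrak t$, so $\mathfrak t_R<\mathfrak t$, which gives maximality. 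Uniqueness of the maximal solution follows by comparing any two solutions up to the minimum of their stopping times.

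The main obstacle is the $\mathbb H^2$ energy estimate and the uniqueness estimate for the truncated equation, in particular the commutator term. Here we plan to use the Kato--Ponce type estimate $\|[\nabla^2,\uu\cdot\nabla]\mathbf y\|_{L^2}\lesssim\|\uu\|_{\mathbb H^3}\|\mathbf y\|_{\mathbb H^2}$, valid on $\mt$, absorbing $\|\mathbf y\|_{\mathbb H^3}$ into the viscous term via Young's inequality.
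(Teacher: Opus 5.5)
Your overall plan is sound, and it is the same truncation-and-gluing scheme the paper relies on. The paper gives no argument of its own beyond pointing to Kim's Proposition~3.2 (whole space, $\mathbb H^\sigma$ with $\sigma\in(3/2,2)$, the case $\sigma=2$ declared straightforward). Using the Ornstein--Uhlenbeck process to reduce to a pathwise random PDE is a legitimate simplification for additive noise, since it removes stochastic compactness and Yamada--Watanabe altogether.

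\textbf{The gap.} Pathwise uniqueness for the truncated problem does not follow from a Lipschitz estimate in $\mathbb H^1$ when the cut-off acts on the $\mathbb H^2$ norm. For two truncated solutions, $\mathbf w=\mathbf u_1-\mathbf u_2$ is driven by
\[
\big[\theta_R(\|\mathbf u_1\|_{\mathbb H^2})-\theta_R(\|\mathbf u_2\|_{\mathbb H^2})\big]\,P_{\mathrm{HL}}[(\mathbf u_1\cdot\nabla)\mathbf u_1],
\]
whose prefactor is only bounded by $CR^{-1}\|\mathbf w\|_{\mathbb H^2}$. Testing with $-\Delta\mathbf w$ therefore produces $CR^{-1}\|(\mathbf u_1\cdot\nabla)\mathbf u_1\|_{\mathbb L^2}\|\mathbf w\|_{\mathbb H^2}^2$. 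This has the same order as the dissipation $\nu\|\Delta\mathbf w\|_{\mathbb L^2}^2$, with a coefficient that is not small (it can be of order $R$). So it can neither be absorbed nor handled by Gronwall's lemma.

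The difference estimate must be done at the level of the cut-off. Test with $\Delta^2\mathbf w$. At each time, put the cut-off difference on the nonlinearity of the solution with the smaller $\mathbb H^2$ norm, so that all factors are bounded by $2R$ whenever the term is non-zero. The bad term is then bounded by
$CR^{-1}\|\mathbf w\|_{\mathbb H^2}\|(\mathbf u_1\cdot\nabla)\mathbf u_1\|_{\mathbb H^1}\|\mathbf w\|_{\mathbb H^3}\le\frac{\nu}{4}\|\mathbf w\|_{\mathbb H^3}^2+CR^2\|\mathbf w\|_{\mathbb H^2}^2$.
This is exactly the global Lipschitz property $\mathbb H^2\to\mathbb H^1$ you invoke. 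The same $\mathbb H^2$ stability estimate also gives continuous, hence measurable and non-anticipating, dependence on $(\mathbf u_0,\mathbf z)$, which your adaptedness argument needs.

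Alternatively, you may claim uniqueness only for the stopped processes $\mathbf u^R(\cdot\wedge\mathfrak t_R)$. There the cut-off is inactive, an $\mathbb L^2$ estimate suffices, and this is all the gluing requires. But then measurability in $\omega$ of your Galerkin limit must be argued separately.

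\textbf{Two smaller corrections.} First, the alternative ``or simply $\Phi W$'' does not work under the stated hypothesis. For $\Phi\in L_2(\mathfrak U;\mathbb H^2)$, $\Phi W$ is in general not in $L^2(0,T;\mathbb H^3)$. Moreover, $\mathbf u-\Phi W$ solves an equation with forcing $\nu\Delta[\Phi W]$ that is only in $\mathbb L^2$, which is too weak for the $\mathbb H^2$ energy estimate. You need the stochastic convolution.

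Second, your displayed ``key bound'' is not the one you want. After one integration by parts,
$|(\nabla^2P_{\mathrm{HL}}[(\mathbf u\cdot\nabla)\mathbf u],\nabla^2\mathbf y)|\le C\|\mathbf u\|_{\mathbb H^2}^2\|\mathbf y\|_{\mathbb H^3}\le CR^2\|\mathbf y\|_{\mathbb H^3}$
on the support of $\theta_R$. So the existence estimate is immediate and needs neither the transport cancellation nor a Kato--Ponce commutator bound. The genuinely delicate point is the uniqueness issue above, not the commutator.
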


\subsection{Estimates for the continuous solution}
In this section we present the crucial estimates for the
continuous solution, which hold up to the stopping time $\mathfrak t_R$ from Definition \ref{def:maxsol} for some $R\gg1$.
\begin{lemma}\label{lem:reg}
 \begin{enumerate}
Let $(\Omega,\mf,(\mf_t)_{t\geq0},\prst)$ be a given stochastic basis with a complete right-continuous filtration and an $(\mf_t)$-cylindrical Wiener process $W$. 
Let $(\uu,(\mathfrak t_R)_{R\in\N},\mathfrak t)$ be the maximal strong pathwise solution to \eqref{eq:SNS}, cf. Definition \ref{def:maxsol}.
\item[(a)] Assume 
that $\uu_0\in L^r(\Omega,\mathbb H(\mt))$ for some $r\geq2$ and 
that $\Phi$ $\in L_2(\mathfrak{U}; \mathbb H)$. Then we have
\begin{align}
\E\bigg[ \bigg(\sup_{0\leq t\leq T}\int_{\mt}|\uu(t\wedge\mathfrak t_R)|^2\dx+\int_0^{T\wedge t_R}\int_{\mt}|\nabla\uu|^2\dxt\bigg)^{\frac{r}{2}}\bigg]\leq\,c\,\E\Big[1+\|\uu_0\|_{\mathbb H}^r\Big].\notag
\end{align}
\item[(b)] Assume 
that $\uu_0\in L^r(\Omega,\mathbb V)$ for some $r\geq2$ and 
that $\Phi \in L_2(\mathfrak{U}; \mathbb V)$. Then we have
\begin{align}
\begin{aligned}
\E\bigg[\bigg(\sup_{0\leq t\leq T}\int_{\mt}|\nabla\uu(t\wedge \mathfrak t_R)|^2\dx&+\int_0^{T\wedge \mathfrak t_R}\int_{\mt}|\nabla^2\uu|^2\dxt\bigg)^{\frac{r}{2}}\bigg]\\&\leq\,cR^{3r}\,\E\Big[1+\|\uu_0\|_{\mathbb V}^r\Big].\notag
\end{aligned}
\end{align}
\item[(c)] Let $m\in\N$ with $m\geq 2$. Assume that ${\bf u}_0\in L^r(\Omega,\mathbb H^{m}(\mt))\cap  L^{(2m+1)r}(\Omega,\mathbb V(\mt))$ for some $r\geq2$ and and $\Phi\in L_2(\mathfrak U;\mathbb V\cap\mathbb H^{m}(\mt))$. Then we have
\begin{align}
\begin{aligned}
\E\bigg[\sup_{0\leq t\leq T}\|{\bf u}(t\wedge\mathfrak t_R)\|_{\mathbb H^{m}}^2\dx&+\int_0^{T\wedge\mathfrak t_R}\|{\bf u}\|_{\mathbb H^{m+1}}^2\dt\bigg]^{\frac{r}{2}}\\&\leq\,cR^{ \frac{2m+1}{3}r}\,\E\Big[1+\|{\bf u}_0\|_{\mathbb H^m}^2\Big]^{\frac{r}{2}}.\notag
\end{aligned}
\end{align}
Here $c=c(r,T)$ is independent of $R$.
\end{enumerate}
\end{lemma}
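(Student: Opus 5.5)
The approach is to apply Itô's formula to the stopped process in the Hilbert scale $\mathbb H^k$, $k=0,1,m$, and control the nonlinearity with the stopping-time bound $\|\uu(t)\|_{\mathbb H^2}\le R$ for $t\le\mathfrak t_R$. The powers of $R$ in (b) and (c) come from this bound, used in place of the (unavailable in 3D) global estimates. In every case the stochastic integral is handled by the Burkholder--Davis--Gundy inequality, and then the $r/2$-th moment is taken, with Young's inequality absorbing the supremum.

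\emph{Part (a).} Apply Itô's formula to $\|\uu(t\wedge\mathfrak t_R)\|_{\mathbb L^2}^2$. The convective term vanishes because $\mathcal C(\uu,\uu,\uu)=0$, and the pressure drops out by solenoidality. This gives
\[
\|\uu(t\wedge\mathfrak t_R)\|_{\mathbb L^2}^2+2\nu\int_0^{t\wedge\mathfrak t_R}\|\nabla\uu\|_{\mathbb L^2}^2\,\dd s=\|\uu_0\|_{\mathbb L^2}^2+(t\wedge\mathfrak t_R)\|\Phi\|_{L_2(\mathfrak U;\mathbb H)}^2+2\int_0^{t\wedge\mathfrak t_R}(\uu,\Phi\,\dd W).
\]
Raise to the power $r/2$, take the supremum and expectation, and estimate the martingale by BDG:
\[
\E\Big[\Big(\int_0^{T\wedge\mathfrak t_R}\|\uu\|^2\|\Phi\|_{L_2}^2\,\dd s\Big)^{r/4}\Big]\le \delta\,\E\Big[\sup_t\|\uu\|^{r}\Big]+c_\delta.
\]
After absorbing the $\delta$-term (and using Gronwall if needed), this yields (a) with $c=c(r,T)$ independent of $R$.

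\emph{Part (b).} Test with $-\Delta\uu$, i.e.\ apply Itô's formula to $\|\nabla\uu\|_{\mathbb L^2}^2$. The Itô correction is $\|\Phi\|_{L_2(\mathfrak U;\mathbb V)}^2$. On the torus the convective term does not vanish, and it is the main point:
\[
|(\uu\cdot\nabla\uu,\Delta\uu)|\le\|\uu\|_{\mathbb L^\infty}\|\nabla\uu\|_{\mathbb L^2}\|\Delta\uu\|_{\mathbb L^2}\le \tfrac\nu2\|\Delta\uu\|_{\mathbb L^2}^2+c\|\uu\|_{\mathbb L^\infty}^2\|\nabla\uu\|_{\mathbb L^2}^2 .
\]
On $[0,\mathfrak t_R]$ the Sobolev embedding $\mathbb H^2\hookrightarrow\mathbb L^\infty$ gives $\|\uu\|_{\mathbb L^\infty}\le cR$. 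The term is therefore bounded by $cR^2\|\nabla\uu\|^2$, which can even be estimated crudely by $cR^4$ or handled via Gronwall. A pure Gronwall argument would yield $e^{cR^2T}$ rather than a polynomial, so to get the stated $R^{3r}$ I would avoid Gronwall. Instead I would interpolate, e.g.\ $\|\uu\|_{\mathbb L^\infty}^2\|\nabla\uu\|^2\le cR^{2}\|\nabla\uu\|\,\|\Delta\uu\|\cdot\|\uu\|_{\mathbb H^2}^{\cdots}$, and bound the time integral directly by powers of $R$ together with the energy from (a), giving a contribution of order $R^{6}$ in the squared quantity, i.e.\ $R^{3r}$ after the $r/2$ power. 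The martingale is again treated by BDG and absorption. Matching the precise exponent is bookkeeping, and any polynomial bound suffices for later use.

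\emph{Part (c)} follows by induction on $m$. Apply $\partial^\beta$ with $|\beta|=m$ and use Itô's formula for $\|\partial^\beta\uu\|^2$. The nonlinear term $(\partial^\beta(\uu\cdot\nabla\uu),\partial^\beta\uu)$ is split by the Leibniz rule. The top-order piece $(\uu\cdot\nabla\partial^\beta\uu,\partial^\beta\uu)$ vanishes, and the remaining terms are controlled by Kato--Ponce/Moser-type commutator estimates:
\[
\lesssim \|\nabla\uu\|_{\mathbb L^\infty}\|\uu\|_{\mathbb H^m}^2\quad\text{or}\quad\lesssim\|\uu\|_{\mathbb H^2}\|\uu\|_{\mathbb H^m}\|\uu\|_{\mathbb H^{m+1}} .
\]
The second form is preferable, since it gives $\le\tfrac\nu2\|\uu\|_{\mathbb H^{m+1}}^2+cR^2\|\uu\|_{\mathbb H^m}^2$. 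Interpolating $\|\uu\|_{\mathbb H^m}$ between $\mathbb V$ and $\mathbb H^{m+1}$ then lets the term be absorbed, at the cost of a power of $R$ and a power of $\|\uu\|_{\mathbb V}$. This explains the moment assumption $L^{(2m+1)r}(\Omega;\mathbb V)$ and the factor $R^{\frac{2m+1}{3}r}$, where the $\mathbb V$-moments are supplied by (b).

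The main obstacle I expect is exactly this exponent bookkeeping in (b)/(c). The aim is to obtain bounds polynomial in $R$, with the stated exponents, instead of the exponential bounds a naive Gronwall argument produces. This requires choosing the interpolation so that the nonlinear term is absorbed into the dissipation and the remainder is integrated using only $\|\uu\|_{\mathbb H^2}\le R$ and the lower-order moments.
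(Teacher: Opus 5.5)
Your arguments for (a) and (b) are fine; the paper only cites earlier work for these. In (b) the interpolation detour is unnecessary: the crude bound you mention, $c\|\mathbf u\|_{\mathbb L^\infty}^2\|\nabla\mathbf u\|_{\mathbb L^2}^2\le cR^4$ on $[0,\mathfrak t_R]$, already gives a factor $R^{2r}\le R^{3r}$ for $R\ge1$.

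\textbf{The gap is in (c).} The exponent $R^{\frac{2m+1}{3}r}$ is part of the statement, and your trilinear bound cannot produce it. Following your own steps, you have
\[
\|\mathbf u\|_{\mathbb H^2}\|\mathbf u\|_{\mathbb H^m}\|\mathbf u\|_{\mathbb H^{m+1}}\le\tfrac\nu2\|\mathbf u\|_{\mathbb H^{m+1}}^2+cR^2\|\mathbf u\|_{\mathbb H^m}^2
\]
and $\|\mathbf u\|_{\mathbb H^m}\le c\|\mathbf u\|_{\mathbb V}^{1/m}\|\mathbf u\|_{\mathbb H^{m+1}}^{(m-1)/m}$. Young's inequality then gives $\delta\|\mathbf u\|_{\mathbb H^{m+1}}^2+cR^{2m}\|\mathbf u\|_{\mathbb V}^2$. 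Integrating in time and using (a), this yields a factor $R^{mr}$ after the $r/2$-moment. Interpolating between $\mathbb H^2$ and $\mathbb H^{m+1}$ instead gives the same $R^{2m}$.

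Since $m>\frac{2m+1}{3}$ for $m\ge2$, this is strictly weaker than the claim. The remark that ``any polynomial bound suffices for later use'' does not prove the lemma as stated. Your computation also never uses $L^{(2m+1)r}(\Omega;\mathbb V)$-moments, so it does not explain that hypothesis.

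The loss sits in the estimate itself. The product $\|\mathbf u\|_{\mathbb H^2}\|\mathbf u\|_{\mathbb H^m}\|\mathbf u\|_{\mathbb H^{m+1}}$ has total order $2m+3$. That is half a derivative above the scaling-sharp count $2m+\frac52$, which is $2m+1$ derivatives plus $\frac32$ for any three-factor H\"older split in three dimensions.

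\textbf{The missing idea, which is what the paper does.} Bound each Leibniz term by three Sobolev norms of orders $s_1,s_2,s_3\in[2,m+1]$ with $s_1+s_2+s_3=2m+\frac52$. The paper uses $\mathbb W^{3/4,2}\hookrightarrow\mathbb L^4$, giving orders $m-|\beta|+\frac34$, $|\beta|+1$, $m+\frac34$. Then interpolate every factor between $\mathbb H^2$, which is controlled by $R$, and $\mathbb H^{m+1}$. The $\mathbb H^{m+1}$-exponents add up to $2-\frac{3}{2(m-1)}<2$ and the $\mathbb H^2$-exponents to $1+\frac{3}{2(m-1)}$. Young's inequality therefore gives the deterministic bound
\[
\delta\|\mathbf u\|_{\mathbb H^{m+1}}^2+c(\delta)R^{\frac{4m+2}{3}}.
\]
This needs no Gronwall argument, no induction on $m$ and no $\mathbb V$-moments, and it yields exactly $R^{\frac{2m+1}{3}r}$.

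\textbf{A caveat when you write this out.} Consider the two extreme Leibniz terms: $(\partial^\alpha\mathbf u\cdot\nabla)\mathbf u$, and the term with one derivative on the advecting field. For these the paper's $\mathbb L^4$--$\mathbb L^2$--$\mathbb L^4$ split produces a factor of order $1$ or $\frac74$, below $\mathbb H^2$. The resulting bound $cR\|\mathbf u\|_{\mathbb H^{m+3/4}}^2$ would only give $R^{4m-2}$. Use instead
\[
\|D^m\mathbf u\|_{\mathbb L^3}\|\nabla\mathbf u\|_{\mathbb L^6}\|D^m\mathbf u\|_{\mathbb L^2}\le c\|\mathbf u\|_{\mathbb H^{m+1/2}}\|\mathbf u\|_{\mathbb H^2}\|\mathbf u\|_{\mathbb H^m},
\]
which fits the same count.
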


\begin{proof}The proof of (a), (b) and (c) with $m=2$ is given in \cite[Lemma 3.1]{BrAd}. The proof for (c) with $m\geq 3$ follows the same strategy similar to \cite[Corollary 2.4.13]{KukShi}.
Formerly,\footnote{The proof can be made rigorous by working with a Galerkin-type approximation and show that the following estimates are uniform with respect to the dimension of the ansatz space.} one applies It\^{o}'s formula to the function $f^{\alpha}(\bfu):=\tfrac{1}{2}\|\partial^\alpha\bfu\|_{\mathbb L^2}^2$ where $\alpha\in \mathbb N_0^3$ is a multi-index of length $m$.
One can estimate the contribution of the convective term for $t\leq\mathfrak t_R$ using the embedding $\mathbb W^{3/4,2}(\mt)\hookrightarrow \mathbb L^4(\mt)$ and interpolation as follows:
\begin{align*}
\bigg|\int_{\mt}\partial^\alpha\mathrm{div}({\bf u}\otimes{\bf u})\cdot\partial^\alpha{\bf u}\dx\bigg|
&\leq\,c\sum_{0\leq |\beta|<m}\|D^{m-|\beta|}\bfu\|_{\mathbb L^{4}}\|D^{|\beta|}\bfu\|_{\mathbb H^{1}}\|D^m\bfu\|_{\mathbb L^{4}}\\
&\leq\,c\sum_{0\leq |\beta|<m}\|\bfu\|_{\mathbb H^{3/4+m-|\beta|}}\|\bfu\|_{\mathbb H^{1+|\beta|}}\|\bfu\|_{\mathbb H^{m+3/4}}\\
&\leq\,c\sum_{1\leq |\beta|<m-1}\|\bfu\|_{\mathbb H^{3/4+m-|\beta|}}\|\bfu\|_{\mathbb H^{1+|\beta|}(\mt)}\|\bfu\|_{\mathbb H^{m+3/4}}
\\&+cR\|\bfu\|_{\mathbb H^{m+3/4}}^2\\
&\leq\,c\sum_{1\leq |\beta|<m-1}\|\bfu\|_{\mathbb H^2}^{1-\frac{3/4+m-|\beta|-2}{m-1}}\|\bfu\|_{\mathbb H^{m+1}}^{\frac{3/4+m-|\beta|-2}{m-1}}\|\bfu\|_{\mathbb H^{2}}^{1-\frac{|\beta|-1}{m-1}}\|\bfu\|_{\mathbb H^{m+1}}^{\frac{|\beta|-1}{m-1}}\\
&\qquad\qquad\times\|\bfu\|_{\mathbb H^{2}(\mt)}^{1-\frac{m+3/4-2}{m-1}}\|\bfu\|_{\mathbb H^{m+1}}^{\frac{m+3/4-2}{m-1}}
+cR\|\bfu\|_{\mathbb H^{m+3/4}}^2\\
&\leq\,c\|\bfu\|_{\mathbb H^{2}}^{1+\frac{3}{2(m-1)}}\|\bfu\|_{\mathbb H^{m+1}}^{\frac{2m+3/2-5}{m-1}}
+cR\|\bfu\|_{\mathbb H^{2}}^{1-\frac{m+3/4-2}{m-1}}\|\bfu\|_{\mathbb H^{m+1}}^{\frac{m+3/4-2}{m-1}}\\
&\leq\,cR^{1+\frac{3}{2(m-1)}}\|\bfu\|_{\mathbb H^{m+1}}^{2-\frac{3/2}{m-1}}
+cR\|\bfu\|_{\mathbb H^{m+1}}\\
&\leq\,\delta \|\bfu\|_{\mathbb H^{m+1}}^{2}+c(\delta)R^{\frac{4m+2}{3}},
\end{align*}

where $\delta>0$ is arbitrary.
\end{proof}

\subsection{The random PDE}
 If $\bfu$ is the maximal strong pathwise solution defined on $(\Omega,\mf,(\mf_t)_{t\geq0},\prst)$ with an $(\mf_t)$-cylindrical Wiener process $W$ (recall Definition \ref{def:maxsol} and Theorem \ref{thm:inc2d}), 
 we may consider the transform
\begin{equation}\label{transform} {\bf y}(t \wedge \mathfrak t_R) = {\bf u}(t \wedge \mathfrak t_R ) - \int_0^{t\wedge \mathfrak t_R}\Phi \, {\rm d}W(s)=\bfu(t \wedge \mathfrak t_R)-\Phi W(t\wedge \mathfrak t_R) \quad \forall\, t \geq 0.
\end{equation}
We denote
\begin{align*}
\mathcal L_1^W(\bfy)&:= 
 \bigl( \nabla[\Phi W] \bigr){\bf y}  ,\quad
\mathcal L_2^W(\bfy):= 
\bigl(\nabla {\bf y}\bigr) [\Phi W]  ,\\
\mathcal L_3^W(t) &:=  \bigl(\nabla [\Phi W]\bigr)[\Phi W],\quad \mathcal L^W:=\mathcal L_1^W+\mathcal L_2^W+\mathcal L^W_3 .
 \end{align*}
Then ${\bf y}: \Omega \times {\mathcal O}_T \rightarrow {\mathbb R}^3$ solves the  following random PDE locally up to the stopping time $ \mathfrak t_R$.
\begin{align}\label{eq:SNSy}
\left\{\begin{array}{rc}
\partial_t {\bf y}  =\nu {\Delta} {\bf y}-{\mathcal P} _{\mathrm{HL}}\bigl[( {\bf y}\cdot)\nabla{\bf y}\bigr]  +\nu {\Delta} [\Phi W]
-{\mathcal P} \bigl[\mathcal L^W(\bfy)\bigr]
& \mbox{in $\mathcal O_T$,}\\
\Div {\bf y}=0\qquad\qquad\qquad\qquad\qquad\,\,\,\,& \mbox{in $\mathcal O_T$,}\\
{\bf y}(0)=\bfu_0\,\qquad\qquad\qquad\qquad\qquad&\mbox{ \,in $\mt$.}\end{array}\right.
\end{align}
Note that for $\mathbb P$-a.a. $\omega\in\Omega$
the triplet $\big(\bfy(\omega,\cdot), \mathfrak t_R, t\big)$ is a solution to the Navier--Stokes equations with right-hand side
\begin{align*}
\bff:=\nu {\Delta} [\Phi W]
-{\mathcal P} \bigl[\mathcal L^W(\bfy)\bigr].
\end{align*} 
Standard regularity results apply provided $\Phi$ is sufficiently regular.
In particular, $\partial_t {\bf y}(\cdot \wedge \mathfrak t_R) \in L^2\bigl( 0,T; \mathbb H(\mt) \bigr)$ holds ${\mathbb P}$-a.s.
\begin{lemma}\label{lem:regadditive}
Let $\bfu_0\in L^r(\Omega;\mathbb H(\mt))$ for some $r>2$, $\Phi\in L_2(\mathfrak U;\mathbb V(\mt))$ and $\bfu$ be the local strong pathwise solution to \eqref{eq:SNS}.
\begin{enumerate} 
\item[(a)] Additionally assume that $\bfu_0\in W^{1,2}(\mathbb V)$ $\mathbb P$-a.s.~and $\Phi\in L_2(\mathfrak U;\mathbb H^2(\mt))$. Then $\partial_t {\bf y} (\cdot \wedge \mathfrak t_R) \in L^2\bigl( 0,T; \mathbb H(\mt) \bigr)$  $\mathbb P$-a.s.~and for a.a. $t\in(0,T)$
\begin{align}
\begin{aligned}
\int_{\mt}|\partial_t\bfy (\cdot \wedge \mathfrak t_R)|^2\dx&\leq\,c\,\Big[\|\Phi W\|_{\mathbb H^2}^2+ \|\bfu(\cdot \wedge \mathfrak t_R)\|_{\mathbb H^2}^2\Big]\\
&+c\,\Big[\|\Phi W\|_{\mathbb V}^2+\|\bfu (\cdot \wedge \mathfrak t_R)\|_{\mathbb V}^2\Big].\notag
\end{aligned}
\end{align}
\item[(b)] Assume additionally $\bfu_0\in \mathbb H^{2}(\mt)$ $\mathbb P$-a.s.~and $\Phi\in L_2(\mathfrak U;\mathbb H^3(\mt))$. Then $\partial_t {\bf y} (\cdot \wedge \mathfrak t_R) \in L^2\bigl( 0,T;\mathbb V(\mt) \bigr)$  $\mathbb P$-a.s.~and
\begin{align}
&\int_0^{T \wedge \mathfrak t_R}\int_{\mt}|\partial_t\nabla\bfy (\cdot \wedge \mathfrak t_R)|^2\dxt \notag \\ &\leq\,c\,\int_0^T\Big[\|\Phi W\|_{\mathbb H^3}^2+\|\bfu(\cdot \wedge \mathfrak t_R)\|_{\mathbb H^3}^2+\|\Phi W\|_{\mathbb H^2}^2+\|\bfu(\cdot \wedge \mathfrak t_R)\|_{\mathbb H^2}^2\Big]\dt.\notag
\end{align}
\item[(c)] Assume additionally ${\bf u}_0\in L^r(\Omega,\mathbb H^4(\mt))$ for some $r\geq2$~and $\Phi\in L_2(\mathfrak U;\mathbb H^{4}(\mt))$. Then we have
\begin{align}
\partial_t{\bf y}(\cdot\wedge\mathfrak t_R)\in C^{1/2}([0,T];L^{r/2}(\Omega);\mathbb V).\notag
\end{align}
\end{enumerate}
\end{lemma}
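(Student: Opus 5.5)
The three claims follow from the random PDE \eqref{eq:SNSy}, read pathwise as a deterministic Navier--Stokes system for $\bfy$ with forcing $\bff=\nu\Delta[\Phi W]-\mathcal P[\mathcal L^W(\bfy)]$, stopped at $\mathfrak t_R$. On $[0,\mathfrak t_R]$ we have $\|\bfu\|_{\mathbb H^2}\le R$. The moments of $\|\bfu\|_{\mathbb H^m}$ are controlled by Lemma \ref{lem:reg}, and those of $\|\Phi W\|_{\mathbb H^k}$ by the Burkholder--Davis--Gundy inequality.

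\textbf{Part (a).} We read $\partial_t\bfy$ directly off the equation and estimate each term in $\mathbb L^2$. The linear terms give $\|\Delta\bfy\|_{\mathbb L^2}\le\|\bfu\|_{\mathbb H^2}+\|\Phi W\|_{\mathbb H^2}$ and $\|\Delta[\Phi W]\|_{\mathbb L^2}\le\|\Phi W\|_{\mathbb H^2}$. For the nonlinear part we rewrite
\[
(\bfy\cdot\nabla)\bfy+\mathcal L^W(\bfy)=(\bfu\cdot\nabla)\bfu,
\]
so only $\|(\bfu\cdot\nabla)\bfu\|_{\mathbb L^2}\le\|\bfu\|_{\mathbb L^\infty}\|\nabla\bfu\|_{\mathbb L^2}$ needs to be controlled. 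Using $\mathbb H^2\hookrightarrow\mathbb L^\infty$ and $\|\bfu\|_{\mathbb H^2}\le R$ up to $\mathfrak t_R$, this is bounded by a quantity of the type stated; the dependence on $R$ is absorbed into $c$. The projection $\mathcal P$ is bounded on $\mathbb L^2$.

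\textbf{Part (b).} We differentiate the same identity in space once. This gives
\[
\|\nabla\partial_t\bfy\|_{\mathbb L^2}\lesssim\|\bfy\|_{\mathbb H^3}+\|\Phi W\|_{\mathbb H^3}+\|(\bfu\cdot\nabla)\bfu\|_{\mathbb H^1}.
\]
For the last term we use
\[
\|\nabla\bfu\otimes\nabla\bfu\|_{\mathbb L^2}+\|\bfu\,\nabla^2\bfu\|_{\mathbb L^2}\le c\|\bfu\|_{\mathbb H^2}^2\le cR\|\bfu\|_{\mathbb H^2}.
\]
We then square, integrate over $(0,T\wedge\mathfrak t_R)$, and use Lemma \ref{lem:reg}(c) with $m=2$ to guarantee that $\bfu\in L^2(0,T;\mathbb H^3)$, so that the right-hand side is finite $\mathbb P$-a.s.

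\textbf{Part (c).} This is the main step. Write $\partial_t\bfy=\nu\Delta\bfu-\mathcal P[(\bfu\cdot\nabla)\bfu]$, evaluated at $t\wedge\mathfrak t_R$. The $\mathbb V$-valued time increments of $\partial_t\bfy$ are then controlled by the $\mathbb H^3$-increments of $\bfu$ together with the increments of the quadratic term. For the nonlinear term we use the bilinear estimate
\[
\|(\bfu\cdot\nabla)\bfu(t)-(\bfu\cdot\nabla)\bfu(s)\|_{\mathbb H^1}\le c\bigl(\|\bfu(t)\|_{\mathbb H^3}+\|\bfu(s)\|_{\mathbb H^3}\bigr)\|\bfu(t)-\bfu(s)\|_{\mathbb H^2}.
\]
For the increments of $\bfu$ itself we use the mild (or integral) form
\[
\bfu(t)-\bfu(s)=\int_s^t\bigl(\nu\Delta\bfu-\mathcal P[(\bfu\cdot\nabla)\bfu]\bigr)\,\dd\sigma+\Phi\bigl(W(t)-W(s)\bigr).
\]
We measure this in $\mathbb H^3$. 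The drift integral is bounded by $|t-s|^{1/2}$ times $\bigl(\int_0^T\|\bfu\|_{\mathbb H^5}^2\bigr)^{1/2}$ and $\sup_t\|\bfu\|_{\mathbb H^4}^2$; these are finite in $L^{r}(\Omega)$ by Lemma \ref{lem:reg}(c) with $m=4$. The noise part satisfies $\E\|\Phi(W(t)-W(s))\|_{\mathbb H^4}^r\le c|t-s|^{r/2}$.

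The main obstacle is the product of random quantities, which is where the integrability drops from $L^r$ to $L^{r/2}(\Omega)$. We handle it with Cauchy--Schwarz in $\omega$, combining one $L^r$ factor (the $\mathbb H^3$-norms) with another (the increments). If the $\mathbb H^5$-integrability from $m=4$ turns out to be too tight, the fallback is to work with the mild formulation via the heat semigroup, trading one derivative for $|t-s|^{1/2}$. The conclusion is $\E\|\partial_t\bfy(t\wedge\mathfrak t_R)-\partial_t\bfy(s\wedge\mathfrak t_R)\|_{\mathbb V}^{r/2}\le c|t-s|^{r/4}$, which is the claimed $C^{1/2}([0,T];L^{r/2}(\Omega;\mathbb V))$ regularity.
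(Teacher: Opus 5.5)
Your proposal is correct and follows the paper's route. For (a) and (b) you do exactly what the paper does: read $\partial_t\bfy=\nu\Delta\bfu-\mathcal P_{\mathrm{HL}}[(\bfu\cdot\nabla)\bfu]$ off the random PDE and estimate in $\mathbb H$ resp.\ $\mathbb V$ under the localisation $\|\bfu\|_{\mathbb H^2}\le R$, so that $c=c(R)$. For (c) the paper only refers to the 2D proof of \cite[Lemma 2.3]{BBCP}, and the ingredient it names, $\mathbb H^2\hookrightarrow\mathbb W^{1,4}$, is what drives your bilinear $\mathbb H^1$-bound for the convective increment.

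There is one bookkeeping slip in (c). By Lemma \ref{lem:reg}(c), $\sup_t\|\bfu\|_{\mathbb H^4}^2$ lies only in $L^{r/2}(\Omega)$, not in $L^r(\Omega)$, so the Cauchy--Schwarz pairing you describe with the $L^r$ factor $\|\bfu\|_{\mathbb H^3}$ would only give $L^{r/3}(\Omega)$. The fix is to use $\|\bfu\|_{\mathbb H^2}\le R$ on $[0,\mathfrak t_R]$ throughout. Bound the convective increment by $cR\|\bfu(t)-\bfu(s)\|_{\mathbb H^2}$, and the convective drift by $cR\|\bfu\|_{\mathbb H^3}$ in $\mathbb H^2$ resp.\ $cR\|\bfu\|_{\mathbb H^4}$ in $\mathbb H^3$. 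Then all increments lie in $L^r(\Omega)$ with the factor $|t-s|^{1/2}$, which gives the claimed $L^{r/2}$ bound (in fact $L^r$).
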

\begin{proof}
In view of the transformation \eqref{transform} and estimations in Lemma \ref{lem:reg} in hand, the proof of (a) and (b) follows from \eqref{eq:SNSy} by estimating upto a stopping time, in the right-hand side in $\mathbb H$ and $\mathbb V$, respectively.

 The proof of (c) can be obtained exactly as for the 2D problem in \cite[Lemma 2.3]{BBCP}. It is based on the embedding $\mathbb H^2(\mt)\hookrightarrow\mathbb W^{1,4}(\mt)$ which also holds in three dimensions.
\end{proof}
In view of Lemmas \ref{lem:reg} and \ref{lem:regadditive} we obtain the following.
\begin{corollary}\label{cor:add}
Assume that $\bfu_0\in L^r(\Omega,\mathbb H^2(\mt))\cap L^{5r}(\Omega,\mathbb V(\mt))$ for some $r\geq2$ and that $\Phi\in L_2(\mathfrak U;\mathbb H^{3}(\mt))\cap L_2(\mathfrak U;\mathbb V(\mt))$. Let $\bfu$ be the local strong pathwise solution to \eqref{eq:SNS}.
Then we have for any $R>0$
\begin{align*}
\E\bigg[\Bigl(\sup_{0\leq t\leq T}\int_{\mt}|\partial_t\bfy(t\wedge\mathfrak t_R)|^2\dx\Bigr)^{\frac{r}{2}}\bigg]&\leq c(T,\Phi,\bfu_0)R^{6r},\\\E\bigg[\Bigl(\int_0^{T\wedge\mathfrak t_R}\int_{\mt}|\partial_t\nabla\bfy|^2\dxt\Bigr)^{\frac{r}{2}}\bigg]&\leq\,c(T,\Phi,\bfu_0)R^{6r},\\ \E\bigg[\Bigl(\sup_{0\leq t\leq T}\int_{\mt}|\nabla\bfy(t\wedge\mathfrak t_R)|^2\dx\Bigr)^{\frac{r}{2}}\bigg]&\leq c(T,\Phi,\bfu_0)R^{3r}.
\end{align*}
\end{corollary}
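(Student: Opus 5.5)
The plan is to derive all three bounds from the pathwise estimates of Lemma \ref{lem:regadditive}(a),(b) together with the moment bounds of Lemma \ref{lem:reg}. The terms $\Phi W$ are controlled by Gaussian moments of the Wiener process. Throughout, each inequality is evaluated at $t\wedge\mathfrak t_R$ and then raised to the power $r/2$. After that I take expectations and use H\"older's inequality in $\Omega$ to separate the $\bfu$ contributions from the $\Phi W$ contributions.

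\emph{Third estimate.} Since $\bfy=\bfu-\Phi W$ by \eqref{transform}, I bound
\begin{align*}
\sup_t\|\nabla\bfy(t\wedge\mathfrak t_R)\|_{\mathbb L^2}^2\leq 2\sup_t\|\nabla\bfu(t\wedge\mathfrak t_R)\|_{\mathbb L^2}^2+2\sup_{t\leq T}\|\Phi W(t)\|_{\mathbb V}^2 .
\end{align*}
Lemma \ref{lem:reg}(b) gives the moment bound $cR^{3r}\E[1+\|\bfu_0\|_{\mathbb V}^r]$ for the first term. For the second term, the Burkholder--Davis--Gundy inequality gives $\E[\sup_t\|\Phi W\|_{\mathbb V}^r]\leq c\|\Phi\|_{L_2(\mathfrak U;\mathbb V)}^r$. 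The assumption $\bfu_0\in L^{5r}(\Omega;\mathbb V)$ is more than enough here.

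\emph{First estimate.} The pointwise bound in Lemma \ref{lem:regadditive}(a) reduces the claim to $\E[\sup_t\|\bfu(t\wedge\mathfrak t_R)\|_{\mathbb H^2}^r]$ plus Wiener moments in $\mathbb H^2$. I would invoke Lemma \ref{lem:reg}(c) with $m=2$, which gives the factor $R^{5r/3}$, or simply use the definition of $\mathfrak t_R$, which gives $\|\bfu\|_{\mathbb H^2}\leq R$. The stated $R^{6r}$ is then a generous majorant, valid for $R\geq1$.

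The main obstacle is hidden in Lemma \ref{lem:regadditive}(a). Its right-hand side came from estimating the nonlinear terms $\mathcal P[(\bfy\cdot\nabla)\bfy]$ and $\mathcal P[\mathcal L^W(\bfy)]$ in $\mathbb L^2$, and these are quadratic. Honestly tracking this gives
\begin{align*}
\|(\bfy\cdot\nabla)\bfy\|_{\mathbb L^2}\leq \|\bfy\|_{\mathbb L^\infty}\|\nabla\bfy\|_{\mathbb L^2}\leq c\|\bfy\|_{\mathbb H^2}\|\bfy\|_{\mathbb V}.
\end{align*}
So the squared bound involves $\|\bfu\|_{\mathbb H^2}^2\|\bfu\|_{\mathbb V}^2$ and similar mixed Wiener terms. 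I would bound one factor by $R$ (or by $R+\sup\|\Phi W\|_{\mathbb H^2}$) and the other by its moment. H\"older's inequality with the high moments $L^{5r}(\Omega;\mathbb V)$ of $\bfu_0$ then absorbs the Wiener factors, and collecting the powers of $R$ gives at most $R^{6r}$.

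\emph{Second estimate.} This follows the same route from Lemma \ref{lem:regadditive}(b). The key term is $\int_0^{T\wedge\mathfrak t_R}\|\bfu\|_{\mathbb H^3}^2\dt$, which Lemma \ref{lem:reg}(c) with $m=2$ controls in $L^{r/2}(\Omega)$ by $cR^{5r/3}$. The quadratic products again require one factor to be placed in $\sup\mathbb H^2$, bounded by $R$, and the other in $L^2_t\mathbb H^3$. The Wiener terms are handled by Gaussian moments of $\sup_t\|\Phi W\|_{\mathbb H^3}$, which are finite because $\Phi\in L_2(\mathfrak U;\mathbb H^3)$. Collecting powers of $R$ again stays below $R^{6r}$.
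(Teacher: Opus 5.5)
Your proposal is correct and follows the same route as the paper, which obtains the corollary by combining the pathwise bounds of Lemma~\ref{lem:regadditive}(a),(b) with the moment estimates of Lemma~\ref{lem:reg}; the hypothesis $\mathbf u_0\in L^{5r}(\Omega;\mathbb V)$ is exactly what Lemma~\ref{lem:reg}(c) with $m=2$ requires, as you use it. You are right that the quadratic terms must be tracked, and the bookkeeping becomes simpler via the identity $\partial_t\mathbf y=\nu\Delta\mathbf u-\mathcal P_{\mathrm{HL}}[(\mathbf u\cdot\nabla)\mathbf u]$: up to $\mathfrak t_R$, all contributions are then controlled by powers of $\sup_t\|\mathbf u(t\wedge\mathfrak t_R)\|_{\mathbb H^2}$ and by $\int_0^{T\wedge\mathfrak t_R}\|\mathbf u\|_{\mathbb H^3}^2\,\mathrm dt$, with no separate Wiener cross terms to handle.
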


\section{The semi-implicit Euler scheme}\label{sec:IE}
Let us consider the temporal discretization of \eqref{eq:SNS} on an equidistance partition of $[0,T]$ with mesh size $\tau = T/M$ and set $t_m = m\tau$, for $ 1 \le m \le M$. Let $u_0$ be an $\mathfrak{F}_0$- measurable random variable with values in $\mathbb H(\mt)$. Our goal is to iteratively construct a  sequence of $\mathfrak{F}_{t_m}$-measurable random variables $\bfu_m $ $\in \mathbb V(\mt)$ such that it holds $\mathbb{P}$-a.s. 
\begin{align}\label{eq:temporal-discrete}
    & \int_{\mt} \uu_m \cdot \bfvarrho\, \dif x - \tau\int_{\mt} \uu_m \otimes \uu_{m-1}: \nabla \bfvarrho \, \dif x \\ & \notag \vspace{2cm}= -\nu\tau\int_{\mt}\nabla \uu : \nabla \bfvarrho\,\dif x + \int_{\mt} \uu_{m-1}\cdot \bfvarrho \, \dif x + \int_{\mt} \varPhi \Delta_m W \cdot \bfvarrho \,\dif x
\end{align}
for every $\bfvarrho \in \mathbb V(\mt)$,
where $\Delta_m W = W(t_m) - W(t_{m-1})$. For given $\uu_{m-1}$ and $\Delta_m W$, the existence of unique $\uu_m$ to \eqref{eq:temporal-discrete} is a routine to check thanks to the linearity in $\uu_m$.
Thus, \eqref{eq:temporal-discrete} can be re-written as
\begin{align}
    & \uu_m + \tau\mathcal{P}_{\mathrm{HL}}(\uu_m\cdot\nabla)\uu_{m-1} = \nu\tau\Delta\uu_m + \uu_{m-1} + \varPhi\Delta_m W \qquad (1 \le m \le M)\,,\notag
\end{align}
 which holds $\mathbb{P}$-a.s. in $\mathbb H(\mt)$. Here, $\mathcal{P}_{\mathrm{HL}} : L^2(\mt) \rightarrow \mathbb H(\mt)$ denotes the Helmholtz-Leray projection.
\subsection{Estimates for time-discrete solution}
Now, we recall the following result concerning the estimation of $\uu_m$, whose proof follows from \cite[Lemma 3.1]{BCP}. Although the proof in \cite{BCP} is given for a bounded domain with zero boundary data, the same arguments apply to the space-periodic case. Moreover, we employ a semi-implicit scheme; this does not affect the proof, since the convective term still cancels when tested against $\uu_m$. Therefore, we state the lemma without proof.
\begin{lemma}\label{lem:regularity-discrte-1}Let $u_0 \in L^{2^p}(\Omega,\mathbb H(\mt))$ and $\varPhi \in L_2(\mathfrak{U}; \mathbb H(\mt))$ for $p \in \N$. Then the iterates $(\uu_m)_{m=1}^M$ given by \eqref{eq:temporal-discrete} satisfy the following, uniformly in M;
\begin{align}
    &\E\bigg[\underset{1 \le m \le M}{\max}\|\uu_m\|_{\mathbb L^2}^{2^p} + \tau \sum_{m=1}^M \|\uu_m\|_{\mathbb L^2}^{2^p-2}\|\nabla\uu _m\|_{\mathbb L^2}^2\bigg] \le C,\notag
\end{align}
where $C= C(p,T, \varPhi, \uu_0) > 0.$
\end{lemma}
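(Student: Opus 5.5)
We prove Lemma \ref{lem:regularity-discrte-1} by induction on $p$, starting from a discrete energy identity. Test \eqref{eq:temporal-discrete} with $\bfvarrho=\uu_m$. Since $\uu_{m-1}$ is divergence free and we are on the torus, the semi-implicit convective term vanishes: $\int_{\mt}\uu_m\otimes\uu_{m-1}:\nabla\uu_m\dx=\tfrac12\int_{\mt}\uu_{m-1}\cdot\nabla|\uu_m|^2\dx=0$. Applying $2(a-b)\cdot a=|a|^2-|b|^2+|a-b|^2$ to $a=\uu_m$, $b=\uu_{m-1}$ then yields
\begin{align*}
\tfrac12\|\uu_m\|_{\mathbb L^2}^2-\tfrac12\|\uu_{m-1}\|_{\mathbb L^2}^2+\tfrac12\|\uu_m-\uu_{m-1}\|_{\mathbb L^2}^2+\nu\tau\|\nabla\uu_m\|_{\mathbb L^2}^2=(\varPhi\Delta_mW,\uu_m).
\end{align*}
We split the right-hand side as $(\varPhi\Delta_mW,\uu_{m-1})+(\varPhi\Delta_mW,\uu_m-\uu_{m-1})$. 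Young's inequality absorbs the second part into the numerical dissipation, leaving $\tfrac12\|\varPhi\Delta_mW\|_{\mathbb L^2}^2$, and $\E\|\varPhi\Delta_mW\|^2=\tau\|\varPhi\|_{L_2(\mathfrak U;\mathbb L^2)}^2$. The first part is a discrete martingale increment, because $\uu_{m-1}$ is $\mathfrak F_{t_{m-1}}$-measurable and $\Delta_mW$ is independent of $\mathfrak F_{t_{m-1}}$. Summing over $m$, taking the maximum, and using the discrete Burkholder--Davis--Gundy inequality on $\sum_n(\varPhi\Delta_nW,\uu_{n-1})$ gives the bound $c\,\E[(\sum_n\tau\|\uu_{n-1}\|^2)^{1/2}]\le \delta\E\max_m\|\uu_m\|^2+c_\delta$. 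This proves the case $p=1$.

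For general $p$ we proceed inductively. Multiply the identity by $\|\uu_m\|^{2^p-2}$, or, more cleanly, take the $2^{p-1}$-th power of the one-step relation. Write $a_m=\|\uu_m\|^2$ and use $a_m^{k}-a_{m-1}^{k}\le k a_m^{k-1}(a_m-a_{m-1})$ with $k=2^{p-1}$. This produces $\tfrac{k}{2}\,a_m^{k-1}\bigl(\|\uu_m-\uu_{m-1}\|^2+2\nu\tau\|\nabla\uu_m\|^2\bigr)$ on the left-hand side and $k\,a_m^{k-1}(\varPhi\Delta_mW,\uu_m)$ on the right-hand side. The awkward point is that $a_m^{k-1}$ is not $\mathfrak F_{t_{m-1}}$-measurable. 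We therefore replace $a_m^{k-1}$ by $a_{m-1}^{k-1}$ plus a remainder. The leading term is again a martingale increment, which we control by BDD as before together with the induction hypothesis. The remainder $(a_m^{k-1}-a_{m-1}^{k-1})(\varPhi\Delta_mW,\uu_m)$ is estimated with the mean value theorem and Young's inequality: it contributes terms of the form $a^{k-1}\|\uu_m-\uu_{m-1}\|^2$, absorbed into the dissipation, plus higher moments $\|\varPhi\Delta_mW\|^{2k}$, which are $O(\tau^{k})$ in expectation, plus terms $\tau\,\E\max a^{k}$. These last terms are closed by a discrete Gronwall argument.

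The main obstacle is exactly this measurability and absorption bookkeeping for the weighted increments. It is carried out in \cite[Lemma 3.1]{BCP}, and the argument transfers verbatim to the periodic setting and to the semi-implicit nonlinearity. The only point where the structure of the nonlinearity enters is the cancellation in the first step, and that cancellation holds for the semi-implicit convective term as shown above.
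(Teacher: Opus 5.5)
Your route is the paper's route. The paper gives no argument of its own: it cites \cite[Lemma 3.1]{BCP} and observes that the semi-implicit convective term still vanishes when tested with $\uu_m$, which is exactly the cancellation you verify. Your $p=1$ argument is complete: the energy identity, absorbing $(\varPhi\Delta_mW,\uu_m-\uu_{m-1})$ into the numerical dissipation, and BDG for the adapted part.

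One caution applies if you intend the higher-moment sketch to be more than a pointer to \cite{BCP}: as written it does not close. The inequality $a_m^k-a_{m-1}^k\le k a_m^{k-1}(a_m-a_{m-1})$ discards the convexity remainder, so the only dissipation left carries the weight $a_m^{k-1}$. The terms you propose to absorb carry different weights. One is $a_{m-1}^{k-1}$, coming from Young's inequality on $a_{m-1}^{k-1}(\varPhi\Delta_mW,\uu_m-\uu_{m-1})$; you need that step because your ``leading term'' $a_{m-1}^{k-1}(\varPhi\Delta_mW,\uu_m)$ is not yet a martingale increment. The other is $\theta^{k-2}(\|\uu_m\|+\|\uu_{m-1}\|)^2$, coming from the mean-value remainder. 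The implicit step can damp strongly: a high-frequency $\uu_{m-1}$ is reduced by roughly the factor $(\nu\tau|k|^2)^{-1}$. Hence $a_{m-1}\gg a_m$ can occur, and no pointwise absorption into $a_m^{k-1}\|\uu_m-\uu_{m-1}\|^2$ is available.

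The standard fix, which is also why the exponents are powers of two, is to square the already adapted one-step inequality from your $p=1$ step:
\[
a_m-a_{m-1}+\tfrac12 d_m\le Z_m:=2(\varPhi\Delta_mW,\uu_{m-1})+2\|\varPhi\Delta_mW\|_{\mathbb L^2}^2,\qquad d_m:=\|\uu_m-\uu_{m-1}\|_{\mathbb L^2}^2+2\nu\tau\|\nabla\uu_m\|_{\mathbb L^2}^2 .
\]
Multiply it by $2a_m$ and use $2a_m(a_m-a_{m-1})=a_m^2-a_{m-1}^2+(a_m-a_{m-1})^2$, which keeps the remainder you dropped. Then $2a_mZ_m=2a_{m-1}Z_m+2(a_m-a_{m-1})Z_m$, and the last term is bounded by $\tfrac12(a_m-a_{m-1})^2+2Z_m^2$, which is absorbed. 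The result is
\[
a_m^2-a_{m-1}^2+\tfrac12(a_m-a_{m-1})^2+a_m d_m\le 2a_{m-1}Z_m+2Z_m^2 .
\]
Here $a_{m-1}(\varPhi\Delta_mW,\uu_{m-1})$ is a genuine martingale increment. By independence of $\Delta_mW$ from $\mathfrak F_{t_{m-1}}$, $\E[a_{m-1}\|\varPhi\Delta_mW\|^2]$ and $\E[Z_m^2]$ are $O(\tau)(1+\E a_{m-1})$. Iterating this squaring $p-1$ times yields exactly the weight $\|\uu_m\|_{\mathbb L^2}^{2^p-2}$ in front of $d_m$. BDG and a discrete Gronwall argument then close the estimate as in your $p=1$ case.
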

In order to obtain ``local-in-time'' estimates we consider a truncated variant similar to \cite{Mi}. For $R>1$ and $\zeta\in C_c^\infty([0,2))$ with $0\leq \zeta\leq 1$ and $\zeta=1$ in $[0,1]$, we set $\zeta_R:=\zeta(R^{-1}\cdot)$ . 

Let $\uu_m^R\in \mathbb V$ be such that
 for every $\varrho\in \mathbb V(\mt)$, it holds true $\p$-a.s. that
\begin{align}\label{tdiscrR}
\begin{aligned}
\int_{\mt}\uu_{m}^R\cdot\bfvarrho \dx &-\tau\int_{\mt}\zeta_R(\|\uu_{m-1}^R\|_{\mathbb H^2})\uu^R_{m}\otimes\uu_{m-1}^R :\nabla\bfvarrho\dx\\
&=-\nu\tau\int_{\mt}\nabla\uu_{m}^R:\nabla\bfvarrho\dx+\int_{\mt}\uu_{m-1}^R\cdot\bfvarrho \dx\\&+\int_{\mt}\zeta_R(\|\uu_{m-1}^R\|_{\mathbb H^{2}}\Phi\,\Delta_mW\cdot \bfvarrho\dx
\end{aligned}
\end{align}
with initial data $\uu_0^R=\uu_0$. Using arguments analogous to those in Lemma \ref{lem:regularity-discrte-1},
and observing that the convective term in \eqref{tdiscrR} disappears when
tested against $\uu_m^R$, we deduce that for any $p \in \mathbb{N}$
\begin{align}
\E\bigg[\max_{1\leq m\leq M}\|\uu_m^R\|^{2^p}_{\mathbb L^{2}}+\tau\sum_{m=1}^M\|\uu_m^R\|_{\mathbb L^{2}}^{2^p-2}\|\nabla\uu^R_m\|^2_{\mathbb L^2}\bigg]&\leq\,c\,,\notag
\end{align}
where $c=c(p,T,\Phi,\uu_0)>0$ is independent of $R$. We begin by recalling $R$--dependent estimates for $(\uu_m^R)_{m=1}^M$ from \cite{BrAd}.
Subsequently, these bounds will be carried over to $(\uu_m)_{m=1}^M$
through the introduction of a suitable discrete stopping time.
\begin{lemma}\label{lemma:3.1} 
Let us assume that $\uu_0\in L^{2^p}(\Omega,\mathbb V(\mt))$ for some $p\in\N$ and $\varPhi \in  L_2(\mathfrak{U}; \mathbb V(\mt))$. Then the iterates $(\uu_m^R)_{m=1}^M$ given by \eqref{tdiscrR}
satisfy the following estimates uniformly in $M$:
\begin{align}
\label{lem:3.1b}
\begin{aligned}
\E\bigg[\max_{1\leq m\leq M}\|\uu_m^R\|^{2^p}_{\mathbb V}&+\sum_{m=1}^{M}\tau\|\uu_m^R\|_{\mathbb V}^{2^p-2}\|\nabla^2\uu_m^R\|^2_{\mathbb L^2}\bigg]\\&+\E\bigg[\sum_{m=1}^{M}\|\uu_m^R\|_{\mathbb V}^{2^p-2}\|\nabla(\uu_m^R-\uu_{m-1}^R)\|^2_{\mathbb L^2}\bigg]\leq\,ce^{cR^{2}},
\end{aligned}
\end{align}
where $c=c(p,T,\Phi,\uu_0)>0$ is independent of $R$.
\end{lemma}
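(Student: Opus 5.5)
The plan is to test the truncated scheme \eqref{tdiscrR} with $\bfvarrho=-\Delta\uu_m^R$, which is admissible in the periodic setting since $\uu_m^R\in\mathbb H^2$ by elliptic regularity of the linear problem. We work first with $p=1$ and then bootstrap to general $2^p$. The identity $2(a-b)\cdot a=|a|^2-|b|^2+|a-b|^2$ gives
\begin{align*}
\tfrac12\|\nabla\uu_m^R\|_{\mathbb L^2}^2-\tfrac12\|\nabla\uu_{m-1}^R\|_{\mathbb L^2}^2+\tfrac12\|\nabla(\uu_m^R-\uu_{m-1}^R)\|_{\mathbb L^2}^2+\nu\tau\|\Delta\uu_m^R\|_{\mathbb L^2}^2
= I_1+I_2,
\end{align*}
where $I_1=\tau\zeta_R(\|\uu_{m-1}^R\|_{\mathbb H^2})\,\mathcal C(\uu_m^R,\uu_{m-1}^R,\Delta\uu_m^R)$ (up to sign) is the convective contribution and $I_2=\zeta_R(\cdot)(\nabla\Phi\Delta_mW,\nabla\uu_m^R)$ is the noise term. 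On $\mathbb T^3$ we also have $\|\nabla^2\uu\|_{\mathbb L^2}=\|\Delta\uu\|_{\mathbb L^2}$.

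\textbf{Convective term.} The cut-off is active only where $\|\uu_{m-1}^R\|_{\mathbb H^2}\le 2R$. There $\|\nabla\uu_{m-1}^R\|_{\mathbb L^4}\le cR$ by $\mathbb H^2\hookrightarrow\mathbb W^{1,4}$, and $\|\uu_m^R\|_{\mathbb L^4}\le c\|\uu_m^R\|_{\mathbb V}$. Hence
\[
|I_1|\le c\tau R\|\uu_m^R\|_{\mathbb V}\|\Delta\uu_m^R\|_{\mathbb L^2}\le \tfrac{\nu}{2}\tau\|\Delta\uu_m^R\|_{\mathbb L^2}^2+c\tau R^2\|\uu_m^R\|_{\mathbb V}^2 .
\]
This is the source of the factor $R^2$ in the exponent.

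\textbf{Noise term.} We split
\[
(\nabla\Phi\Delta_mW,\nabla\uu_m^R)=(\nabla\Phi\Delta_mW,\nabla\uu_{m-1}^R)+(\nabla\Phi\Delta_mW,\nabla(\uu_m^R-\uu_{m-1}^R)).
\]
The first part is a martingale increment, because $\zeta_R(\|\uu_{m-1}^R\|_{\mathbb H^2})\nabla\uu_{m-1}^R$ is $\mathfrak F_{t_{m-1}}$-measurable. The second part is bounded by $\tfrac14\|\nabla(\uu_m^R-\uu_{m-1}^R)\|^2_{\mathbb L^2}+c\|\nabla\Phi\Delta_mW\|_{\mathbb L^2}^2$, with expectation $c\tau\|\Phi\|_{L_2(\mathfrak U;\mathbb V)}^2$. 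Summing over $m\le n$, taking the maximum in $n$ and then expectations, the Burkholder--Davis--Gundy inequality controls the martingale by
\[
\E\Big(\sum_m\tau\|\Phi\|^2_{L_2(\mathfrak U;\mathbb V)}\|\nabla\uu_{m-1}^R\|^2_{\mathbb L^2}\Big)^{1/2}\le\delta\,\E\max_m\|\nabla\uu_m^R\|^2_{\mathbb L^2}+c_\delta ,
\]
and $\delta$ is absorbed into the left-hand side.

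\textbf{Gronwall step.} The term $c\tau R^2\|\uu_m^R\|_{\mathbb V}^2$ involves the new iterate. For $\tau R^2$ small we move it to the left; otherwise we handle it by a discrete Gronwall inequality applied pathwise before taking expectations. The latter is the main obstacle. The martingale term prevents a direct pathwise Gronwall, so the intended route is a stopped/pathwise discrete Gronwall argument: set
\[
Z_n:=\|\nabla\uu_n^R\|^2_{\mathbb L^2}+\sum_{m\le n}\big[\nu\tau\|\Delta\uu_m^R\|^2_{\mathbb L^2}+\|\nabla(\uu_m^R-\uu_{m-1}^R)\|^2_{\mathbb L^2}\big].
\]
The recursion reads $Z_n\le Z_0+cR^2\tau\sum_{m\le n}Z_m+M_n+\text{noise}$. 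The discrete Gronwall lemma then gives
\[
\max_n Z_n\le e^{cTR^2}\Big(Z_0+\max_n|M_n|+\sum_m\|\nabla\Phi\Delta_mW\|^2_{\mathbb L^2}\Big)
\]
for $\tau$ small, with the implicit factor $(1-cR^2\tau)^{-n}\le e^{c'R^2T}$. Taking expectations and applying BDG with absorption produces the bound $ce^{cR^2}$. If one prefers not to require $\tau R^2$ small, the implicit term can instead be bounded using the $\mathbb L^2$-estimate of the truncated scheme together with interpolation $\|\uu\|_{\mathbb V}^2\le\|\uu\|_{\mathbb L^2}\|\Delta\uu\|_{\mathbb L^2}$, which makes it absorbable at the price of $R^4$ times an $\mathbb L^2$ moment; this yields the same exponential form.

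\textbf{Higher moments.} For general $p$ we multiply the one-step identity by $\|\nabla\uu_m^R\|_{\mathbb L^2}^{2^p-2}$, or equivalently proceed inductively by squaring the identity as in \cite{BCP}. We use $(a-b)a^{k}\ge\frac{1}{k+1}(a^{k+1}-b^{k+1})$-type inequalities and repeat the BDG and Gronwall steps. This gives the weighted sums $\sum\tau\|\uu_m^R\|_{\mathbb V}^{2^p-2}\|\nabla^2\uu_m^R\|^2_{\mathbb L^2}$ and $\sum\|\uu_m^R\|_{\mathbb V}^{2^p-2}\|\nabla(\uu_m^R-\uu_{m-1}^R)\|^2_{\mathbb L^2}$ stated in \eqref{lem:3.1b}, with the constant $ce^{cR^2}$ independent of $M$.
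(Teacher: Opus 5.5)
The paper does not prove this lemma itself; it only recalls the estimate from \cite{BrAd}. Your argument is the standard proof of such a bound, and it is correct. It tests \eqref{tdiscrR} with $-\Delta\uu_m^R$ and bounds the convective term by $cR\tau\|\uu_m^R\|_{\mathbb V}\|\Delta\uu_m^R\|_{\mathbb L^2}$ on the support of the cut-off. It then splits the noise into a martingale increment plus a part absorbed by $\|\nabla(\uu_m^R-\uu_{m-1}^R)\|_{\mathbb L^2}^2$, applies BDG, and closes with a discrete Gronwall argument. The factor $e^{cR^2}$ in the statement is exactly what that Gronwall step produces.

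A small correction on the convective term: since it is meant to vanish when tested with $\uu_m^R$, it is $\mathcal C(\uu_{m-1}^R,\uu_m^R,\cdot)$ rather than your $\mathcal C(\uu_m^R,\uu_{m-1}^R,\cdot)$. This changes nothing, because $\|\uu_{m-1}^R\|_{\mathbb L^\infty}\le cR$ on the support of the cut-off gives the same bound.

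Three further remarks.

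First, the pathwise Gronwall step with the implicit term $c\tau R^2\|\uu_m^R\|_{\mathbb V}^2$ needs $cR^2\tau<1$, just as direct absorption does. It is therefore not an alternative for large $\tau$. The uniformity in $M$ claimed in the lemma comes from your interpolation variant.

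Second, that variant gives more than you say. Using $\|\uu\|_{\mathbb V}^2\le\|\uu\|_{\mathbb L^2}\|\Delta\uu\|_{\mathbb L^2}$ and Young's inequality you get $|I_1|\le\frac{\nu}{2}\tau\|\Delta\uu_m^R\|_{\mathbb L^2}^2+c\tau R^4\|\uu_m^R\|_{\mathbb L^2}^2$. So for $p=1$ no Gronwall lemma is needed at all, and the $R$-independent $\mathbb L^2$ bound of the truncated scheme gives $c(1+R^4)$. This polynomial dependence matches Lemma~\ref{lem:reg}(b) for the continuous solution and of course implies the stated exponential bound.

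Third, for $p>1$ follow the squaring/induction route of \cite{BCP} rather than the bare convexity inequality $(a-b)a^k\ge\frac{1}{k+1}(a^{k+1}-b^{k+1})$. The squaring route keeps $\bigl(\|\nabla\uu_m^R\|_{\mathbb L^2}^2-\|\nabla\uu_{m-1}^R\|_{\mathbb L^2}^2\bigr)^2$ on the left-hand side. That term is what absorbs the cross terms between the weight and the noise; the convexity inequality discards it.
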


\begin{lemma}\label{lemma:3.1B} 
Assume that $\uu_0\in L^{2^p}(\Omega,\mathbb V\cap\mathbb H^{2}(\mt))$ for some $q\in\N$ and suppose that  $\Phi$ $\in L_2(\mathfrak{U};  \mathbb V \cap \mathbb H^{2}(\mt))$ hold. Then the iterates $(\uu_m^R)_{m=1}^M$ given by \eqref{tdiscrR}
satisfy the following estimates uniformly in $M$:
\begin{align}
\label{lem:3.1B}
\begin{aligned}
\E\bigg[\max_{1\leq m\leq M}\|\uu_m^R\|^{2^p}_{\mathbb H^2}&+\sum_{m=1}^{M}\tau\|\uu_m^R\|_{\mathbb H^2(\mt)}^{2^p-2}\|\nabla^3\uu_m^R\|^2_{\mathbb L^2}\bigg]\\
&+\sum_{m=1}^{M}\|\uu_m^R\|_{\mathbb H^2}^{2^p-2}\|\nabla^2(\uu_m^R-\uu_{m-1}^R)\|^2_{\mathbb L^2}\bigg]
\leq\,ce^{cR^{2}},
\end{aligned}
\end{align}
where $c=c(p,T,\Phi,\uu_0)>0$ is independent of $R$.
\end{lemma}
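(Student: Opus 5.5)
The plan is to mirror the proof of Lemma \ref{lemma:3.1}, one derivative higher. I would test \eqref{tdiscrR} with $\bfvarrho=\Delta^2\uu_m^R$, which is admissible at the Galerkin level or after a smoothing argument in the periodic setting. Equivalently, I would apply $\Delta$ to the strong form and test with $\Delta\uu_m^R$. The identity $2(a-b,a)=|a|^2-|b|^2+|a-b|^2$ then gives
\begin{align*}
\tfrac12\|\Delta\uu_m^R\|_{\mathbb L^2}^2-\tfrac12\|\Delta\uu_{m-1}^R\|_{\mathbb L^2}^2+\tfrac12\|\Delta(\uu_m^R-\uu_{m-1}^R)\|_{\mathbb L^2}^2+\nu\tau\|\nabla\Delta\uu_m^R\|_{\mathbb L^2}^2
= -\tau\zeta_R(\|\uu_{m-1}^R\|_{\mathbb H^2})\,\mathcal C(\uu_{m-1}^R,\uu_m^R,\Delta^2\uu_m^R)+\zeta_R(\|\uu_{m-1}^R\|_{\mathbb H^2})(\Delta\Phi\Delta_mW,\Delta\uu_m^R).
\end{align*}
On the torus, $\|\Delta\cdot\|_{\mathbb L^2}$ and $\|\nabla\Delta\cdot\|_{\mathbb L^2}$ are equivalent to $\|\nabla^2\cdot\|$ and $\|\nabla^3\cdot\|$ on mean-free functions. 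So this energy identity controls exactly the three quantities appearing in \eqref{lem:3.1B} for $p=1$.

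\textbf{Convective term.} I would integrate by parts once to get $\mathcal C(\uu_{m-1}^R,\uu_m^R,\Delta^2\uu_m^R)=-\int\nabla[(\uu_{m-1}^R\cdot\nabla)\uu_m^R]:\nabla\Delta\uu_m^R\dx$. The gradient of the product splits into $(\nabla\uu_{m-1}^R)\nabla\uu_m^R$ and $(\uu_{m-1}^R\cdot\nabla)\nabla\uu_m^R$. The cutoff $\zeta_R$ is nonzero only when $\|\uu_{m-1}^R\|_{\mathbb H^2}\le 2R$, and then $\|\uu_{m-1}^R\|_{\mathbb L^\infty}+\|\nabla\uu_{m-1}^R\|_{\mathbb L^4}\le cR$. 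Using $\mathbb H^1\hookrightarrow\mathbb L^4$ and Young's inequality, I bound both contributions by
\[
cR\,\tau\|\uu_m^R\|_{\mathbb H^2}\|\nabla^3\uu_m^R\|_{\mathbb L^2}\le \tfrac{\nu}{2}\tau\|\nabla^3\uu_m^R\|^2_{\mathbb L^2}+cR^2\tau\|\uu_m^R\|_{\mathbb H^2}^2.
\]
The coefficient $R^2$ is what ultimately produces $e^{cR^2}$ through the discrete Gronwall lemma.

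\textbf{Noise term.} I would write $(\Delta\Phi\Delta_mW,\Delta\uu_m^R)=(\Delta\Phi\Delta_mW,\Delta\uu_{m-1}^R)+(\Delta\Phi\Delta_mW,\Delta(\uu_m^R-\uu_{m-1}^R))$. The first piece is a martingale increment, since $\zeta_R(\|\uu^R_{m-1}\|_{\mathbb H^2})\Delta\uu_{m-1}^R$ is $\mathfrak F_{t_{m-1}}$-measurable; it has zero expectation. The second piece is absorbed into the numerical dissipation $\tfrac12\|\Delta(\uu_m^R-\uu_{m-1}^R)\|^2$, leaving $\|\Delta\Phi\Delta_mW\|^2$, whose expectation is $\tau\|\Phi\|^2_{L_2(\mathfrak U;\mathbb H^2)}$. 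This is where $\Phi\in L_2(\mathfrak U;\mathbb H^2)$ enters.

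\textbf{Summation and the maximum.} Summing over $m$ and using $\uu_0\in\mathbb H^2$ handles $p=1$ with the maximum outside the expectation. To put the maximum inside, I would apply the discrete Burkholder--Davis--Gundy inequality to $\max_n\sum_{m\le n}(\Delta\Phi\Delta_mW,\Delta\uu_{m-1}^R)$. Its quadratic variation is bounded by $\max\|\uu^R\|_{\mathbb H^2}\,(\tau\sum\|\uu_{m-1}^R\|^2_{\mathbb H^2})^{1/2}$ up to constants, and after Young's inequality it is absorbed.

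For general $p$, I would proceed inductively as in \cite{BCP,BrAd}: multiply the one-step inequality by $\|\Delta\uu_m^R\|^{2^p-2}$, or equivalently iterate the squaring trick $(a^2-b^2)a^2\ge\frac12(a^4-b^4)$ plus the square of the difference. This produces the weighted terms $\|\uu^R_m\|^{2^p-2}_{\mathbb H^2}\|\nabla^3\uu_m^R\|^2$ and $\|\uu^R_m\|^{2^p-2}_{\mathbb H^2}\|\nabla^2(\uu^R_m-\uu^R_{m-1})\|^2$.

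\textbf{Main obstacle.} The hardest step is Gronwall in the presence of the random $R^2$-weighted term together with the maximum inside the expectation. The difficulty is that $cR^2\tau\|\uu_m^R\|^2_{\mathbb H^2}$ involves the \emph{new} iterate, so the discrete Gronwall inequality must be applied pathwise before taking expectations. To handle this, I would first take expectations of the summed inequality without the maximum and apply the deterministic discrete Gronwall lemma to $\E\|\uu^R_n\|^{2^p}_{\mathbb H^2}$. This requires $cR^2\tau<1/2$; otherwise I would move the term to $\uu_{m-1}^R$ using the already controlled difference term, at the cost of a constant. The result is the bound $ce^{cR^2}$. Only then would I return to the BDG step for the maximum, feeding in these bounds.
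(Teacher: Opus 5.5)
Your overall route is the natural one and fits the form of the bound. Note that the paper gives no proof of this lemma and simply imports it from \cite{BrAd}. The following steps are all fine:
\begin{itemize}
\item testing with $\Delta^2\uu_m^R$;
\item using the cutoff to get $\|\uu_{m-1}^R\|_{\mathbb L^\infty}+\|\nabla\uu_{m-1}^R\|_{\mathbb L^4}\le cR$ on its support;
\item splitting off the martingale increment $\zeta_R(\Delta\Phi\Delta_mW,\Delta\uu_{m-1}^R)$;
\item BDG for the maximum, and the squaring trick for higher moments.
\end{itemize}

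\textbf{The gap.} The step that does not close is the one you single out. Your convective bound leaves $cR^2\tau\|\uu_m^R\|_{\mathbb H^2}^2$ at the \emph{new} time level. So the discrete Gronwall lemma can only be used in its implicit form, which requires $cR^2\tau<\tfrac12$. The lemma claims the bound uniformly in $M$, with no such restriction.

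Your fallback does not remove the restriction. After $\|\uu_m^R\|_{\mathbb H^2}^2\le 2\|\uu_{m-1}^R\|_{\mathbb H^2}^2+2\|\uu_m^R-\uu_{m-1}^R\|_{\mathbb H^2}^2$, you must absorb $2cR^2\tau\|\nabla^2(\uu_m^R-\uu_{m-1}^R)\|_{\mathbb L^2}^2$ into the numerical dissipation $\tfrac14\|\Delta(\uu_m^R-\uu_{m-1}^R)\|_{\mathbb L^2}^2$. That again needs $R^2\tau$ small. The difference is not controlled anywhere else at this level: Lemma \ref{lemma:3.1} controls it one derivative lower, and invoking the present lemma would be circular.

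In the paper's application this is harmless, since $R=R(\tau)$ is chosen with $\tau e^{cR^2}\to0$, so $R^2\tau\to0$. As a proof of the lemma as stated, however, it leaves the range $\tau\gtrsim R^{-2}$ uncovered.

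\textbf{The fix.} Avoid producing an $\mathbb H^2$-norm on the right-hand side at all. For mean-free periodic fields $\|\uu\|_{\mathbb H^2}\le c\|\uu\|_{\mathbb L^2}^{1/3}\|\nabla^3\uu\|_{\mathbb L^2}^{2/3}$, so
\[
cR\tau\|\uu_m^R\|_{\mathbb H^2}\|\nabla^3\uu_m^R\|_{\mathbb L^2}\le\tfrac{\nu}{2}\tau\|\nabla^3\uu_m^R\|_{\mathbb L^2}^2+cR^6\tau\|\uu_m^R\|_{\mathbb L^2}^2 .
\]
The last term is bounded in all moments, uniformly in $R$ and $M$, by the energy estimate for $\uu_m^R$ stated before Lemma \ref{lemma:3.1}.
\begin{itemize}
\item For $p=1$ you can then simply sum in $m$, with no Gronwall argument at all.
\item For higher $p$, set $a_m:=\|\Delta\uu_m^R\|_{\mathbb L^2}^2$. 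In the squaring step, whenever $a_m$ multiplies a right-hand side term, write $a_m=a_{m-1}+(a_m-a_{m-1})$. Absorb the difference part into $\tfrac12(a_m-a_{m-1})^2$ with a fixed coefficient. Only an explicit Gronwall with an $R$-independent rate remains.
\end{itemize}
This proves the lemma for every $M$, with a bound that is even polynomial in $R$, hence $\le ce^{cR^2}$. Interpolating against $\|\uu_m^R\|_{\mathbb V}$ and using Lemma \ref{lemma:3.1} instead would also work.
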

For $R>0$, we define the (discrete) $(\mathfrak{F}_{t_{m}})$-stopping time
\begin{align} 
{\mathfrak s}_{R}^{\tt d} &:= \min_{0 \leq m \leq M} \biggl\{ t_m:\ \max_{0\leq n\leq m} \Vert  {\bf u}_{n}\Vert_{\mathbb H^2(\mt)} \geq R\biggr\},\notag
\end{align}
where we set ${\mathfrak s}_{R}^{\tt d}=t_M$ if the set above is empty.
 Note that ${\mathfrak s}_{R}^{\tt d} \in \{t_m\}_{m=0}^M$, 
 with random index 
 ${\mathfrak j}_{R} \in {\mathbb N}_0 \cap [0,M]$, such that ${\mathfrak s}_{R}^{\tt d} = t_{{\mathfrak j}_{R}}$. The crucial point is now to obtain a counterpart of the strict positivity
 of the stopping time from the continuous solution, cf. Definition
 \ref{def:strsol}. This is the content of the next lemma, which states that ${\mathfrak s}_{R}^{\tt d}\geq \tau$ with high probability and the prove follows similar lines of argument as in \cite[Lemma 4.4]{BrAd}.
 \begin{lemma}\label{lem:sgeqtau}
 Suppose that the assumptions from Lemma  \ref{lemma:3.1B}
 (with $q=1$) hold and let $R=R(\tau)$ be chosen such that
 $\tau e^{cR^2}\rightarrow0$ as $\tau\rightarrow0$. Then we have for any $\ell\in\N$
 \begin{align}\label{eq:sgeqtau}
 \lim_{\tau\rightarrow0}\p\big([{\mathfrak s}_{R}^{\tt d}\leq\tau\ell]\big)=0.
 \end{align}
 \end{lemma}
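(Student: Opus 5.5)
Since $\ell$ is fixed, it suffices to control the first $\ell$ iterates. The event $[\mathfrak s_R^{\tt d}\le \tau\ell]$ is the event that $\max_{0\le n\le \ell}\|\uu_n\|_{\mathbb H^2}\ge R$, so I aim to show that this maximum is tight uniformly in $\tau$ while $R=R(\tau)\to\infty$.

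\emph{Step 1: remove the initial datum.} Since $\uu_0\in L^{2}(\Omega;\mathbb H^2)$, Chebyshev gives $\p(\|\uu_0\|_{\mathbb H^2}\ge R/2)\le 4R^{-2}\E\|\uu_0\|_{\mathbb H^2}^2\to0$, because $R(\tau)\to\infty$. (If $R$ stays bounded the statement is in any case only relevant for $R$ large.) It therefore remains to bound $\p\big(\max_{1\le n\le \ell}\|\uu_n\|_{\mathbb H^2}\ge R,\ \|\uu_0\|_{\mathbb H^2}<R/2\big)$.

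\emph{Step 2: compare with the truncated iterates.} By induction over $n$, as long as $\|\uu_{k}\|_{\mathbb H^2}\le R$ for $k<n$ we have $\zeta_R(\|\uu_{k}\|_{\mathbb H^2})=1$. Scheme \eqref{tdiscrR} then coincides with \eqref{eq:temporal-discrete}, so by uniqueness $\uu_n=\uu_n^R$. Hence on $[\mathfrak s_R^{\tt d}\le\tau\ell]$ the first index $n\le\ell$ with $\|\uu_n\|_{\mathbb H^2}\ge R$ also satisfies $\|\uu^R_n\|_{\mathbb H^2}\ge R$, and we get
\[
\p([\mathfrak s_R^{\tt d}\le \tau\ell])\le \p(\|\uu_0\|_{\mathbb H^2}\ge R)+\p\Big(\max_{1\le n\le \ell}\|\uu_n^R\|_{\mathbb H^2}\ge R\Big).
\]

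\emph{Step 3: increments of the truncated scheme (main obstacle).} Lemma \ref{lemma:3.1B} with $p=1$ gives
\[
\E\sum_{m}\|\nabla^2(\uu_m^R-\uu_{m-1}^R)\|_{\mathbb L^2}^2\le ce^{cR^2},
\]
but this alone does not give smallness. I would use instead the one-step structure of the scheme: testing with $\Delta^2$ (in the $\mathbb H^2$ seminorm) yields
\[
\|\uu_n^R-\uu_{n-1}^R\|_{\mathbb H^2}\lesssim \tau\big(\|\uu_n^R\|_{\mathbb H^3}\cdots\big)+\|\Phi\Delta_nW\|_{\mathbb H^2}.
\]
A cleaner route is to take expectations of the squared sum and use Cauchy--Schwarz:
\[
\E\max_{n\le\ell}\|\uu_n^R-\uu_0\|_{\mathbb H^2}^2\le \ell\,\E\sum_{n\le \ell}\|\uu_n^R-\uu_{n-1}^R\|_{\mathbb H^2}^2 .
\]
I would then bound each single increment by $c\,\tau e^{cR^2}$. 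The argument is to rerun the energy estimate behind Lemma \ref{lemma:3.1B} over one step, where the noise contributes $\E\|\Phi\Delta_nW\|_{\mathbb H^2}^2=\tau\|\Phi\|_{L_2(\mathfrak U;\mathbb H^2)}^2$ and the truncated convective term contributes at most $\tau c(R)\|\uu^R_n\|^2_{\mathbb H^3}$, which is absorbed by the viscous term. Then Chebyshev gives
\[
\p\big(\max_{n\le\ell}\|\uu_n^R-\uu_0\|_{\mathbb H^2}\ge R/2\big)\le 4R^{-2}\ell^2 c\,\tau e^{cR^2}\to0
\]
by the assumption $\tau e^{cR^2}\to0$.

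\emph{Step 4: conclusion.} Combining Steps 1--3 with the triangle inequality $\|\uu_n^R\|_{\mathbb H^2}\le\|\uu_0\|_{\mathbb H^2}+\|\uu_n^R-\uu_0\|_{\mathbb H^2}$ gives \eqref{eq:sgeqtau}. The delicate point is Step 3, namely extracting a factor $\tau$ for individual increments rather than only the summed bound. If the one-step estimate proves awkward, a fallback is to use Lemma \ref{lemma:3.1B} directly for the tail and bound the increments $n\le\ell$ separately via the linear part, whose resolvent $(I-\nu\tau\Delta)^{-1}$ is contractive in $\mathbb H^2$.
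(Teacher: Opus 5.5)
\textbf{Gap in Step~3.} The single-step bound $\E\|\uu_n^R-\uu_{n-1}^R\|_{\mathbb H^2}^2\le c\,\tau e^{cR^2}$ is false under the hypotheses of Lemma~\ref{lemma:3.1B}, where $\uu_0$ is only $\mathbb H^2$-regular. Testing \eqref{tdiscrR} with $\Delta^2\uu_n^R$ gives
\begin{align*}
\|\nabla^2\uu_n^R\|_{\mathbb L^2}^2-\|\nabla^2\uu_{n-1}^R\|_{\mathbb L^2}^2+\|\nabla^2(\uu_n^R-\uu_{n-1}^R)\|_{\mathbb L^2}^2+2\nu\tau\|\nabla^3\uu_n^R\|_{\mathbb L^2}^2=(\text{convective})+(\text{noise}).
\end{align*}
So the increment is controlled only together with the one-step decrease $\|\nabla^2\uu_{n-1}^R\|_{\mathbb L^2}^2-\|\nabla^2\uu_n^R\|_{\mathbb L^2}^2$. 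Your sketch drops this decrease, and it is not $O(\tau)$.

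Already for the Stokes part of the first step ($\Phi=0$, convection dropped) one has $\uu_1-\uu_0=\big((I-\nu\tau\Delta)^{-1}-I\big)\uu_0$. Hence $\|\nabla^2(\uu_1-\uu_0)\|_{\mathbb L^2}^2\ge\frac14\sum_{\nu\tau|k|^2\ge1}|k|^4|c_k(\uu_0)|^2$, which is the $\mathbb H^2$-tail of $\uu_0$ above frequency $(\nu\tau)^{-1/2}$. For fixed $\uu_0$, the truncated convective term changes $\uu_1$ only by $O(\tau^{1/2})$ in $\mathbb H^2$. This tail is $O(\tau)$ only if $\uu_0$ has roughly $\mathbb H^3$-regularity. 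For a generic $\uu_0\in\mathbb H^2$ it decays arbitrarily slowly, e.g.\ like $|\log\tau|^{-1}$, so it is not dominated by $\tau e^{cR^2}$ (take $e^{cR^2}=\tau^{-1/2}$). Hence your final Chebyshev bound $4R^{-2}\ell^2c\,\tau e^{cR^2}$ is unjustified. Your fallback does not repair this: contractivity of $(I-\nu\tau\Delta)^{-1}$ in $\mathbb H^2$ gives boundedness, not smallness, of increments, and it does not address the convective term.

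\textbf{How to repair it.} No smallness of increments is needed. You only need uniform control of the first $\ell$ iterates together with $R(\tau)\to\infty$. As you note, $R(\tau)\to\infty$ must be read into the statement: for bounded $R$ it fails whenever $\p(\|\uu_0\|_{\mathbb H^2}\ge R)>0$. Keep Steps~1--2, which are correct, and replace Step~3 as follows.
\begin{enumerate}
\item In the identity above, use $\|\uu_{n-1}^R\|_{\mathbb H^2}\le 2R$ on the support of $\zeta_R$. This bounds the convective term by $\nu\tau\|\nabla^3\uu_n^R\|_{\mathbb L^2}^2+c\tau R^{\gamma}\|\uu_n^R\|_{\mathbb H^2}^2$ for some power $\gamma$.
\item Split the noise term into the martingale increment $2\zeta_R(\|\uu_{n-1}^R\|_{\mathbb H^2})(\nabla^2\Phi\Delta_nW,\nabla^2\uu_{n-1}^R)$ plus $\|\nabla^2\Phi\Delta_nW\|_{\mathbb L^2}^2+\|\nabla^2(\uu_n^R-\uu_{n-1}^R)\|_{\mathbb L^2}^2$. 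The last term is absorbed on the left.
\item Sum over $n\le\ell$ and use $\tau R^\gamma\to0$, which is implied by $\tau e^{cR^2}\to0$. Burkholder--Davis--Gundy and a discrete Gronwall argument over $\ell$ steps yield $\E\max_{n\le\ell}\|\uu_n^R\|_{\mathbb H^2}^2\le C_\ell\big(1+\E\|\uu_0\|_{\mathbb H^2}^2\big)$, uniformly in $\tau$ and $R$.
\item Chebyshev then gives $\p\big(\max_{n\le\ell}\|\uu_n^R\|_{\mathbb H^2}\ge R\big)\le C_\ell\big(1+\E\|\uu_0\|_{\mathbb H^2}^2\big)R^{-2}\to0$.
\end{enumerate}
Alternatively, you can bound the growth $\max_{n\le\ell}\big(\|\nabla^2\uu_n^R\|_{\mathbb L^2}^2-\|\nabla^2\uu_0\|_{\mathbb L^2}^2\big)$ in expectation by $c\ell\tau R^\gamma e^{cR^2}+c(\ell\tau e^{cR^2})^{1/2}+c\ell\tau$ using Lemma~\ref{lemma:3.1B}. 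This is where a hypothesis of the form $\tau e^{cR^2}\to0$ enters naturally.

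The paper itself gives no argument here beyond the pointer to \cite[Lemma 4.4]{BrAd}.
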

With relation \eqref{eq:sgeqtau} at hand it is now meaningful to transfer the estimates for $(\uu_m^R)_{m=1}^M$ from Lemma  \ref{lem:regularity-discrte-1} and  \ref{lemma:3.1B} to 
$(\uu_m)_{m=1}^M$. Noticing that $\uu_m=\uu_m^R$
in $[{\mathfrak s}_{R}^{\tt d}\geq t_m]$ we obtain the following corollary.
\begin{corollary}\label{cor:3.1} 
Assume that $q\in\N$. Then the iterates $(\uu_m)_{m=1}^M$ given by \eqref{eq:temporal-discrete}
satisfy the following estimates uniformly in $M$:
\begin{enumerate}
\item Suppose that $\uu_0\in L^{2^q}(\Omega,\mathbb V(\mt))$ and that additionally $\varPhi \in  L_2(\mathfrak{U}; \mathbb V(\mt))$. Then we have
\begin{align*}
\E\bigg[\max_{1\leq m\leq {\mathfrak j}_{R} }\|\uu_m\|^{2^q}_{\mathbb V}&+\sum_{m=1}^{{\mathfrak j}_{R} }\tau\|\uu_m\|_{\mathbb V}^{2^q-2}\|\nabla^2\uu_m\|^2_{\mathbb L^2}\bigg]\\
&+\sum_{m=1}^{\mathfrak j_R}\|\uu_m^R\|_{\mathbb H^2}^{2^q-2}\|\nabla(\uu_m-\uu_{m-1})\|^2_{\mathbb L^2}\bigg]\leq\,ce^{cR^{2}}.
\end{align*}
\item Suppose that $\uu_0\in L^{2^q}(\Omega,\mathbb V\cap\mathbb H^2(\mt))$  and that additionally $L_2(\mathfrak{U}; \mathbb V\cap\mathbb H^2(\mt))$ hold. Then we have
\begin{align*}
\E\bigg[\max_{1\leq m\leq {\mathfrak j}_{R} }\|\uu_m\|^{2^q}_{\mathbb H^2}&+\sum_{m=1}^{{\mathfrak j}_{R} }\tau\|\uu_m\|_{\mathbb H^2(\mt)}^{2^q-2}\|\nabla^3\uu_m\|^2_{\mathbb L^2}\bigg]\\
&+\sum_{m=1}^{\mathfrak j_R}\|\uu_m^R\|_{\mathbb H^2}^{2^q-2}\|\nabla^2(\uu_m-\uu_{m-1})\|^2_{\mathbb L^2}\bigg]\leq\,ce^{cR^{2}}.
\end{align*}
\end{enumerate}
Here $c=c(q,T,\Phi,\uu_0)>0$ is independent of $R$.
\end{corollary}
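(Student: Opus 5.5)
The plan is to reduce Corollary~\ref{cor:3.1} to Lemmas~\ref{lemma:3.1} and \ref{lemma:3.1B}. The two discrete schemes coincide before the discrete stopping time: up to $\mathfrak s_R^{\tt d}$ the cut-off $\zeta_R$ equals one, so \eqref{tdiscrR} and \eqref{eq:temporal-discrete} agree there.

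The first step is to show by induction on $m$ that $\uu_m=\uu_m^R$ on the event $[\mathfrak j_R\geq m]$ for $0\le m\le M$.
\begin{itemize}
\item For $m=0$ this holds because $\uu_0^R=\uu_0$.
\item Suppose $\uu_{m-1}=\uu_{m-1}^R$ on $[\mathfrak j_R\geq m-1]$. On $[\mathfrak j_R\geq m]\subset[\mathfrak j_R\ge m-1]$ the definition of $\mathfrak s_R^{\tt d}$ gives $\|\uu_{m-1}\|_{\mathbb H^2}<R$, hence $\zeta_R(\|\uu_{m-1}^R\|_{\mathbb H^2})=1$.
\item With this, \eqref{tdiscrR} becomes exactly \eqref{eq:temporal-discrete} with the same data $\uu_{m-1}$ and $\Delta_mW$.
\item The step problem is linear in the unknown and, as noted after \eqref{eq:temporal-discrete}, uniquely solvable pathwise. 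Hence $\uu_m=\uu_m^R$ on $[\mathfrak j_R\ge m]$.
\end{itemize}
Measurability is not an issue here: $[\mathfrak j_R\ge m]$ is $\mathfrak F_{t_{m-1}}$-measurable, and the identity is pointwise in $\omega$.

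The second step is the comparison of norms. Since $\mathfrak j_R$ is random, I write every quantity in the statement with indicator functions:
\[
\max_{1\le m\le \mathfrak j_R}\|\uu_m\|_{\mathbb V}^{2^q}=\max_{1\le m\le M}\mathbf 1_{[\mathfrak j_R\ge m]}\|\uu_m^R\|_{\mathbb V}^{2^q}\le \max_{1\le m\le M}\|\uu_m^R\|_{\mathbb V}^{2^q}.
\]
The sums $\sum_{m\le\mathfrak j_R}\tau\|\uu_m\|_{\mathbb V}^{2^q-2}\|\nabla^2\uu_m\|_{\mathbb L^2}^2$ are handled the same way.

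The increment terms $\|\nabla^k(\uu_m-\uu_{m-1})\|_{\mathbb L^2}^2$ with $k=1,2$ need both indices to be covered. On $[\mathfrak j_R\ge m]$ we also have $\mathfrak j_R\geq m-1$, so both $\uu_m$ and $\uu_{m-1}$ coincide with their truncated versions. Each such term is therefore bounded by the corresponding term with $\uu^R$.

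The weights $\|\uu_m^R\|^{2^q-2}_{\mathbb H^2}$ in the statement need a short remark.
\begin{itemize}
\item In part (b) they already appear with $\uu^R$, and on $[\mathfrak j_R\ge m]$ they equal $\|\uu_m\|_{\mathbb H^2}^{2^q-2}$. The bound then follows directly from \eqref{lem:3.1B}.
\item In part (a) the weight is the $\mathbb H^2$ norm rather than the $\mathbb V$ norm of \eqref{lem:3.1b}. I will use two facts. For $m\le \mathfrak j_R-1$ the definition of the stopping time gives $\|\uu_m\|_{\mathbb H^2}<R$. For the single remaining index $m=\mathfrak j_R$ the norm is controlled by Lemma~\ref{lemma:3.1B}. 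Either way the weight contributes at most a polynomial factor in $R$, or an additional factor absorbed via H\"older's inequality into $ce^{cR^2}$, possibly after enlarging $c$.
\end{itemize}
Taking expectations and invoking Lemmas~\ref{lemma:3.1} and \ref{lemma:3.1B} with $p=q$ then yields both estimates with constant $ce^{cR^2}$, independent of $M$.

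The main obstacle is the index $m=\mathfrak j_R$ itself: the step at which the threshold $R$ is first reached or exceeded. At this index $\uu_m$ may already have large $\mathbb H^2$ norm, so one cannot bound it by $R$. One has to check carefully that the identity $\uu_m=\uu_m^R$ still holds there. It does, because the cut-off is evaluated at the previous iterate $\uu_{m-1}$, which is still below $R$. All bounds must therefore be routed through the truncated iterates rather than through the stopping-time threshold.
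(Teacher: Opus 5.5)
Your route is the paper's. The paper justifies the corollary in one line, by noting that $\uu_m=\uu_m^R$ on $[\mathfrak s_R^{\tt d}\ge t_m]=[\mathfrak j_R\ge m]$ and then transferring Lemmas~\ref{lemma:3.1} and \ref{lemma:3.1B}. Your induction is correct, in particular the observation that the cut-off is evaluated at $\uu_{m-1}^R$, so the identity still holds at the exit index $m=\mathfrak j_R$. Your indicator bookkeeping for the maxima, the $\tau$-sums and the increments (with both $\uu_m$ and $\uu_{m-1}$ covered) is also correct, and more careful than the paper. This settles part (b) completely.

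The one step that fails is your treatment of the weight $\|\uu_m^R\|_{\mathbb H^2}^{2^q-2}$ in part (a).
\begin{itemize}
\item For $m<\mathfrak j_R$, the bound $\|\uu_m^R\|_{\mathbb H^2}<R$ combined with Lemma~\ref{lemma:3.1} for $p=1$ is fine.
\item For the exit index $m=\mathfrak j_R$ you appeal to Lemma~\ref{lemma:3.1B}. Its hypotheses, $\uu_0\in L^{2^q}(\Omega,\mathbb V\cap\mathbb H^2)$ and $\Phi\in L_2(\mathfrak U;\mathbb V\cap\mathbb H^2)$, are not assumed in (a).
\item Nothing at the $\mathbb V$ level rescues this. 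Lemma~\ref{lemma:3.1} only gives $\|\nabla^2\uu^R_{\mathfrak j_R}\|_{\mathbb L^2}^2\le\tau^{-1}\sum_{m}\tau\|\nabla^2\uu_m^R\|_{\mathbb L^2}^2$, which is not uniform in $M$.
\end{itemize}

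The $\mathbb H^2$-weight in (a) is almost certainly a misprint for $\|\uu_m\|_{\mathbb V}^{2^q-2}$. Compare \eqref{lem:3.1b}; the brackets in the displayed statement are also unbalanced. With that reading, your indicator argument yields (a) directly from Lemma~\ref{lemma:3.1} with $p=q$, and the index $m=\mathfrak j_R$ needs no special treatment.

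If you want to keep the $\mathbb H^2$-weight, you must add the hypotheses of (b). The exit term is then best handled by Young's inequality rather than an unspecified H\"older step. Set $a:=\max_{1\le m\le M}\|\uu_m^R\|_{\mathbb H^2}$ and $b:=\|\nabla(\uu^R_{\mathfrak j_R}-\uu^R_{\mathfrak j_R-1})\|_{\mathbb L^2}$, and note $b\le 2\max_{0\le m\le M}\|\uu_m^R\|_{\mathbb V}$. Then $a^{2^q-2}b^2\le a^{2^q}+b^{2^q}$, and Lemma~\ref{lemma:3.1B} with $p=q$ applies.
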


\subsection{A stable time-discretization}
\label{sec:3.1}
Owing to the temporal regularity of $\bfy$ as in Lemma~\ref{lem:regadditive}, we anticipate (up to) first-order convergence in probability for the semi-implicit time discretization of \eqref{eq:SNS}. To this end, consider a uniform partition of the interval $[0,T]$ with step size $\tau = T/M$ and nodes $t_m = m\tau$. Let $\bfu_0$ be an $\mathfrak F_0$-measurable random variable taking values in $\mathbb V(\mt)$, and assume that $\Phi \in L_2(\mathfrak U; \mathbb V(\mt))$. Starting from $\bfy_0 = \bfu_0$, we construct a sequence of $\mf_{t_m}$-measurable random variables $\bfy_m$ ($1 \leq m \leq M$) upto a stopping time $\mathfrak t_R$, with values in $\mathbb V(\mt)$ such that
\begin{align}\label{tdiscr-add-rephrased}
\begin{aligned}
\frac{\bfy_m - \bfy_{m-1}}{\tau} - \nu \Delta \bfy_m 
&= \nu \Delta[\Phi W(t_m)] 
- \mathcal P_{\mathrm{HL}}\bigl[(\bfy_{m-1}\cdot\nabla )\bfy_m\bigr] 
+ \mathcal P_{\mathrm{HL}}\bigl[\mathcal L^m(\bfy_{m-1}, \bfy_m)\bigr], \\
\text{where}\quad 
\mathcal L^m(\bfy_{m-1}, \bfy_m)
&= \mathcal L_1^{W(t_m)}(\bfy_{m-1}) 
+ \mathcal L_2^{W(t_{m-1})}(\bfy_m) 
+ \mathcal L_1^{W(t_m)}\bigl(\Phi W(t_{m-1})\bigr).
\end{aligned}
\end{align}
For $\mathbb P$-almost every $\omega \in \Omega$, the formulation \eqref{tdiscr-add-rephrased} can be interpreted as a stationary Navier--Stokes problem with forcing term $\nu \Delta[\Phi W(t_m)]$ and perturbed by $\mathcal P_{\mathrm{HL}}\bigl[\mathcal L^m(\bfy_{m-1}, \bfy_m)\bigr]$. The mapping depends continuously on $\bfy_{m-1}$, $W(t_{m-1})$, and $W(t_m)$, which ensures the required measurability. If, in addition, $\Phi \in L_2(\mathfrak U; \mathbb H^2(\mt))$, then enhanced regularity follows and $\bfy_m \in \mathbb H^2(\mt)$ holds $\mathbb P$-almost surely. An analogous statement remains valid when $\mathbb H^3(\mt)$ is considered instead of $\mathbb H^2(\mt)$.
Setting 
\begin{equation}\label{identd1}
{\bf u}_m := {\bf y}_m  +\Phi W(t_m) \qquad (1 \leq m \leq M)\,,
\end{equation}
accordingly gives
\begin{equation}\label{help-1}({\bf u}_m - {\bf u}_{m-1}\bigr) - \nu \tau {\Delta}{\bf u}_m + \tau {\mathcal P}_{\mathrm{HL}}(\nabla{\bf u}_m ){\bf u}_{m-1} = \Phi\Delta_m W \qquad (1 \leq m \leq M)
\end{equation}
and ${\bf u}_0 = {\bf y}_0$. Thus, approximating $\bfy$ by \eqref{tdiscr-add-rephrased} and approximating $\bfu$ by \eqref{help-1} is equivalent.
To proceed further, we define
\begin{eqnarray}\label{eq:tRd-1} 
\mathfrak t_{R}^{\tt d} &:=& \min_{0 \leq m \leq M} \biggl\{ t_m\leq \mathfrak t_{R}:\ \sup_{t\in[0,t_m]}\|\Phi (W_{t})\|_{\mathbb H^2} \geq R\biggr\}\, \wedge {\mathfrak s}_{R}^{\tt d} ,
\end{eqnarray}
where the minimum of the empty set is defined as $t_M$ and $R$ is chosen in accordance with \eqref{eq:2008} below. To this end, $\mathfrak m_{R}$ denotes the unique index in $\{1,\dots,M\}$ with
$t_{\mathfrak m_{R}}=\mathfrak t_{R}^{\tt d}$. 
Since $\Phi\in L_2(\mathfrak U;\mathbb H^3(\mt))$ we clearly have
$$\E\bigg[\sup_{0\leq t\leq t_M}\|\Phi W(t)\|_{\mathbb H^2}\bigg]\leq\,C.$$
Hence one can control the size of \{$\mathfrak t_{R}^{\tt d} < T\} $ by
 \begin{align}\label{eq:2008} \begin{aligned}{\mathbb P}\bigl[\{\mathfrak t_{R}^{\tt d} < T\}\bigr] &\leq {\mathbb P} \bigl[ \{ \mathfrak s_{R}^{\tt d} < T\}\big]+\mathbb P\bigg[\sup_{0\leq t\leq t_M}\|\Phi W(t)\|_{\mathbb H^{2}}\geq R\bigg] \rightarrow 0,
 \end{aligned}
 \end{align}
provided we choose $R$ such that  $R=R(\tau)$ with
$\tau e^{cR^2}\rightarrow0$ as $\tau\rightarrow0$ (as in Lemma  \ref{lem:sgeqtau}). Note that, thanks to the transformation \eqref{identd1}, $\mathbb{P}$-a.s., 
$$\underset{m\le \mathfrak m_R}{\sup} \|\bfy_m\|_{\mathbb H^2} \le \underset{m\le \mathfrak m_R}{\sup}\|\bfu_m\|_{\mathbb H^2} + \underset{0 \leq t \le t_M}{\sup}\|\Phi W(t)\|_{\mathbb H^2} \le R\,.$$

.

\subsection{Temporal Error analysis}
For the purpose of the error analysis
we define for $R>0$ and $m\in\{1,\dots M\}$
\begin{align}\label{eq:tmR}
\mathfrak t_{m}^R:=t_m\wedge \mathfrak t_{R}^{\tt d},\quad \tau_{n}^{R}&=\mathfrak t_{n}^{R}-\mathfrak t_{n-1}^{R}.
\end{align} 
We consider the stopped process
\begin{align}\label{tdiscrtilde}
\bfy_m^{R}:=\begin{cases} \bfy_m,\quad \text{in}\quad \{t_m={\mathfrak t}_m^{R}\},\\
\bfy_{{\mathfrak m}_{R}},\quad \text{in}\quad \{t_m> {\mathfrak{t}}_m^{R}\},
\end{cases}
\end{align}
where $\mathfrak m_{R}$ is defined after \eqref{eq:tRd-1}. The main effort of this section is to prove the following error estimate.
\begin{theorem}\label{thm:main}
Assume that $\bfu_0\in L^r(\Omega,\mathbb H^3(\mt))\cap L^{5r}(\Omega,\mathbb V(\mt))$ for some $r\geq8$ and that $\Phi\in L_2(\mathfrak U;\mathbb V\cap \mathbb H^3(\mt))$. Let $\bfy$ be the solution
to \eqref{eq:SNSy}, and $(\bfy_m)_{m=1}^M$ be the solution to \eqref{tdiscr-add-rephrased}. Then we have the error estimate
\begin{align}\label{eq:thm:4}
\begin{aligned}
\max_{1\leq m\leq M}\E\bigg[\|\bfy({\mathfrak{t}}^R_m)-\bfy^{R}_{m}\|^2_{\mathbb L^2}&+\sum_{n=1}^m \tau_n^R \|\nabla\bfy(\mathfrak{t}^R_n)-\nabla\bfy^R_{n}\|^2_{\mathbb L^2}\bigg]\leq C_{\varepsilon, R, \Phi, T}\tau^{2-\varepsilon}\,,\notag
\end{aligned}
\end{align}
for every $\varepsilon > 0.$
\end{theorem}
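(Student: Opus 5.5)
We argue pathwise on the random PDE \eqref{eq:SNSy} and compare it step by step with the scheme \eqref{tdiscr-add-rephrased}, always working on the stopped indices $n\le\mathfrak m_R$. For larger indices $\tau_n^R=0$ and both $\bfy(\mathfrak t^R_n)$ and $\bfy^R_n$ are frozen, so the error simply stays constant there. Set $\mathbf e_n:=\bfy(\mathfrak t^R_n)-\bfy^R_n$. We integrate \eqref{eq:SNSy} over $[\mathfrak t^R_{n-1},\mathfrak t^R_n]$ (note $\mathfrak t^R_n\le\mathfrak t_R$ by \eqref{eq:tRd-1}, so the strong solution exists there) and subtract $\tau_n^R$ times \eqref{tdiscr-add-rephrased}. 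This gives an error equation
\begin{align*}
\mathbf e_n-\mathbf e_{n-1}-\nu\tau_n^R\Delta\mathbf e_n=\tau_n^R\big(\text{nonlinear differences}\big)+\mathcal R_n ,
\end{align*}
where $\mathcal R_n$ collects the consistency errors. The first is $\nu\int_{\mathfrak t^R_{n-1}}^{\mathfrak t^R_n}\Delta(\bfy(s)-\bfy(\mathfrak t^R_n))\,\dd s$. The second is $\nu\int\Delta\Phi(W(s)-W(t_n))\,\dd s$. The third is the analogous differences for the convective term and for $\mathcal L^W$, where the scheme evaluates $W$ at $t_m$ or $t_{m-1}$ and $\bfy$ at $t_{m-1}$. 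We then test with $\mathbf e_n$ and use $2(a-b,a)=|a|^2-|b|^2+|a-b|^2$. In the nonlinear differences we split, e.g., $(\bfy(t_n)\cdot\nabla)\bfy(t_n)-(\bfy_{n-1}\cdot\nabla)\bfy_n=(\mathbf e_{n-1}\cdot\nabla)\bfy(t_n)+((\bfy(t_n)-\bfy(t_{n-1}))\cdot\nabla)\bfy(t_n)+(\bfy_{n-1}\cdot\nabla)\mathbf e_n$. The last term vanishes when tested with $\mathbf e_n$ since $\Div\bfy_{n-1}=0$. The first is bounded by $\|\mathbf e_{n-1}\|_{\mathbb L^2}\|\nabla\bfy\|_{\mathbb L^\infty}\|\mathbf e_n\|_{\mathbb L^2}\le cR\,\|\mathbf e_{n-1}\|\,\|\mathbf e_n\|$, using $\|\bfy(t)\|_{\mathbb H^3}$ integrability and, where only $\mathbb H^2$ is available, Sobolev embedding plus absorption into $\nu\|\nabla\mathbf e_n\|^2$. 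The $\mathcal L^W$ differences are handled the same way, since $\sup_t\|\Phi W\|_{\mathbb H^2}\le R$ up to $\mathfrak t^R_{\tt d}$, and $\bfy_m$ is bounded in $\mathbb H^2$ by $R$ there.

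Summing and applying the discrete Gronwall lemma pathwise gives
\begin{align*}
\max_m\|\mathbf e_m\|^2+\sum\tau_n^R\|\nabla\mathbf e_n\|^2\le e^{cR^2T}\sum_n\big(\text{consistency terms}\big).
\end{align*}
The Gronwall constant is deterministic in terms of $R$, because every coefficient was bounded by $R$ on the stopped range. This is why no smallness of $\tau$ is needed: the implicit treatment of $\mathbf e_n$ in the transport term makes the scheme unconditionally stable. Taking expectations, it remains to show $\E\sum_n(\text{consistency})\le C\tau^{2-\varepsilon}$.

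The consistency terms involving $\bfy$ are bounded using $\partial_t\bfy$. For instance,
\begin{align*}
\Big|\int_{\mathfrak t^R_{n-1}}^{\mathfrak t^R_n}\big(\nabla(\bfy(s)-\bfy(\mathfrak t^R_n)),\nabla\mathbf e_n\big)\,\dd s\Big|\le \tau\Big(\int_{t_{n-1}}^{t_n}\|\partial_t\nabla\bfy\|^2\Big)^{1/2}\tau^{1/2}\|\nabla\mathbf e_n\|,
\end{align*}
and after Young's inequality this contributes $\tau^2\int\|\partial_t\nabla\bfy\|^2$. The expectation of that integral is bounded by Corollary \ref{cor:add}, which gives an $R^{6r}$ factor that is absorbed into $C_{\varepsilon,R}$. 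Terms with $\bfy(t_n)-\bfy(t_{n-1})$ are treated in the same way via $\sup\|\partial_t\bfy\|$.

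\textbf{Main obstacle.} The hard part is the noise terms, e.g. $\nu\sum_n\int_{t_{n-1}}^{t_n}(\Delta\Phi(W(s)-W(t_n)),\mathbf e_n)\,\dd s$. Naively this is only of order $\tau^{1/2}$ per step, which would give $\tau^1$ overall for the squared error, not $\tau^{2-\varepsilon}$. To recover the full rate I would write $\mathbf e_n=\mathbf e_{n-1}+(\mathbf e_n-\mathbf e_{n-1})$. The part paired with the $\mathfrak F_{t_{n-1}}$-measurable $\mathbf e_{n-1}$ is a martingale increment: $\int_{t_{n-1}}^{t_n}(W(s)-W(t_n))\,\dd s=-\int_{t_{n-1}}^{t_n}(s-t_{n-1})\,\dd W$, which has mean zero and variance of order $\tau^3$. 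Summing, a BDG/Doob argument bounds it by $\E\max_m|\cdot|\le c\,\tau\,(\E\max\|\mathbf e\|^2)^{1/2}$, which is absorbable. The increment part $(\mathbf e_n-\mathbf e_{n-1})$ is controlled by the numerical dissipation $\|\mathbf e_n-\mathbf e_{n-1}\|^2$ together with a $\tau^{3-\varepsilon}$ bound on the squared Brownian quadrature. The stopping at $\mathfrak t^R_n$ must be preserved by using the stopped indicator $\mathbf 1_{\{t_{n-1}<\mathfrak t_R^{\tt d}\}}$, which is $\mathfrak F_{t_{n-1}}$-measurable. The $\varepsilon$ loss comes from the Hölder continuity of $W$, used via a Kolmogorov-type bound on $\sup_s\|\Phi(W(s)-W(t_n))\|$ in the nonlinear $\mathcal L^W$ differences. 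With $r\ge8$, Hölder's inequality in $\Omega$ closes the estimate.
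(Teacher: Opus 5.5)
Your proposal follows essentially the same route as the paper (error equation on the stopped intervals tested with $\mathbf e_n$, skew-symmetric cancellation of the implicit transport term, an $\mathbb H^2$-localized bound on the explicit $\mathbf e_{n-1}$ term, consistency via $\partial_t\bfy$, the splitting $\mathbf e_n=\mathbf e_{n-1}+(\mathbf e_n-\mathbf e_{n-1})$ of the noise terms into a mean-zero martingale part and a part absorbed by the numerical dissipation, then discrete Gronwall). The only real difference is that you run Gronwall pathwise with a deterministic factor and then use BDG, which gives the slightly stronger bound on $\E\big[\max_m\|\mathbf e_m\|_{\mathbb L^2}^2\big]$ instead of the paper's $\max_m\E\big[\|\mathbf e_m\|_{\mathbb L^2}^2\big]$.

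Two points to fix when writing it out. First, bound $(\mathbf e_{n-1}\cdot\nabla)\bfy(t_n)$ only by $C\|\mathbf e_{n-1}\|_{\mathbb L^2}\|\bfy(t_n)\|_{\mathbb H^2}\|\nabla\mathbf e_n\|_{\mathbb L^2}$. Your estimate $\|\nabla\bfy\|_{\mathbb L^\infty}\le cR$ is not available from the $\mathbb H^2$ localization in 3D; an $\mathbb H^3$-based coefficient would make the Gronwall factor random; and a bound in terms of $\|\mathbf e_n\|_{\mathbb L^2}$ would reintroduce a smallness condition on $\tau$. Second, apply the same martingale splitting to the linear-in-noise parts of the convective and $\mathcal L^W$ differences, with the coefficient frozen at $t_{n-1}$ so that it is $\mathfrak F_{t_{n-1}}$-measurable. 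A Kolmogorov bound alone on those terms only yields $\tau^{1-\varepsilon}$.
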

\subsubsection{Consistency residual} We introduce the shorthand notation for all $m\in\{1,\dots,M\}$ and $\quad  \ell \in \{m, m-1\}$
\[ {{\bf Y}}^R_{\ell}
:=
{\bf y}(\mathfrak{t}_\ell^R) + \Phi W(\mathfrak{t}_{\ell}^R)\,,\]
and 
\[\bar{{\bf Y}}_{m-1}^R:=\frac{1}{\tau}\int^{\mathfrak{t}_m^R}_{\mathfrak{t}^R_{m-1}}{\bf y}(s)\,\mathrm{d}s + \frac{1}{\tau}\int^{\mathfrak{t}_m^R}_{\mathfrak{t}^R_{m-1}} \Phi W(s)\,\mathrm{d}s =: \bar{\bf y}_{m-1}^R + \Phi\mathcal{Q}_{m-1, R}^W
\]
For each time level \(m=1,\dots,M\), we define the \emph{consistency residual} \(\mathcal{R}_{m}\in (\mathbb{V}(\mt))'\) by
\begin{align}\label{eq:R-def-1} \langle \mathcal{R}^R_{m},\boldsymbol{\varphi}\rangle &:=\frac{1}{\tau}\!\int_{\mathfrak{t}_{m-1}^R}^{\mathfrak{t}_{m}^R}\! \Big(\mathcal C({\bf {Y}}(t),{\bf {Y}}(t),\boldsymbol{\varphi})-\mathcal C(\bar{{\bf Y}}^R_{m-1},{{\bf Y}}^R_{m},\boldsymbol{\varphi})\Big)\,\mathrm{d}t\notag\\ 
&\quad- \frac{1}{\tau}\!\int_{\mathfrak{t}_{m-1}^R}^{\mathfrak{t}_{m}^R}\! \mathcal C(\bar{\bf {Y}}_{m-1}^R, \Phi( W(t) -W(\mathfrak{t}_{m}^R)),\boldsymbol{\varphi})\,\mathrm{dt} -
\mathcal C\!\bigl({{\bf y}}(\mathfrak{t}_{m-1}^R)-\bar{{\bf y}}^R_{m-1},\, {{\bf Y}}^R_{m},\, \boldsymbol{\varphi}\bigr)\notag\\
&\quad+ \nu\,\frac{1}{\tau}\!\int_{\mathfrak{t}_{m-1}^R}^{\mathfrak{t}_{m}^R}\! \big(\nabla ({\bf {y}}(t)-{\bf y}(\mathfrak{t}_m^R)),\nabla\boldsymbol{\varphi}\big)\,\mathrm{d}t
\end{align} 
for any \(\boldsymbol{\varphi}\in \mathbb{V}(\mt)\).
By using the bilinearity of $\mathcal{C}$, we obtain
\begin{align}\nonumber \langle \mathcal{R}_{m}^R,\boldsymbol{\varphi}\rangle &:=\frac{1}{\tau}\!\int_{\mathfrak{t}_{m-1}^R}^{\mathfrak{t}_{m}^R}\! \Big(\mathcal C({\bf {Y}}(t)-\bar{{\bf Y}}^R_{m-1},{\bf {Y}}(t)-\bar{{\bf Y}}^R_{m-1},\boldsymbol{\varphi})\,{\rm d}t\\&\quad+\underbrace{\frac{1}{\tau}\!\int_{\mathfrak{t}_{m-1}^R}^{\mathfrak{t}_{m}^R}\! \Big(\mathcal C( {\bf Y}(t)-\bar{{\bf Y}}^R_{m-1},\bar{{\bf Y}}^R_{m-1},\boldsymbol{\varphi})\Big)\,\mathrm{d}t}_{=0}\notag\\ &\quad +\frac{1}{\tau}\!\int_{\mathfrak{t}_{m-1}^R}^{\mathfrak{t}_{m}^R}\! \mathcal C\big(\bar{{\bf Y}}^R_{m-1},\,{\bf {Y}}(t)-{{\bf Y}}^R_{m},\,\boldsymbol{\varphi}\big)\,\mathrm{d}t- \frac{1}{\tau}\!\int_{\mathfrak{t}_{m-1}^R}^{\mathfrak{t}_{m}^R}\! \mathcal C(\bar{\bf {Y}}_{m-1}^R, \Phi( W(t) -W(\mathfrak{t}_{m}^R)),\boldsymbol{\varphi})\,\mathrm{dt}\notag \\ &\quad -
\mathcal C\!\bigl({{\bf y}}(\mathfrak{t}_{m-1}^R)-\bar{{\bf y}}^R_{m-1},\, {{\bf Y}}^R_{m},\, \boldsymbol{\varphi}\bigr) + \nu\,\frac{1}{\tau}\!\int_{\mathfrak{t}_{m-1}^R}^{\mathfrak{t}_{m}^R}\! \big(\nabla ({\bf {y}}(t)-{\bf y}^R_{m}),\nabla\boldsymbol{\varphi}\big)\,\mathrm{d}t.\notag \\
&= \frac{1}{\tau}\!\int_{\mathfrak{t}_{m-1}^R}^{\mathfrak{t}_{m}^R}\! \Big(\mathcal C({\bf {Y}}(t)-\bar{{\bf Y}}^R_{m-1},{\bf {Y}}(t)-\bar{{\bf Y}}^R_{m-1},\boldsymbol{\varphi})\,{\rm d}t  +\frac{1}{\tau}\!\int_{\mathfrak{t}_{m-1}^R}^{\mathfrak{t}_{m}^R}\! \mathcal C\big(\bar{{\bf Y}}^R_{m-1},\,{\bf y}(t)-{\bf y}^R_{m},\,\boldsymbol{\varphi}\big)\,\mathrm{d}t\notag \\ &\quad  -
\mathcal C\!\bigl({{\bf y}}(\mathfrak{t}_{m-1}^R)-\bar{{\bf y}}^R_{m-1},\, {{\bf Y}}^R_{m},\, \boldsymbol{\varphi}\bigr) + \nu\,\frac{1}{\tau}\!\int_{\mathfrak{t}_{m-1}^R}^{\mathfrak{t}_{m}^R}\! \big(\nabla ({\bf {y}}(t)-{\bf y}(\mathfrak{t}_m^R),\nabla\boldsymbol{\varphi}\big)\,\mathrm{d}t.
\end{align}
Thanks to the decomposition on $[\mathfrak{t}_{m-1}^R,\mathfrak{t}_{m}^R],$ 
\[ {\bf {Y}}(t)-\bar {{\bf Y}}^R_{m-1} =\big({\bf y}(t)-\bar {{\bf y}}^R_{m-1}\big)+\Phi\big(W(t)-\mathcal Q_{m-1,R}^W\big), \] 
a straightforward expansion yields 
\[ \langle\mathcal R^R_{m},\boldsymbol{\varphi}\rangle =\sum_{i=1}^{7}\langle\mathcal{T}_{i,R,m},\boldsymbol{\varphi}\rangle, \]
where 
\begin{align*} \big\langle\mathcal{T}_{1,R,m},\boldsymbol{\varphi}\big\rangle &:=\frac{1}{\tau}\!\int_{\mathfrak{t}_{m-1}^R}^{\mathfrak{t}_{m}^R}\! \mathcal C\big({\bf y}(t)-\bar{{\bf y}}^R_{m-1},\,{\bf y}(t)-\bar{{\bf y}}^R_{m-1},\,\boldsymbol{\varphi}\big)\,\mathrm{d}t,\\[1mm]
\big\langle\mathcal{T}_{2,R,m},\boldsymbol{\varphi}\big\rangle &:=\frac{1}{\tau}\!\int_{\mathfrak{t}_{m-1}^R}^{\mathfrak{t}_{m}^R}\! \mathcal C\big(\Phi W(t)-\Phi\mathcal{Q}^{W}_{m-1,R},\, \Phi W(t)-\Phi\mathcal{Q}^{W}_{m-1,R},\, \boldsymbol{\varphi}\big)\,\mathrm{d}t,\\[1mm]
\big\langle\mathcal{T}_{3,R,m},\boldsymbol{\varphi}\big\rangle &:=\frac{1}{\tau}\!\int_{\mathfrak{t}_{m-1}^R}^{\mathfrak{t}_{m}^R}\! \mathcal C\big({\bf y}(t)-\bar{{\bf y}}^R_{m-1},\, \Phi W(t)-\Phi\mathcal{Q}^{W}_{m-1,R},\, \boldsymbol{\varphi}\big)\,\mathrm{d}t,\\[1mm]
\big\langle\mathcal{T}_{4,R,m},\boldsymbol{\varphi}\big\rangle &:=\frac{1}{\tau}\!\int_{\mathfrak{t}_{m-1}^R}^{\mathfrak{t}_{m}^R}\! \mathcal C\big(\Phi W(t)-\Phi\mathcal{Q}^{W}_{m-1,R},\, {\bf y}(t)-\bar{{\bf y}}^R_{m-1},\, \boldsymbol{\varphi}\big)\,\mathrm{d}t,\\[1mm]
\big\langle\mathcal{T}_{5,R,m},\boldsymbol{\varphi}\big\rangle &:=\frac{1}{\tau}\!\int_{\mathfrak{t}_{m-1}^R}^{\mathfrak{t}_{m}^R}\! \mathcal C\big(\bar{{\bf Y}}^R_{m-1},\,{\bf y}(t)-{{\bf y}}_{m}^R,\,\boldsymbol{\varphi}\big)\,\mathrm{d}t,\\[1mm]
\big\langle\mathcal{T}_{6,R,m},\boldsymbol{\varphi}\big\rangle &:=- \mathcal C\!\bigl({{\bf y}}(\mathfrak{t}_{m-1}^R)-\bar{{\bf y}}^R_{m-1},\, {{\bf Y}}^R_{m},\, \boldsymbol{\varphi}\bigr),\\[1mm] \big\langle\mathcal{T}_{7,R,m},\boldsymbol{\varphi}\big\rangle &:=\nu\,\frac{1}{\tau}\!\int_{\mathfrak{t}_{m-1}^R}^{\mathfrak{t}_{m}^R}\! \big(\nabla {\bf y}(t)-\nabla{\bf y}(\mathfrak{t}_{m}^R),\nabla\boldsymbol{\varphi}\big)\,\mathrm{d}t. \end{align*}
We start with the following estimate.
\begin{lemma}[Residual bound]\label{lem:R-bound-1}
For each small $\varepsilon>0$, there exists a constant \(C_{\varepsilon,R}>0\), independent of
\(\tau\) and \(N\), such that
\begin{equation}
\mathbb E\left[
\tau\sum_{m=1}^{M}
\left\|
\mathcal R_{m}^R
\right\|_{\mathbb V^{-1}}^{2}
\right]
\le
C_{\varepsilon,R}\tau^{2-\varepsilon}.\notag
\end{equation}
\end{lemma}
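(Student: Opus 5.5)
We estimate the seven pieces $\mathcal T_{i,R,m}$ of the decomposition $\langle\mathcal R^R_m,\boldsymbol\varphi\rangle=\sum_{i=1}^7\langle\mathcal T_{i,R,m},\boldsymbol\varphi\rangle$ separately in $\mathbb V^{-1}$. Then $\|\mathcal R_m^R\|_{\mathbb V^{-1}}^2\le 7\sum_i\|\mathcal T_{i,R,m}\|_{\mathbb V^{-1}}^2$, and we sum with weight $\tau$.

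Three tools are used throughout.

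First, the trilinear bound for $\mathcal C$ on $\mathbb T^3$:
\[
|\mathcal C(\mathbf a,\mathbf b,\boldsymbol\varphi)|\le c\|\mathbf a\|_{\mathbb L^3}\|\nabla\mathbf b\|_{\mathbb L^2}\|\boldsymbol\varphi\|_{\mathbb L^6}\le c\|\mathbf a\|_{\mathbb V}\|\mathbf b\|_{\mathbb V}\|\boldsymbol\varphi\|_{\mathbb V}.
\]
We also use the variant $\|\mathbf a\|_{\mathbb L^2}\|\nabla \mathbf b\|_{\mathbb L^3}\|\boldsymbol\varphi\|_{\mathbb L^6}$ when $\mathbf b$ carries $\mathbb H^2$-regularity.

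Second, the stopping-time bounds. On $[0,\mathfrak t_R^{\tt d}]$ we have $\|\mathbf u\|_{\mathbb H^2}\le R$ (since $\mathfrak t^{\tt d}_R\le\mathfrak t_R$) and $\|\Phi W\|_{\mathbb H^2}\le R$. Hence $\|\mathbf y\|_{\mathbb H^2}$, $\|\bar{\mathbf Y}^R_{m-1}\|_{\mathbb H^2}$ and $\|\mathbf Y^R_m\|_{\mathbb H^2}$ are all bounded by $cR$. This bound is pathwise, so these factors can be pulled out of expectations.

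Third, the time regularity of $\mathbf y$ from Corollary \ref{cor:add}: $\partial_t\mathbf y\in L^2(0,T\wedge\mathfrak t_R;\mathbb V)$ and $\sup_t\|\partial_t\mathbf y\|_{\mathbb L^2}$ have moments bounded by powers of $R$. We also use the H\"older continuity of Brownian paths in the form
\[
\mathbb E\Big[\sup_{|t-s|\le\tau}\|\Phi(W(t)-W(s))\|_{\mathbb H^2}^p\Big]\le c\,\tau^{p/2-\varepsilon}.
\]
This follows from the Kolmogorov/Garsia--Rodemich--Rumsey estimate, since $\Phi\in L_2(\mathfrak U;\mathbb H^3)$. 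Note that $\tau^R_m\le\tau$, which makes the averages $\frac1\tau\int_{\mathfrak t^R_{m-1}}^{\mathfrak t^R_m}$ harmless.

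The terms are handled as follows.

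For $\mathcal T_1$, we have $\|\mathbf y(t)-\bar{\mathbf y}^R_{m-1}\|_{\mathbb V}\le\int_{\mathfrak t^R_{m-1}}^{\mathfrak t^R_m}\|\partial_t\mathbf y\|_{\mathbb V}$. This gives $\|\mathcal T_{1,R,m}\|^2_{\mathbb V^{-1}}\le c\,\tau^2\big(\int\|\partial_t\mathbf y\|_{\mathbb V}^2\big)^2$, which is $O(\tau^2)$ after summation, by Cauchy--Schwarz and the moment bound.

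For $\mathcal T_2$, the quadratic Brownian term is bounded pointwise by $\sup_{|t-s|\le\tau}\|\Phi(W(t)-W(s))\|_{\mathbb H^2}^2$. It is therefore of order $\tau^{1-\varepsilon}$, and its square is $\tau^{2-2\varepsilon}$.

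For $\mathcal T_3$ and $\mathcal T_4$, the mixed terms are products of an $O(\tau^{1/2})$ factor $\int\|\partial_t \mathbf y\|_{\mathbb V}$ and an $O(\tau^{1/2-\varepsilon})$ Brownian factor. Each is thus of order $\tau^{1-\varepsilon}$, and after squaring it is acceptable. Here Hölder in $\omega$ with $r\ge 8$ separates the moments.

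For $\mathcal T_6$, we use $\mathbf y(\mathfrak t^R_{m-1})-\bar{\mathbf y}^R_{m-1}=\frac1\tau\int(\mathbf y(\mathfrak t^R_{m-1})-\mathbf y(s))\,\mathrm ds$. Its $\mathbb L^2$-norm is at most $\tau\sup\|\partial_t\mathbf y\|_{\mathbb L^2}$, and we pair it with $\|\nabla\mathbf Y^R_m\|_{\mathbb L^3}\le cR$.

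For $\mathcal T_7$, we write $\nabla\mathbf y(t)-\nabla\mathbf y(\mathfrak t^R_m)=-\int_t^{\mathfrak t^R_m}\partial_t\nabla\mathbf y$. This yields $\|\mathcal T_{7,R,m}\|_{\mathbb V^{-1}}^2\le c\,\tau\int_{\mathfrak t^R_{m-1}}^{\mathfrak t^R_m}\|\partial_t\nabla\mathbf y\|^2_{\mathbb L^2}$. Summing with weight $\tau$ gives $\tau^2\,\mathbb E\int_0^{T\wedge\mathfrak t_R}\|\partial_t\nabla\mathbf y\|^2\le C_R\tau^2$ by Corollary \ref{cor:add}.

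The main obstacle is $\mathcal T_5$. As written after the expansion, it contains $\mathbf y(t)-\mathbf y^R_m$, which involves the discrete iterate and is not purely a consistency quantity. Comparing with the defining formula \eqref{eq:R-def-1}, the intended quantity is $\mathbf y(t)-\mathbf y(\mathfrak t^R_m)$, combined with the Brownian increment $\Phi(W(t)-W(\mathfrak t^R_m))$. I would first reconcile this by tracking the cancellation of $\mathcal C(\bar{\mathbf Y}^R_{m-1},\Phi(W(t)-W(\mathfrak t_m^R)),\cdot)$ against the corresponding piece of $\mathbf Y(t)-\mathbf Y^R_m$. Once reconciled, $\mathcal T_5$ becomes $\frac1\tau\int\mathcal C(\bar{\mathbf Y}^R_{m-1},\mathbf y(t)-\mathbf y(\mathfrak t^R_m),\boldsymbol\varphi)\,\mathrm dt$. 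It is bounded by $cR\int_{\mathfrak t^R_{m-1}}^{\mathfrak t^R_m}\|\partial_t\mathbf y\|_{\mathbb V}$, exactly as for $\mathcal T_7$, using $\|\bar{\mathbf Y}^R_{m-1}\|_{\mathbb L^3}\le cR$.

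The subtle points are two. The Brownian term must never appear linearly with only $\tau^{1/2}$ accuracy, which is precisely why the transformation \eqref{transform} is used. The stopped intervals $[\mathfrak t^R_{m-1},\mathfrak t^R_m]$ have random length, and we handle them by bounding them by $\tau$ while keeping all the $R$-dependence pathwise.
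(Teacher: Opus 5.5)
Your proposal follows the paper's proof essentially step for step: termwise $\mathbb V^{-1}$ bounds for $\mathcal T_{1,R,m},\dots,\mathcal T_{7,R,m}$ via the continuity of $\mathcal C$, the pathwise bound $\|\mathbf Y\|_{\mathbb H^2}=\|\mathbf u\|_{\mathbb H^2}\le R$, the time regularity of $\mathbf y$ and the modulus of continuity of $\Phi W$; your reading of $\mathcal T_{5,R,m}$ with $\mathbf y(\mathfrak t^R_m)$ in place of $\mathbf y^R_m$ is exactly what the paper's own estimate tacitly uses, and your Cauchy--Schwarz use of $\partial_t\nabla\mathbf y\in L^2$ in time for $\mathcal T_{1,R,m}$ and $\mathcal T_{7,R,m}$ matches Corollary~\ref{cor:add} more closely than the paper's supremum-in-time bounds. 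One caveat, shared with the paper, is that $\mathfrak t^{\tt d}_R$ is a grid time, so $\tau^R_m\in\{0,\tau\}$ and on the frozen steps $m>\mathfrak m_R$ every time-averaged term vanishes while $\bar{\mathbf y}^R_{m-1}=0$ leaves $\mathcal T_{6,R,m}=-\mathcal C(\mathbf y(\mathfrak t^{\tt d}_R),\mathbf Y^R_m,\cdot)$, which is not small; ``bounding the interval lengths by $\tau$'' therefore does not suffice there, and the residual (and the error equation) must be restricted to $m\le\mathfrak m_R$, which costs nothing since $\mathbf e^R_m=\mathbf e^R_{m-1}$ on frozen steps.
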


\begin{proof}
Recall that
\[
\left\|
\mathcal T_{i,R,m}
\right\|_{\mathbb V^{-1}}
=
\sup_{\substack{
\boldsymbol{\varphi}\in\mathbb V\\
\|\nabla\boldsymbol{\varphi}\|_{\mathbb L^2}\le1}}
\left|
\left<
\mathcal T_{i,R,m},\boldsymbol{\varphi}
\right>
\right|.
\]
We first consider \(\mathcal T_{1,R,m}\). By the continuity estimate
\[
\left|
\left<
\mathcal T_{1,R,m},\boldsymbol{\varphi}
\right>
\right|
\le
\frac{C}{\tau}
\int_{\mathfrak t_{m-1}^R}^{\mathfrak t_{m}^R}
\left\|
{\bf y}(t)-\bar{\bf y}^R_{m-1}
\right\|_{\mathbb V}^{2}
\,{\rm d}t\,
\|\nabla\boldsymbol{\varphi}\|_{\mathbb L^2}.
\]
Thanks to Corollary \ref{cor:add}, 
\({\bf y}(\cdot\wedge\mathfrak t_R)\in
L^4(\Omega;C^1([0,T];\mathbb V))\), we have, for
\(t\in[\mathfrak t_{m-1}^R,\mathfrak t_{m}^R]\), $\mathbb{P}$-a.s.
\[
\left\|
{\bf y}(t)-\bar{\bf y}^R_{m-1}
\right\|_{\mathbb V}
\le
C_R\tau
\|{\bf y}(\cdot\wedge\mathfrak t_R)\|_{C^1([0,T];\mathbb V)}.
\]
Therefore by taking expectation, we obtain 
\begin{equation}
\mathbb E\left[
\sup_{1\le m \le M}
\left\|
\mathcal T_{1,R,m}
\right\|_{\mathbb V^{-1}}^2
\right]
\le
C_R\tau^4
\le
C_R\tau^{3-\varepsilon},\notag
\end{equation}
where we have used the H\"older inequality in the probability variable together with Corollary \ref{cor:add}. For $\mathcal{T}_{2, R, m}$, we again invoke  continuity estimates to get
\[
\left|
\left<
\mathcal T_{2,R,m},\boldsymbol{\varphi}
\right>
\right|
\le
\frac{C}{\tau}
\int_{\mathfrak t_{m-1}^R}^{\mathfrak t_{m}^R}
\left\|\Phi(W(t) -\mathcal{Q}_{m-1,R}^W) \right\|_{\mathbb V}^2
\,{\rm d}t\,
\|\nabla\boldsymbol{\varphi}\|_{\mathbb L^2}.
\]
Therefore,
\begin{align}
    \label{eq:T1-sup-1}
    \begin{aligned}
\mathbb E\left[
\sup_{1\le m \le M}
\left\|
\mathcal T_{2,R,m}
\right\|_{\mathbb V^{-1}}^2
\right]
&\le 
C\mathbb E\left[
\sup_{1\le m \le M}\sup_{t,s \in [t_{m-1}^R, \mathfrak{t}_{m}^R]}
\left\|\Phi(W(t)-W(s))\right\|_{\mathbb V}^{4}
\right] \\&\le 
C_{\varepsilon, R, \Phi}\tau^{2-\varepsilon},
\end{aligned}
\end{align}
thanks to Doob's maximum inequality and the fact that $\Phi \in L^2(\mathfrak{U}, \mathbb V).$

\noindent
We next estimate \(\mathcal T_{3,R,m}\) and \(\mathcal T_{4,R,m}\).
Again by the continuity estimate for \(\mathcal C\),
\[
\left\|
\mathcal T_{3,R,m}
\right\|_{\mathbb V^{-1}}
+
\left\|
\mathcal T_{4,R,m}
\right\|_{\mathbb V^{-1}}
\]
is bounded by
\[
C
\left(
\frac1{\tau}
\int_{\mathfrak t_{m-1}^R}^{\mathfrak t_{m}^R}
\|{\bf y}(t)-\bar{\bf y}^R_{m-1}\|_{\mathbb L^2}^{2}
\,{\rm d}t
\right)^{1/2}
\left(
\frac1{\tau}
\int_{\mathfrak t_{m-1}^R}^{\mathfrak t_{m}^R}
\|\Phi(W(t)-\mathcal Q_{m-1,R}^{W})\|_{\mathbb H^2}^{2}
\,{\rm d}t
\right)^{1/2}.
\]
The first factor is bounded by
$
C_R\tau
\|{\bf y}(\cdot\wedge\mathfrak t_R)\|_{C^1([0,T];\mathbb L^2)}$ where as invoking  an argument similar to \eqref{eq:T1-sup-1} along with the fact that $\Phi \in L^2(\mathfrak{U};\mathbb {H}^2(\mt))$, we bound the second factor as
\[
\mathbb E\left[
\sup_{1\le m \le M}
\frac1{\tau}
\int_{\mathfrak t_{m-1}^R}^{\mathfrak t_{m}^R}
\|\Phi(W(t)-\mathcal Q_{m-1, R}^{W})\|_{\mathbb H^2}^{2}
\,{\rm d}t
\right]
\le
C_{\varepsilon,\Phi}\tau^{1-\varepsilon}.
\]
By Combining the above estimates, we get
\begin{equation}\label{eq:T3T4-sup-1}
\mathbb E\left[
\sup_{0\le m \le M}
\left(
\left\|
\mathcal T_{3,R,m}
\right\|_{\mathbb V^{-1}}
+
\left\|
\mathcal T_{4,R,m}
\right\|_{\mathbb V^{-1}}
\right)^2
\right]
\le
C_{\varepsilon,R, \Phi}\tau^{3-\varepsilon}.
\end{equation}
For $\mathcal{T}_{5,R,m}$, we use H\"older's inequality and the Sobolev embedding $\mathbb{H}^2(\mt) \hookrightarrow \mathbb L^\infty(\mt)$ along with localisation bound
$\|\bar{\bf Y}^R_m\|_{\mathbb{H}^2(\mt)} \le CR$  to obtain
\[
\|\mathcal{T}_{5,R,m}\|_{\mathbb{V}^{-1}}
\le
C_R
\left\|
\frac{1}{\tau}
\int_{\mathfrak{t}_{m-1}^R}^{\mathfrak{t}_m^R}
\bigl({\bf y}(t)-{\bf y}^R_{m}\bigr)
\,\mathrm{d}t
\right\|_{\mathbb{L}^2}.
\]
By using Jensen inequality and  Corollary \ref{cor:add}, we have
\begin{align}\label{eq:T5-sup-1}
\begin{aligned}
\mathbb{E}\left[
\sup_{1\le m \le M}
\|\mathcal{T}_{5,R,m}\|^2_{\mathbb{V}^{-1}}
\right] &\le C_R\,\tau^2\mathbb{E}\left[
\sup_{0\le m \le M}\sup_{t \in [\mathfrak{t}_{m-1}^R, \mathfrak{t}_m^R]}\|\partial_t{\bf y}(\cdot \wedge \mathfrak{t}_m^R)\|_{\mathbb L^2}^2\right] 
\\&\le
C_{\varepsilon,R}\,\tau^{2-\varepsilon}.
\end{aligned}
\end{align}
Similarly, for $\mathcal{T}_{6,R,m}$, we use the localization bound
$\|{\bf Y}^R_{m}\|_{\mathbb{H}^2(\mt)} \le CR$ to obtain 
\begin{equation}\label{eq:T6-sup-1}
\mathbb{E}\left[
\sup_{1 \le m \le M}
\|\mathcal{T}_{6,R,m}\|^2_{\mathbb{V}^{-1}}
\right]
\le
C_{\varepsilon,R}\,\tau^{2-\varepsilon}.
\end{equation}

\noindent
For \(m=1\),
\({\bf y}(0)={\bf y}(t_0)\), and hence only the first-order estimate
\[
\left\|
{\bf y}_1^R-\bar{\bf y}_1^R
\right\|_{\mathbb L^2_x}
\le
C\tau
\|{\bf y}(\cdot\wedge\mathfrak t_R)\|_{C^1([0,T];\mathbb L^2)}
\]
is available. Therefore,
\begin{equation}
\mathbb E\left[
\left\|
\mathcal T_{6,R,1}
\right\|_{\mathbb V^{-1}}^2
\right]
\le
C_R\tau^2.\notag
\end{equation}
Finally, we consider \(\mathcal T_{7,R,m}\). By definition,
\[
\left\|
\mathcal T_{7,R,m}
\right\|_{\mathbb V^{-1}}
\le
\nu
\left\|
\frac1{\tau}
\int_{\mathfrak t_{m-1}^R}^{\mathfrak t_{m}^R}
\left(
\nabla{\bf y}(t)-\nabla{\bf y}(\mathfrak{t}_m^R)
\right)
\,{\rm d}t
\right\|_{\mathbb L^2}.
\]
Thus, again by invoking Jensen inequality and  Corollary \ref{cor:add}, we get
\begin{align}\label{eq:T7-sup-1}
\begin{aligned}
\mathbb E\left[
\sup_{1 < m \le M}
\left\|
\mathcal T_{7,R,m} 
\right\|_{\mathbb V^{-1}}^2
\right]
&\le
C_R\,\tau^2\mathbb{E}\left[
\sup_{1 < m \le M}\sup_{t \in [\mathfrak{t}_{m-1}^R, \mathfrak{t}_m^R]}\|\partial_t\nabla{\bf y}(\cdot \wedge \mathfrak{t}_m^R)\|_{\mathbb L^2}^2\right] \\
&\le
C_{\varepsilon,R}\,\tau^{2-\varepsilon}.
\end{aligned}
\end{align}
By combining
\eqref{eq:T1-sup-1},
\eqref{eq:T3T4-sup-1},
\eqref{eq:T5-sup-1},
\eqref{eq:T6-sup-1}, and
\eqref{eq:T7-sup-1},
the claim follows.
\end{proof}
\noindent\textbf{Proof of Theorem \ref{thm:main}:}
Fix \(m\in\{1,\dots,M\}\). By integrating the continuous
equation on the stopped interval
\([\mathfrak t_{m-1}^R,\mathfrak t_{m}^R]\), writing it in terms of
\[
{\bf Y}(t)={\bf y}(t)+\Phi W(t),
\]
and testing with \(\boldsymbol{\varphi}\in\mathbb V\), we obtain
\begin{align}\label{eq:cont-avg2-1}
\begin{aligned}
\Big(
\frac{{\bf y}(\mathfrak t_{m}^R)-{\bf y}(\mathfrak t_{m-1}^R)}{\tau},
\boldsymbol{\varphi}
\Big)
&+
\frac1{\tau}
\int_{\mathfrak t_{m-1}^R}^{\mathfrak t_{m}^R}
\mathcal C({\bf Y}(t),{\bf Y}(t),\boldsymbol{\varphi})\,{\rm d}t\\&+
\nu\,\frac1{\tau}
\int_{\mathfrak t_{m-1}^R}^{\mathfrak t_{m}^R}
(\nabla{\bf Y}(t),\nabla\boldsymbol{\varphi})\,{\rm d}t
=0 .
\end{aligned}
\end{align}
The corresponding stopped discrete step reads
\begin{align}\label{eq:disc-step22-1}
\begin{aligned}
\Big(
\frac{{\bf y}_{m}^R-{\bf y}_{m-1}^R}{\tau},
\boldsymbol{\varphi}
\Big)
&+
\mathcal C\big(
{\bf y}_{m-1}^R+\Phi W(\mathfrak{t}_{m-1}^R),\,
{\bf y}_{m}^R+\Phi W(\mathfrak{t}_{m}^R),\,
\boldsymbol{\varphi}
\big)\\
&+
\nu\,
\big(
\nabla({\bf y}_{m}^R+\Phi W(\mathfrak{t}_m^R)),
\nabla\boldsymbol{\varphi}
\big)
=0 .
\end{aligned}
\end{align}
By subtracting \eqref{eq:disc-step22-1} from \eqref{eq:cont-avg2-1}, multiplying with $\tau$ and using
$
{\bf e}_{\ell}^R
=
{\bf y}(\mathfrak t_{\ell}^R)-{\bf y}_{\ell}^R$
for $\ell \in \{m, m-1\}$, 
and choosing $\boldsymbol{\varphi}= {\bf e}_m^R $, we have
\begin{align}\label{eq:err-local-raw-1}
&\Big(
{\bf e}_{m}^R-{\bf e}_{m-1}^R,
{\bf e}_{m}^R
\Big)
+
\tau\nu
\|\nabla{\bf e}_{m}^R\|_{\mathbb L^2}^2 = 
-\!\int_{\mathfrak{t}_{m-1}^R}^{\mathfrak{t}_{m}^R}\! \mathcal C({\bf {Y}}(t),{\bf {Y}}(t),{\bf e}_m^R)\,\mathrm{d}t \\&\quad-\nu\!\int_{\mathfrak{t}_{m-1}^R}^{\mathfrak{t}_{m}^R}\! \big(\nabla ({\bf {Y}}(t)-{\bf Y}_m^R),\nabla{\bf e}_m^R\big)\,\mathrm{d}t\notag\\&\quad+\tau\mathcal C\big(
{\bf y}_{m-1}^R+\Phi W(\mathfrak{t}_{m-1}^R),\,
{\bf y}_{m}^R+\Phi W(\mathfrak{t}_{m}^R),\,
{\bf e}_m^R
\big) \notag \\
&\quad= -\!\int_{\mathfrak{t}_{m-1}^R}^{\mathfrak{t}_{m}^R}\! \Big(\mathcal C({\bf {Y}}(t),{\bf {Y}}(t),{\bf e}_m^R)-\mathcal C(\bar{{\bf Y}}^R_{m-1},{{\bf Y}}^R_{m},{\bf e}_m^R)\Big)\,\mathrm{d}t\notag\\ 
&\quad +
\tau\mathcal C\!\bigl({{\bf y}}(\mathfrak{t}_{m-1}^R)-\bar{{\bf y}}^R_{m-1},\, {{\bf Y}}^R_{m},\, {\bf e}_m^R\bigr)
- \nu\!\int_{\mathfrak{t}_{m-1}^R}^{\mathfrak{t}_{m}^R}\! \big(\nabla ({\bf {Y}}(t)-{\bf Y}^R_{m}),\nabla{\bf e}_m^R\big)\,\mathrm{d}t \notag \\ &\quad -\tau\mathcal C\big(\Phi (Q_{m-1,R}^W-W(\mathfrak{t}_{m-1}^R)) 
,\,
{\bf Y}_{m}^R,\,
{\bf e}_{m}^R
\big) -\tau\mathcal C({\bf Y}^R_{m-1}, {\bf Y}^R_{m}, {\bf e}^R_{m}) \notag\\ &\quad+\mathcal \tau C\big(
{\bf y}_{m-1}^R+\Phi W(\mathfrak{t}_{m-1}^R),\,
{\bf y}_{m}^R+\Phi W(\mathfrak{t}_{m}^R),\,
{\bf e}_m^R
\big)\notag\\ & \quad=
-\tau\big\langle
{\mathcal R}_{m}^R,
{\bf e}_{m}^R
\big\rangle -\mathcal \tau C\big({\bf e}_{m-1}^R
,\,
{\bf Y}_{m}^R,\,
{\bf e}_{m}^R
\big)\, - \tau\mathcal C\big({\bf y}_{m-1}^R + \Phi W(\mathfrak{t}_{m-1}^R)
,\,
{\bf e}_{m}^R,\,
{\bf e}_{m}^R
\big) 
\notag \\&\quad-\tau\mathcal C\big(\Phi (Q_{m-1,R}^W-W(\mathfrak{t}_{m-1}^R))
,\,
{\bf Y}_{m}^R,\,
{\bf e}_{m}^R
\big) -\!\int_{\mathfrak{t}_{m-1}^R}^{\mathfrak{t}_{m}^R}\! \mathcal C(\bar{\bf {Y}}_{m-1}^R, \Phi( W(t) -W(\mathfrak{t}_{m}^R)), {\bf e}_m^R)\,\mathrm{dt}\, \notag\\
&\quad- \nu\!\int_{\mathfrak{t}_{m-1}^R}^{\mathfrak{t}_{m}^R}\! \big(\nabla \Phi(W(t)-W(\mathfrak t^R_{m})), \nabla{\bf e}_m^R\big)\,\mathrm{d}t\,,\notag
\end{align}
where $\langle
{\mathcal R}_{m}^R,
{\bf e}_{m}^R
\big\rangle$ is defined in \ref{eq:R-def-1}. The left-hand side  of \eqref{eq:err-local-raw-1} becomes
\begin{equation}\label{erro-1}\tfrac{1}{2} \big(\Vert {\bf e}_m^R\Vert^2_{\mathbb L^2} - \Vert {\bf e}_{m-1}^R\Vert^2 _{\mathbb L^2}+ \Vert {\bf e}_m^R - {\bf e}_{m-1}^R\Vert^2_{\mathbb L^2}\big) + \tau\nu\Vert \nabla {\bf e}_m^R\Vert^2_{\mathbb L^2} \, .
\end{equation}
First, by duality and Young's inequality, for every
\(\delta\in(0,\nu)\),
\begin{align}\label{eq:Rtilde-bound-1}
\big|
\big\langle
\mathcal R_{m}^R,
{\bf e}_{m}^R
\big\rangle
\big|
&\le
\|{\mathcal R}_{m}^R\|_{\mathbb V^{-1}}
\,
\|\nabla{\bf e}_{m}^R\|_{\mathbb L^2}
\nonumber\\
&\le
\frac{\delta}{4}
\|\nabla{\bf e}_{m}^R\|_{\mathbb L^2}^2
+
C(\delta)
\|{\mathcal R}_{m}^R\|_{\mathbb V^{-1}}^2.
\end{align}
By using the continuity estimate for the trilinear form and the localization
bound $
\|{\bf Y}_{m}^R\|_{\mathbb H^2(\mt)}\le CR$,
we obtain
\begin{align}\label{eq:Econv-bound-local-1}
\big|
\mathcal C\big(
{\bf e}_{m-1}^R,\,
{\bf Y}_{m}^R,\,
{\bf e}_{m}^R
\big)
\big|
&\le
C\,
\|{\bf e}_{m-1}^R\|_{\mathbb L^2}
\,
\|{\bf Y}_{m-1}^R\|_{\mathbb H^2}
\,
\|\nabla{\bf e}_{m}^R\|_{\mathbb L^2(\mt)}
\nonumber\\
&\le
C_R\,\|{\bf e}_{m-1}^R\|_{\mathbb L^2}
\,
\|\nabla{\bf e}_{m}^R\|_{\mathbb L^2}
\,.
\end{align}
Hence, again by Young's inequality,
\begin{equation}\label{eq:Econv-Young-1}
\big|
\mathcal C\big(
{\bf e}_{m-1}^R,\,
{\bf Y}_{m}^R,\,
{\bf e}_{m}^R
\big)
\big|
\le
\frac{\delta}{4}
\|\nabla{\bf e}_{m}^R\|_{\mathbb L^2}^2
+
C_R(\delta)\|{\bf e}_{m-1}^R\|_{\mathbb L^2}^2\,.
\end{equation}
Thanks to the property of trilinear form
\begin{align}\label{err-1}
    \mathcal C\big({\bf y}_{m-1}^R + \Phi W(\mathfrak{t}_{m-1}^R),\,
{\bf e}_{m}^R,\,
{\bf e}_{m}^R
\big) = 0\,.
\end{align}
Thanks to the independent increment of the Wiener process, Doob's maximal inequality along with the fact that $\Phi\in L_2(\mathfrak U;\mathbb H^{2}(\mt))$, we follow the estimations as in  \cite{BrPr2} to get
\begin{align}\label{err-3}
    &\mathbb E \Big[ \nu\!\int_{\mathfrak{t}_{m-1}^R}^{\mathfrak{t}_{m}^R}\! \big(\Delta \Phi(W(t)-W(\mathfrak t^R_{m})),{\bf e}_m^R\big)\,\mathrm{d}t\Big] \le \frac{1}{2} {\mathbb E}\bigl[\Vert {\bf e}_m^R - {\bf e}_{m-1}^R\Vert^2_{\mathbb L^2}\bigr] + C\tau^3\,.
\end{align}
We re-write 
\begin{align}
&\!\int_{\mathfrak{t}_{m-1}^R}^{\mathfrak{t}_{m}^R}\! \mathcal C(\bar{\bf {Y}}_{m-1}^R, \Phi( W(t) -W(\mathfrak{t}_m^R)), {\bf e}_m^R)\,\mathrm{dt} \notag \\ & =  \!\int_{\mathfrak{t}_{m-1}^R}^{\mathfrak{t}_{m}^R}\! \mathcal C(\bar{\bf {Y}}_{m-1}^R, \Phi( W(t) -W(\mathfrak{t}_m^R)), {\bf e}_m^R - {\bf e}_{m-1}^R)\,\mathrm{dt} \notag \\
& \quad + \!\int_{\mathfrak{t}_{m-1}^R}^{\mathfrak{t}_{m}^R}\! \mathcal C(\bar{\bf {Y}}_{m-1}^R - \big({\bf y}(\mathfrak{t}_{m-1}^R) + \Phi W(\mathfrak{t}_{m-1}^R)\big), \Phi( W(t) -W(\mathfrak{t}_m^R)), {\bf e}_{m-1}^R)\,\mathrm{dt} \notag \\
& +\!\int_{\mathfrak{t}_{m-1}^R}^{\mathfrak{t}_{m}^R}\! \mathcal C({\bf y}(\mathfrak{t}_{m-1}^R) + \Phi W(\mathfrak{t}_{m-1}^R), \Phi( W(t) -W(\mathfrak{t}_m^R)), {\bf e}_{m-1}^R)\,\mathrm{dt} \notag=: \sum_{i=1}^3\mathcal{E}_i^{m}\,.
\end{align}
Observe that $\sum_{n = 1}^m \mathcal{E}_3^{n}$ can be written as composition of the $\mathfrak{F}_{t_m}$-martingale
$$-\sum_{n=1}^m \Big \langle \int_{t_{n-1}}^{t_n} ({\bf y}(\mathfrak{t}_{m-1}^R) + \Phi W(\mathfrak{t}_{m-1}^R)) \otimes \Phi( W(t) -W(\mathfrak{t}_m^R)), \nabla{\bf e}_{m-1}^R)\Big \rangle$$
with the $(\mathfrak F_{t_m})$-stopping time $\mathfrak m_R$ defined after \eqref{tdiscrtilde}. Hence it is an $(\mathfrak F_{t_m})$-martingale in its own right. Thus, $\mathbb{E}\Big[\sum_{n = 1}^m \mathcal{E}_3^{n}\Big] = 0\,.$

\noindent
By using H\"older's inequality along with Sobolev's embedding $\mathbb{H}^{2}(\mt) \hookrightarrow \mathbb L^\infty(\mt)$, the localization bound of $\|\bar{\bf Y}_{m-1}^R\|_{\mathbb H^2(\mt)} \le R$ and Young's inequality, we infer
\begin{align}
    \mathbb{E}\big[\mathcal{E}_1\big] 
     &\le  C_R \mathbb{E}\Big[\tau\sup_{t \in [\mathfrak{t}_{m-1}^R, \mathfrak{t}_m^R]}\Big(\big\|\nabla\Phi(W(t) - W(\mathfrak{t}_m^R))\big\|_{\mathbb L^2}\Big)\|({\bf e}_m^R-{\bf e}_{m-1}^{R})\|_{\mathbb L^2}\Big] \notag \\
     & \le \delta\mathbb{E}\Big[\|{\bf e}_m^R-{\bf e}_{m-1}^{R}\|_{\mathbb L^2}^2\Big] + C_R(\delta)\tau^2\mathbb{E}\Big[\sup_{t \in [\mathfrak{t}_{m-1}^R, \mathfrak{t}_m^R]}\big\|\nabla\Phi(W(t) - W(\mathfrak{t}_m^R))\big\|_{\mathbb L^2}^2\Big]\notag \\ & \le \delta \mathbb{E}\Big[\|({\bf e}_m^R-{\bf e}_{m-1}^{R})\|_{\mathbb L^2}^2\Big] + C_{\varepsilon, R, \Phi}(\delta)\tau^{3-\varepsilon}\,, 
\end{align}
where we have used Doob's maximal inequality and  $\Phi\in L_2(\mathfrak U; \mathbb H^{2}(\mt))$. 
Similarly,
\begin{align*}
    &\mathbb{E}\big[\mathcal{E}_2\big]  = \!\mathbb{E}\Big[\int_{\mathfrak{t}_{m-1}^R}^{\mathfrak{t}_{m}^R}\int_{\mt}\! \big(\bar{\bf {Y}}_{m-1}^R - \big({\bf y}(\mathfrak{t}_{m}^R) + \Phi W(\mathfrak{t}_{m-1}^R)\big)\big) \otimes \Phi( W(t) -W(\mathfrak{t}_m^R)) :\nabla{\bf e}_{m-1}^R\, \mathrm{dx}\,\mathrm{dt} \Big] \notag\\
    &\le \mathbb E \bigg[\tau \underset{t \in [\mathfrak{t}_{m-1}^R, \mathfrak{t}_{m-1}^R] }{\sup}\Big(\|\bar{\bf {Y}}_m^R - \big({\bf y}(\mathfrak{t}_{m-1}^R) + \Phi W(\mathfrak{t}_{m-1}^R)\big)\|_{\mathbb L^2} \|\Phi( W(t) -W(\mathfrak{t}_m^R))\|_{\mathbb H^{2}} \Big)\|\nabla {\bf e}_{m-1}^R\|_{\mathbb L^2}\bigg]\notag \\
    & \le \delta\tau\mathbb{E}\big[\|\nabla {\bf e}_{m-1}^R\|_{\mathbb L^2}^2\big] + C_R(\delta)\tau \mathbb{E}\Big[\sup_{t \in [\mathfrak{t}_{m-1}^R, \mathfrak{t}_m^R]} \|\Phi( W(t) -W(\mathfrak{t}_m^R))\|_{\mathbb H^{2}}^2 \notag \\& \hspace{5cm} \times \big(\tau^2\|\partial_t{\bf y}(\cdot \wedge \mathfrak{t}_m^R)\|_{\mathbb L^2}^2 + \|\Phi(W(t) -W(\mathfrak{t}_m^R))\|_{\mathbb L^2}^2\big)\Big]\notag \\
    & \le \delta\tau\mathbb{E}\big[\|\nabla {\bf e}_{m-1}^R\|_{\mathbb L^2}^2\big] + C_R(\delta)\tau \Big(\mathbb{E}\Big[\sup_{t \in [\mathfrak{t}_{m-1}^R, \mathfrak{t}_m^R]} \|\Phi( W(t) -W(\mathfrak{t}_m^R))\|_{\mathbb H^{2}}^4\big]\Big)^{\frac{1}{2}} \notag \\& \hspace{1cm} \times \bigg[\tau^2\Big(\mathbb{E}\Big[\sup_{t \in [\mathfrak{t}_{m-1}^R, \mathfrak{t}_m^R]}\|\partial_t{\bf y}(\cdot \wedge \mathfrak{t}_m^R)\|_{\mathbb L^2}^4 \Big]\Big)^{\frac{1}{2}} + \Big(\mathbb{E}\Big[\sup_{t \in [\mathfrak{t}_{m-1}^R, \mathfrak{t}_m^R]}\|\Phi(W(t) -W(\mathfrak{t}_m^R))\|_{\mathbb L^2}^4\Big]\Big)^{\frac{1}{2}}\bigg]\notag\\
    & \le \delta\tau\mathbb{E}\big[\|\nabla {\bf e}_{m-1}^R\|_{\mathbb L^2}^2\big] + C_{\varepsilon, R, \Phi}\tau^{3-\epsilon}.
\end{align*}

\noindent
We re-arrange to get
\begin{align}
&\tau\mathcal C\big(\Phi (W(\mathfrak{t}_{m-1}^R)-Q_{m-1,R}^W)
,\,
{\bf Y}_{m}^R,\,
{\bf e}_{m}^R
\big)
\notag \\ & = \tau\mathcal C\big(\Phi (W(\mathfrak{t}_{m-1}^R)-Q_{m-1,R}^W)
,\,
{\bf Y}_{m}^R,\,
{\bf e}_{m}^R - {\bf e}_{m-1}^R 
\big)\notag \\ & \quad +  \tau\mathcal C\big(\Phi (W(\mathfrak{t}_{m-1}^R)-Q_{m-1,R}^W)
,\,
{\bf Y}_{m}^R - ({\bf y}(\mathfrak{t}_{m}^R) + \Phi W(\mathfrak{t}_{m}^R)),\, {\bf e}_{m-1}^R  
\big)\notag \\
&\quad  + \tau\mathcal C\big(\Phi (W(\mathfrak{t}_{m-1}^R)-Q_{m-1,R}^W)
,\,
{\bf y}(\mathfrak{t}_{m}^R) + \Phi W(\mathfrak{t}_{m}^R),\, {\bf e}_{m-1}^R  
\big)\, =: \sum_{i=1}^3\mathcal{G}_{i}^{m}\notag
\end{align}
Note that $\mathbb{E}\Big[\sum_{n=1}^{m}\mathcal{G}_3^n\Big] = 0.$ Recall that $Q_{m,R}^{W} = \frac{1}{\tau}\int_{\mathfrak{t}_{m-1}^R}^{\mathfrak{t}_{m}^R}\Phi W(s)\,\mathrm{ds}$. Now proceeding with similar estimations as in $\mathcal{E}_1^m$ and $\mathcal{E}_2^m$, we get
\begin{align}\label{eq:Econv-Young-2}
    &\mathbb{E}\big[\mathcal{G}_1\big] \le \delta\mathbb{E}\big[\|{\bf e}_m^R -{\bf e}_{m-1}^R\|_{\mathbb L^2}^2\big]  + C_{\varepsilon, R, \Phi}\tau^{3-\varepsilon}\,,\\
    & \mathbb{E}\big[\mathcal{G}_2\big] \le \delta\tau\mathbb{E}\big[\|\nabla {\bf e}_{m-1}^R\|_{\mathbb L^2}^2\big] + C_{\varepsilon, R, \Phi}\tau^{3-\epsilon}\,.\notag
\end{align}

\noindent
By combining \eqref{eq:err-local-raw-1}--\eqref{eq:Econv-Young-2}, and choosing
\(\delta\in(0,\nu)\), we absorb the gradient terms into the left-hand side and in the resulting inequality multiplying by \(2\) and summing from $j = 0$ to $j= k$ for $k \in \{1,\dots,m\}$ and invoking Lemma \ref{lem:R-bound-1}, we get (recall that ${\bf e}_1^R= 0$)
\begin{align}\label{eq:sum-local-k-1}
\begin{aligned}
\sup_{1 \le m \le M}\mathbb{E}\big[\|{\bf e}_{m}^R\|_{\mathbb {L}^2}^2\big]
&+
\nu\tau\mathbb{E}\Big[
\sum_{j=0}^{m}
\|\nabla{\bf e}_{j}^R\|_{\mathbb {L}^2}^2\Big]
\le C_{\varepsilon,R, \Phi}\tau^{ 2-\varepsilon} + C_r\tau\mathbb{E}\Big[\sum_{j=1}^{M}\|{\bf e}_j^R\|_{\mathbb L^2}^2\Big]\,.
\end{aligned}
\end{align}
An application of the discrete Gronwall lemma gives
\begin{align}
    \sup_{1 \le m \le M}\mathbb{E}\big[\|{\bf e}_{m}^R\|_{\mathbb {L}^2}^2\big] \le  C_{\varepsilon,R, \Phi}\tau^{ 2-\varepsilon} \,,
\end{align}
which conclude the proof of the theorem.\hfill$\Box$

\section{The Crank--Nicolson Scheme}\label{theo-2}
As in the previous section, we consider the random PDE
\eqref{eq:SNSy} and aim at a temporal discretization. However, we employ now a scheme of Crank--Nicolson type to obtain a higher convergence rate.


\subsection{Time--discretization scheme}\label{sec: time-discretization}

Let $\{t_n=n\tau\}_{n=0}^N$ be a uniform partition of $[0,T]$ with time
step $\tau>0$. For $n\ge0$, we define the discrete midpoint and the BDF2
extrapolated values by
\[
{{\bf y}}_{n+\frac12} := \tfrac12({\bf y}_{n+1}+{\bf y}_n), 
\qquad 
{\bf y}_{\star,n+\frac12}:=\tfrac32\,{\bf y}_n-\tfrac12\,{\bf y}_{n-1},
\quad n\ge 0,\qquad {\bf y}_{-1}:={\bf y}_0.
\]
On each sub-interval $I_n=[t_n,t_{n+1}]$ we introduce the
fine mesh
\[
t_{n,\ell}=t_n+\ell\,\tau^2,\qquad \ell=0,\dots,M,\qquad M=\tau^{-1}.
\]

We now describe the various quadrature and approximation ingredients
used in the construction of the scheme.

\begin{itemize}[leftmargin=2em]
  \item[(i)] \textbf{Brownian quadrature on a fine mesh.}
  We approximate the time average of the Brownian motion on $I_n$,
  \[
    \mathcal Q_n^W:=\frac1{\tau}\int_{t_n}^{t_{n+1}} W(s)\,\mathrm{d}s,
  \]
  by the Riemann sum
  \[
    \mathcal I_n^W:=\sum_{\ell=1}^{M}\tau\, W(t_{n,\ell}).
  \]
  The mean-square quadrature error satisfies (see Lemma~\ref{lem:BM-quadrature-sup})
  \begin{equation}\label{eq:QWvsIW}
  \mathbb E\!\left[\sup_{0\le\,n\le N-1}\|\Phi\mathcal Q_n^W-\Phi\mathcal I_n^W\|_{\mathbb{L}^2}^2\right]
  \le\ C_{\Phi, \varepsilon}\,\tau^{3-\varepsilon},
  \end{equation}
  where $\varepsilon>0$ is arbitrary.

  \item[(ii)] \textbf{Matrix-valued Brownian triple integral.}
  We define the (matrix-valued) triple integral
  \begin{equation*}
  \mathcal{Q}^{W^2}_n
  :=\frac{1}{\tau^3}\int_{t_n}^{t_{n+1}}\!\int_{t_n}^{t_{n+1}}\!\int_{t_n}^{t_{n+1}}
      \bigl(\Phi W(t)-\Phi W(s)\bigr)\otimes\bigl(\Phi W(t)-\Phi W(r)\bigr)\,
      \mathrm{d}s\,\mathrm{d}t\,\mathrm{d}r.
  \end{equation*}
  Its discrete approximation on the fine mesh is
  \begin{align*}
  \mathcal{I}^{W^2}_{n}
  &:= \tau^3\sum_{\ell=1}^{M}\sum_{k=1}^{M}\sum_{j=1}^{M}
      \bigl(\Phi W(t_{n,\ell})-\Phi W(t_{n,k})\bigr)\otimes
      \bigl(\Phi W(t_{n,\ell})-\Phi W(t_{n,j})\bigr)\\
      &=\tau\sum_{\ell=1}^{M}
      \bigl(\Phi W(t_{n,\ell})-\Phi \mathcal{I}_n^W\bigr)\otimes
      \bigl(\Phi W(t_{n,\ell})-\Phi \mathcal{I}_n^W\bigr),
  \end{align*}
  which satisfies the mean-square bound (see Lemma~\ref{lem:triple-BM-sup})
  \begin{equation}\label{eq:QW2vsIW2}
  \mathbb{E}\big[\sup_{0\le n\le N-1}\|\mathcal{Q}_n^{W^2}-\mathcal{I}_n^{W^2}\|_{{\mathbb{L}^2}}^2\big]
  \le C_{\varepsilon, \Phi}\,\tau^{3-\varepsilon}
  \end{equation}
  for any $\varepsilon>0$
  \item[(iii)] \textbf{Integral involving the convective term.}
  We denote the time average of the convective term on $I_n$ by
  \[
  \mathcal{Q}_n^{{\bf y}^2}:=\frac{1}{\tau}\int_{t_n}^{t_{n+1}}
  \diver\big(({\bf y}(t)+\Phi W(t))\otimes ({\bf y}(t)+\Phi W(t))\big)\,\mathrm{d}t.
  \]
  Its discrete approximation is chosen as
  \[
  \mathcal{I}_{n}^{{\bf y}^2}
  :=\diver\Big(\big(\bar{{\bf y}}_{\star,n+\frac{1}{2}}+\Phi\mathcal{I}_n^W\big)
               \otimes\big(\bar{{\bf y}}_{n+\frac{1}{2}}+\Phi \mathcal{I}_n^W\big)\Big)
    + \diver \mathcal{I}_{n}^{W^2},
  \]
  where
  \[
  \bar{{\bf y}}_{n+\frac12}:=\frac{{\bf y}(t_n)+{\bf y}(t_{n+1})}{2},
  \qquad 
  \bar{\bf y}_{\star,n+\frac12}:=\frac{3}{2}{\bf y}(t_n)-\frac{1}{2}{\bf y}(t_{n-1}),
  \qquad {\bf y}(t_{-1}):={\bf y}_0.
  \]

  \item[(iv)] \textbf{Integral involving the diffusive term.}
  We denote the averaged diffusive term on $I_n$ by
  \[
  \mathcal{Q}_n^{\Delta {\bf y}}
  :=\frac{1}{\tau}\int_{t_n}^{t_{n+1}}\Delta \big[{\bf y}(t)+\Phi W(t)\big]\,\mathrm{d}t,
  \]
  and approximate it by
  \[
  \mathcal{I}_{n}^{\Delta {\bf y}}
  :=\Delta\big[\bar{{\bf y}}_{n+\frac12}+\Phi\mathcal{I}_n^W\big].
  \]
\end{itemize}

The above ingredients motivate the following time--semi--discretization
of the random PDE~\eqref{req:stochastic-NS}.


\begin{scheme}
\medskip\noindent
\textbf{Linear Crank--Nicolson scheme for the random PDE~\eqref{req:stochastic-NS}.} For all $n\ge 0$, given ${\bf y}_n,{\bf y}_{n-1}\in\mathbb V$ (with ${\bf y}_{-1}={\bf y}_0$), find
$({\bf y}_{n+1},p_{n+1})\in\mathbb H^1(\mt)\times\Qspace(\mt)$ such that, for all
$({\boldsymbol{\varphi}},q)\in\mathbb H^1(\mt)\times\Qspace(\mt)$,
\begin{align}\label{scheme:main0}
\begin{cases}
\displaystyle
\Big(\frac{{\bf y}_{n+1}-{\bf y}_n}{\tau},{\boldsymbol{\varphi}}\Big)
+ \mathcal C\!\big({\bf y}_{\star,n+\frac12}+\Phi\mathcal I_n^W,\;
                   {{\bf y}}_{n+\frac12}+\Phi\mathcal I_n^W,\;
                   {\boldsymbol{\varphi}}\big)
 -\big(\mathcal{I}_n^{W^2},\nabla{\boldsymbol{\varphi}}\big)
\\[1ex]\qquad\qquad\qquad\qquad\qquad\qquad
+ \nu\,\big(\nabla({{\bf y}}_{n+\frac12}+\Phi\mathcal I_n^W),\nabla{\boldsymbol{\varphi}}\big)
 - (p_{n+1},\mathrm{div}\,{\boldsymbol{\varphi}})
 =0,\\[1ex]
(\mathrm{div}\, {\bf y}_{n+1},q)=0.
\end{cases}
\end{align}
\end{scheme}

Note that the convective term in \eqref{scheme:main0} is linear in
${\bf y}_{n+1}$; the nonlinearity appears only through the extrapolated,
divergence-free advecting field ${\bf y}_{\star,n+\frac12}+\Phi\mathcal I_n^W$.

\subsection{A numerical scheme for the SPDE~\eqref{eq:SNS}}
\label{snse_scheme}

We now rewrite the time-discrete random PDE scheme in terms of the
original stochastic variable $X$. For all $n\ge 0$ we define the
Brownian quadrature correction terms
\[
\mathcal{J}_{\star,n+\frac{1}{2}}
:=\mathcal{I}_n^W-\frac{3}{2}W(t_n)+\frac{1}{2}W(t_{n-1}),
\qquad W(t_{-1})=W(0)=0,
\]
\[
\mathcal{J}_{n+\frac12}
:=\mathcal{I}_n^W-\frac{1}{2}\big(W(t_{n+1})+W(t_{n})\big),
\qquad
\Delta_{n+1}W:=W(t_{n+1})-W(t_{n}).
\]
We then define the discrete transformation
\begin{equation}\label{discrete transformation}
{\bf u}_{n}:={\bf y}_n+\Phi W(t_n),
\qquad n\ge 0.
\end{equation}
By construction, the sequence $\{({\bf u}_n,p_n)\}_{n\ge 1}$ satisfies the
following time--semi--discrete scheme for the original SPDE
\eqref{eq:SNS}.

\begin{scheme}
\medskip\noindent
\textbf{Modified Crank--Nicolson scheme for the SPDE~\eqref{eq:SNS}.}  For all $n\ge 0$, given ${\bf u}_n,{\bf u}_{n-1}\in\Vspace$ (with ${\bf u}_{-1}={\bf u}_0$),
find $({\bf u}_{n+1},p_{n+1})\in\mathbb H^1(\mt)\times\Qspace(\mt)$ such that, for all
$({\boldsymbol{\varphi}},q)\in\mathbb H^1(\mt)\times\Qspace(\mt)$,
\begin{align}\label{scheme:main}
\begin{cases}
\displaystyle
\Big(\frac{{\bf u}_{n+1}-{\bf u}_n}{\tau},{\boldsymbol{\varphi}}\Big)
+ \mathcal C\!\big({\bf u}_{\star,n+\frac12}+\Phi\mathcal J_{\star,n+\frac{1}{2}},\;
                   {\bf u}_{n+\frac12}+\Phi\mathcal J_{n+\frac{1}{2}},\;
                   {\boldsymbol{\varphi}}\big)
 -\big(\mathcal{I}_n^{W^2},\nabla{\boldsymbol{\varphi}}\big)
\\[1ex]\qquad\qquad
+ \nu\,\big(\nabla({\bf u}_{n+\frac12}+\Phi\mathcal J_{n+\frac{1}{2}}),\nabla{\boldsymbol{\varphi}}\big)
 - (p_{n+1},\mathrm{div}\,{\boldsymbol{\varphi}})
 =\bigl(\Phi,{\boldsymbol{\varphi}}\bigr)\frac{\Delta_{n+1}W}{\tau},\\[1ex]
(\mathrm{div}\, {\bf u}_{n+1},q)=0,
\end{cases}
\end{align}
where, analogously to ${{\bf y}}_{n+\frac12}$ and ${\bf y}_{\star,n+\frac12}$, we set
\[
{\bf u}_{n+\frac12}:=\tfrac12({\bf u}_{n+1}+{\bf u}_n),
\qquad
{\bf u}_{\star,n+\frac12}:=\tfrac32 {\bf u}_n-\tfrac12 {\bf u}_{n-1},
\qquad {\bf u}_{-1}:={\bf u}_0.
\]
\end{scheme}

\medskip

For practical implementation, it is convenient to summarize the scheme
\eqref{scheme:main} in algorithmic form; see Algorithm~\ref{alg:MCN-SPDE}.

\begin{algorithm}[!htbp]
\caption{Modified Crank--Nicolson scheme for the SPDE~\eqref{eq:SNS}}
\label{alg:MCN-SPDE}
\begin{algorithmic}[1]
\State \textbf{Input:} Final time $T>0$, time step $\tau>0$, $N:=T/\tau$,
       initial velocity ${\bf u}_0\in\Vspace$.
\State \textbf{Initialization:} Set ${\bf u}_{-1}:={\bf u}_0$, $W(t_0):=0$.
\For{$n=0,\dots,N-1$}
  \State Generate the Brownian increment
         $\Delta_{n+1}W:=W(t_{n+1})-W(t_n)$.
  \State Construct the fine mesh
         $t_{n,\ell}=t_n+\ell\,\tau^2$, $\ell=0,\dots,M$, $M=\tau^{-1}$.
  \State Compute the Brownian quadratures
  \[
    \mathcal I_n^W:=\sum_{\ell=1}^{M}\tau\,W(t_{n,\ell}),\qquad
    \mathcal{I}_n^{W^2}
    := \tau\sum_{\ell=1}^{M}
       \bigl(\Phi W(t_{n,\ell})-\Phi\mathcal{I}_n^W\bigr)\otimes
       \bigl(\Phi W(t_{n,\ell})-\Phi \mathcal{I}_n^W\bigr).
  \]
  \State Define the correction terms
  \[
    \mathcal{J}_{\star, n+\frac{1}{2}}
    :=\mathcal{I}_n^W-\frac{3}{2}W(t_n)+\frac{1}{2}W(t_{n-1}),\qquad
    \mathcal{J}_{n+\frac12}
    :=\mathcal{I}_n^W-\frac{1}{2}\big(W(t_{n+1})+W(t_{n})\big).
  \]
  \State Form the midpoint and extrapolated velocities
  \[
    {\bf u}_{n+\frac12}:=\tfrac12({\bf u}_{n+1}+{\bf u}_n),\qquad
    {\bf u}_{\star,n+\frac12}:=\tfrac32 {\bf u}_n-\tfrac12 {\bf u}_{n-1}.
  \]
  \State Find $({\bf u}_{n+1},p_{n+1})\in\Vspace(\mt)\times\Qspace(\mt)$ such that, for all
         $(\boldsymbol{\varphi},q)\in\Vspace(\mt)\times\Qspace(\mt)$,
  \[
  \begin{aligned}
    \Big(\tfrac{{\bf u}_{n+1}-{\bf u}_n}{\tau},{\boldsymbol{\varphi}}\Big)
    &+ \mathcal C\!\big({\bf u}_{\star,n+\frac12}+\Phi\mathcal J_{\star,n+\frac{1}{2}},\;
                        {\bf u}_{n+\frac12}+\Phi\mathcal J_{n+\frac{1}{2}},\;
                        {\boldsymbol{\varphi}}\big)
     -\big(\mathcal{I}_n^{W^2},\nabla{\boldsymbol{\varphi}}\big)\\
    &\quad+ \nu\,\big(\nabla({\bf u}_{n+\frac12}+\Phi\mathcal J_{n+\frac{1}{2}}),\nabla{\boldsymbol{\varphi}}\big)
     - (p_{n+1},\mathrm{div}\,{\boldsymbol{\varphi}})
     =\bigl(\Phi,{\boldsymbol{\varphi}}\bigr)\frac{\Delta_{n+1}W}{\tau},\\[0.5ex]
    (\mathrm{div}\, {\bf u}_{n+1},q)&=0.
  \end{aligned}
  \]
\EndFor
\State \textbf{Output:} Approximations $\{({\bf u}_n,p_n)\}_{n=1}^N$ to the
       velocity and pressure of \eqref{eq:SNS}.
\end{algorithmic}
\end{algorithm}
\subsection{Strong rate of convergence}

For $R\gg1$ we recall the definitions of $\mathfrak t_R^{\tt d}$, $\mathfrak t_m^R$, $\tau_m^R$, $\bfy_m^R$ and $\mathfrak m_R$ from \eqref{eq:tRd-1}, \eqref{eq:tmR} and \eqref{tdiscrtilde}.
Analogously, we introduce the stopped quadrature quantities
\(\mathcal Q_{n,R}^W\), \(\mathcal I_{n,R}^W\),
\(\mathcal Q_{n,R}^{W^2}\), and \(\mathcal I_{n,R}^{W^2}\) as the
corresponding quadratures computed along the stopped Brownian path
\(W(\cdot\wedge\mathfrak t_R^{\tt d})\).

In the following error analysis  we require that the pathwise solution \({\bf y}\) of
\eqref{req:stochastic-NS} satisfies
\begin{equation}\label{regularity}
\begin{aligned}
{\bf y}(\cdot\wedge\mathfrak t_R)
&\in
L^2
\big(\Omega;C([0,T];\mathbb H^2)\big)
\cap
{L}^4\big(\Omega;C^1([0,T];\mathbb V)\big),
\\
{\bf y}(\cdot\wedge\mathfrak t_R)
&\in
{L}^2\big(\Omega;C^{1,1/2-\varepsilon}([0,T];\mathbb V)\big).
\end{aligned}
\end{equation}
As a consequence of Lemma~\ref{lem:regadditive}, this regularity holds
under sufficiently smooth initial data and noise, for instance if
\[
{\bf u}_0\in
L^4(\Omega;\mathbb H^4)
\cap
L^{36}(\Omega;\mathbb V),
\quad
\Phi\in L_2(\mathfrak U;\mathbb H^4).
\]
Our main result is the following mean-square error estimate for the velocity. 
\begin{theorem}\label{thm:main-local}
Assume that $\bfu_0\in L^4(\Omega;\mathbb H^4(\mt))
\cap
L^{36}(\Omega;\mathbb V(\mt))$ and that $\Phi\in L_2(\mathfrak U;\mathbb V\cap \mathbb H^4(\mt))$. Let $\bfy$ be the solution
to \eqref{eq:SNSy}, and $(\bfy_n)_{n=1}^N$ be the solution to \eqref{scheme:main0}.
 
 Then, for every
\(\varepsilon>0\) and every \(R>0\), there exists a constant
\(C_{\varepsilon,R,T}>0\), independent of \(\tau\) and \(N\), such that
\begin{align}\label{eq:main-local}
\begin{aligned}
&\mathbb E\left[
\sup_{0\le n\le N-1}
\|{\bf y}(\mathfrak t_{n+1}^R)-{\bf y}_{n+1}^R\|_{\mathbb L^2}^2
\right]
\\
&\qquad
+
\nu\tau\,
\mathbb E\left[
\sup_{0\le k\le N-1}
\sum_{n=0}^{k}
\|\nabla{\bf e}_{n+\frac12}^R\|_{\mathbb L^2}^2
\right]
\le
C_{\varepsilon,R,T}\tau^{3-\varepsilon}.
\end{aligned}
\end{align}
\end{theorem}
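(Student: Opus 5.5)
We follow the energy method of the proof of Theorem \ref{thm:main}, now tested at the midpoint. We work on the stopped time grid $\mathfrak t_n^R$ and write ${\bf e}_n^R:={\bf y}(\mathfrak t_n^R)-{\bf y}_n^R$ and ${\bf e}_{n+\frac12}^R:=\tfrac12({\bf e}_{n+1}^R+{\bf e}_n^R)$.

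\textbf{Step 1: error equation.} We integrate the random PDE \eqref{eq:SNSy} over $[\mathfrak t_n^R,\mathfrak t_{n+1}^R]$ and divide by $\tau$. This gives an identity of the form $\tau^{-1}({\bf y}(\mathfrak t_{n+1}^R)-{\bf y}(\mathfrak t_n^R),\boldsymbol\varphi)+\langle\mathcal Q_{n,R}^{{\bf y}^2},\boldsymbol\varphi\rangle+\nu(\nabla\mathcal Q_{n,R}^{\Delta{\bf y}}\text{-term})=0$ for divergence-free $\boldsymbol\varphi$; the pressure drops out there. We subtract the stopped scheme \eqref{scheme:main0}. This produces a consistency residual $\mathcal R_n^R$, consisting of four pieces:
\begin{itemize}
\item[(a)] $\mathcal Q^{\Delta{\bf y}}_{n,R}-\mathcal I^{\Delta{\bf y}}_{n,R}$, i.e.\ the trapezoidal error of the $C^{1,1/2-\varepsilon}$ function ${\bf y}$ together with $\Delta\Phi(\mathcal Q^W_{n,R}-\mathcal I^W_{n,R})$;
\item[(b)] the convective consistency error. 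Here we expand ${\bf Y}(t)={\bf y}(t)+\Phi W(t)$ around $\bar{\bf y}_{\star,n+\frac12}+\Phi\mathcal Q^W_n$. The quadratic Brownian fluctuation averages exactly to $\diver\mathcal Q^{W^2}_n$, because the factor $\frac1{\tau^3}\iiint$ reproduces $\frac1\tau\int(\Phi W-\Phi\mathcal Q^W)^{\otimes2}$. The mixed ${\bf y}$--$W$ fluctuation terms carry at least one factor $\tau$ from $\partial_t{\bf y}$ and one factor $\tau^{1/2-\varepsilon}$ from $W$;
\item[(c)] the extrapolation error $\bar{\bf y}_{\star,n+\frac12}-{\bf y}(t_{n+\frac12})=O(\tau^{3/2-\varepsilon})$, which again uses $C^{1,1/2-\varepsilon}$;
\item[(d)] the quadrature errors from \eqref{eq:QWvsIW} and \eqref{eq:QW2vsIW2}.
\end{itemize}

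\textbf{Step 2: residual bound.} The goal is $\mathbb E[\tau\sum_n\|\mathcal R_n^R\|_{\mathbb V^{-1}}^2]\le C_{\varepsilon,R}\tau^{3-\varepsilon}$. We bound the trilinear terms as in Lemma \ref{lem:R-bound-1}, using $\mathbb H^2\hookrightarrow\mathbb L^\infty$ and the localization bounds $\|{\bf y}\|_{\mathbb H^2},\|\Phi W\|_{\mathbb H^2}\le CR$ before $\mathfrak t_R^{\tt d}$. Regularity \eqref{regularity} enters through Lemma \ref{lem:regadditive}(c). The pathwise suprema over $n$ of the Brownian terms come from Lemmas \ref{lem:BM-quadrature-sup} and \ref{lem:triple-BM-sup}.

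The only term that is not of size $\tau^{3/2}$ pathwise is the linear Brownian fluctuation $\mathcal C(\cdot,\Phi(W(t)-\mathcal Q^W_n),\cdot)$ and its symmetric counterpart, which are of size $\tau^{1/2}$. These we do \emph{not} put into $\mathcal R_n^R$. Instead we handle them as $\mathcal E_i,\mathcal G_i$ were handled in Section \ref{sec:IE}. Such a term has zero time average against any $\mathfrak F_{t_n}$-measurable argument, since $\int_{I_n}(W(t)-\mathcal Q^W_n)\,\mathrm dt=0$. Hence it vanishes exactly once its other arguments are frozen at time $t_n$. The remainder involves either ${\bf y}(t)-{\bf y}(t_n)=O(\tau)$, giving $O(\tau^{3/2})$, or ${\bf e}_{n+1}^R-{\bf e}_n^R$, which is absorbed by the increment term. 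Terms of the latter type whose coefficient is $\mathfrak F_{t_n}$-measurable but involve $W(t)-W(t_n)$ are martingale increments; their sum is stopped at $\mathfrak m_R$ and controlled by Burkholder--Davis--Gundy.

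\textbf{Step 3: energy estimate and Gronwall.} We test the error equation with ${\bf e}_{n+\frac12}^R$. The left side gives $\frac1{2}(\|{\bf e}_{n+1}^R\|^2-\|{\bf e}_n^R\|^2)+\nu\tau\|\nabla{\bf e}_{n+\frac12}^R\|^2$. The convective error splits as follows. The part with the discrete advecting field vanishes by skew-symmetry, as in \eqref{err-1}. The part with the extrapolated error $\tfrac32{\bf e}_n^R-\tfrac12{\bf e}_{n-1}^R$ against the bounded continuous field is controlled by $C_R(\|{\bf e}_n^R\|^2+\|{\bf e}_{n-1}^R\|^2)+\delta\|\nabla{\bf e}_{n+\frac12}^R\|^2$, in the same way as \eqref{eq:Econv-Young-1}. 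We then sum, take $\sup_k$ before the expectation (the reason for the pathwise-sup bounds and for BDG on the martingale part), and apply the discrete Gronwall lemma. This yields \eqref{eq:main-local}.

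\textbf{Main obstacle.} We expect the hardest step to be the $\tau^{1/2}$-order Brownian fluctuation terms in Step 2. Obtaining $\tau^{3-\varepsilon}$ for them needs exact cancellation (the zero-mean structure and the $\mathcal Q^{W^2}$ correction) rather than crude bounds. The martingale argument must also be compatible with the random stopping index and with the $\sup_n$ inside the expectation. We would first try to bound $\mathbb E\sup_k|\sum_{n\le k\wedge\mathfrak m_R}M_n|$ via BDG by $(\mathbb E\sum_n\tau^{3}\|{\bf e}_n^R\|^2)^{1/2}$, and then apply Young's inequality.
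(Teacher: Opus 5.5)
Your overall architecture is the paper's:
\begin{itemize}
\item a residual built from exact time averages on the stopped intervals;
\item exact cancellation of the quadratic Brownian fluctuation by the $\mathcal Q^{W^2}_n$ correction;
\item testing with ${\bf e}^R_{n+\frac12}$ and using skew-symmetry for the discrete advecting field;
\item the bound $C_R\|{\bf e}^R_{\star,n+\frac12}\|_{\mathbb L^2}\|\nabla{\bf e}^R_{n+\frac12}\|_{\mathbb L^2}$ for the extrapolated error;
\item Gronwall, with Lemmas~\ref{lem:deltastar-average}, \ref{lem:BM-quadrature-sup} and \ref{lem:triple-BM-sup} supplying the $\tau^{3/2-\varepsilon}$ sizes.
\end{itemize}
The problem is your treatment of the linear Brownian fluctuation $\mathcal C(\cdot,\Phi(W(t)-\mathcal Q^W_n),\cdot)$ and its counterpart, which you call the main obstacle. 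As proposed, that step does not work, for two reasons.

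First, there is no increment term into which ${\bf e}^R_{n+1}-{\bf e}^R_n$ could be absorbed. Testing the Crank--Nicolson error equation with the midpoint gives exactly $\frac12(\|{\bf e}^R_{n+1}\|^2_{\mathbb L^2}-\|{\bf e}^R_n\|^2_{\mathbb L^2})$, as your own Step~3 says. Moreover, only $\nabla{\bf e}^R_{n+\frac12}$ is controlled, not $\nabla({\bf e}^R_{n+1}-{\bf e}^R_n)$. So the Euler trick behind $\mathcal E_1^m,\mathcal G_1^m$ has no analogue here.

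Second, the BDG bound you plan, $(\mathbb E\sum_n\tau^3\|{\bf e}^R_n\|^2_{\mathbb L^2})^{1/2}\le\delta\,\mathbb E\sup_n\|{\bf e}^R_n\|^2_{\mathbb L^2}+C_\delta T\tau^2$, leaves an additive $\tau^2$. Even bootstrapping $E\le C\tau^{3-\varepsilon}+C\tau E^{1/2}$ only gives $E\lesssim\tau^2$, i.e.\ order $1$. Martingale increments of size $\tau^{3/2}$ with $O(1)$ coefficients are exactly the Euler barrier, so any surviving martingale term of this type would destroy the rate $3/2$.

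What you are missing is that the cancellation is pathwise and has nothing to do with measurability. Since $\mathcal C$ is linear in each argument and $\int_{\mathfrak t_n^R}^{\mathfrak t_{n+1}^R}(W(t)-\mathcal Q^W_{n,R})\,\mathrm dt=0$, we have
\[
\frac1\tau\int_{\mathfrak t_n^R}^{\mathfrak t_{n+1}^R}\mathcal C\big(A,\Phi(W(t)-\mathcal Q^W_{n,R}),\boldsymbol\varphi\big)\,\mathrm dt
=\frac1\tau\int_{\mathfrak t_n^R}^{\mathfrak t_{n+1}^R}\mathcal C\big(\Phi(W(t)-\mathcal Q^W_{n,R}),A,\boldsymbol\varphi\big)\,\mathrm dt=0
\]
for \emph{every} $A,\boldsymbol\varphi$ that do not depend on $t$ on the interval, whether or not they are $\mathfrak F_{t_n}$-measurable.

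Both of your relevant arguments are such constants. Your expansion point $\bar{\bf y}_{\star,n+\frac12}+\Phi\mathcal Q^W_n$ is one; the paper uses $\bar{\bf Y}^R_{n+\frac12}$, and the two differ by an $O(\tau^{3/2-\varepsilon})$ extrapolation error. The test function ${\bf e}^R_{n+\frac12}$ is the other. So these terms vanish identically and leave nothing to freeze, absorb, or sum as a martingale.

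This identity is the underbraced zero term in the paper's expansion of $\mathcal R^R_{n+\frac12}$. It is also why $\mathcal T_{5,R,n}$ collapses by linearity to $\mathcal C(\bar{\bf Y}^R_{n+\frac12},\bar{\bf Y}^R_{n+\frac12}-{\bf Y}^R_{n+\frac12},\boldsymbol\varphi)$, which is a trapezoidal plus Brownian-quadrature error. Exploiting it is the whole purpose of the non-adapted average $\Phi\mathcal I^W_n$ in the scheme.

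Consequently the modified residual $\widetilde{\mathcal R}^R_{n+\frac12}$ is entirely of size $\tau^{3/2-\varepsilon}$. The only exception is the first step, where $\mathcal T_{6,R,0}=O(\tau)$ costs only $\tau\cdot\tau^2$. No BDG is needed, and putting $\sup_k$ inside the expectation is trivial, because the summed one-step inequality has a nonnegative right-hand side that is monotone in $k$. With this identity in place of your martingale plan, your argument coincides with the paper's.
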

The remainder of this subsection is devoted to the proof of Theorem~\ref{thm:main-local} presented
in the following subsections. 
\subsubsection{Consistency residual} For all $n\in\{0,1,\dots,N-1\}$, we introduce the shorthand notation
\[ {\bf {Y}}^R(t):={\bf y}(t\wedge\mathfrak{t}_{R}^d)+\Phi W(t\wedge\mathfrak{t}_R^d),\qquad \bar{\bf y}_{n+\frac{1}{2}}^R:=\frac{1}{\tau}\int_{\mathfrak{t}_n^R}^{\mathfrak{t}^R_{n+1}}{\bf y}(t)\,\mathrm{d}t,\qquad \bar{{\bf Y}}_{n+\frac12}^R:= \bar{\bf y}_{n+\frac{1}{2}}^R+\Phi\,\mathcal Q_{n,R}^W, \]
and 
\[
{{\bf Y}}^R_{n+\frac12}
:=
\frac{
{\bf y}(t_n\wedge\mathfrak t_R^{\tt d})
+
{\bf y}(t_{n+1}\wedge\mathfrak t_R^{\tt d})
}{2}
+
\Phi\mathcal I_{n,R}^W,
\qquad
\bar{\bf y}^{\,R}_{\star,n+\frac12}
:=
\frac{3}{2}{\bf y}(t_n\wedge\mathfrak t_R^{\tt d})
-
\frac{1}{2}{\bf y}(t_{n-1}\wedge\mathfrak t_R^{\tt d}) .
\]
Here we use the convention
\[
  {\bf y}(t_{-1}\wedge\mathfrak t_R^{\tt d})={\bf y}_0 .
\]
For each time level \(n=0,\dots,N-1\), we define the \emph{consistency residual} \(\mathcal{R}_{n+\frac12}\in \Vdual\) by: 
\begin{align}\label{eq:R-def} \langle \mathcal{R}^R_{n+\frac12},\boldsymbol{\varphi}\rangle &:=\frac{1}{\tau}\!\int_{\mathfrak{t}_{n}^R}^{\mathfrak{t}_{n+1}^R}\! \Big(\mathcal C({\bf {Y}}(t),{\bf {Y}}(t),\boldsymbol{\varphi})-\mathcal C(\bar{{\bf Y}}^R_{n+\frac12},{{\bf Y}}^R_{n+\frac12},\boldsymbol{\varphi})\Big)\,\mathrm{d}t\notag\\ 
&\quad- \mathcal C\!\bigl((\bar{\bf y}^R_{\star,n+\frac12}+\Phi \mathcal I_{n,R}^W)-\bar{{\bf Y}}^R_{n+\frac12},\, {{\bf Y}}^R_{n+\frac12},\, \boldsymbol{\varphi}\bigr)\notag\\
&\quad+ \nu\,\frac{1}{\tau}\!\int_{\mathfrak{t}_{n}^R}^{\mathfrak{t}_{n+1}^R}\! \big(\nabla {\bf {Y}}(t)-\nabla {{\bf Y}}^R_{n+\frac12},\nabla\boldsymbol{\varphi}\big)\,\mathrm{d}t
\end{align} 
 for any \(\boldsymbol{\varphi}\in \mathbb{V}\).
By using the bilinearity of $\mathcal{C}$, we obtain
\begin{align}\nonumber \langle \mathcal{R}_{n+\frac12}^R,\boldsymbol{\varphi}\rangle &:=\frac{1}{\tau}\!\int_{\mathfrak{t}_{n}^R}^{\mathfrak{t}_{n+1}^R}\!\mathcal C\bigl({\bf {Y}}(t)-\bar{{\bf Y}}^R_{n+\frac12},{\bf {Y}}(t)-\bar{{\bf Y}}^R_{n+\frac12},\boldsymbol{\varphi}\bigr)\,{\rm d}t\\&\quad+\underbrace{\frac{1}{\tau}\!\int_{\mathfrak{t}_{n}^R}^{\mathfrak{t}_{n+1}^R}\! \mathcal C\big( {\bf Y}(t)-\bar{{\bf Y}}^R_{n+\frac12},\bar{{\bf Y}}^R_{n+\frac12},\boldsymbol{\varphi})\big)\,\mathrm{d}t}_{=0}\notag\\ &\quad +\frac{1}{\tau}\!\int_{\mathfrak{t}_{n}^R}^{\mathfrak{t}_{n+1}^R}\! \mathcal C\big(\bar{{\bf Y}}^R_{n+\frac12},\,{\bf {Y}}(t)-{{\bf Y}}^R_{n+\frac12},\,\boldsymbol{\varphi}\big)\,\mathrm{d}t-\mathcal C\!\bigl((\bar{\bf y}^R_{\star,n+\frac12}+\Phi \mathcal I_{n,R}^W)-\bar{{\bf Y}}^R_{n+\frac12},\, {{\bf Y}}^R_{n+\frac12},\, \boldsymbol{\varphi}\bigr)\notag\\ &\quad+ \nu\,\frac{1}{\tau}\!\int_{\mathfrak{t}_{n}^R}^{\mathfrak{t}_{n+1}^R}\! \big(\nabla {\bf {Y}}(t)-\nabla {{\bf Y}}^R_{n+\frac12},\nabla\boldsymbol{\varphi}\big)\,\mathrm{d}t. 
\end{align}
By using the bilinearity of \(\mathcal C\) and the decomposition on $[\mathfrak{t}_{n}^R,\mathfrak{t}_{n+1}^R],$ 
\[ {\bf {Y}}(t)-\bar {{\bf Y}}^R_{n+\frac12} =\big({\bf y}(t)-\bar {{\bf y}}^R_{n+\frac12}\big)+\Phi\big(W(t)-\mathcal Q_{n,R}^W\big), \] 
a straightforward expansion yields 
\[ \langle\mathcal R^R_{n+\frac12},\boldsymbol{\varphi}\rangle =\sum_{i=1}^{7}\langle\mathcal{T}_{i,R,n},\boldsymbol{\varphi}\rangle, \]
where 
\begin{align*} \big\langle\mathcal{T}_{1,R,n},\boldsymbol{\varphi}\big\rangle &:=\frac{1}{\tau}\!\int_{\mathfrak{t}_{n}^R}^{\mathfrak{t}_{n+1}^R}\! \mathcal C\big({\bf y}(t)-\bar{{\bf y}}^R_{n+\frac12},\,{\bf y}(t)-\bar{{\bf y}}^R_{n+\frac12},\,\boldsymbol{\varphi}\big)\,\mathrm{d}t,\\[1mm]
\big\langle\mathcal{T}_{2,R,n},\boldsymbol{\varphi}\big\rangle &:=\frac{1}{\tau}\!\int_{\mathfrak{t}_{n}^R}^{\mathfrak{t}_{n+1}^R}\! \mathcal C\big(\Phi W(t)-\Phi\mathcal{Q}^{W}_{n,R},\, \Phi W(t)-\Phi\mathcal{Q}^{W}_{n,R},\, \boldsymbol{\varphi}\big)\,\mathrm{d}t,\\[1mm]
\big\langle\mathcal{T}_{3,R,n},\boldsymbol{\varphi}\big\rangle &:=\frac{1}{\tau}\!\int_{\mathfrak{t}_{n}^R}^{\mathfrak{t}_{n+1}^R}\! \mathcal C\big({\bf y}(t)-\bar{{\bf y}}^R_{n+\frac12},\, \Phi W(t)-\Phi\mathcal{Q}^{W}_{n,R},\, \boldsymbol{\varphi}\big)\,\mathrm{d}t,\\[1mm]
\big\langle\mathcal{T}_{4,R,n},\boldsymbol{\varphi}\big\rangle &:=\frac{1}{\tau}\!\int_{\mathfrak{t}_{n}^R}^{\mathfrak{t}_{n+1}^R}\! \mathcal C\big(\Phi W(t)-\Phi\mathcal{Q}^{W}_{n,R},\, {\bf y}(t)-\bar{{\bf y}}^R_{n+\frac12},\, \boldsymbol{\varphi}\big)\,\mathrm{d}t,\\[1mm]
\big\langle\mathcal{T}_{5,R,n},\boldsymbol{\varphi}\big\rangle &:=\frac{1}{\tau}\!\int_{\mathfrak{t}_{n}^R}^{\mathfrak{t}_{n+1}^R}\! \mathcal C\big(\bar{{\bf Y}}^R_{n+\frac12},\,{\bf {Y}}(t)-{{\bf Y}}_{n+\frac12}^R,\,\boldsymbol{\varphi}\big)\,\mathrm{d}t,\\[1mm]
\big\langle\mathcal{T}_{6,R,n},\boldsymbol{\varphi}\big\rangle &:=- \mathcal C\!\bigl((\bar{\bf y}_{\star,n+\frac12}^R+\Phi \mathcal I_{n,R}^W)-\bar{{\bf Y}}^R_{n+\frac12},\, {{\bf Y}}^R_{n+\frac12},\, \boldsymbol{\varphi}\bigr),\\[1mm] \big\langle\mathcal{T}_{7,R,n},\boldsymbol{\varphi}\big\rangle &:=\nu\,\frac{1}{\tau}\!\int_{\mathfrak{t}_{n}^R}^{\mathfrak{t}_{n+1}^R}\! \big(\nabla {\bf {Y}}(t)-\nabla {{\bf Y}}_{n+\frac12}^R,\nabla\boldsymbol{\varphi}\big)\,\mathrm{d}t. \end{align*}
The modified residual is defined by
\begin{align}\label{modified-residual-2}
\left<\widetilde{\mathcal R}_{n+\frac12}^R,\boldsymbol{\varphi}\right>
:=
\left<\mathcal R_{n+\frac12}^R,\boldsymbol{\varphi}\right>
+
\left(\mathcal Q_{n,R}^{W^2},\nabla\boldsymbol{\varphi}\right).
\end{align}
We obtain the following.
\begin{lemma}[Residual bound]\label{lem:R-bound}
For each small $\varepsilon>0$, there exists a constant \(C_{\varepsilon,R}>0\), independent of
\(\tau\) and \(N\), such that
\begin{equation}\label{eq:R-bound-sum}
\mathbb E\left[
\tau\sum_{n=0}^{N-1}
\left\|
\widetilde{\mathcal R}_{n+\frac12}^R
\right\|_{\mathbb V^{-1}}^{2}
\right]
\le
C_{\varepsilon,R}\tau^{3-\varepsilon}.
\end{equation}
\end{lemma}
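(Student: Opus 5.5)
The plan is to bound each of the seven terms $\mathcal T_{i,R,n}$ separately in $\mathbb V^{-1}$, treating the pair $\mathcal T_{2,R,n}$ and $(\mathcal Q^{W^2}_{n,R},\nabla\boldsymbol\varphi)$ together. For every term we aim at
\[
\mathbb E\Big[\sup_{0\le n\le N-1}\|\cdot\|_{\mathbb V^{-1}}^2\Big]\le C_{\varepsilon,R}\tau^{3-\varepsilon},
\]
which gives \eqref{eq:R-bound-sum} because $\tau\sum_{n=0}^{N-1}\le T\sup_n$. Throughout we use the continuity estimate for $\mathcal C$ in the forms $|\mathcal C(\mathbf a,\mathbf b,\boldsymbol\varphi)|\le C\|\mathbf a\|_{\mathbb V}\|\mathbf b\|_{\mathbb V}\|\nabla\boldsymbol\varphi\|_{\mathbb L^2}$ and $\le C\|\mathbf a\|_{\mathbb L^2}\|\mathbf b\|_{\mathbb H^2}\|\nabla\boldsymbol\varphi\|_{\mathbb L^2}$. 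We also use the localisation bounds $\|\bar{\mathbf Y}^R_{n+\frac12}\|_{\mathbb H^2}$, $\|\mathbf Y^R_{n+\frac12}\|_{\mathbb H^2}\le CR$, valid before $\mathfrak t_R^{\tt d}$. The regularity \eqref{regularity} supplies $\mathbf y\in C^{1,1/2-\varepsilon}([0,T];\mathbb V)$ in $L^2(\Omega)$, and Doob's inequality together with Kolmogorov/Gaussian moment bounds gives $\mathbb E\sup_n\sup_{s,t\in I_n}\|\Phi(W(t)-W(s))\|_{\mathbb H^2}^p\le C_{\varepsilon,p}\tau^{p/2-\varepsilon}$.

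\textbf{Routine terms.} $\mathcal T_1$ is quadratic in $\mathbf y(t)-\bar{\mathbf y}^R_{n+\frac12}=O(\tau)$ in $\mathbb V$, so it is $O(\tau^2)$ and its square is $O(\tau^4)$. $\mathcal T_3$ and $\mathcal T_4$ are products of an $O(\tau)$ factor and an $O(\tau^{1/2-\varepsilon})$ factor, which gives $\tau^{3-\varepsilon}$ after squaring and applying H\"older in $\omega$. For $\mathcal T_7$ we write
\[
\mathbf Y(t)-\mathbf Y^R_{n+\frac12}=\Big(\mathbf y(t)-\tfrac12\big(\mathbf y(t_n)+\mathbf y(t_{n+1})\big)\Big)+\Phi\big(W(t)-\mathcal I^W_{n,R}\big).
\]
The Brownian part has time average $\Phi(\mathcal Q^W_{n,R}-\mathcal I^W_{n,R})$, which is $O(\tau^{3/2-\varepsilon})$ in $\mathbb V$ by Lemma~\ref{lem:BM-quadrature-sup}, applied with $\Phi\in\mathbb H^4$ so that the bound holds in $\mathbb V$. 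The deterministic part is the trapezoidal error of a $C^{1,1/2-\varepsilon}$ function. Its average equals $\frac1\tau\int_{I_n}(t-t_{n+1/2})(\partial_t\mathbf y(t)-\partial_t\mathbf y(t_{n+1/2}))\,dt$, which is $O(\tau^{3/2-\varepsilon})$. $\mathcal T_5$ is handled the same way with $\bar{\mathbf Y}^R_{n+\frac12}$ frozen in $\mathbb H^2$ (norm $\le CR$). The integral $\frac1\tau\int\mathcal C(\bar{\mathbf Y},\mathbf Y(t)-\mathbf Y^R_{n+\frac12},\boldsymbol\varphi)$ is linear in its second slot, so it reduces to the same averaged differences. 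For $\mathcal T_6$ we split
\[
\bar{\mathbf y}^R_{\star,n+\frac12}-\bar{\mathbf y}^R_{n+\frac12}+\Phi\big(\mathcal I^W_{n,R}-\mathcal Q^W_{n,R}\big).
\]
The second piece is again Lemma~\ref{lem:BM-quadrature-sup}. The first is the extrapolation error $\frac32\mathbf y(t_n)-\frac12\mathbf y(t_{n-1})-\frac1\tau\int_{I_n}\mathbf y$, which is $O(\tau^{3/2-\varepsilon})$ in $\mathbb L^2$ by Taylor expansion with a H\"older-continuous derivative, since the linear terms cancel. At $n=0$ (where $\mathbf y_{-1}=\mathbf y_0$) only $O(\tau)$ is available, but that single summand contributes $\tau\cdot\tau^2=\tau^3$ to the sum, which is enough.

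\textbf{Main obstacle.} The key term is $\mathcal T_2+(\mathcal Q^{W^2}_{n,R},\nabla\boldsymbol\varphi)$, since $\mathcal T_2$ alone is only $O(\tau)$. Integrating by parts gives $\mathcal C(\mathbf a,\mathbf a,\boldsymbol\varphi)=-(\mathbf a\otimes\mathbf a,\nabla\boldsymbol\varphi)$ for divergence-free $\mathbf a$. Hence
\[
\langle\mathcal T_{2,R,n},\boldsymbol\varphi\rangle=-\Big(\frac1\tau\int_{I_n}\Phi(W(t)-\mathcal Q^W)\otimes\Phi(W(t)-\mathcal Q^W)\,dt,\nabla\boldsymbol\varphi\Big).
\]
Expanding the triple-integral definition of $\mathcal Q^{W^2}_n$ shows that it equals exactly this averaged covariance, so the two terms cancel identically on non-stopped intervals. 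I expect this identity to hold only up to sign conventions, to be checked against how $\mathcal I^{W^2}_n$ enters the scheme. On the (at most one) interval cut by $\mathfrak t_R^{\tt d}$ the stopped path is constant after the cut, and the same algebra applies to the stopped quadratures because both are computed along $W(\cdot\wedge\mathfrak t_R^{\tt d})$. What remains is that the scheme uses $\mathcal I^{W^2}$ rather than $\mathcal Q^{W^2}$. If that difference is placed in the residual, it is controlled by \eqref{eq:QW2vsIW2}, giving $\tau^{3-\varepsilon}$. Summing all contributions proves the lemma.
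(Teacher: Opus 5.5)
Your proposal is correct and follows essentially the same route as the paper: $\mathcal T_{2,R,n}$ cancels exactly against $(\mathcal Q^{W^2}_{n,R},\nabla\boldsymbol{\varphi})$, so that $\widetilde{\mathcal R}^R_{n+\frac12}=\sum_{i\neq 2}\mathcal T_{i,R,n}$. The remaining terms are then bounded in $\mathbb V^{-1}$ via the trapezoidal and extrapolation errors of Lemma~\ref{lem:deltastar-average}, the uniform quadrature bound of Lemma~\ref{lem:BM-quadrature-sup}, and the separate $O(\tau)$ treatment of $\mathcal T_{6,R,0}$. Your last step is superfluous: in the paper the mismatch $\mathcal I^{W^2}_{n,R}-\mathcal Q^{W^2}_{n,R}$ is not part of $\widetilde{\mathcal R}^R_{n+\frac12}$ but is handled as the separate term $\mathcal N^R_{n+\frac12}$ in Lemma~\ref{lem:balanced-local}; also, since $\mathfrak t_R^{\tt d}$ takes values on the grid, no interval is partially cut.
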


\begin{proof}
Arguing as in \cite{BBCP}, it is enough to estimate the terms
\(\mathcal T_{i,R,n}\), \(i\neq2\), in the
\(\mathbb V^{-1}\)-norm, where also as in \cite{BBCP}
\begin{equation}\label{eq:T3T4-sup}
\mathbb E\left[
\sup_{0\le n\le N-1}
\left(\left\|
\mathcal T_{1,R,n}
\right\|_{\mathbb V^{-1}}
+
\left\|
\mathcal T_{3,R,n}
\right\|_{\mathbb V^{-1}}
+
\left\|
\mathcal T_{4,R,n}
\right\|_{\mathbb V^{-1}}
\right)^2
\right]
\le
C_{\varepsilon,R}\tau^{3-\varepsilon}.
\end{equation}
For \(\mathcal T_{5,R,n}\), we use the localization bound
\[
\|\bar{\bf Y}^R_{n+\frac12}\|_{\mathbb H^2(\mt)^3}\le C R.
\]
Therefore,
\[
\left\|
\mathcal T_{5,R,n}
\right\|_{\mathbb V^{-1}}
\le
C R
\left\|
\frac1{\tau}
\int_{\mathfrak t_n^R}^{\mathfrak t_{n+1}^R}
\left(
{\bf Y}(t)-{\bf Y}^R_{n+\frac12}
\right)\,{\rm d}t
\right\|_{\mathbb L^2}.
\]
By splitting \({\bf Y}={\bf y}+\Phi W\), the deterministic part is the
trapezoidal error for \({\bf y}\). Since
\[
{\bf y}(\cdot\wedge\mathfrak t_R)
\in
L^2\big(\Omega;C^{1,1/2-\varepsilon}([0,T];L^2)\big),
\]
this part is bounded by \(C\tau^{3/2-\varepsilon}\) in
\(L^2(\Omega)\) due to Lemma~\ref{lem:deltastar-average}. The stochastic part is exactly the
Brownian quadrature error
\[
\Phi(\mathcal Q_{n,R}^{W}-\mathcal I_{n,R}^{W}),
\]
which satisfies the following uniform estimate with the help of ~\eqref{eq:BM-quadrature-sup}:
\[\mathbb{E}\bigg[\sup_{0\le n\le N-1}\|\Phi(\mathcal Q_{n,R}^{W}-\mathcal I_{n,R}^{W})\|_{\mathbb{L}^2}^2\bigg]\le \mathbb{E}\big[\sup_{0\le n\le N-1}\|\Phi(\mathcal Q_{n}^{W}-\mathcal I_{n}^{W})\|_{\mathbb{L}^2}^2\big]\le C_{\Phi,\varepsilon}\tau^{3-\varepsilon}\]
Hence
\begin{equation}\label{eq:T5-sup}
\mathbb E\left[
\sup_{0\le n\le N-1}
\left\|
\mathcal T_{5,R,n}
\right\|_{\mathbb V^{-1}}^2
\right]
\le
C_{\varepsilon,R}\tau^{3-\varepsilon}.
\end{equation}
We now estimate \(\mathcal T_{6,R,n}\). We have
\[
\left<
\mathcal T_{6,R,n},\boldsymbol{\varphi}
\right>
=
-
\mathcal C
\left(
(\bar{\bf y}^R_{\star,n+\frac12}+\Phi\mathcal I_{n,R}^{W})
-\bar{\bf Y}^R_{n+\frac12},
{\bf Y}^R_{n+\frac12},
\boldsymbol{\varphi}
\right).
\]
By using again the localization bound for \({\bf Y}_{n+\frac12}\), we obtain
\[
\left\|
\mathcal T_{6,R,n}
\right\|_{\mathbb V^{-1}}
\le
C R
\left\|
(\bar{\bf y}^R_{\star,n+\frac12}-\bar{\bf y}^R_{n+\frac12})
+
\Phi(\mathcal I_{n,R}^{W}-\mathcal Q_{n,R}^{W})
\right\|_{\mathbb L^2}.
\]
For \(n\ge1\), by Lemma~\ref{lem:deltastar-average} the extrapolation error satisfies
\[
\left\|
\bar{\bf y}^R_{\star,n+\frac12}-\bar{\bf y}^R_{n+\frac12}
\right\|_{\mathbb L^2}
\le
C\tau^{3/2-\varepsilon}
\|{\bf y}(\cdot\wedge\mathfrak t_R)\|_{C^{1,1/2-\varepsilon}([0,T];\mathbb L^2)}.
\]
Together with the Brownian quadrature estimate~\eqref{eq:BM-quadrature-sup}, this gives
\begin{equation}\label{eq:T6-sup}
\mathbb E\left[
\sup_{1\le n\le N-1}
\left\|
\mathcal T_{6,R,n}
\right\|_{\mathbb V^{-1}}^2
\right]
\le
C_{\varepsilon,R}\tau^{3-\varepsilon}.
\end{equation}
For \(n=0\), the extrapolation starts with
\({\bf y}(t_{-1})={\bf y}(t_0)\), and hence only the first-order estimate
\[
\left\|
\bar{\bf y}_{\star,\frac12}^R-\bar{\bf y}_{\frac12}^R
\right\|_{\mathbb L^2(\mt)^3}
\le
C\tau
\|{\bf y}(\cdot\wedge\mathfrak t_R)\|_{C^1([0,T];\mathbb L^2)}
\]
is available. Therefore,
\begin{equation}\label{eq:T6-first}
\mathbb E\left[
\left\|
\mathcal T_{6,R,0}
\right\|_{\mathbb V^{-1}}^2
\right]
\le
C_R\tau^2.
\end{equation}
Finally, we consider \(\mathcal T_{7,R,n}\). By definition,
\[
\left\|
\mathcal T_{7,R,n}
\right\|_{\mathbb V^{-1}}
\le
\nu
\left\|
\frac1{\tau}
\int_{\mathfrak t_n^R}^{\mathfrak t_{n+1}^R}
\left(
\nabla{\bf Y}(t)-\nabla{\bf Y}^R_{n+\frac12}
\right)
\,{\rm d}t
\right\|_{\mathbb L^2}.
\]
The deterministic part is again a trapezoidal error, now at gradient
level, and the stochastic part is the Brownian quadrature error~\eqref{eq:BM-quadrature-sup} for
\(\nabla\Phi W\). By the assumed
\(C^{1,1/2-\varepsilon}\)-regularity of
\({\bf y}(\cdot\wedge\mathfrak t_R)\) and the uniform Brownian quadrature
estimate~\eqref{eq:BM-quadrature-sup}, we get
\begin{equation}\label{eq:T7-sup}
\mathbb E\left[
\sup_{0\le n\le N-1}
\left\|
\mathcal T_{7,R,n}
\right\|_{\mathbb V^{-1}}^2
\right]
\le
C_{\varepsilon,R}\tau^{3-\varepsilon}.
\end{equation}
By combining
\eqref{eq:T3T4-sup},
\eqref{eq:T5-sup},
\eqref{eq:T6-sup}, and
\eqref{eq:T7-sup}, and using
\[
\widetilde{\mathcal R}_{n+\frac12}^R
=
\sum_{\substack{i=1\\ i\neq2}}^7
\mathcal T_{i,R,n},
\]
we obtain

\[
\mathbb E\left[
\tau\sum_{n=0}^{N-1}
\left\|
\widetilde{\mathcal R}_{n+\frac12}^R
\right\|_{\mathbb V^{-1}}^2
\right]
\le
C_R\tau^3
+
C_{\varepsilon,R}\tau^{3-\varepsilon}
\le
C_{\varepsilon,R}\tau^{3-\varepsilon}.
\]
This proves \eqref{eq:R-bound-sum} and completes the proof.
\end{proof}
\subsubsection{Error inequality}

We now derive the localized error inequality in conservative form and
combine it with Lemma~\ref{lem:R-bound}. For
\(n=0,1,\dots,N-1\), we define
\[
{\bf e}_n^R:={\bf y}(\mathfrak t_n^R)-{\bf y}_n^R,
\qquad
{\bf e}_{n+\frac12}^R
:=\frac12\big({\bf e}_{n+1}^R+{\bf e}_n^R\big).
\]
For \(n\ge1\), we set
\[
{\bf e}_{\star,n+\frac12}^R
:=\frac32{\bf e}_n^R-\frac12{\bf e}_{n-1}^R.
\]
For \(n=0\), we use the convention
\[
{\bf e}_{-1}^R:={\bf e}_0^R,
\qquad
{\bf e}_{\star,\frac12}^R
:=\frac32{\bf e}_0^R-\frac12{\bf e}_{-1}^R
={\bf e}_0^R.
\]

\begin{lemma}[Error inequality]\label{lem:balanced-local}
 There exists a constant
\(C>0\), independent of \(n\) and \(\tau\), but possibly depending
on \(R,\nu\), and \(T\), such that for all $n\in \{0,\dots,N-1\}$
\begin{align}\label{eq:balanced-local}
\begin{aligned}
&\frac12\,
\mathbb E\bigg[
\sup_{0\le i\le n}
\|{\bf e}_{i+1}^R\|_{\mathbb L^2}^2
\bigg]
+
\frac{\nu\tau}{2}\,
\mathbb E\bigg[
\sup_{0\le k\le n}
\sum_{i=0}^{k}
\|\nabla{\bf e}_{i+\frac12}^R\|_{\mathbb L^2}^2
\bigg]
\\
&\qquad\le
C\,
\tau\sum_{i=0}^{N-1}\mathbb E\bigg[\|\widetilde{\mathcal R}_{i+\frac12}^R\|_{\mathbb V^{-1}}^2
\bigg]+
C\,
\mathbb E\bigg[
\sup_{0\le i\le N-1}
\|\mathcal I_{i}^{W^2}-\mathcal Q_{i}^{W^2}\|_{\mathbb L^{2}}^2
\bigg]
\\
&\qquad\quad
+
C(R)\,\tau
\sum_{k=0}^{n-1}
\mathbb E\bigg[
\sup_{0\le i\le k}
\|{\bf e}_{i+1}^R\|_{\mathbb L^2}^2
\bigg].
\end{aligned}
\end{align}
\end{lemma}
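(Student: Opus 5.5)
We will derive the error equation by subtracting the stopped discrete scheme \eqref{scheme:main0} from the time-averaged continuous equation on $[\mathfrak t_i^R,\mathfrak t_{i+1}^R]$, both tested with divergence-free functions so that the pressure drops out.

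\textbf{Error equation.} Integrating \eqref{eq:SNSy} over $[\mathfrak t_i^R,\mathfrak t_{i+1}^R]$, dividing by $\tau$ and testing with $\boldsymbol{\varphi}\in\mathbb V$ produces the averaged convective term $\frac1\tau\int\mathcal C(\mathbf Y,\mathbf Y,\boldsymbol{\varphi})\,\mathrm dt$ and the averaged viscous term. We rewrite these as the discrete-looking quantities
\[
\mathcal C\bigl(\bar{\bf y}^R_{\star,i+\frac12}+\Phi\mathcal I^W_{i,R},\,{\bf Y}^R_{i+\frac12},\,\boldsymbol{\varphi}\bigr)+\nu\bigl(\nabla{\bf Y}^R_{i+\frac12},\nabla\boldsymbol{\varphi}\bigr)
\]
plus $\langle\mathcal R^R_{i+\frac12},\boldsymbol{\varphi}\rangle$; this is exactly the definition \eqref{eq:R-def}. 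The $\mathcal T_{2}$-part of the residual is the Brownian second moment, $-(\mathcal Q^{W^2}_{i,R},\nabla\boldsymbol{\varphi})$ up to sign. This is why we pass to $\widetilde{\mathcal R}$ in \eqref{modified-residual-2}, leaving the discrepancy $(\mathcal I^{W^2}_{i}-\mathcal Q^{W^2}_{i},\nabla\boldsymbol{\varphi})$ against the scheme term $(\mathcal I^{W^2}_i,\nabla\boldsymbol{\varphi})$.

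Subtracting the scheme and using bilinearity of $\mathcal C$, the convective difference splits into
\[
\mathcal C\bigl({\bf e}^R_{\star,i+\frac12},{\bf Y}^R_{i+\frac12},\boldsymbol{\varphi}\bigr)+\mathcal C\bigl({\bf y}^R_{\star,i+\frac12}+\Phi\mathcal I^W_{i,R},\,{\bf e}^R_{i+\frac12},\,\boldsymbol{\varphi}\bigr).
\]
On the stopped interval, and for $i\ge \mathfrak m_R$, both sides are frozen and the error increment vanishes, so only indices before the stopping time contribute.

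\textbf{Energy step.} We test with $\boldsymbol{\varphi}={\bf e}^R_{i+\frac12}$ and multiply by $\tau$.
\begin{itemize}
\item The time-derivative term becomes $\frac12(\|{\bf e}^R_{i+1}\|^2-\|{\bf e}^R_i\|^2)$ exactly, which is the Crank--Nicolson identity.
\item The viscous term gives $\nu\tau\|\nabla{\bf e}^R_{i+\frac12}\|^2$.
\item The second convective term vanishes by skew-symmetry, since the advecting field is divergence-free.
\item The first convective term is bounded by $C\|{\bf e}^R_{\star,i+\frac12}\|_{\mathbb L^2}\|{\bf Y}^R_{i+\frac12}\|_{\mathbb H^2}\|\nabla{\bf e}^R_{i+\frac12}\|$, as in \eqref{eq:Econv-bound-local-1}. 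The localization gives $\|{\bf Y}^R_{i+\frac12}\|_{\mathbb H^2}\le CR$.
\item We apply Young's inequality to this term and to the two residual pairings, $\|\widetilde{\mathcal R}\|_{\mathbb V^{-1}}\|\nabla{\bf e}^R_{i+\frac12}\|$ and $\|\mathcal I^{W^2}_i-\mathcal Q^{W^2}_i\|_{\mathbb L^2}\|\nabla{\bf e}^R_{i+\frac12}\|$. With $\delta=\nu/4$ this absorbs half the dissipation.
\end{itemize}
This yields
\[
\tfrac12\|{\bf e}^R_{i+1}\|^2-\tfrac12\|{\bf e}^R_i\|^2+\tfrac{\nu\tau}{2}\|\nabla{\bf e}^R_{i+\frac12}\|^2\le C\tau\|\widetilde{\mathcal R}^R_{i+\frac12}\|^2_{\mathbb V^{-1}}+C\tau\|\mathcal I^{W^2}_i-\mathcal Q^{W^2}_i\|^2+C(R)\tau\bigl(\|{\bf e}^R_i\|^2+\|{\bf e}^R_{i-1}\|^2\bigr).
\]

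\textbf{Summation.} We sum over $i=0,\dots,k$, using ${\bf e}^R_0=0$, and take the supremum over $k\le n$ before taking expectations. The right-hand side is already nonnegative and pathwise, so no martingale term arises: the noise enters only through the random-PDE coefficients. The sum $\tau\sum_{i\le N-1}\|\mathcal I^{W^2}_i-\mathcal Q^{W^2}_i\|^2$ is bounded by $T\sup_i\|\cdot\|^2$, which gives the second term of \eqref{eq:balanced-local}. The lower-order terms $\tau\sum_{i\le k}\|{\bf e}^R_i\|^2$ are dominated by $\tau\sum_{k'\le n-1}\sup_{j\le k'}\|{\bf e}^R_{j+1}\|^2$ after an index shift, using the convention ${\bf e}^R_{-1}={\bf e}^R_0=0$.

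\textbf{Main obstacle.} The delicate point is the algebraic bookkeeping in the first step. One must check that the mismatch between the stopped quadratures ($\mathcal I_{n,R}$ along $W(\cdot\wedge\mathfrak t^{\tt d}_R)$) and the unstopped ones entering the scheme only matters on $\{t_{i+1}>\mathfrak t^{\tt d}_R\}$, where the increments vanish. One must also verify that the sign and placement of the $\mathcal Q^{W^2}$ correction make the $\mathcal T_2$-term cancel exactly against the modified residual; otherwise only an order-$\tau$ remainder would survive.
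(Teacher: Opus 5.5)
Your proposal is correct and follows the same route as the paper. The paper imports the error identity (modified residual $\widetilde{\mathcal R}^R_{n+\frac12}$, quadrature defect $\mathcal N^R_{n+\frac12}$, convection defect $\mathcal C({\bf e}^R_{\star,n+\frac12},{\bf Y}^R_{n+\frac12},{\bf e}^R_{n+\frac12})$, with the skew-symmetric term already dropped) from \cite[Lemma~3.2]{BBCP}, whereas you sketch its derivation; the remaining steps coincide: Young's inequality with $\|{\bf Y}^R_{n+\frac12}\|_{\mathbb H^2}\le CR$, absorption, summation using ${\bf e}^R_0=0$, supremum over $k$, $\tau n\le T$, and expectation.

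One caveat applies to the paper as much as to you: for frozen indices $i\ge\mathfrak m_R$ there is no discrete equation, so the energy argument only controls $\|\nabla{\bf e}^R_{i+\frac12}\|_{\mathbb L^2}^2$ for $i<\mathfrak m_R$, and the dissipation sum on the left should be restricted to those indices (or weighted by the stopped step $\tau^R_{i+1}$, as in Theorem~\ref{thm:main}).
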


\begin{proof}
By arguing as in the proof of \cite[Lemma~3.2]{BBCP}, we obtain for fixed $n\in\{0,1,\dots,N-1\}$
\begin{align}\label{eq:err-local}
\begin{aligned}
\frac1{2\tau}
\left(
\|{\bf e}_{n+1}^R\|_{\mathbb L^2}^2
-
\|{\bf e}_n^R\|_{\mathbb L^2}^2
\right)
&+
\nu
\|\nabla{\bf e}_{n+\frac12}^R\|_{\mathbb L^2}^2
=
-\big\langle
\widetilde{\mathcal R}_{n+\frac12}^R,
{\bf e}_{n+\frac12}^R
\big\rangle
-
\big\langle
\mathcal N_{n+\frac12}^R,
{\bf e}_{n+\frac12}^R
\big\rangle
\\
&\quad
-
\mathcal E_{\rm conv}^{\,n+\frac12}
({\bf e}_{n+\frac12}^R),
\end{aligned}
\end{align}
where \[
\left\langle
\mathcal N_{n+\frac12}^R,\varphi
\right\rangle
:=
\left(
\mathcal I_{n,R}^{W^2}
-
\mathcal Q_{n,R}^{W^2},
\nabla\varphi
\right).
\]
\noindent
We now estimate each term on the right-hand side. First, by duality and Young's inequality, for every
\(\delta\in(0,\nu)\),
\begin{align}\label{eq:Rtilde-bound}
\big|
\big\langle
\widetilde{\mathcal R}_{n+\frac12}^R,
{\bf e}_{n+\frac12}^R
\big\rangle
\big|
&\le
\|\widetilde{\mathcal R}_{n+\frac12}^R\|_{\mathbb V^{-1}}
\,
\|\nabla{\bf e}_{n+\frac12}^R\|_{\mathbb L^2}
\nonumber\\
&\le
\frac{\delta}{4}
\|\nabla{\bf e}_{n+\frac12}^R\|_{\mathbb L^2}^2
+
C(\delta)
\|\widetilde{\mathcal R}_{n+\frac12}^R\|_{\mathbb V^{-1}}^2.
\end{align}
Similarly,
\begin{align}\label{eq:N-bound}
\big|
\big\langle
\mathcal N_{n+\frac12}^R,
{\bf e}_{n+\frac12}^R
\big\rangle
\big|
&\le
\|\mathcal I_{n,R}^{W^2}-\mathcal Q_{n,R}^{W^2}\|_{\mathbb L^{2}}
\,
\|\nabla{\bf e}_{n+\frac12}^R\|_{\mathbb L^2}
\nonumber\\
&\le
\frac{\delta}{4}
\|\nabla{\bf e}_{n+\frac12}^R\|_{\mathbb L^2}^2
+
C(\delta)
\|\mathcal I_{n,R}^{W^2}-\mathcal Q_{n,R}^{W^2}\|_{\mathbb L^{2}}^2.
\end{align}
It remains to estimate the convection defect. By definition,
\[
\mathcal E_{\rm conv}^{\,n+\frac12}
({\bf e}_{n+\frac12}^R)
=
\mathcal C(
{\bf e}_{\star,n+\frac12}^R,
{\bf Y}_{n+\frac12}^R,
{\bf e}_{n+\frac12}^R
).
\]
By using the continuity estimate for the trilinear form and the localization
bound
\[
\|{\bf Y}_{n+\frac12}^R\|_{\mathbb H^2}\le C R,
\]

\noindent
we obtain
\begin{align}\label{eq:Econv-bound-local}
\big|
\mathcal E_{\rm conv}^{\,n+\frac12}
({\bf e}_{n+\frac12}^R)
\big|
&\le
C\,
\|{\bf e}_{\star,n+\frac12}^R\|_{\mathbb L^2}
\,
\|{\bf Y}_{n+\frac12}^R\|_{\mathbb H^2}
\,
\|\nabla{\bf e}_{n+\frac12}^R\|_{\mathbb L^2}
\nonumber\\
&\le
C R\,
\|{\bf e}_{\star,n+\frac12}^R\|_{\mathbb L^2}
\,
\|\nabla{\bf e}_{n+\frac12}^R\|_{\mathbb L^2}.
\end{align}
Hence, again by Young's inequality,
\begin{equation}\label{eq:Econv-Young}
\big|
\mathcal E_{\rm conv}^{\,n+\frac12}
({\bf e}_{n+\frac12}^R)
\big|
\le
\frac{\delta}{4}
\|\nabla{\bf e}_{n+\frac12}^R\|_{\mathbb L^2}^2
+
C(\delta)R^2
\|{\bf e}_{\star,n+\frac12}^R\|_{\mathbb L^2}^2.
\end{equation}
Moreover, by the definition of \({\bf e}_{\star,n+\frac12}^R\),
\[
\|{\bf e}_{\star,n+\frac12}^R\|_{\mathbb L^2}^2
\le
C
\left(
\|{\bf e}_n^R\|_{\mathbb L^2}^2
+
\|{\bf e}_{n-1}^R\|_{\mathbb L^2}^2
\right),
\]
with the convention \({\bf e}_{-1}^R={\bf e}_0^R\). Therefore,
\begin{equation}\label{eq:estar-bound}
\|{\bf e}_{\star,n+\frac12}^R\|_{\mathbb L^2}^2
\le
C
\sup_{0\le i\le n}
\|{\bf e}_i^R\|_{\mathbb L^2}^2.
\end{equation}
By combining \eqref{eq:err-local}--\eqref{eq:estar-bound}, and choosing
\(\delta\in(0,\nu)\), we absorb the gradient terms into the left-hand side.
Thus
\begin{align}\label{eq:one-step-local}
\begin{aligned}
\frac1{2\tau}
\left(
\|{\bf e}_{n+1}^R\|_{\mathbb L^2}^2
-
\|{\bf e}_n^R\|_{\mathbb L^2}^2
\right)
&+
\frac{\nu}{2}
\|\nabla{\bf e}_{n+\frac12}^R\|_{\mathbb L^2}^2\le
C
\|\widetilde{\mathcal R}_{n+\frac12}^R\|_{\mathbb V^{-1}}^2
\\
&\quad
+
C
\|\mathcal I_{n,R}^{W^2}-\mathcal Q_{n,R}^{W^2}\|_{\mathbb L^{2}}^2
+
C(R)
\sup_{0\le i\le n}
\|{\bf e}_i^R\|_{\mathbb L^2}^2 .
\end{aligned}
\end{align}
Multiplying \eqref{eq:one-step-local} by \(2\tau\), we get
\begin{align}\label{eq:one-step-local-tau}
\begin{aligned}
\|{\bf e}_{n+1}^R\|_{\mathbb L^2}^2
-
\|{\bf e}_n^R\|_{\mathbb L^2}^2
+
\nu\tau
\|\nabla{\bf e}_{n+\frac12}^R\|_{\mathbb L^2}^2&\le
C\tau
\|\widetilde{\mathcal R}_{n+\frac12}^R\|_{\mathbb V^{-1}}^2
+
C\tau
\|\mathcal I_{n,R}^{W^2}-\mathcal Q_{n,R}^{W^2}\|_{\mathbb L^2}^2
\\
&\quad
+
C(R)\tau
\sup_{0\le i\le n}
\|{\bf e}_i^R\|_{\mathbb L^2}^2 .
\end{aligned}
\end{align}
Now let \(k\in\{0,\dots,n\}\). By summing
\eqref{eq:one-step-local-tau} from \(m=0\) to \(m=k\), and using
\({\bf e}_0^R=0\), gives
\begin{align}\label{eq:sum-local-k}
\begin{aligned}
\|{\bf e}_{k+1}^R\|_{\mathbb L^2}^2
&+
\nu\tau
\sum_{m=0}^{k}
\|\nabla{\bf e}_{m+\frac12}^R\|_{\mathbb L^2}^2
\le
C\tau
\sum_{m=0}^{k}
\|\widetilde{\mathcal R}_{m+\frac12}^R\|_{\mathbb V^{-1}}^2
\\
&\quad
+
C\tau
\sum_{m=0}^{k}
\|\mathcal I_{m,R}^{W^2}-\mathcal Q_{m,R}^{W^2}\|_{\mathbb L^{2}}^2
+
C(R)\tau
\sum_{m=0}^{k}
\sup_{0\le i\le m}
\|{\bf e}_i^R\|_{\mathbb L^2}^2 .
\end{aligned}
\end{align}
Taking the supremum over \(k=0,\dots,n\), we obtain
\begin{align}\label{eq:sup-local-k}
\begin{aligned}
\sup_{0\le k\le n}
\|{\bf e}_{k+1}^R\|_{\mathbb L^2}^2
&+
\nu\tau
\sup_{0\le k\le n}
\sum_{m=0}^{k}
\|\nabla{\bf e}_{m+\frac12}^R\|_{\mathbb L^2}^2\le
\tau
\sum_{m=0}^{N-1}
\|\widetilde{\mathcal R}_{m+\frac12}^R\|_{\mathbb V^{-1}}^2
\\
&\quad
+
C\tau n
\sup_{0\le m\le n}
\|\mathcal I_{m}^{W^2}-\mathcal Q_{m}^{W^2}\|_{\mathbb L^{2}}^2
+
C(R)\tau
\sum_{m=0}^{n-1}
\sup_{0\le i\le m}
\|{\bf e}_i^R\|_{\mathbb L^2}^2 .
\end{aligned}
\end{align}
Since \(n\tau\le T\), the factors \(\tau n\) are bounded by \(T\).
Absorbing this into the constants, taking expectations, and reducing the
coefficient on the left-hand side if necessary, we arrive at
\eqref{eq:balanced-local}. This completes the proof.
\end{proof}
\subsubsection{Error estimate}
By using Lemmas~\ref{lem:R-bound} and ~\ref{lem:balanced-local}, and the
uniform Brownian quadrature estimates \eqref{eq:triple-BM-sup}, we obtain, for every \(n=0,\dots,N-1\),
\[
\begin{aligned}
&\mathbb E\bigg[
\sup_{0\le i\le n}
\|{\bf e}_{i+1}^R\|_{\mathbb L^2}^2
\bigg]
+
\nu\tau\,
\mathbb E\bigg[
\sup_{0\le k\le n}
\sum_{i=0}^{k}
\|\nabla{\bf e}_{i+\frac12}^R\|_{\mathbb L^2}^2
\bigg]
\\
&\qquad\le
C_{\varepsilon,R}\tau^{3-\varepsilon}
+
C(R)\tau
\sum_{k=0}^{n-1}
\mathbb E\bigg[
\sup_{0\le i\le k}
\|{\bf e}_{i+1}^R\|_{\mathbb L^2}^2
\bigg].
\end{aligned}
\]
An application of the discrete Gronwall lemma gives
\[
\mathbb E\bigg[
\sup_{0\le i\le n}
\|{\bf e}_{i+1}^R\|_{\mathbb L^2}^2
\bigg]
\le
C_{\varepsilon,R,T}\tau^{3-\varepsilon}.
\]
Substituting this estimate back into the previous inequality yields
\[
\nu\tau\,
\mathbb E\bigg[
\sup_{0\le k\le n}
\sum_{i=0}^{k}
\|\nabla{\bf e}_{i+\frac12}^R\|_{\mathbb L^2}^2
\bigg]
\le
C_{\varepsilon,R,T}\tau^{3-\varepsilon}.
\]
By taking \(n=N-1\), we conclude that
\begin{align}\label{eq:final-local-rate}
\begin{aligned}
&\mathbb E\bigg[
\sup_{0\le i\le N-1}
\|{\bf e}_{i+1}^R\|_{\mathbb L^2}^2
\bigg]
+
\nu\tau\,
\mathbb E\bigg[
\sup_{0\le k\le N-1}
\sum_{i=0}^{k}
\|\nabla{\bf e}_{i+\frac12}^R\|_{\mathbb L^2}^2
\bigg]\le
C_{\varepsilon,R,T}\tau^{3-\varepsilon}.
\end{aligned}
\end{align}
This completes the proof of Theorem~\ref{thm:main-local}.
\subsection{Strong rate of convergence for the pressure}
\label{subsec:pressure-rate}

We next derive the corresponding localized estimate for the pressure.
As in the velocity estimate, the argument is performed for the stopped
three-dimensional solution.  The pressure is not compared pointwise in
time; instead, we compare the discrete pressure with the time average of
the exact pressure on each time interval.

\noindent
We use the continuous inf--sup condition: there exists a constant
\(\beta>0\) such that
\begin{equation}\label{eq:inf-sup-cont-3d}
   \beta \|q\|_{\mathbb L^2}
   \le
   \sup_{{\bf v}\in\mathbb V\setminus\{0\}}
   \frac{(q,\diver{\bf v})}
        {\|{\bf v}\|_{\mathbb H^1}},
   \qquad q\in \Qspace .
\end{equation}

\noindent
Let \(\mathfrak t_R^{\tt d}\) be the discrete stopping time introduced in
\eqref{eq:tRd-1}.  For the pressure estimate, following
\cite[Section~3.4]{BBCP}, we introduce a localized probability set which
controls only the discrete advecting velocity appearing in the linearized
convective term.  Namely, for \(R>0\), define
\begin{equation}\label{eq:pressure-local-set}
   \Omega_R^\tau
   :=
   \left\{\omega\in\Omega: 
      \sup_{0\le n\le N-1}
      \left\|
         {\bf y}_{\star,n+\frac12}^R+\Phi\mathcal I_{n,R}^{W}
      \right\|_{\mathbb V}^{2}
      \le R
   \right\},
   \qquad
   \mathcal B_R^\tau:=\mathbf 1_{\Omega_R^\tau}.
\end{equation}
Equivalently, we shall use the pathwise bound
\begin{equation}\label{eq:pressure-pathwise-bound}
   \sup_{0\le n\le N-1}
   \mathbf 1_{\Omega_R^\tau}
   \left\|
      {\bf y}_{\star,n+\frac12}^R+\Phi\mathcal I_{n,R}^{W}
   \right\|_{\mathbb V}^{2}
   \le R.
\end{equation}
The localized pressure error is
\begin{equation}\label{eq:pressure-error-stopped-3d}
   \pi_{n+1}^R
   :=
   \mathbf 1_{\Omega_R^\tau}
   \left(
      \frac1{\tau}\int_{t_n}^{t_{n+1}}p(t)\,\mathrm dt
      -p_{n+1}
   \right).
\end{equation}

\begin{lemma}[Discrete time derivative estimate]
\label{lem:discrete-time-derivative-3d}
Under the assumptions of Theorem~\ref{thm:main-local}, for every
\(\varepsilon>0\) and every \(R>0\), there exists a constant
\(C_{\varepsilon,R,T}>0\), independent of \(\tau\) and \(N\), such that
\begin{equation}\label{eq:discrete-time-derivative-3d}
   \tau\sum_{n=0}^{N-1}
   \mathbb E\left[
      \mathbf 1_{\Omega_R^\tau}
      \left\|
         \frac{{\bf e}_{n+1}^R-{\bf e}_n^R}{\tau}
      \right\|_{\Vdual}^2
   \right]
   \le
   C_{\varepsilon,R,T}\tau^{3-\varepsilon}.
\end{equation}
\end{lemma}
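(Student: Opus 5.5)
The estimate follows from the error equation of the Crank--Nicolson scheme, tested only with divergence-free functions, so that the pressure does not appear.

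Subtract the stopped scheme \eqref{scheme:main0} from the time-averaged continuous equation on $[\mathfrak t_n^R,\mathfrak t_{n+1}^R]$, as in the proof of Lemma~\ref{lem:balanced-local}. For every $\boldsymbol{\varphi}\in\mathbb V$ this gives
\[
\Big(\frac{{\bf e}_{n+1}^R-{\bf e}_n^R}{\tau},\boldsymbol{\varphi}\Big)
=-\nu(\nabla{\bf e}_{n+\frac12}^R,\nabla\boldsymbol{\varphi})
-\langle\widetilde{\mathcal R}_{n+\frac12}^R,\boldsymbol{\varphi}\rangle
-\langle\mathcal N_{n+\frac12}^R,\boldsymbol{\varphi}\rangle
-\mathcal C\big({\bf e}_{\star,n+\frac12}^R,{\bf Y}_{n+\frac12}^R,\boldsymbol{\varphi}\big)
-\mathcal C\big({\bf y}_{\star,n+\frac12}^R+\Phi\mathcal I_{n,R}^W,{\bf e}_{n+\frac12}^R,\boldsymbol{\varphi}\big).
\]
The pressure term drops out because $\operatorname{div}\boldsymbol{\varphi}=0$. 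The convective difference is split exactly as in \eqref{eq:err-local}. The only new term is the last one, which vanished before when tested with ${\bf e}_{n+\frac12}^R$.

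Next, take the supremum over $\|\nabla\boldsymbol{\varphi}\|_{\mathbb L^2}\le1$ and bound each term in $\Vdual$.
\begin{itemize}
\item The diffusion term gives $\nu\|\nabla{\bf e}_{n+\frac12}^R\|_{\mathbb L^2}$.
\item The residual and $\mathcal N$ give $\|\widetilde{\mathcal R}_{n+\frac12}^R\|_{\Vdual}+\|\mathcal I_{n,R}^{W^2}-\mathcal Q_{n,R}^{W^2}\|_{\mathbb L^2}$.
\item For the first convective term, use the localisation $\|{\bf Y}_{n+\frac12}^R\|_{\mathbb H^2}\le CR$ as in \eqref{eq:Econv-bound-local}. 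This gives $CR\,\|{\bf e}_{\star,n+\frac12}^R\|_{\mathbb L^2}$.
\item For the last term, use H\"older together with $\mathbb V\hookrightarrow\mathbb L^6$ and $\mathbb L^3$: $|\mathcal C({\bf a},{\bf b},\boldsymbol{\varphi})|\le C\|{\bf a}\|_{\mathbb V}\|\nabla{\bf b}\|_{\mathbb L^2}\|\nabla\boldsymbol{\varphi}\|_{\mathbb L^2}$. On $\Omega_R^\tau$, by \eqref{eq:pressure-pathwise-bound}, this is at most $C\sqrt R\,\|\nabla{\bf e}_{n+\frac12}^R\|_{\mathbb L^2}$. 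This is why the indicator $\mathbf 1_{\Omega_R^\tau}$ is introduced: the discrete advecting field is not controlled by the stopping time in the $\mathbb V$-norm uniformly otherwise.
\end{itemize}

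Square these bounds, multiply by $\mathbf 1_{\Omega_R^\tau}\le1$, sum over $n$ with weight $\tau$, and take expectations. This gives
\[
\tau\sum_{n}\mathbb E\Big[\mathbf 1_{\Omega_R^\tau}\Big\|\tfrac{{\bf e}_{n+1}^R-{\bf e}_n^R}{\tau}\Big\|_{\Vdual}^2\Big]
\le C(R)\Big(\nu\tau\sum_n\mathbb E\|\nabla{\bf e}_{n+\frac12}^R\|^2_{\mathbb L^2}
+\tau\sum_n\mathbb E\|\widetilde{\mathcal R}_{n+\frac12}^R\|_{\Vdual}^2
+T\,\mathbb E\sup_n\|\mathcal I_n^{W^2}-\mathcal Q_n^{W^2}\|_{\mathbb L^2}^2
+T\,\mathbb E\sup_n\|{\bf e}_n^R\|_{\mathbb L^2}^2\Big).
\]
The four terms on the right are controlled as follows:
\begin{itemize}
\item the gradient sum and the supremum of $\|{\bf e}_n^R\|_{\mathbb L^2}^2$ by Theorem~\ref{thm:main-local} (using $\|{\bf e}_{\star}\|^2\le C\sup_i\|{\bf e}_i\|^2$);
\item the residual sum by Lemma~\ref{lem:R-bound};
\item the Brownian triple-integral quadrature error by \eqref{eq:QW2vsIW2}.
\end{itemize}
Each is bounded by $C_{\varepsilon,R,T}\tau^{3-\varepsilon}$, which proves the claim.

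The main obstacle is the last convective term. One must check that the bound uses only the $\mathbb V$-norm of ${\bf y}_{\star,n+\frac12}^R+\Phi\mathcal I_{n,R}^W$ available on $\Omega_R^\tau$, and not an $\mathbb H^2$-bound of the discrete solution. The three-dimensional Sobolev embedding handles this, with $\nabla{\bf e}$ carrying the derivative.

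One further point needs care. The $n=0$ residual is only $O(\tau^2)$ in $\Vdual$, as in \eqref{eq:T6-first}. Multiplied by the factor $\tau$ it contributes $O(\tau^3)$, which is admissible.
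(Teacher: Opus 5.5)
Your proposal is correct and takes the same route as the paper, which defers to the argument of Lemma~3.3 in \cite{BBCP}. Both test the stopped error equation with divergence-free functions so that the pressure drops. Both bound the extra convective term $\mathcal C({\bf y}_{\star,n+\frac12}^R+\Phi\mathcal I_{n,R}^W,{\bf e}_{n+\frac12}^R,\cdot)$ pathwise on $\Omega_R^\tau$ and control the remaining terms by Theorem~\ref{thm:main-local}, Lemma~\ref{lem:R-bound} and the quadrature bound \eqref{eq:QW2vsIW2}. Your version supplies the details the paper leaves implicit, notably the three-dimensional $\mathbb L^6\times\mathbb L^2\times\mathbb L^3$ bound that needs only the $\mathbb V$-norm of the advecting field.
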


\begin{proof}
The proof follows \cite[Lemma~3.3]{BBCP}.  The only modification is that
the two-dimensional localization is replaced by
\(\mathbf 1_{\Omega_R^\tau}\).  The advecting velocity is controlled pathwise by
\eqref{eq:pressure-pathwise-bound}, while the remaining terms are
controlled by the stopped velocity estimate \eqref{eq:main-local} and the
residual estimate \eqref{eq:R-bound-sum}.  This proves
\eqref{eq:discrete-time-derivative-3d}.
\end{proof}

\begin{theorem}[Localised pressure error estimate]
\label{thm:pressure-local-3d}
Let the assumptions of Theorem~\ref{thm:main-local} hold.  Then, for every
\(\varepsilon>0\) and every \(R>0\), there exists a constant
\(C_{\varepsilon,R,T}>0\), independent of \(\tau\) and \(N\), such that
\begin{equation}\label{eq:pressure-rate-3d}
   \tau\sum_{n=0}^{N-1}
   \mathbb E\left[
      \|\pi_{n+1}^R\|_{\mathbb L^2}^2
   \right]
   \le
   C_{\varepsilon,R,T}\tau^{3-\varepsilon}.
\end{equation}
\end{theorem}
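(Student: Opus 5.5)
The approach is the standard inf--sup argument applied to the pressure error equation, following \cite[Section~3.4]{BBCP}.

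\textbf{Step 1: the error equation.} Integrate the continuous momentum equation (with pressure) for $\bfy$ over $[\mathfrak t_n^R,\mathfrak t_{n+1}^R]$ and divide by $\tau$. Test with an arbitrary $\boldsymbol\varphi\in\mathbb H^1$, not only divergence-free ones, so that the averaged pressure appears through $(\frac1\tau\int p\,\dt,\diver\boldsymbol\varphi)$. Subtract the scheme \eqref{scheme:main0} from this. Exactly as in the derivation of \eqref{eq:err-local}, this gives, on $\Omega_R^\tau$,
\begin{align*}
(\pi_{n+1}^R,\diver\boldsymbol\varphi)
&=\mathbf 1_{\Omega_R^\tau}\Big[\Big(\frac{{\bf e}_{n+1}^R-{\bf e}_n^R}{\tau},\boldsymbol\varphi\Big)
+\nu(\nabla{\bf e}_{n+\frac12}^R,\nabla\boldsymbol\varphi)
+\langle\widetilde{\mathcal R}^R_{n+\frac12},\boldsymbol\varphi\rangle
+\langle\mathcal N^R_{n+\frac12},\boldsymbol\varphi\rangle\\
&\qquad+\mathcal C({\bf e}^R_{\star,n+\frac12},{\bf Y}^R_{n+\frac12},\boldsymbol\varphi)
+\mathcal C({\bf y}^R_{\star,n+\frac12}+\Phi\mathcal I^W_{n,R},{\bf e}^R_{n+\frac12},\boldsymbol\varphi)\Big].
\end{align*}

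\textbf{Step 2: inf--sup bound.} Apply \eqref{eq:inf-sup-cont-3d}, taking the supremum over $\boldsymbol\varphi$ in $\mathbb H^1$, which is the natural space for the pressure. The time-difference term is then measured in the dual norm, and it is controlled by Lemma~\ref{lem:discrete-time-derivative-3d}. Bound the trilinear terms with the continuity estimate $|\mathcal C({\bf a},{\bf b},\boldsymbol\varphi)|\le C\|{\bf a}\|_{\mathbb V}\|{\bf b}\|_{\mathbb V}\|\boldsymbol\varphi\|_{\mathbb H^1}$. This yields
\begin{align*}
\beta\|\pi_{n+1}^R\|_{\mathbb L^2}
&\le \mathbf 1_{\Omega_R^\tau}\Big\|\tfrac{{\bf e}_{n+1}^R-{\bf e}_n^R}{\tau}\Big\|_{\Vdual}
+C\|\nabla{\bf e}^R_{n+\frac12}\|_{\mathbb L^2}
+\|\widetilde{\mathcal R}^R_{n+\frac12}\|_{\Vdual}
+\|\mathcal I^{W^2}_{n,R}-\mathcal Q^{W^2}_{n,R}\|_{\mathbb L^2}\\
&\quad+CR\|{\bf e}^R_{\star,n+\frac12}\|_{\mathbb L^2}
+C\sqrt R\,\|\nabla{\bf e}^R_{n+\frac12}\|_{\mathbb L^2}.
\end{align*}
Two bounds enter here.
\begin{itemize}
\item For the term with ${\bf Y}^R_{n+\frac12}$, use its $\mathbb H^2$ localization bound together with $\mathbb H^2\hookrightarrow\mathbb L^\infty$.
\item For the term with the discrete advecting field ${\bf y}^R_{\star,n+\frac12}+\Phi\mathcal I^W_{n,R}$, use precisely the pathwise bound \eqref{eq:pressure-pathwise-bound}. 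This is why the set $\Omega_R^\tau$ is introduced: the discrete extrapolated velocity is not controlled by the stopping time alone.
\end{itemize}

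\textbf{Step 3: summation.} Square, multiply by $\tau$, sum over $n$ and take expectations. The terms are then handled as follows.
\begin{itemize}
\item The first term is $\le C\tau^{3-\varepsilon}$ by Lemma~\ref{lem:discrete-time-derivative-3d}.
\item The gradient terms are controlled by the dissipation part of Theorem~\ref{thm:main-local}.
\item The residual is controlled by Lemma~\ref{lem:R-bound}.
\item The triple-integral quadrature term is bounded by $T\,\E[\sup_n\|\mathcal I^{W^2}_n-\mathcal Q^{W^2}_n]\|^2]\le C\tau^{3-\varepsilon}$ via \eqref{eq:QW2vsIW2}.
\item The extrapolated error term is bounded by $C R^2 T\,\E[\sup_n\|{\bf e}^R_n\|^2_{\mathbb L^2}]\le C\tau^{3-\varepsilon}$, again by Theorem~\ref{thm:main-local}.
\end{itemize}

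\textbf{Main obstacle.} The delicate point is Step~1: the pressure appears only when testing with non-solenoidal $\boldsymbol\varphi$. One therefore has to check that the residual bound of Lemma~\ref{lem:R-bound} and the quadrature bounds still hold with the full $\mathbb H^1$ dual norm rather than $\Vdual$. This should go through because all of those estimates used only $\|\nabla\boldsymbol\varphi\|_{\mathbb L^2}$ and never the divergence constraint. One also has to verify that the continuous averaged pressure and the stopping $\mathfrak t^R_n$ are compatible on $\Omega_R^\tau\cap\{\mathfrak t_R^{\tt d}=T\}$. On the remaining part, where the stopped quantities are frozen, both the error and the residual vanish beyond $\mathfrak m_R$.
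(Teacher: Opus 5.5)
Your proposal is correct in substance and follows the same route as the paper's sketch. The paper applies the inf--sup argument of \cite[Theorem~3.2]{BBCP} on $\Omega_R^\tau$, controlling the discrete time derivative by Lemma~\ref{lem:discrete-time-derivative-3d}, the gradient and extrapolated velocity errors by Theorem~\ref{thm:main-local}, and the residual by Lemma~\ref{lem:R-bound}. You are more explicit than the paper in two places: you isolate $\mathcal C({\bf y}^R_{\star,n+\frac12}+\Phi\mathcal I^W_{n,R},{\bf e}^R_{n+\frac12},\boldsymbol\varphi)$ as the term that actually needs \eqref{eq:pressure-pathwise-bound}, and you keep the quadrature term $\mathcal I^{W^2}_{n,R}-\mathcal Q^{W^2}_{n,R}$. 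Also, on $\mathbb T^3$, passing from the $(\mathbb H^1)'$-norm to the $\Vdual$-norm of the divergence-free time difference is harmless, since $\mathcal P_{\mathrm{HL}}$ is bounded on $\mathbb H^1$.

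The one loose end, which the paper's sketch shares, is your closing remark on the stopped part. As \eqref{eq:pressure-error-stopped-3d} is written, $\pi^R_{n+1}$ involves the unstopped quantities $\frac1\tau\int_{t_n}^{t_{n+1}}p\,\mathrm dt$ and $p_{n+1}$. So for $t_{n+1}>\mathfrak t_R^{\tt d}$ your Step~1 identity fails, and the frozen stopped velocity error gives no control of $\pi^R_{n+1}$ there. What the argument actually proves is the estimate for the stopped pressure error, for example with $\pi^R_{n+1}$ multiplied by $\mathbf 1_{\{t_{n+1}\le\mathfrak t_R^{\tt d}\}}$, which is presumably what the paper intends.
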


\begin{proof}
The proof follows the inf--sup argument of \cite[Theorem~3.2]{BBCP}.  On
the localized set \(\Omega_R^\tau\), the pressure error is bounded by the
discrete time-derivative error, the stopped velocity error terms, and the
modified residual. These are controlled by
Lemma~\ref{lem:discrete-time-derivative-3d},
Theorem~\ref{thm:main-local}, and the residual estimate~\eqref{eq:R-bound-sum}.  This yields \eqref{eq:pressure-rate-3d}.
\end{proof}
\section{Simulations}\label{sec: simulations}
To approximate the random PDE \eqref{req:stochastic-NS}, we employ fully discrete finite element 
methods based on both the semi-implicit Euler and Crank-Nicolson time 
discretisations as discussed in Sections~\ref{sec:IE} and \ref{theo-2}, respectively.
Here we also allow a deterministic forcing $\mathbf f$ which can depend on space and time. The computational 
domain is taken to be the unit cube $\mathcal{O} = ¢(0,1)^3$, which is partitioned 
into a uniform tetrahedral mesh with mesh size $h = 1/L$. The mesh is constructed 
by dividing $\mathcal{O}$ into $L \times L \times L$ congruent sub-cubes and 
subsequently decomposing each sub-cube into tetrahedra. More precisely, let 
$\mathcal{T}_h$ denote the resulting quasi-uniform, shape-regular triangulation into tetrahedra of 
$\mathcal{O}$, with maximal mesh size
\[
h = \max_{K \in \mathcal{T}_h} \operatorname{diam}(K).
\]
For the spatial discretisation, we employ the lowest-order Taylor-Hood finite 
element pair, which is known to be inf-sup stable; see, 
e.g., \cite{GR}. Denoting by $P_\ell(K)$ the 
space of polynomials of degree at most $\ell$ on an element $K \in \mathcal{T}_h$, 
the discrete velocity and pressure spaces are defined as
\[
\mathbb{V}_h
:=
\bigl\{
\mathbf{v}_h \in H_0^1(\mathcal{O}) :
\mathbf{v}_h|_K \in P_2(K),
\ \forall\, K \in \mathcal{T}_h
\bigr\},
\]
\[
\mathbb{Q}_h
:=
\bigl\{
q_h \in L_0^2(\mathcal{O}) :
q_h|_K \in P_1(K),
\ \forall\, K \in \mathcal{T}_h
\bigr\},
\]
where $L_0^2(\mathcal{O}) := \{ q \in L^2(\mathcal{O}) : \int_{\mathcal{O}} q\, dx = 0 \}$ and $H^1_0(\mathcal O)$ denotes the space of $H^1$-functions over $\mathcal O$ vanishing at the boundary. The pair 
$(\mathbb{V}_h, \mathbb{Q}_h)$ is called stable if the so-called Ladyshenskaya-Babuska-Brezzi (LLB) condition is satisfied i.e., there exists a constant $\varpi > 0$ (independent of $h$) such that 
\[
\inf_{q_h \in \mathbb{Q}_h \setminus \{0\}}
\sup_{\mathbf{v}_h \in \mathbb{V}_h \setminus \{0\}}
\frac{\int_{\mathcal{O}}\text{div}\mathbf{v}_h q_h\,dx}{\|\nabla \mathbf{v}_h\|_{\mathbb{L}^2}\,\|q_h\|_{\mathbb{L}^2}}
\geq \varpi > 0,
\]
The LLB condition is a necessary condition for showing the well-posedness of full discrete formulation. In its absence, it is impossible to uniquely reconstruct the pressure once the velocity has been constructed.

The fully discrete approximations corresponding to the semi-implicit Euler (IE) is given as follows: For $m=1,\ldots,N$, given
$ \mathbf{y}_h^{m-1}\in \mathbb V_h$, find
$(\mathbf{y}_h^{m},p_h^{m})\in \mathbb V_h\times \mathbb Q_h$
such that, for all
$(\boldsymbol{\varphi},q)\in \mathbb V_h\times \mathbb Q_h$,
\begin{equation}
\label{eq:IE-FEM}
\begin{cases}
\displaystyle
\left(
\frac{\mathbf{y}_h^{m}-\mathbf{y}_h^{m-1}}{\tau},
\boldsymbol{\varphi}
\right)
+
\mathcal{C}^{*}
\!\left(\mathbf{y}_h^{m-1}
+\Phi(W^{m-1}),
\mathbf{y}_h^{m}
+\Phi(W^{m}),
\boldsymbol{\varphi}
\right)
\\[1ex]
\displaystyle\qquad
+\,
\nu
\left(
\nabla\bigl(\mathbf{y}_h^{m}
+\Phi(W^{m})\bigr),
\nabla\boldsymbol{\varphi}
\right)
-
\left(p_h^{m},
\text{div}\,\boldsymbol{\varphi}\right)
=
\left(\mathbf{f}(t_{m}),
\boldsymbol{\varphi}\right),
\\[2ex]
\displaystyle
\left(\text{div}\,\mathbf{y}_h^{m},q\right)=0.
\end{cases}
\end{equation}
where $
W^{m}=W(t_m),$ and 
nonlinear convection term is discretised using its skew-symmetric form,
\[
\mathcal{C}^{*}(\mathbf{u},\mathbf{v},\boldsymbol{\varphi})
=
\frac12\mathcal{C}(\mathbf{u},\mathbf{v},\boldsymbol{\varphi})
-
\frac12\mathcal{C}(\mathbf{v},\mathbf{u},\boldsymbol{\varphi})
\]
thanks to the divergence-free velocity fields and ensures that the fully discrete formulation satisfies the appropriate discrete energy balance.
The fully discrete approximation corresponding to Crank-Nicolson (CN) scheme is given as follow: For $m=1,\ldots,N$, given
$\mathbf{y}_h^{m-1},\mathbf{y}_h^{m-2}\in \mathbb V_h$, find
$(\mathbf{y}_h^{m},p_h^{m})\in \mathbb V_h\times \mathbb Q_h$
such that, for all
$(\boldsymbol{\varphi},q)\in \mathbb V_h\times \mathbb Q_h$,
\begin{equation}\label{eq:CN-FEM}
\begin{cases}
\displaystyle
\left(
\frac{\mathbf{y}_h^{m}-\mathbf{y}_h^{m-1}}{\tau},
\boldsymbol{\varphi}
\right)
+
\mathcal{C}\!\left(
\mathbf{y}_{h,\star}^{m-\frac12}
+\Phi\mathcal{I}_{m-1}^W,
\mathbf{y}_h^{m-\frac12}
+\Phi\mathcal{I}^W_{m-1},
\boldsymbol{\varphi}
\right)
\\[1.5ex]
\displaystyle\qquad
-
\left(
\mathcal{I}_{m-1}^{W^2},\nabla\boldsymbol{\varphi}
\right)
+
\nu
\left(
\nabla\!\left(
\mathbf{y}^{m-\frac12}_h
+\Phi\mathcal{I}^W_{m-1}
\right),
\nabla\boldsymbol{\varphi}
\right)
-
\left(
p_h^{m},
\text{div}\,\boldsymbol{\varphi}
\right)
\\[1.5ex]
\displaystyle\qquad
=
\left(
\frac{1}{\tau}
\int_{t_{m-1}}^{t_{m}}
\mathbf{f}(s)\,ds,
\boldsymbol{\varphi}
\right),
\\[2ex]
\displaystyle
\left(
\text{div}\,\mathbf{y}_h^{m},
q
\right)=0.
\end{cases}
\end{equation}

To discretise the probability space, we employ a Monte Carlo approach with 
$N_s \in \mathbb{N}$ independent samples where both the temporal schemes require different types of stochastic increments. For the IE scheme, we use classical Wiener increments. For each sample 
$\ell = 1,\ldots, N_s$, the Brownian increments 
$(\Delta_m W(\omega_\ell))_{m=1}^{M}$ are approximated by
\[
Z_m^{\ell} = \sqrt{\tau}\,\xi_m^{\ell},
\qquad \xi_m^{\ell} \stackrel{\mathrm{i.i.d.}}{\sim} \mathcal{N}(0,1),
\qquad m = 1,\ldots,M,
\]
generated via a pseudo-random number generator. On the other hand, the CN scheme requires averaged Wiener increments to achieve its higher 
convergence rate of order. For each sample 
$\ell = 1,\ldots,N_s$, we define
\[
\widetilde{Z}_m^{\ell}
= \frac{1}{2}\bigl(\Delta_m W^{\ell} + \Delta_{m-1} W^{\ell}\bigr)
  + \frac{1}{\sqrt{12}}\bigl(\zeta_m^{\ell} - \zeta_{m-1}^{\ell}\bigr),
\qquad m = 1,\ldots,M,
\]
where $\Delta_m W^{\ell} = \sqrt{\tau}\,\xi_m^{\ell}$ are classical increments 
and $\zeta_m^{\ell} = \sqrt{\tau}\,\eta_m^{\ell}$ with 
$\eta_m^{\ell} \stackrel{\mathrm{i.i.d.}}{\sim} \mathcal{N}(0,1)$, independent 
of $(\xi_m^{\ell})$.

When comparing solutions at different temporal resolutions, coarse and fine 
approximations must be driven by the same pseudo-random event. Let 
$M_f = r M_c$ for some $r \in \mathbb{N}$, where $M_c = 2^k$ and 
$M_f = 2^K$ are the number of time steps at levels $k < K$, with step 
sizes $\tau_c = T/M_c$ and $\tau_f = T/M_f = \tau_c/r$, respectively. 
Increments are generated first at the finest level and all coarser 
levels are recovered via the reconstruction
\[
Z_m^{\ell,c} = \sum_{n=1}^{r} Z_{(m-1)r+n}^{\ell,f},
\qquad m = 1,\ldots,M_c, \quad \ell = 1,\ldots,N_s.
\]
Since $Z_{(m-1)r+n}^{\ell,f} \sim \mathcal{N}(0,\tau_f)$ independently, 
the sum satisfies $Z_m^{\ell,c} \sim \mathcal{N}(0,\tau_c)$, consistent 
with Brownian motion. This reconstruction is applied identically for both 
the IE and CN schemes. For CN, the Brownian bridge within each coarse 
interval is then constructed independently, conditioned on $Z_m^{\ell,c}$.
\subsection{Academic Experiment}
We demonstrate the convergence orders for the IE scheme~\eqref{eq:IE-FEM}
and the CN scheme~\eqref{eq:CN-FEM} in three spatial dimensions using an exact solution
\begin{align}
    \mathbf{y}^{\mathrm{ex}}(t,\mathbf{x})
        &= \bigl(2\cos(6t) + 4W_1(t)\bigr)\,\mathbf{g}(\mathbf{x}), \\
    p^{\mathrm{ex}}(t,\mathbf{x})
        &= t\bigl(|\mathbf{x}|^2 - 1\bigr),
\end{align}
where $\mathbf{g}(\mathbf{x}) = (x_1^3,\,-3x_1^2 x_2,\, x_1 x_2)^\top$ satisfies
$\text{div}\,\mathbf{g} = 0$ and the noise coefficient is
$\boldsymbol{\Phi} = 4\mathbf{g}$, so that
$\boldsymbol{\Phi} W_1(t) = 4\mathbf{g}(\mathbf{x})\,W_1(t)$.
We approximate~\eqref{req:stochastic-NS} on $(0,T)\times \mathcal{O}$ with $T = 1$ and the unit cube
$\mathcal{O} = (0,1)^3$, with right-hand side
\begin{equation}
    \mathbf{f}(t,\mathbf{x})
    = -12\sin(6t)\,\mathbf{g}
    + \bigl(2\cos(6t) + 4W_1(t)\bigr)^2 (\mathbf{g}\cdot\nabla)\mathbf{g}
    - \bigl(2\cos(6t) + 4W_1(t)\bigr)\Delta\mathbf{g}
    + t\,\nabla\bigl(|\mathbf{x}|^2 - 1\bigr),
\end{equation}
and Dirichlet boundary condition
$\mathbf{y}\big|_{\partial \mathcal{O}} = \mathbf{y}^{\mathrm{ex}}$.
The domain $\mathcal{O}$ is partitioned into a uniform tetrahedral mesh with mesh size $h = 1/16$.
Errors are estimated from $N_s = 1000$ independent realisations of the driving Wiener
process for time step sizes $\tau \in \{2^{-2}, 2^{-3}, 2^{-4}, 2^{-5}, 2^{-6}\}$.

In Figure~\ref{fig:exact_IE_combined}, we display the mean-square approximation errors
\begin{equation}\label{eq:errors}
    \mathbb{E}\!\Big[\max_{0\le n\le N}
        \bigl\|\mathbf{y}(t_n) - \mathbf{y}_n\bigr\|^2_{L^2(\mathcal{O})}\Big]
    \quad\text{and}\quad
    \tau\sum_{n=1}^{N}
    \mathbb{E}\!\left[\|\pi_n\|^2_{L^2(\mathcal{O})}\right]
\end{equation}
for the velocity and pressure of the IE scheme, and in Figure~\ref{fig:exact_CN_combined} the
corresponding errors for the CN scheme. The plots in Figures~\ref{fig:exact_IE_combined} and~\ref{fig:exact_CN_combined} validate the theoretical convergence
rates: the IE scheme achieves the expected rate of order $1$ for the velocity, and the
CN scheme attains the predicted rate of order $3/2$ for both velocity and pressure.
For the pressure approximation of the IE scheme, the plot suggests a rate that is
consistent (even better) with order $1$;
we attribute this to the smoothness of the exact pressure $p^{\mathrm{ex}}(t,\mathbf{x})
= t(|\mathbf{x}|^2 - 1)$ in this academic example.

In the two-dimensional setting of~\cite{BBCP}, the CN scheme yields significantly smaller
errors than the IE scheme: at the largest time step $\tau = 0.01$, the velocity error
of the CN scheme is approximately $2$ times smaller than that of the IE scheme,
with this advantage persisting across all computed time steps. However, in our present experiment, the situation is reversed: the IE scheme
produces smaller absolute errors throughout the computed range
$\tau \in \{2^{-2},\ldots,2^{-6}\}$, with the IE velocity error being approximately
$2.9$ times smaller at $\tau = 2^{-2}$ and $1.9$ times smaller at $\tau = 2^{-6}$.
This does not contradict either the theoretical convergence rates or the experimental
findings for both the schemes. Rather, it reflects the fact that the error constant
$C_{\mathrm{CN}}$ of the CN scheme is approximately $15$ times larger than the error constant 
$C_{\mathrm{IE}}$ of IE scheme, in this three-dimensional case in present setting . Consequently, the crossover
step size $
\tau^* = \left(\frac{C_{\mathrm{IE}}}{C_{\mathrm{CN}}}\right)^2 \approx 2^{-8},
$
lies just beyond our finest accessible time step. Thus, in three spatial
dimensions the computational cost prevents us from reaching $\tau^*$, so the IE
scheme appears superior over the range considered here.
\begin{figure}[htbp]
\centering
\begin{minipage}[t]{0.48\textwidth}
    \centering
    \includegraphics[width=\linewidth]{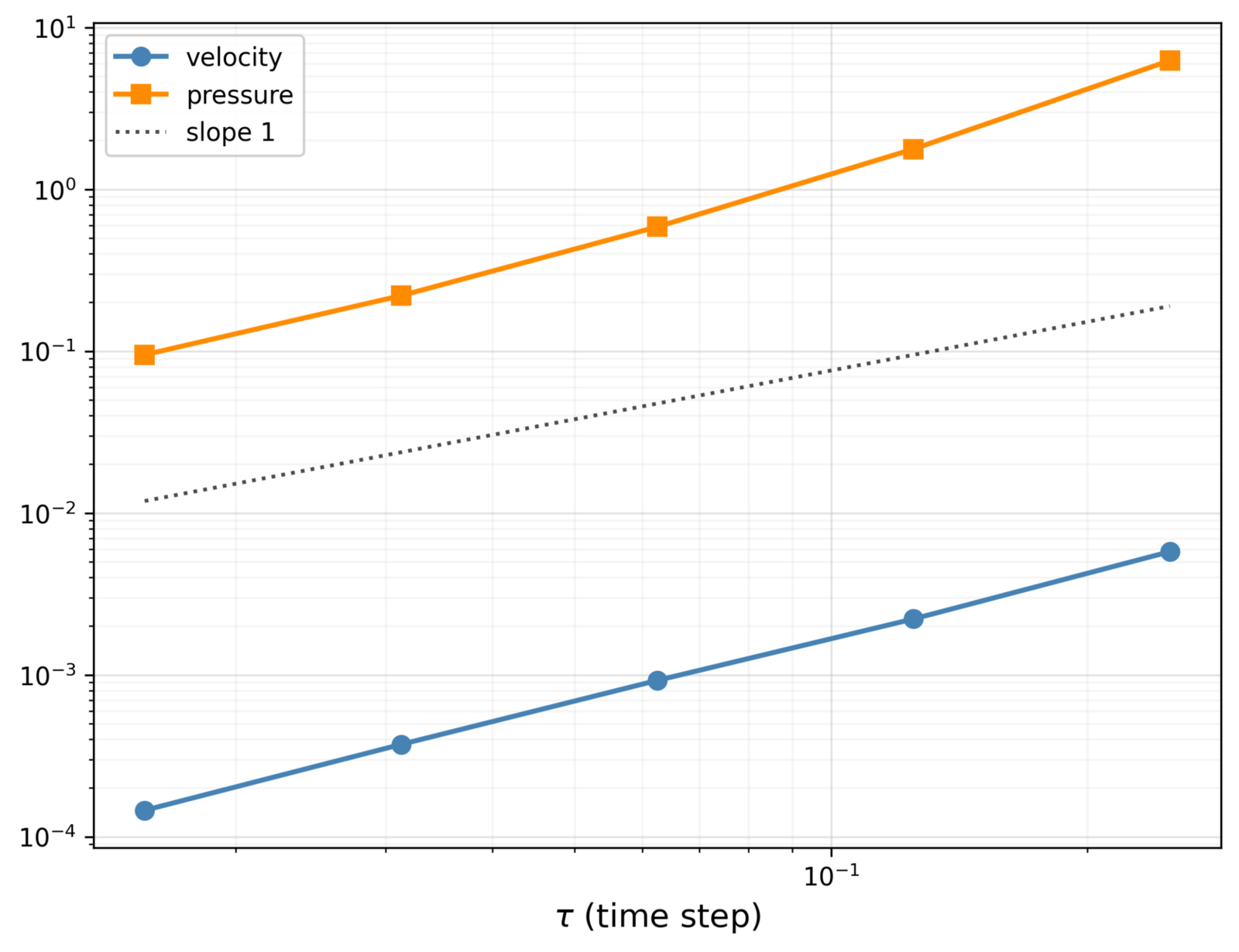}
    \captionsetup{width=1\textwidth}
    \caption{Academic Experiment: approximation error for IE scheme both velocity and pressure}
\label{fig:exact_IE_combined}
\end{minipage}
\hfill
\begin{minipage}[t]{0.48\textwidth}
    \centering
    \includegraphics[width=\linewidth]{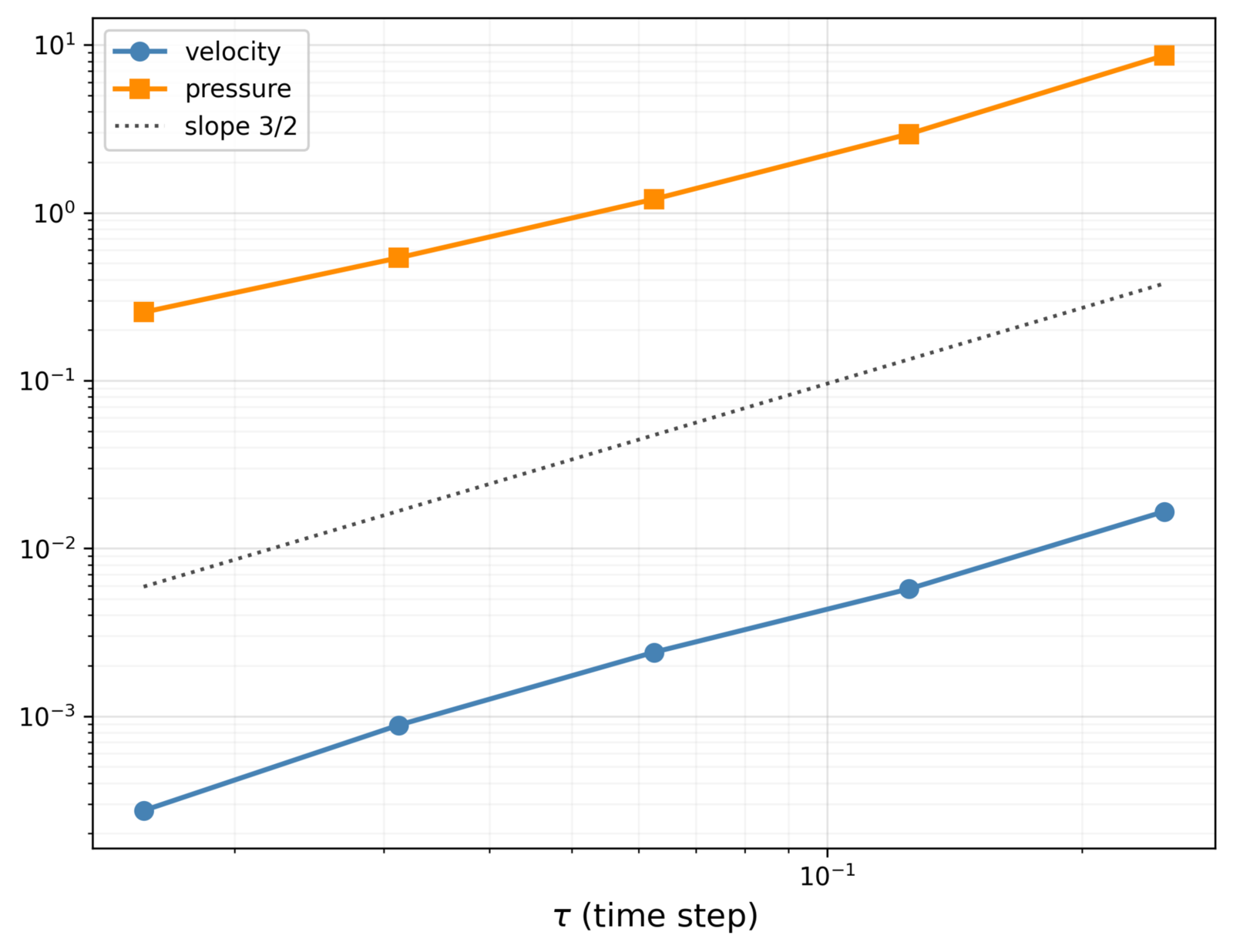}
    \captionsetup{width=1\textwidth}
    \caption{Academic Experiment: approximation error for CN scheme both velocity and pressure}
\label{fig:exact_CN_combined}
\end{minipage}
\end{figure}
\begin{table}[htbp]
\centering
\caption{Mean-square velocity errors
and experimental orders of convergence~(EOC) for the IE and CN schemes;
$h=1/16$, $N_s=1000$ realisations.}
\label{tab:vel_eoc}
\begin{tabular}{c rr rr}
\toprule
& \multicolumn{2}{c}{IE} & \multicolumn{2}{c}{CN} \\
\cmidrule(lr){2-3}\cmidrule(lr){4-5}
$\tau$ & \multicolumn{1}{c}{Error} & \multicolumn{1}{c}{EOC}
       & \multicolumn{1}{c}{Error} & \multicolumn{1}{c}{EOC} \\
\midrule
$2^{-2}$ & $5.77\times10^{-3}$ & $---$ & $1.66\times10^{-2}$ & $---$ \\
$2^{-3}$ & $2.22\times10^{-3}$ & $1.28$ & $5.74\times10^{-3}$ & $1.53$ \\
$2^{-4}$ & $9.24\times10^{-4}$ & $1.26$ & $2.41\times10^{-3}$ & $1.25$ \\
$2^{-5}$ & $3.72\times10^{-4}$ & $1.31$ & $8.86\times10^{-4}$ & $1.44$ \\
$2^{-6}$ & $1.46\times10^{-4}$ & $1.35$ & $2.74\times10^{-4}$ & $1.69$ \\
\bottomrule
\end{tabular}
\end{table}
\subsection{Lid-driven cavity flow in 3D}\label{sec:Lid-3d}
We test the long-time behaviour of schemes~\eqref{eq:IE-FEM}--\eqref{eq:CN-FEM} on
the three-dimensional lid-driven cavity problem.
The domain is $\mathcal{O}=(0,1)^3$; we take $u_0=0$, $\mathbf f=0$, and kinematic viscosity
$\nu=\tfrac{1}{500}$ ($\mathrm{Re}=500$).
Boundary conditions are $\mathbf u\big|_{\{z=1\}}=(1,0,0)^\top$ (moving lid) and
$\mathbf u=0$ on all remaining walls. The additive noise is given by
\[
  \Phi\,\mathrm{d}W(t)
  = \mu\!\sum_{c\in\mathcal{G}^3}\boldsymbol{\varphi}_c(x)\,\mathrm{d}\widetilde{\beta}_c(t),
  \qquad \mu\geq 0,
\]
where $\{\widetilde{\beta}_c\}_{c\in\mathcal{G}^3}$ are independent Brownian motions and
$\mathcal{G}=\bigl\{\tfrac{1}{8},\tfrac{3}{8},\tfrac{5}{8},\tfrac{7}{8}\bigr\}$,
giving $|\mathcal{G}|^3=64$ modes.
The modes $\varphi_c$ are constructed as the three-dimensional analogue of the
stream-function approach used in the two-dimensional case~\cite{BBCP} and \cite[Example 5.9]{BrPrWi}. In two dimensions one sets $\boldsymbol{\varphi}_c=\nabla^\perp\psi_c$ for a scalar
stream function $\psi_c$ whereas in three dimensions we employ the vector-potential identity
$\boldsymbol{\varphi}_c=\nabla\times (0,0,\psi_c)^\top$, so that
$\operatorname{div}\boldsymbol{\varphi}_c=0$ holds identically.
Setting $\lambda=\tfrac{1}{8}$, $\mathcal{G}_c=\bigl[x_0-\tfrac{\lambda}{2},x_0+\tfrac{\lambda}{2}\bigr]^3$
for $c=(x_0,y_0,z_0)\in\mathcal{G}^3$, and writing
$(\hat x,\hat y,\hat z)=(x-c)/\lambda+\tfrac{1}{2}$ for the rescaled local coordinates,
we take
\[
  \psi_c(\hat x,\hat y,\hat z)
  = C\,\hat x^2(1-\hat x)^2\,\hat y^2(1-\hat y)^2\,\hat z^2(1-\hat z)^2.
\]
An explicit computation of $\nabla\times(0,0,\psi_c)^\top$ then yields
\[
  \boldsymbol{\varphi}_c(x)
  = \mathbf{1}_{\mathcal{O}_c}(x)\;h(\hat z)
    \begin{pmatrix}
      \tilde{g}(\hat x,\hat y)\\[3pt]
     -\tilde{g}(\hat y,\hat x)\\[2pt]
      0
    \end{pmatrix},
  \quad
  \tilde{g}(s,r)=200\,s^2(1-s)^2\,r(r-1)(2r-1),
  \quad
  h(s)=s^2(1-s)^2.
\]
The factor $h(\hat z)$ ensures $\boldsymbol{\varphi}_c$ vanishes on $\{z=0\}$ and $\{z=1\}$, while
the $s^2(1-s)^2$ prefactors in $\tilde g$ enforce vanishing at the remaining walls;
together, $\boldsymbol{\varphi}_c$ is compactly supported in $\mathcal{O}_c\subset \mathcal{O}$ and satisfies the the boundary condition on $\partial \mathcal{O}$.
All results are computed on $(0,T]$ with $T \in \{20, 50, 100\}$, time step $\tau=0.05$, and a
uniform tetrahedral mesh with $h=1/16$ and with $N=500$ independent Monte Carlo samples.

\vspace{0.1cm}
\noindent\underline{$U$-velocity profile}: The $U$-velocity profile along the vertical centreline of the cavity has been
widely adopted as a benchmark quantity for the three-dimensional lid-driven
cavity problem; see \cite{FTTF, KHT} for the deterministic setup. The computed solutions from the present formulation (for $\nu= \{0,10,40\}$ at T=100) are shown in Figures \ref{fig:lidcavity_IE_mean_T100_centerline} and \ref{fig:lidcavity_CN_mean_centerline} match almost exactly with the deterministic case \cite{FTTF, KHT}.
\begin{figure}[htbp]
\centering
\begin{minipage}[t]{0.45\textwidth}
    \centering
    \includegraphics[width=\linewidth]{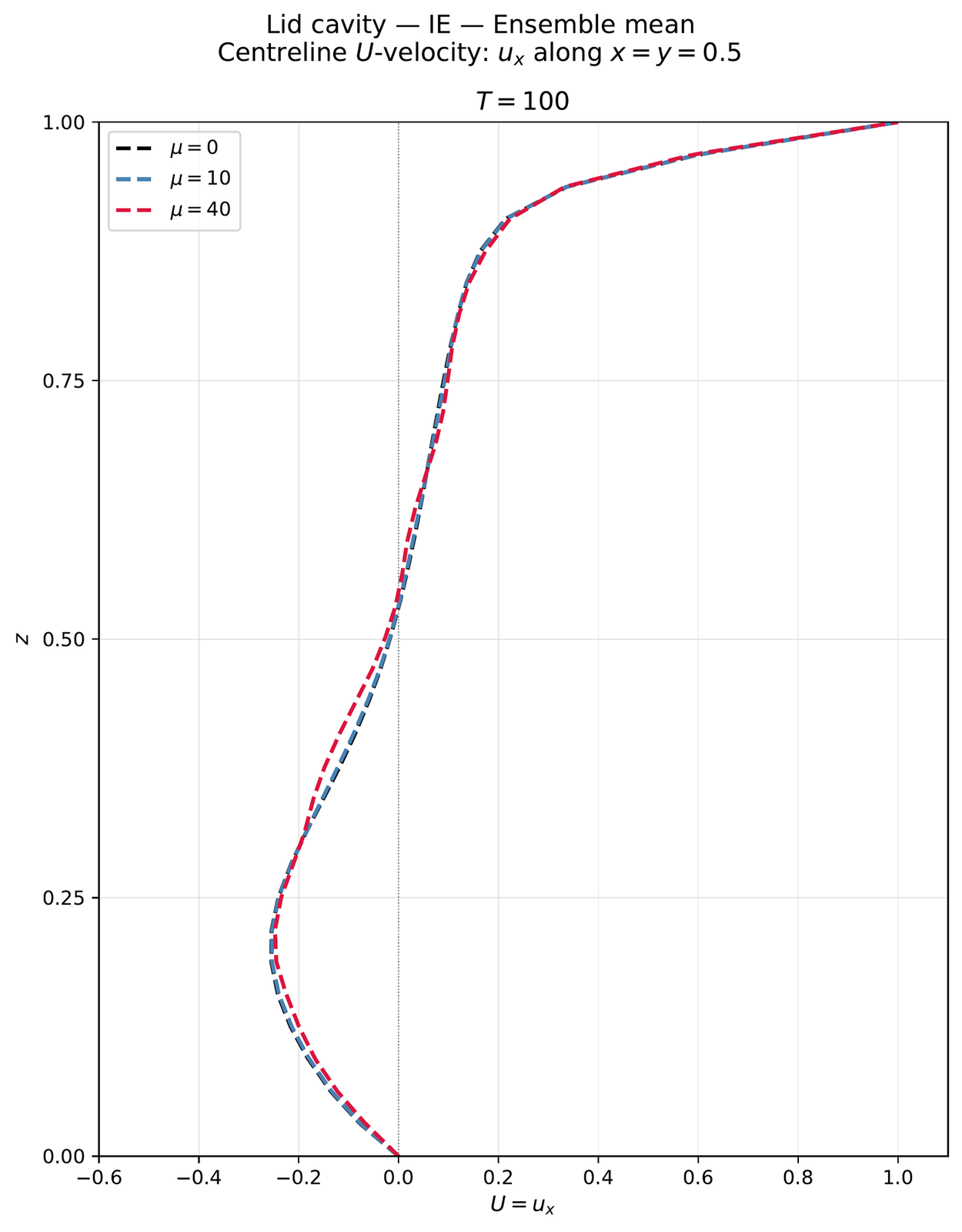}
    \captionsetup{width=1\textwidth}
    \caption{3D-mid plane centerline $U$-velocity: for IE scheme, Re=500}
\label{fig:lidcavity_IE_mean_T100_centerline}
\end{minipage}
\hfill
\begin{minipage}[t]{0.45\textwidth}
    \centering
    \includegraphics[width=\linewidth]{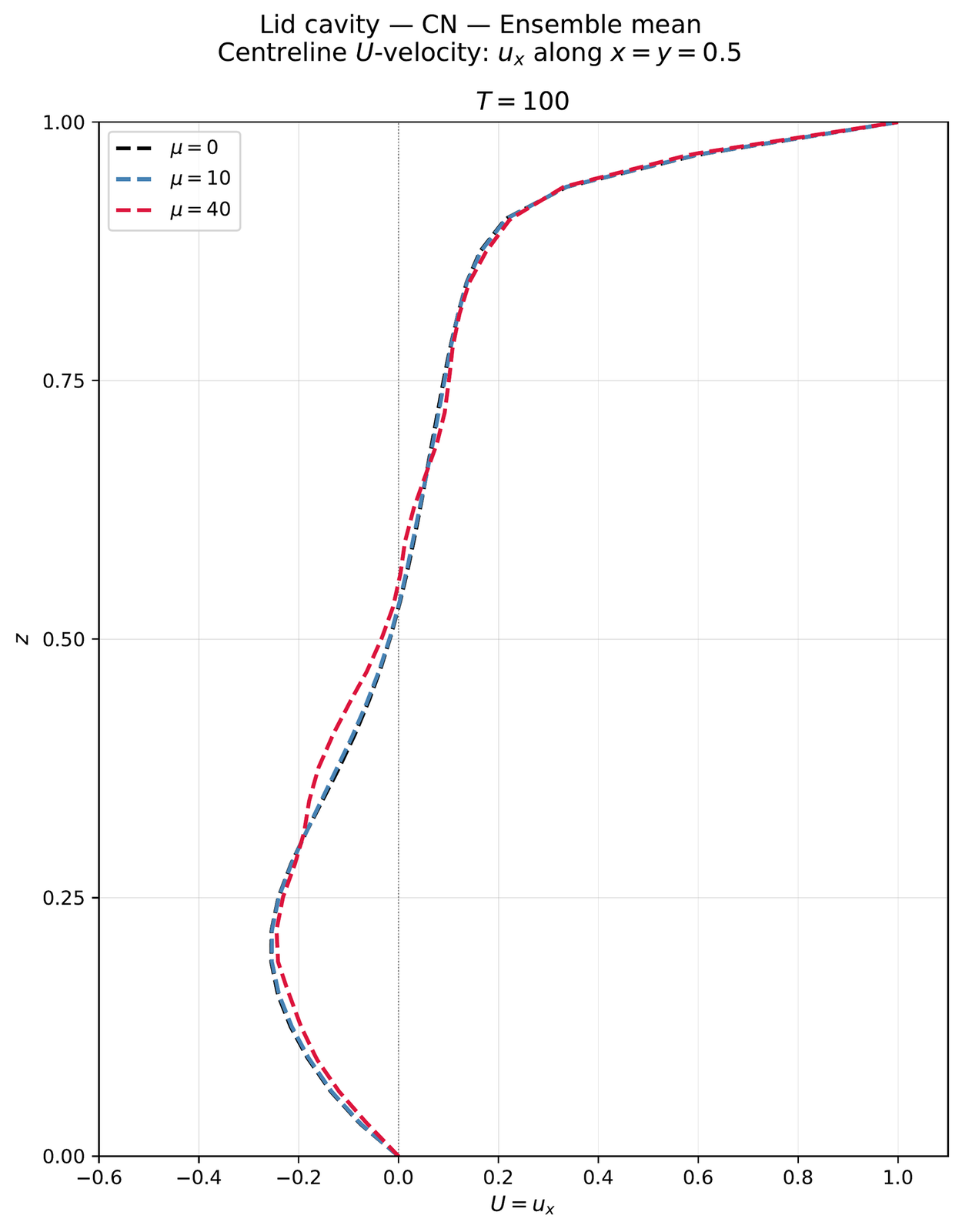}
    \captionsetup{width=1\textwidth}
    \caption{For CN scheme, Re=500}
\label{fig:lidcavity_CN_mean_centerline}
\end{minipage}
\end{figure}

\noindent\underline{2D streamlines and Vorticity:} Figures \ref{fig:lidcavity_IE_mean_T020_slices}, \ref{fig:lidcavity_CN_mean_T020_slices}, and \ref{fig:IE_mean_T100_streamlines_vorticity}--\ref{fig:CN_realization_T100_streamlines_vorticity} display the streamline of the velocity and vorticity across XY-plane $z=0.5$, XZ-plane $y=0.5$,  YZ-plane $z=0.5$ with odd columns representing velocity and even ones, vorticity. The streamlines are colored according to the amplitude of the velocity where as for vorticity the color convey the rotational nature of the fluid. In streamlines, the bright \textit{yellow-white} portion mark the  high-speed regions, concentrated along the lid ($z=1$) and the outer rim of the primary vortex core, 
the \textit{orange-red} part represents the moderate speed and trace inward movement of fluid towards the vortex center where as the 
dark \textit{purple-black} marks the near-stagnation regions at the vortex core and at the corner zones where fluid motion restricted by the wall stresses.  For vorticity \textit{blue} marks the clockwise rotation associated with the primary lid-driven vortex, \textit{red} displays the counter-clockwise rotation and characteristic
of secondary corner vortices forming due to wall friction, while \textit{white} represents the stagnation regions and shear layers.
\begin{figure}[htbp]
    \centering
    \includegraphics[width=\textwidth]{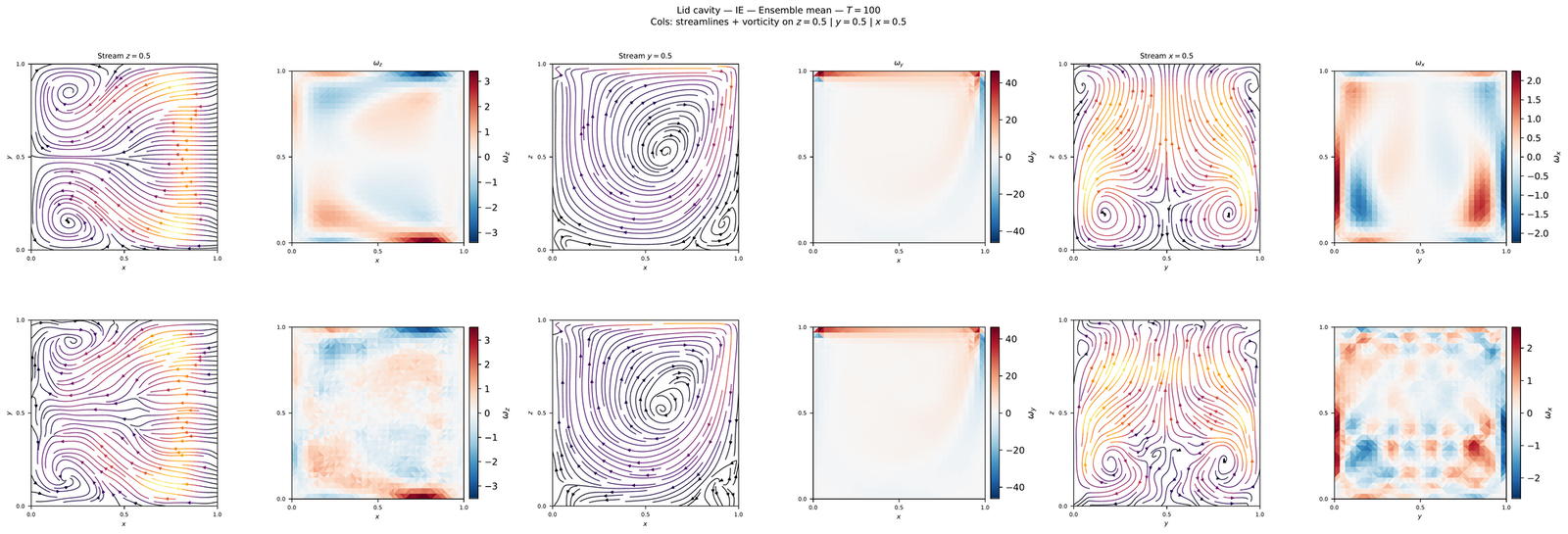}
    \caption{Streamlines and vorticity of solution at $T=100$ for IE scheme}\label{fig:IE_mean_T100_streamlines_vorticity}
\end{figure}
\begin{figure}[htbp]
    \centering
    \includegraphics[width=\textwidth]{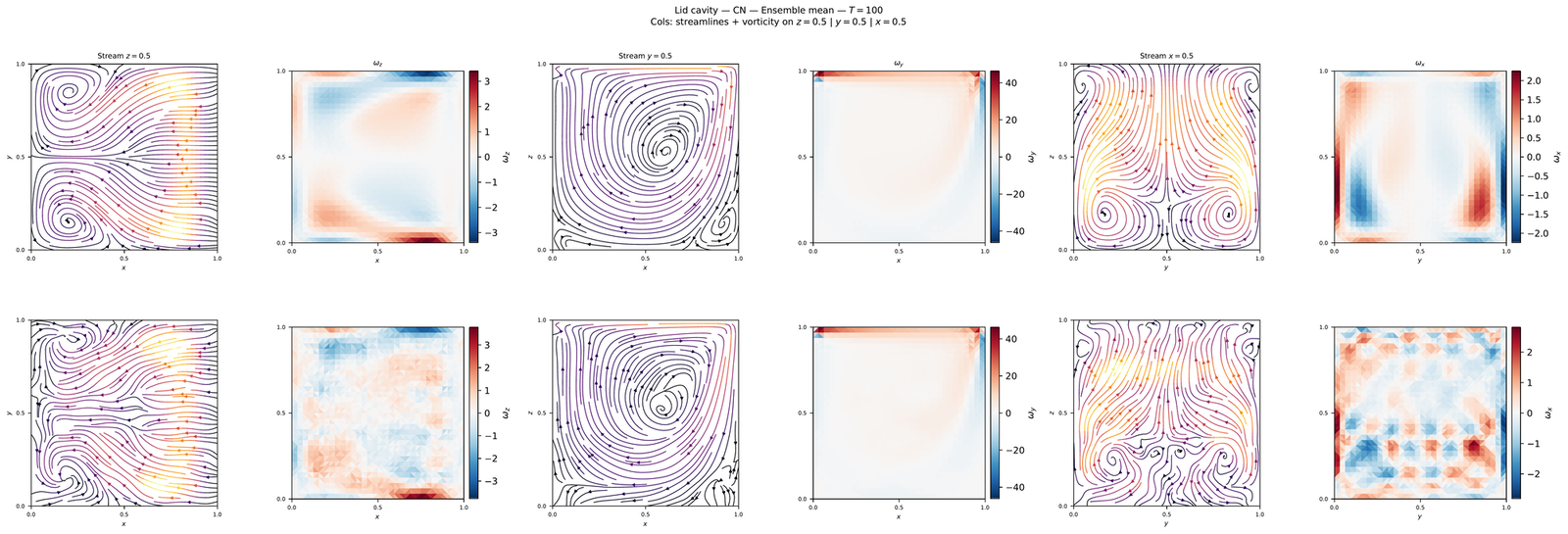}
    \caption{Streamlines and vorticity of solution at $T=100$ for CN scheme}\label{fig:CN_mean_T100_streamlines_vorticity}
\end{figure}
\begin{figure}[htbp]
    \centering
    \includegraphics[width=\textwidth]{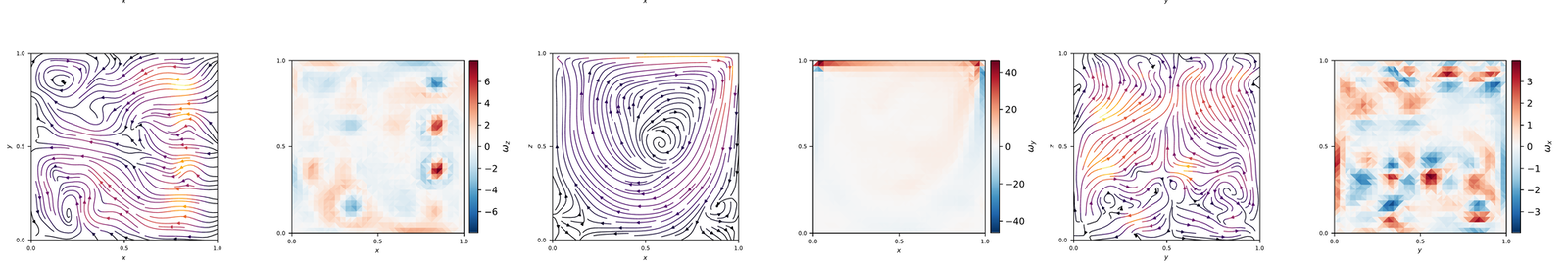}
    \caption{Streamlines and vorticity of solution at $T=100$, for one realisation of the noise  for IE scheme}\label{fig:IE_realization_T100_streamlines_vorticity}
\end{figure}
\begin{figure}[htbp]
    \centering
    \includegraphics[width=\textwidth]{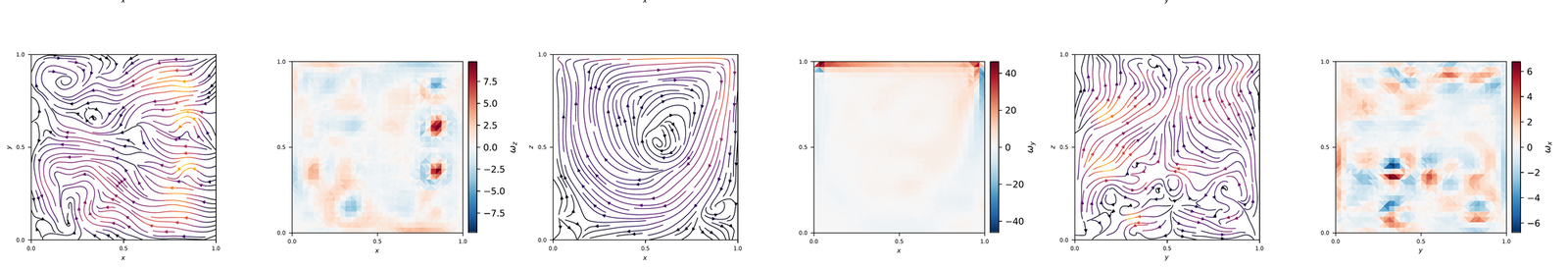}
    \caption{Streamlines and vorticity of solution at $T=100$ for for one realisation of the noise for CN scheme}\label{fig:CN_realization_T100_streamlines_vorticity}
\end{figure}

Figure \ref{fig:IE_mean_T100_streamlines_vorticity} (IE) and \ref{fig:CN_mean_T100_streamlines_vorticity} (CN) demonstrate the streamline of velocity and vorticity 
at $T=100$ for $\mu = 0$ (deterministic solution) at row 1 and $\mu = 40$ at row 2. It is observed that the flow has reached a statistically stationary regime at $T=100$.
On the plane $z=0.5$  dominant primary recirculation vorteices occupy
the upper and lower portion of the cavity and a downward return flow along the side walls. On the plane $x=0.5$ the cross-sectional structure reveals the
three-dimensional nature of the flow: the vortex core is elongated
in the spanwise direction with weaker tertiary recirculation at the end walls.
As $\mu$ increases from $0$ to $40$, the stochastic forcing continuously
injects energy into the flow, causing the vortex core to shift,
the streamline topology to become more complex and vorticity magnitude to intensify. Moreover, we observed that mean streamlines remain topologically consistent and the large-scale rotational structure of the flow is preserved (i.e. mean vorticity remains stable) across
both the IE and CN schemes while for single realization the Figures \ref{fig:IE_realization_T100_streamlines_vorticity} (IE) and \ref{fig:CN_realization_T100_streamlines_vorticity} (CN) convey the stochastic distortions of the vortex and significant fluctuation in rotational structure of the flow for $\mu = 40$ at $T = 100$.

In the introduction, the Figures \ref{fig:lidcavity_IE_mean_T020_slices} (IE) and \ref{fig:lidcavity_CN_mean_T020_slices} (CN) mark the mean value of the numerical solution at $T=20$ computed over 1000 realisation of noise $\mu = 10$ in row 2 and $\mu = 40$ in row 3 along with the deterministic solution (i.e. $\mu = 0$ in row 1), respectively. The deterministic solution and the solution for $\mu = 10$ show  similar results for streamline flow and vorticity where as larger noise intensity (i.e., $\mu = 40$) differs significantly across the schemes.

\vspace{0.1cm}
\noindent\underline{Volumetric streamline:} Figures~\ref{fig:lidcavity_IE_mean_T020_streamlines_mu0_mu40_back_oblique} (IE) and \ref{fig:lidcavity_CN_mean_T020_streamlines_mu0_mu40_back_oblique} (CN), in introduction display
spaghetti-style volumetric streamlines in the back view (column 1) and oblique view (column 2),
for $\mu = 0$ in row 1 and   $\mu = 40$ in row 2,  at $T = 20$.  At $\mu=0$ the flow exhibits a classical single-vortex topology; accompanied by two compact counter-rotating
corner vortices in the lower cavity, visible in the oblique view. As $\mu$ increases to $40$, the continuous injection of kinetic energy
by the stochastic forcing progressively destabilizes the vortex structure.

Figures \ref{fig:paper_stream_IE_mean_mu40_T050_T100} (IE) and \ref{fig:stream_CN_mean_mu40_T050_T100}  (CN) mark the ensemble mean of volumetric streamline for $\mu =40$ at time $T=50$ at row 1 and $T=100$ at row 2, with broadened vortex core and
weakened corner vortices, while single realisations in Figures \ref{fig:stream_IE_realization_mu40_T050_T100} (IE) and  \ref{fig:CN_realization_mu40_T050_T100} (CN) exhibit
multiple competing vortex cores and heavily tangled trajectories,
with the number of identifiable vortex structures increasing remarkably.
\begin{figure}[htbp]
\centering
\begin{minipage}[t]{0.48\textwidth}
    \centering
    \includegraphics[width=\linewidth]{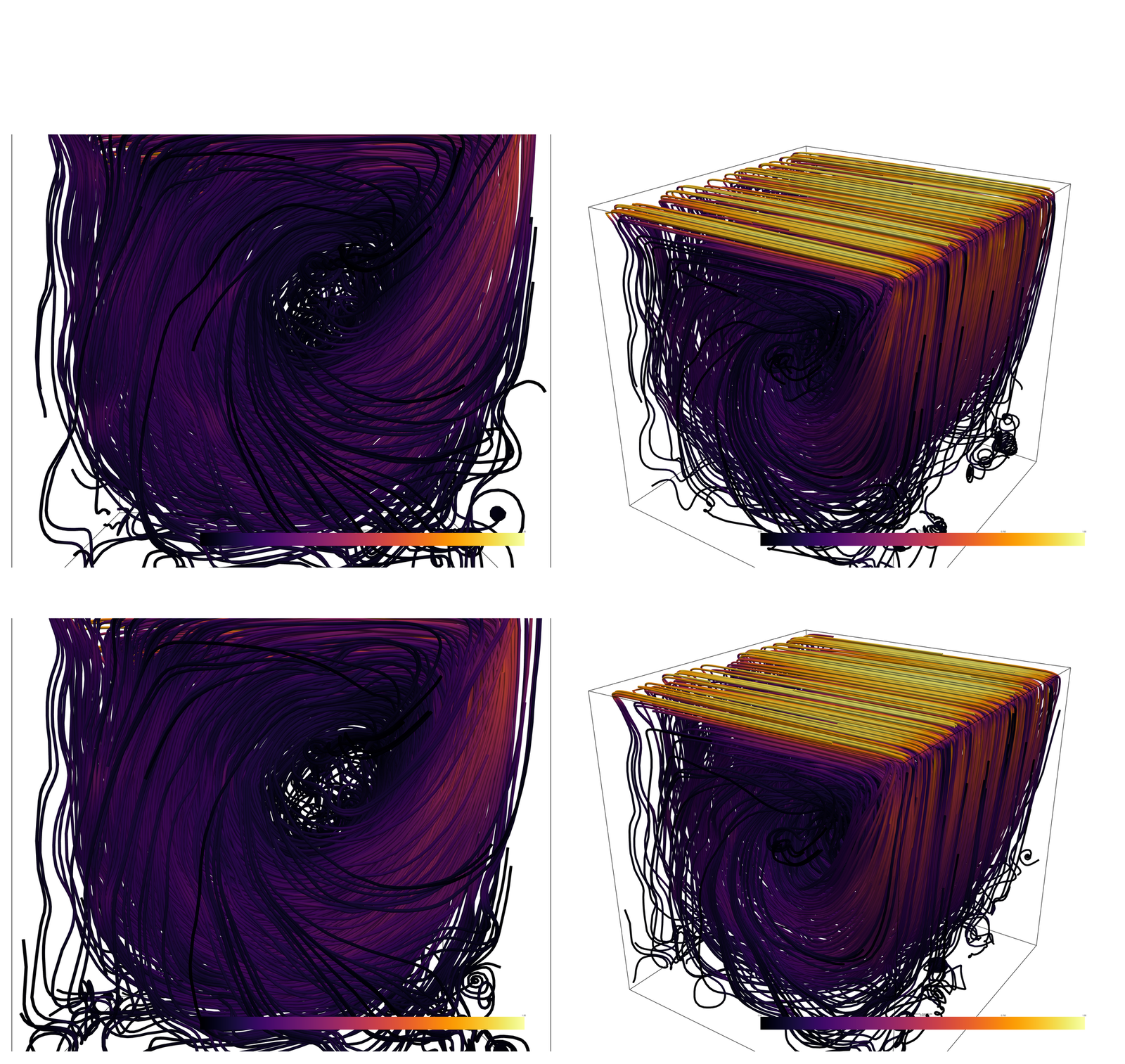}
    \captionsetup{width=1\textwidth}
    \caption{Volumetric streamlines: for $\mu = 40$ for IE scheme}
\label{fig:paper_stream_IE_mean_mu40_T050_T100}
\end{minipage}
\hfill
\begin{minipage}[t]{0.48\textwidth}
    \centering
    \includegraphics[width=\linewidth]{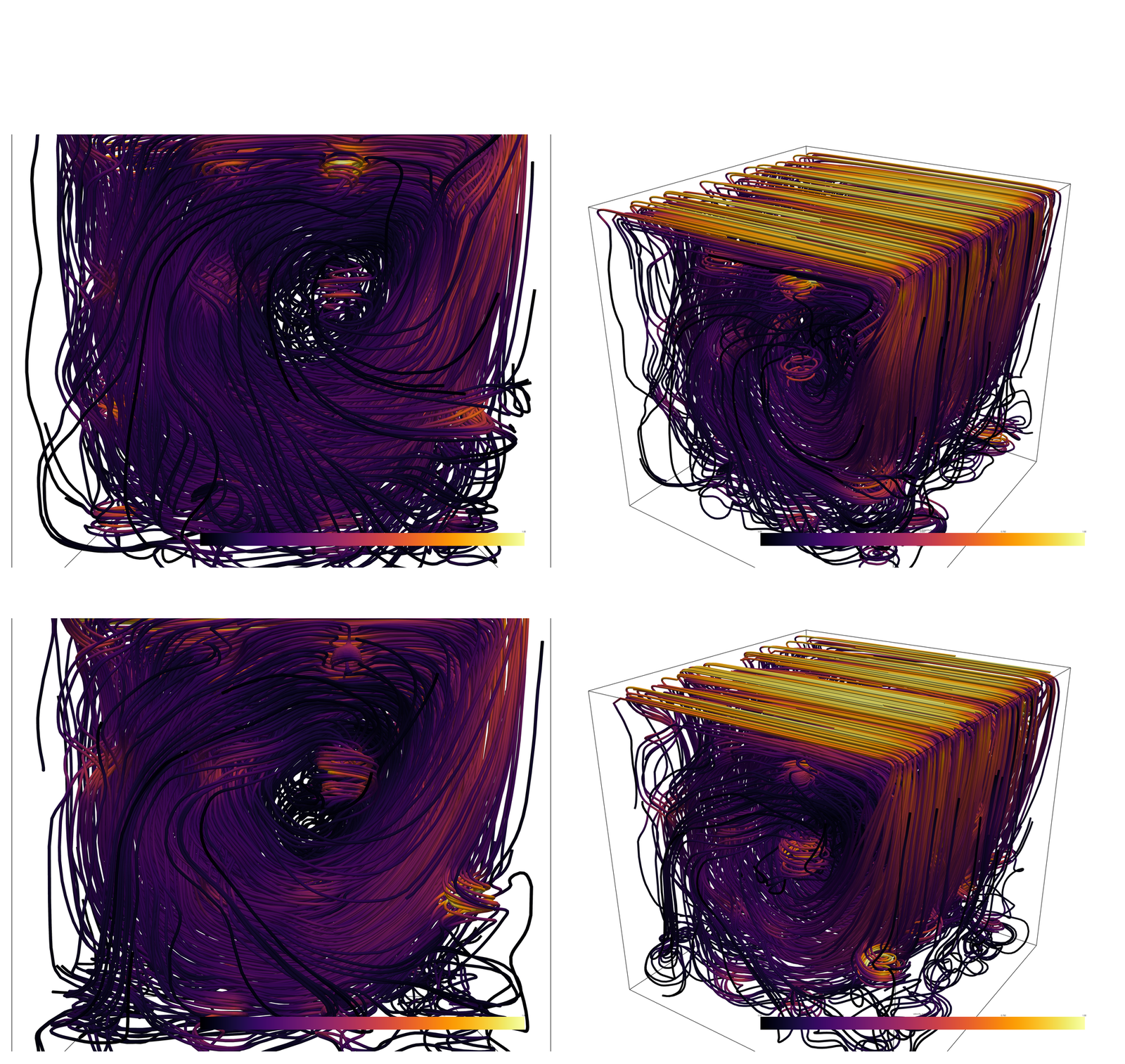}
    \captionsetup{width=1\textwidth}
    \caption{Volumetric streamlines: for $\mu = 40$ for IE scheme with one realisation of noise}
\label{fig:stream_IE_realization_mu40_T050_T100}
\end{minipage}
\end{figure}
\begin{figure}[htbp]
\centering
\begin{minipage}[t]{0.48\textwidth}
    \centering
    \includegraphics[width=\linewidth]{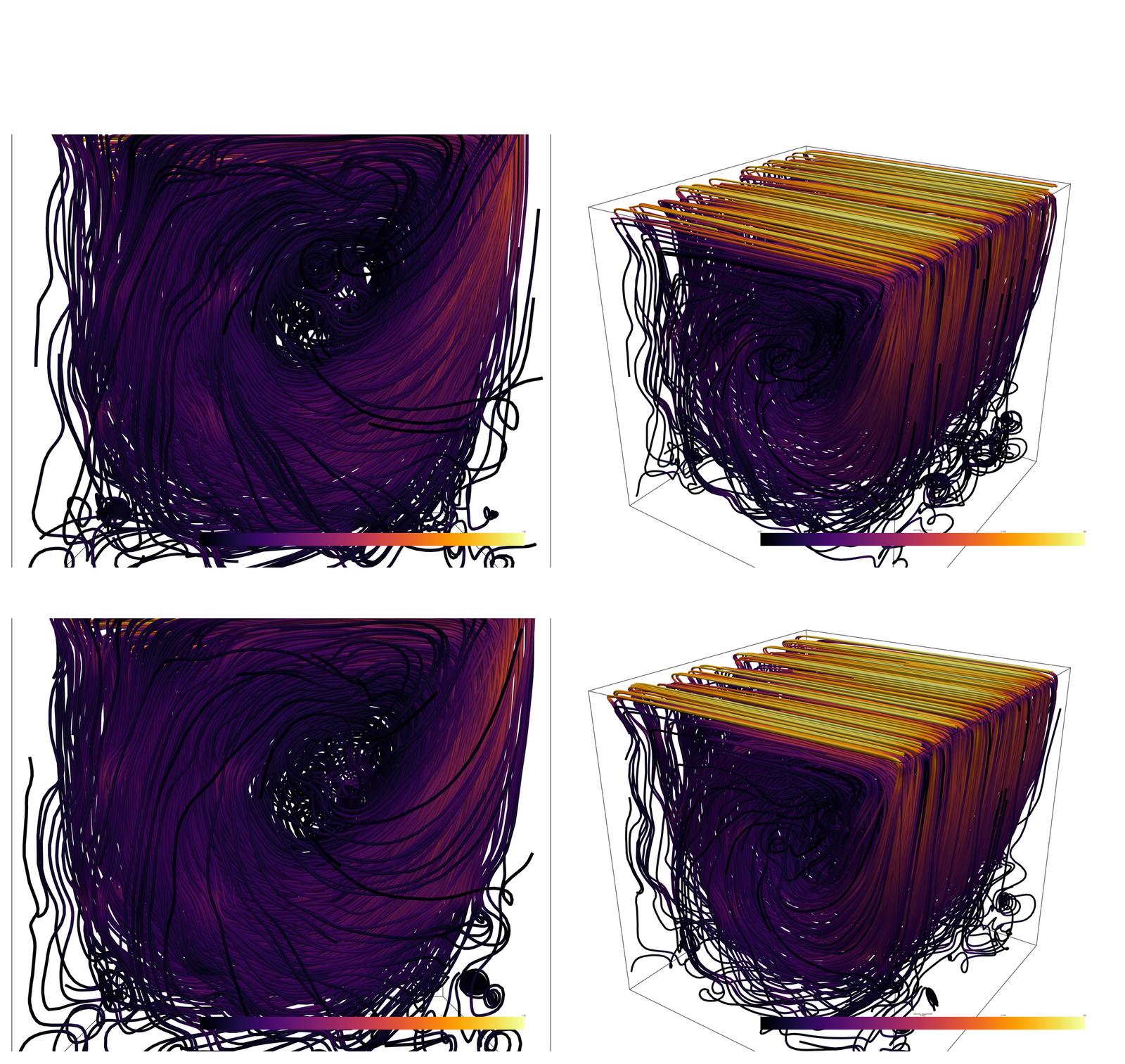}
    \captionsetup{width=1\textwidth}
    \caption{Volumetric streamlines: for $\mu = 40$ for CN scheme}
\label{fig:stream_CN_mean_mu40_T050_T100}
\end{minipage}
\hfill
\begin{minipage}[t]{0.48\textwidth}
    \centering
    \includegraphics[width=\linewidth]{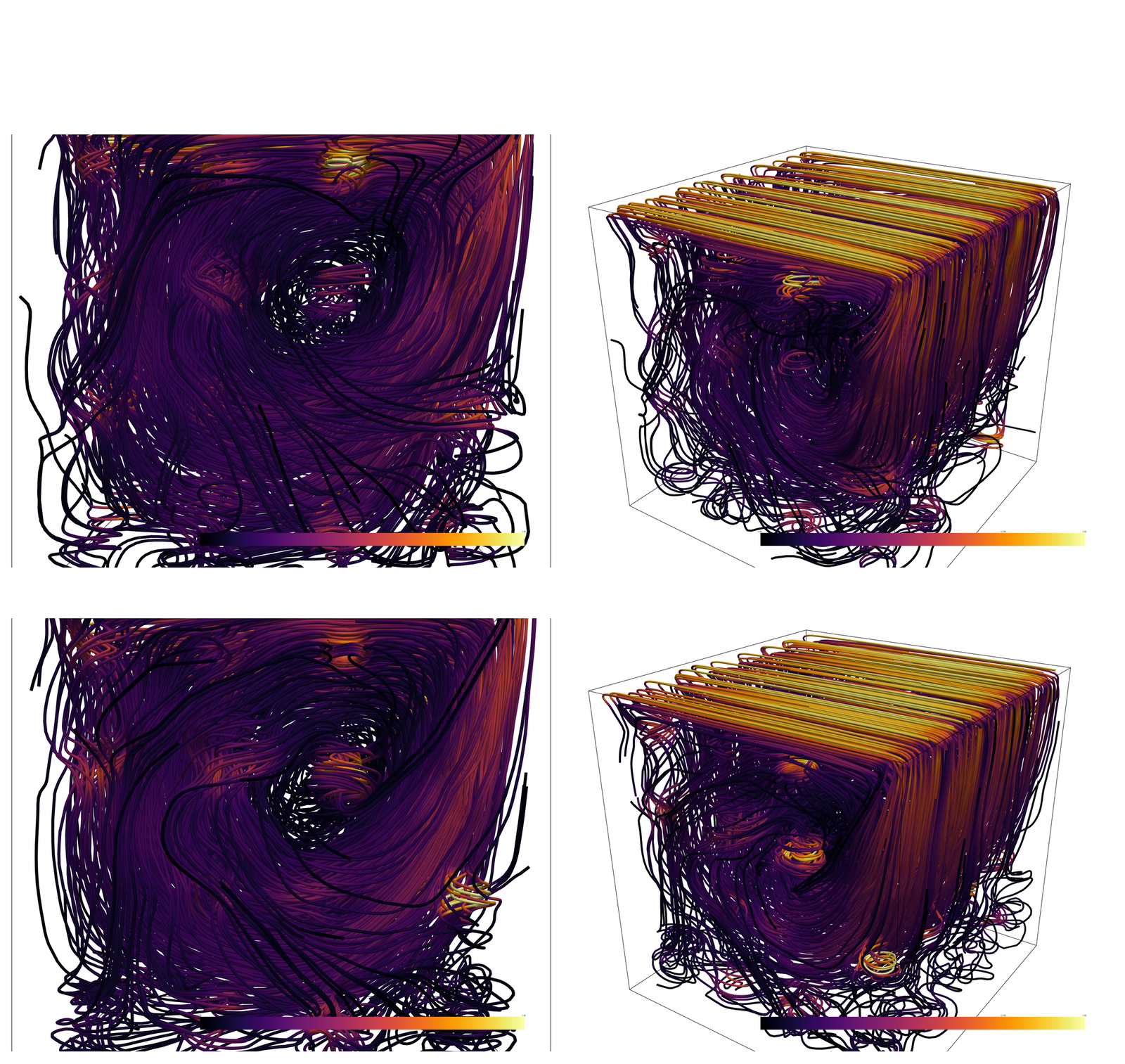}
    \captionsetup{width=1\textwidth}
    \caption{Volumetric streamlines: for $\mu = 40$ for CN scheme with one realisation of noise}
\label{fig:CN_realization_mu40_T050_T100}
\end{minipage}
\end{figure}

\vspace{0.1cm}
\noindent\underline{Iso-surface:} Figures~\ref{fig:lidcavity_IE_mean_T020_isosurface_mu0_mu40_back_oblique} (IE) and  \ref{fig:lidcavity_CN_mean_T020_isosurface_mu0_mu40_back_oblique} (CN) show iso-surfaces of the magnitude of the velocity at $10\%$,
$20\%$, and $30\%$ of the lid speed, coloured dark purple, teal, and
yellow-green (viridis), respectively (where the figures for $\mu = 0$ are placed in row 1 and $\mu = 40$ in row 2 with back view in column 1 and oblique view in column 2) . The dark-purple outermost shell marks the spatial extent over which
the lid momentum has penetrated the fluid; the teal shell delineates
the transition from driven to near-stagnant flow; while the yellow-green
innermost shell identifies the high-speed core of the primary vortex directly energised by the lid. At $\mu=0$, $T=100$, the three nested shells are smooth and connected, reflecting the organised single-vortex structure.

In Figure \ref{fig:iso_IE_mean_mu40_T050_T100} (IE) and \ref{fig:iso_CN_mean_mu40_T050_T100} (CN), we mark velocity magnitude iso-surface for $\mu=40$ at $T=50$ in row 1 and $T=100$ at row 2. The ensemble-mean shells dilate outward at both $T=50$
and $T=100$, reflecting elevated mean kinetic energy, while remaining topologically connected.
For single realizations in Figures \ref{fig:iso_IE_realization_mu40_T050_T100} (IE) and \ref{fig:iso_CN_realization_mu40_T050_T100} (CN), the shells fragment into multiple
disconnected lobes; increasing at the innermost level, with
fragmentation intensifying from $T=50$ to $T=100$.
IE and CN mean iso-surfaces are visually identical; where as in single realization fragmentation patterns differ due to independent sampling.
\begin{figure}[htbp]
\centering
\begin{minipage}[t]{0.48\textwidth}
    \centering
    \includegraphics[width=\linewidth]{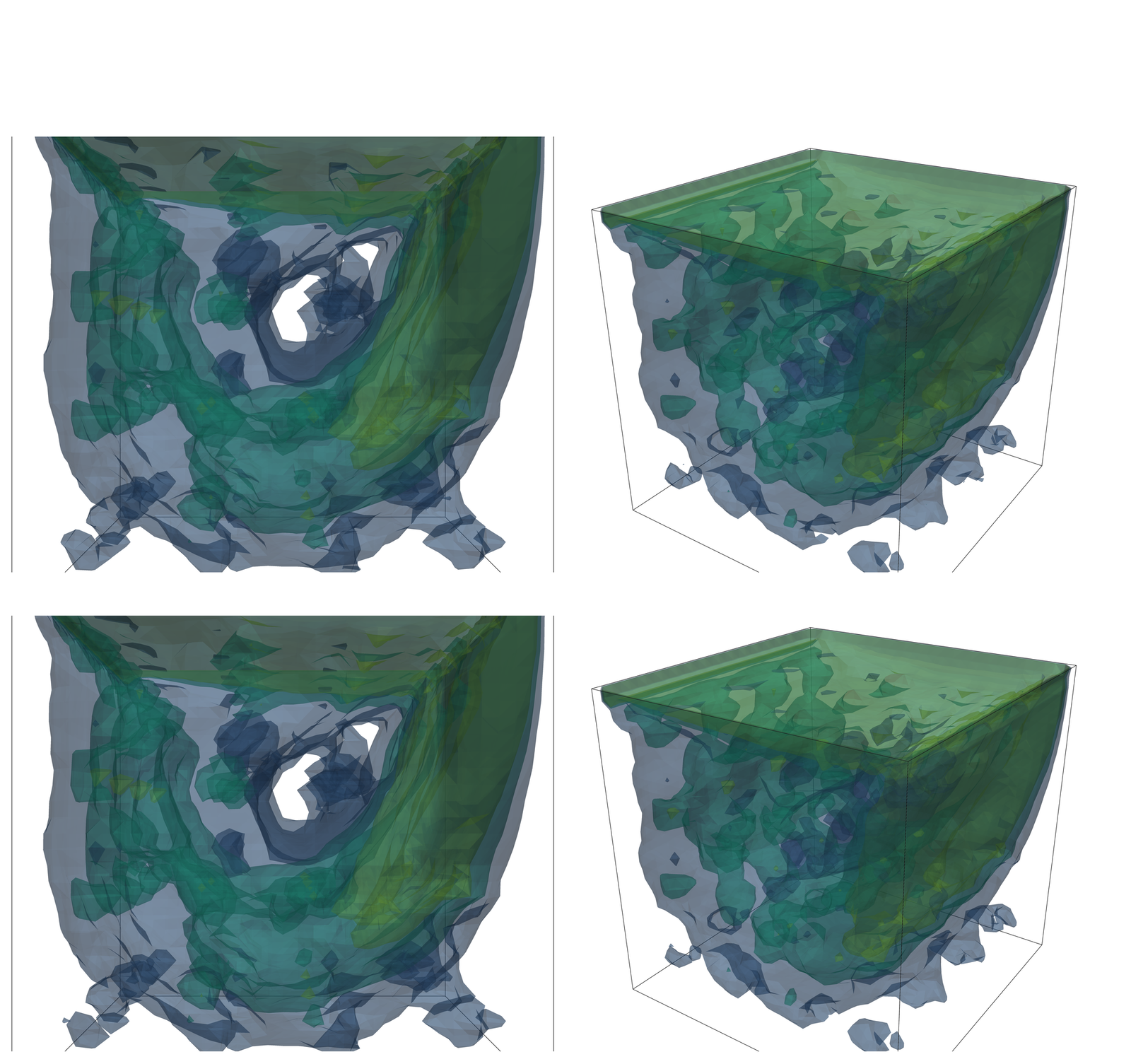}
    \captionsetup{width=1\textwidth}
    \caption{Velocity magnitude Iso surface for IE scheme with $\mu = 40$}
\label{fig:iso_IE_mean_mu40_T050_T100}
\end{minipage}
\hfill
\begin{minipage}[t]{0.48\textwidth}
    \centering
    \includegraphics[width=\linewidth]{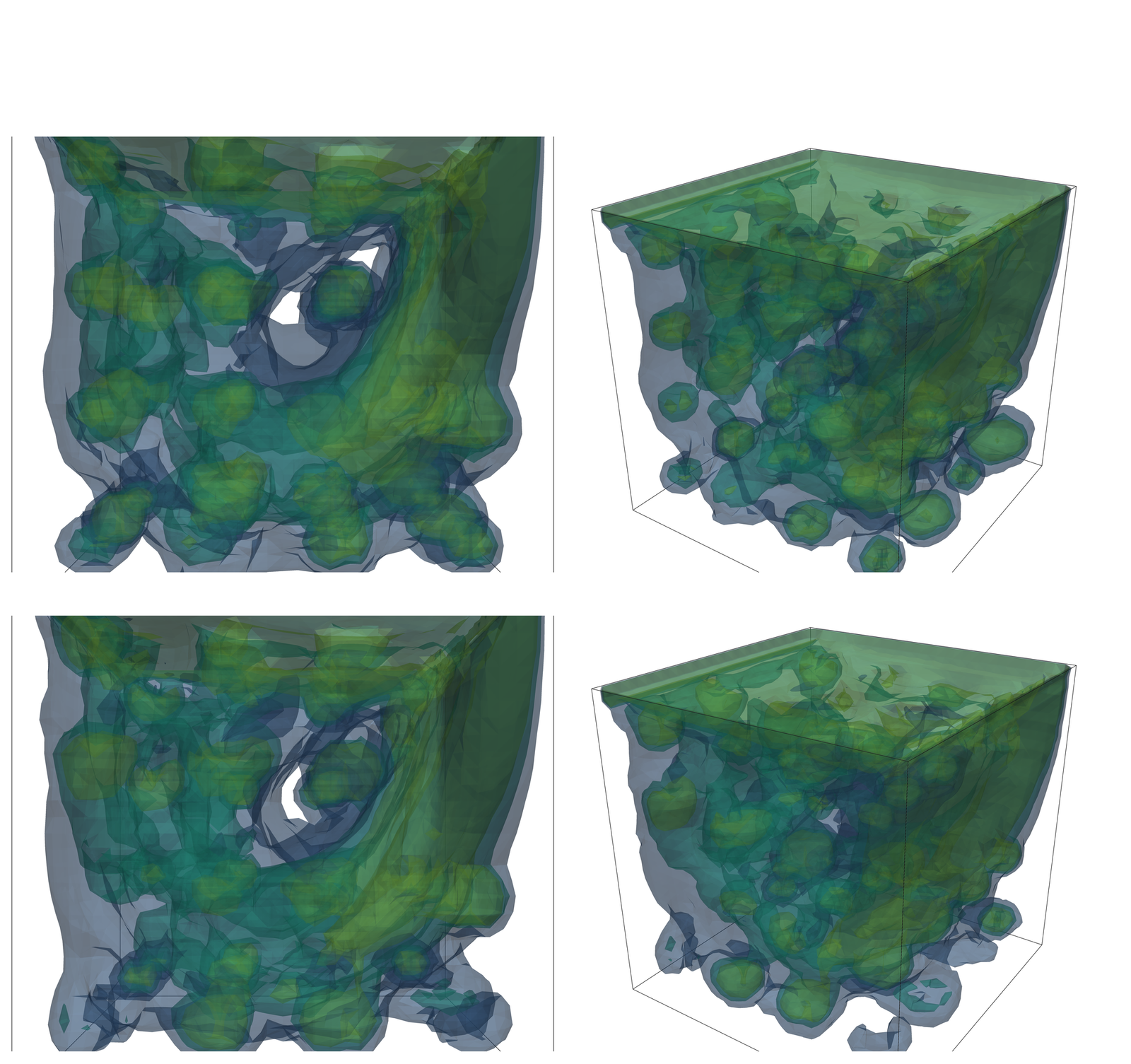}
    \captionsetup{width=1\textwidth}
    \caption{Velocity magnitude Iso surface for one realisation of noise  of IE scheme with $\mu = 40$}
\label{fig:iso_IE_realization_mu40_T050_T100}
\end{minipage}
\end{figure}
\begin{figure}[htbp]
\centering
\begin{minipage}[t]{0.48\textwidth}
    \centering
    \includegraphics[width=\linewidth]{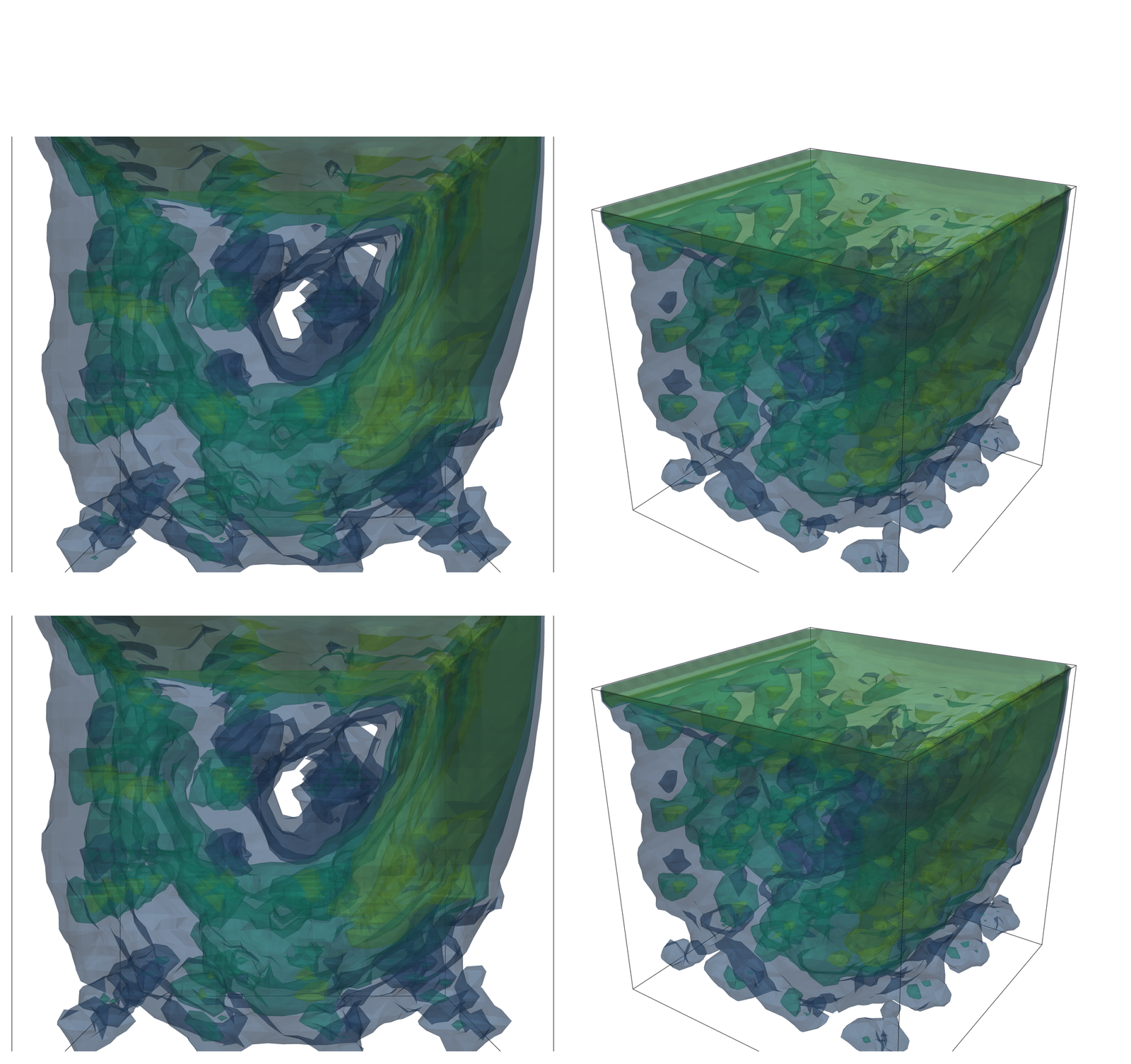}
    \captionsetup{width=1\textwidth}
    \caption{Iso surface for CN scheme with $\mu = 40$}
\label{fig:iso_CN_mean_mu40_T050_T100}
\end{minipage}
\hfill
\begin{minipage}[t]{0.48\textwidth}
    \centering
    \includegraphics[width=\linewidth]{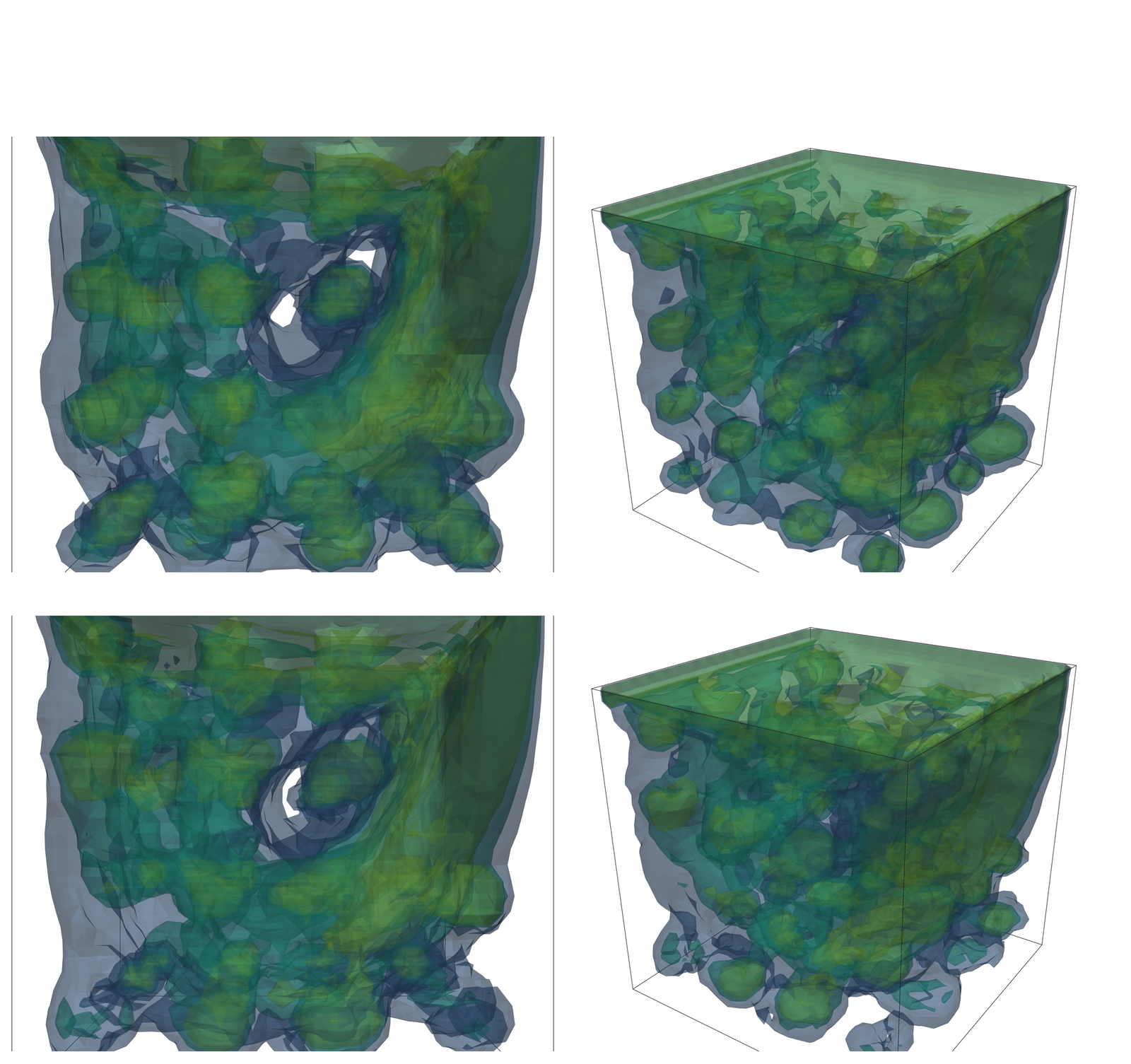}
    \captionsetup{width=1\textwidth}
    \caption{Iso surface for one realisation of noise  of CN scheme with $\mu = 40$}
\label{fig:iso_CN_realization_mu40_T050_T100}
\end{minipage}
\end{figure}
\subsection{Computational cost}\label{sec:CC}
All computations were carried out on the \textbf{amplitUDE HPC cluster}
(University of Duisburg-Essen) using MPI-based Monte Carlo
parallelisation. The spatial discretisation and time-stepping were
implemented in \textbf{Firedrake}, a high-level
finite element framework that generates optimized parallel code via
automated code generation. Each MPI process independently advances one
realsation of the noise, with no inter-process communication during
time-stepping. Since the realisations are statistically independent,
the $N_{\mathrm{MC}}$ samples are distributed across processes by
round-robin assignment, and the wall-clock time is governed by the
process carrying the largest sample count. The linear systems arising
at each time step were solved using \textbf{MUMPS} (MUltifrontal
Massively Parallel direct Solver), invoked independently on each MPI
process.

For the rate of convergence experiment ($N_s = 1000$, five time-step levels),
the Monte Carlo ensemble was distributed across $32$ compute nodes using $704$
MPI processes.
The IE and CN schemes were executed sequentially within
each process, yielding a total wall-clock time of approximately $12\,\mathrm{h}\;21\,\mathrm{min}$,
or $6\,\mathrm{h}\;11\,\mathrm{min}$ per scheme approx. This corresponds to $198$ node-hours and $4353$ CPU-hours per scheme ($396$
node-hours and $8706$ CPU-hours in total).

For 3D lid-driven cavity experiment ($N_s = 500$, $T = 10,50,100$, and $2000$ time steps per sample), each noise level ($\mu \in \{0, 10, 40\}$) was run on $9$ compute nodes with
$504$ MPI processes ($56$ per node), so that each process handles at most one
Monte Carlo sample.
Wall-clock times per noise level were approximately $17\,\mathrm{h}\;52\,\mathrm{min}$ for IE
and $20\,\mathrm{h}\;57\,\mathrm{min}$ for CN, yielding $161$ and $189$ node-hours
(respectively $9{,}005$ and $10{,}559$ CPU-hours) per noise level.

\appendix

\medskip
\begin{center}
{Compliance with ethical standards}
\end{center}
\noindent\textbf{Conflict of Interest.} The authors declare that they have no conflict of interest.

\vspace{0.1cm }
\noindent\textbf{Data Availability.} Data sharing is not applicable to this article as no datasets were generated.
or analysed during the current study.

\section{Technical results}\label{appx}

\subsection{Peano-kernel proof of the midpoint extrapolation error}
\label{app:peano}

We recall the following deterministic estimate from
\cite[Lemma~A.1]{BBCP}.  

\begin{lemma}[Midpoint extrapolation error]
\label{lem:deltastar-average}
Let \((\mathbb X;\| \cdot\|_{\mathbb X})\) be a Banach space, let \(\beta\in(0,1)\), and let
\({\bf y}\in C^1([0,T];\mathbb X)\) with
\(\partial_t{\bf y}\in C^\beta([0,T];\mathbb X)\).  For \(n=0,\dots,N-1\), define
\[
\delta_{\star,\,n+\frac12}
:=
\frac{1}{\tau}\int_{t_n}^{t_{n+1}}{\bf y}(s)\,\mathrm ds
-
\left(
\frac32{\bf y}(t_n)-\frac12{\bf y}(t_{n-1})
\right),
\qquad {\bf y}(t_{-1}):={\bf y}(t_0).
\]
Then there exists a constant \(C>0\), independent of \(n\) and \(\tau\),
such that
\begin{equation}\label{eq:deltastar-average}
\|\delta_{\star,\,n+\frac12}\|_{\mathbb X}
\le
C\,[\partial_t{\bf y}]_{C^\beta([0,T];\mathbb X)}\,\tau^{1+\beta}.
\end{equation}
\end{lemma}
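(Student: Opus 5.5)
The plan is a Peano-kernel (Taylor with integral remainder) argument carried out separately for $n\ge1$ and $n=0$. For $n\ge1$ I expand everything around $t_n$. I set $\mathbf{g}:=\partial_t\mathbf{y}$ and write, for $s\in[t_{n-1},t_{n+1}]$,
\[
\mathbf{y}(s)=\mathbf{y}(t_n)+(s-t_n)\mathbf{g}(t_n)+\int_{t_n}^{s}\bigl(\mathbf{g}(r)-\mathbf{g}(t_n)\bigr)\,\mathrm dr .
\]
The constant and linear parts are reproduced exactly by the difference $\frac1\tau\int_{t_n}^{t_{n+1}}\cdot\,\mathrm ds-\bigl(\frac32\,\cdot(t_n)-\frac12\,\cdot(t_{n-1})\bigr)$. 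For the constant part this gives $1-(\frac32-\frac12)=0$. For the linear part, the average of $s-t_n$ over $I_n$ equals $\tau/2$, while the extrapolation yields $-\frac12(-\tau)=\tau/2$, so these cancel as well. This is the key observation: the averaging functional and the extrapolation functional agree on affine functions.

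It then remains to bound the two remainder contributions. The averaged remainder satisfies
\[
\Bigl\|\frac1\tau\int_{t_n}^{t_{n+1}}\int_{t_n}^{s}\bigl(\mathbf{g}(r)-\mathbf{g}(t_n)\bigr)\,\mathrm dr\,\mathrm ds\Bigr\|_{\mathbb X}\le \frac1\tau\int_{t_n}^{t_{n+1}}\int_{t_n}^{s}[\mathbf{g}]_{C^\beta}(r-t_n)^\beta\,\mathrm dr\,\mathrm ds\le C[\mathbf{g}]_{C^\beta}\tau^{1+\beta}.
\]
The extrapolated remainder is $-\frac12\int_{t_n}^{t_{n-1}}\bigl(\mathbf{g}(r)-\mathbf{g}(t_n)\bigr)\,\mathrm dr$, and it is bounded in the same way by $C[\mathbf{g}]_{C^\beta}\tau^{1+\beta}$. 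Adding the two bounds gives \eqref{eq:deltastar-average} for $n\ge1$, with a constant that depends only on $\beta$.

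The case $n=0$ is where the main obstacle lies. Here ${\bf y}(t_{-1})={\bf y}(t_0)$, so the extrapolation degenerates to ${\bf y}(t_0)$ and the linear term no longer cancels. The error becomes $\frac\tau2\mathbf{g}(t_0)+O(\tau^{1+\beta})$, which is only $O(\tau)$ unless $\partial_t\mathbf{y}(0)=0$. Taken literally, the stated bound therefore cannot hold at $n=0$ with the Hölder seminorm alone. I would handle this as the paper itself does in the proof of Lemma~\ref{lem:R-bound}, namely by treating the first step separately through the estimate $\|\delta_{\star,\frac12}\|_{\mathbb X}\le C\tau\|\mathbf{y}\|_{C^1([0,T];\mathbb X)}$. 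A single step of size $O(\tau)$ contributes only $\tau\cdot\tau^2=\tau^3$ to $\tau\sum_n\|\cdot\|^2$, so the overall rate is unaffected. Accordingly, I would state the lemma for $n\ge1$, or replace the right-hand side by $C\bigl([\partial_t\mathbf{y}]_{C^\beta}\tau^{1+\beta}+\tau\|\partial_t\mathbf{y}(0)\|_{\mathbb X}\mathbf 1_{n=0}\bigr)$.
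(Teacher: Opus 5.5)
Your argument is correct and is essentially the Peano-kernel (Taylor with integral remainder) argument the paper relies on; the paper gives no proof of its own and only cites \cite[Lemma~A.1]{BBCP}. You also correctly spot that at $n=0$ the convention ${\bf y}(t_{-1})={\bf y}(t_0)$ leaves the term $\frac\tau2\partial_t{\bf y}(0)$ uncancelled: for example ${\bf y}(t)=t\,{\bf x}$ has vanishing H\"older seminorm of $\partial_t{\bf y}$ but $\delta_{\star,\frac12}=\frac\tau2{\bf x}$, so the lemma as stated holds only for $n\ge1$, which is exactly how the paper uses it, treating $n=0$ separately in the proof of Lemma~\ref{lem:R-bound}.
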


\subsection{Approximation of Brownian integrals}
The next result is related to \cite[Eq.~(3.10)]{CP1}. However, \cite[Eq.~(3.10)]{CP1} provides a mean-square estimate for a single time interval, whereas here we require a uniform-in-time estimate, with the supremum over all time steps taken inside the expectation. Consequently, the argument used in \cite[Eq.~(3.10)]{CP1}, which relies only on independence and second-moment estimates for Brownian increments, is not sufficient in the present setting. We instead use higher moment bounds for Brownian increments, combined with a summability argument over the time grid. For this reason, we include the complete proof.
\begin{lemma}[Uniform-in-time Brownian quadrature estimate]
\label{lem:BM-quadrature-sup}
Let \(T>0\), $M\in\mathbb N$ and set $\tau=\frac{1}{M}$ and \(N=T/\tau\). Assume that \(M=\tau^{-1}\in\mathbb N\).
Let \(W\) be a cylindrical Wiener process on a separable Hilbert space
\(\mathfrak U\). Let \(s\in\{1,2\}\), and assume that
\[
  \Phi\in L_2(\mathfrak U;\mathbb H^s).
\]
For \(n=0,\dots,N-1\), define
\[
  \mathcal Q_n^W
  :=
  \frac1{\tau}\int_{t_n}^{t_{n+1}}W(r)\,{\rm d}r,
  \qquad
  \Phi\mathcal I_n^W
  :=
  \sum_{\ell=1}^{M}\tau\, W(t_{n,\ell}),
\]
where
\[
  t_{n,\ell}:=t_n+\ell\tau^2,
  \qquad \ell=0,\dots,M.
\]
Then, for every \(\varepsilon>0\), there exists a constant
\(C_{\varepsilon,\Phi,T}>0\), independent of \(\tau\), such that
\begin{equation}\label{eq:BM-quadrature-sup}
  \mathbb E\left[
  \sup_{0\le n\le N-1}
  \|\Phi(\mathcal Q_n^W-\mathcal I_n^W)\|_{\mathbb H^s}^{2}
  \right]
  \le
  C_{\varepsilon,\Phi,T}\tau^{3-\varepsilon}.
\end{equation}
\end{lemma}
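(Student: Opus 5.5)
We write the error on each coarse interval as a sum of independent Gaussian pieces, compute its second moment exactly, upgrade to a $p$-th moment by Gaussianity, and then pass the supremum inside the expectation by a union bound over the $N$ intervals.

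\textbf{Reduction to a single interval.} Fix $n$. Since $t_{n,\ell}-t_{n,\ell-1}=\tau^2$ and $M\tau^2=\tau$, we have $\mathcal I_n^W=\frac1\tau\sum_{\ell=1}^M\tau^2 W(t_{n,\ell})$, so
\[
\Phi(\mathcal Q_n^W-\mathcal I_n^W)=\frac1\tau\sum_{\ell=1}^{M}\int_{t_{n,\ell-1}}^{t_{n,\ell}}\Phi\bigl(W(r)-W(t_{n,\ell})\bigr)\,{\rm d}r=:\frac1\tau\sum_{\ell=1}^M \boldsymbol{\xi}_{n,\ell}.
\]
The $\boldsymbol{\xi}_{n,\ell}$ are built from increments of $W$ over disjoint fine intervals. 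They are therefore independent, centred, Gaussian random variables with values in $\mathbb H^s$.

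\textbf{Second moment.} By Fubini and the Itô isometry,
\[
\E\|\boldsymbol{\xi}_{n,\ell}\|_{\mathbb H^s}^2\le \tau^2\int_{t_{n,\ell-1}}^{t_{n,\ell}}\E\|\Phi(W(r)-W(t_{n,\ell}))\|_{\mathbb H^s}^2\,{\rm d}r\le C\|\Phi\|_{L_2(\mathfrak U;\mathbb H^s)}^2\tau^6 .
\]
Independence and zero mean give
\[
\E\|\Phi(\mathcal Q_n^W-\mathcal I_n^W)\|_{\mathbb H^s}^2\le \tau^{-2}\cdot M\cdot C\tau^6=C\tau^3 .
\]
This is the single-interval estimate of \cite{CP1}.

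\textbf{Higher moments.} The vector $X_n:=\Phi(\mathcal Q_n^W-\mathcal I_n^W)$ is a centred Gaussian random variable in the Hilbert space $\mathbb H^s$. By the equivalence of Gaussian moments (Fernique/Kahane–Khintchine), for every $p\ge1$,
\[
\E\|X_n\|_{\mathbb H^s}^{2p}\le C_p\bigl(\E\|X_n\|_{\mathbb H^s}^2\bigr)^p\le C_p\tau^{3p}.
\]
Alternatively, one can apply the Burkholder/Rosenthal inequality to the sum of independent $\boldsymbol{\xi}_{n,\ell}$ together with the Gaussian moments of each term.

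\textbf{Taking the supremum — the main obstacle.} This is the step where the single-interval argument does not suffice, and where $\tau^{-\varepsilon}$ is lost. Bound the supremum by a sum and apply Jensen:
\[
\E\Bigl[\sup_{n}\|X_n\|^2\Bigr]\le\Bigl(\E\sup_n\|X_n\|^{2p}\Bigr)^{1/p}\le\Bigl(\sum_{n=0}^{N-1}\E\|X_n\|^{2p}\Bigr)^{1/p}\le (NC_p\tau^{3p})^{1/p}=C_p T^{1/p}\tau^{3-1/p}.
\]
Choosing $p\ge 1/\varepsilon$ yields \eqref{eq:BM-quadrature-sup} with $C_{\varepsilon,\Phi,T}=C_pT^{1/p}\|\Phi\|^2_{L_2(\mathfrak U;\mathbb H^s)}$.

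Two points need care here:
\begin{enumerate}
\item the constant in the Gaussian moment equivalence must depend only on $p$, not on the dimension, which holds in a Hilbert space;
\item the trace-class structure of $\Phi$ must be tracked, so that $\E\|\Phi(W(r)-W(t))\|_{\mathbb H^s}^2=|r-t|\,\|\Phi\|_{L_2(\mathfrak U;\mathbb H^s)}^2$.
\end{enumerate}
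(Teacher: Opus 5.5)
Your proposal is correct and follows essentially the same route as the paper. Both arguments obtain a single-interval second moment of order $\tau^3$, upgrade it to $2p$-th moments of order $\tau^{3p}$ via Gaussian moment equivalence in the Hilbert space $\mathbb H^s$, and then bound the supremum by the sum over the $N=T/\tau$ intervals and apply Jensen to get $\tau^{3-1/p}$ with $p>1/\varepsilon$. The only cosmetic difference is in the second-moment step: the paper uses stochastic Fubini to write the error as a Wiener integral $\int_0^\tau g(\rho)\,\Phi\,{\rm d}W(\rho)$ with deterministic $g$ and computes it exactly ($\tfrac{\tau^3}{3}\|\Phi\|^2_{L_2(\mathfrak U;\mathbb H^s)}$) by It\^o's isometry, whereas you use independence of the fine-interval pieces together with Cauchy--Schwarz to get a bound $C\tau^3$.
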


\begin{proof}
We first prove a higher-moment estimate on one fixed interval. By the
stationarity of Brownian increments, it is enough to consider the interval
\([0,\tau]\). Let
\[
  h:=\tau^2,
  \qquad
  t_\ell:=\ell h,
  \qquad \ell=0,\dots,M.
\]
Then \(Mh=\tau\), since \(M=\tau^{-1}\). On \([0,\tau]\), we have
\[
  \Phi\mathcal Q^W
  :=
  \frac1{\tau}\int_0^\tau \Phi W(r)\,{\rm d}r,
  \qquad
  \Phi\mathcal I^W
  :=
  \sum_{\ell=1}^{M}\tau\,\Phi W(t_\ell).
\]
Since \(h=\tau^2\), we can write
\[
  \Phi\mathcal I^W
  =
  \frac1{\tau}
  \sum_{\ell=1}^{M}h\,\Phi W(t_\ell).
\]
Therefore,
\[
\begin{aligned}
  \Phi(\mathcal Q^W-\mathcal I^W)
  &=
  \frac1{\tau}
  \sum_{\ell=1}^{M}
  \left(
  \int_{t_{\ell-1}}^{t_\ell}\Phi W(r)\,{\rm d}r
  -
  h\,\Phi W(t_\ell)
  \right) \\
  &=
  \frac1{\tau}
  \sum_{\ell=1}^{M}
  \int_{t_{\ell-1}}^{t_\ell}
  \bigl(\Phi W(r)-\Phi W(t_\ell)\bigr)\,{\rm d}r .
\end{aligned}
\]
For \(r\in[t_{\ell-1},t_\ell]\), we have
\[
  \Phi W(r)-\Phi W(t_\ell)
  =
  -\int_r^{t_\ell}\Phi\,{\rm d}W(\rho).
\]
Hence, by the stochastic Fubini theorem,
\[
\begin{aligned}
  \int_{t_{\ell-1}}^{t_\ell}
  \bigl(\Phi W(r)-\Phi W(t_\ell)\bigr)\,{\rm d}r
  &=
  -\int_{t_{\ell-1}}^{t_\ell}
  \int_r^{t_\ell}\Phi\,{\rm d}W(\rho)\,{\rm d}r \\
  &=
  -\int_{t_{\ell-1}}^{t_\ell}
  (\rho-t_{\ell-1})\,\Phi\,{\rm d}W(\rho).
\end{aligned}
\]
Consequently,
\[
  \Phi(\mathcal Q^W-\mathcal I^W)
  =
  -\frac1{\tau}
  \sum_{\ell=1}^{M}
  \int_{t_{\ell-1}}^{t_\ell}
  (\rho-t_{\ell-1})\,\Phi\,{\rm d}W(\rho).
\]
Equivalently,
\[
  \Phi(\mathcal Q^W-\mathcal I^W)
  =
  \int_0^\tau g(\rho)\,\Phi\,{\rm d}W(\rho),
\]
where
\[
  g(\rho):=
  -\frac1{\tau}(\rho-t_{\ell-1}),
  \qquad
  \rho\in[t_{\ell-1},t_\ell].
\]
Since the integrand is deterministic, \(\Phi(\mathcal Q^W-\mathcal I^W)\)
is a centered Gaussian random variable with values in
\(\mathbb H^s(\mathbb T^3)\). By It\^o-isometry,
\[
\begin{aligned}
  \mathbb E
  \big[\|\Phi(\mathcal Q^W-\mathcal I^W)\|_{\mathbb H^s}^{2}\big]
  &=
  \int_0^\tau
  |g(\rho)|^2
  \|\Phi\|_{L_2(\mathfrak U;\mathbb H^s)}^2
  \,{\rm d}\rho \\
  &=
  \frac1{\tau^2}
  \sum_{\ell=1}^{M}
  \int_{t_{\ell-1}}^{t_\ell}
  (\rho-t_{\ell-1})^2\,{\rm d}\rho\,
  \|\Phi\|_{L_2(\mathfrak U;\mathbb H^s)}^2 \\
  &=
  \frac1{\tau^2}\,M\,\frac{h^3}{3}\,
  \|\Phi\|_{L_2(\mathfrak U;\mathbb H^s)}^2= \frac{\tau^3}{3}
  \|\Phi\|_{L_2(\mathfrak U;\mathbb H^s)}^2.
\end{aligned}
\]
Let \(q\in\mathbb N\). Since
\(\Phi(\mathcal Q^W-\mathcal I^W)\) is a centered Gaussian random variable
in the Hilbert space \(\mathbb H^s(\mathbb T^3)\), the Gaussian moment
estimate gives
\[
  \mathbb E
  \big[\|\Phi(\mathcal Q^W-\mathcal I^W)\|_{\mathbb H^s}^{2q}\big]
  \le
  C_q
  \left(
  \mathbb E
  \|\Phi(\mathcal Q^W-\mathcal I^W)\|_{\mathbb H^s}^{2}
  \right)^q.
\]
Hence
\[
  \mathbb E
  \big[\|\Phi(\mathcal Q^W-\mathcal I^W)\|_{\mathbb H^s}^{2q}\big]
  \le
  C_{q,\Phi}\tau^{3q}.
\]
Returning to the general interval \([t_n,t_{n+1}]\), stationarity of
Brownian increments gives
\begin{equation}\label{eq:single-BM-high-moment-cyl}
  \mathbb E
  \big[\|\Phi(\mathcal Q_n^W-\mathcal I_n^W)\|_{\mathbb H^s}^{2q}\big]
  \le
  C_{q,\Phi}\tau^{3q},
  \qquad n=0,\dots,N-1.
\end{equation}
We now pass to the supremum in \(n\).
For \(q\ge1\), we have
\[
  \sup_{0\le n\le N-1} \|\Phi(\mathcal Q_n^W-\mathcal I_n^W)\|_{\mathbb H^s}^{2}
  \le
  \left(
  \sum_{n=0}^{N-1} \|\Phi(\mathcal Q_n^W-\mathcal I_n^W)\|_{\mathbb H^s}^{2q}
  \right)^{1/q}.
\]
Therefore, by Jensen's inequality and \eqref{eq:single-BM-high-moment-cyl},
\[
\begin{aligned}
  \mathbb E\left[
  \sup_{0\le n\le N-1}
  \|\Phi(\mathcal Q_n^W-\mathcal I_n^W)\|_{\mathbb H^s}^{2}
  \right]
  &\le
  \left(
  \sum_{n=0}^{N-1}
  \mathbb E
  \big[\|\Phi(\mathcal Q_n^W-\mathcal I_n^W)\|_{\mathbb H^s}^{2q}\big]
  \right)^{1/q} \\
  &\le
  \left(
  N C_{q,\Phi}\tau^{3q}
  \right)^{1/q}.
\end{aligned}
\]
Since \(N=T/\tau\), this yields
\[
  \mathbb E\left[
  \sup_{0\le n\le N-1}
  \|\Phi(\mathcal Q_n^W-\mathcal I_n^W)\|_{\mathbb H^s}^{2}
  \right]
  \le
  C_{q,\Phi,T}\tau^{3-\frac1q}.
\]
Finally, given \(\varepsilon>0\), choose \(q\in\mathbb N\) such that
\(q>1/\varepsilon\). Then
\[
  3-\frac1q\ge 3-\varepsilon,
\]
and therefore
\[
  \mathbb E\left[
  \sup_{0\le n\le N-1}
  \|\Phi(\mathcal Q_n^W-\mathcal I_n^W)\|_{\mathbb H^s}^{2}
  \right]
  \le
  C_{\varepsilon,\Phi,T}\tau^{3-\varepsilon}.
\]
This proves \eqref{eq:BM-quadrature-sup}.
\end{proof}
The next lemma is analogous to \cite[Lemma~A.2]{BBCP}.  We give the
proof in full detail because the present stopped error analysis requires a
uniform-in-time control of the Brownian quadrature error.  More precisely,
we need an estimate for the pathwise supremum over all time intervals,
taken inside the expectation.  This is stronger than the single-interval
mean-square estimate used in \cite[Lemma~A.2]{BBCP}, where independence
and fourth-moment bounds for Brownian increments are sufficient.  Here we
use higher moment estimates for the increments and then apply a
summability argument over the time grid.
\begin{lemma}[Uniform-in-time approximation of the Brownian triple integral]
\label{lem:triple-BM-sup}
Let \(T>0\), let \(N=T/\tau\), and assume that \(M=\tau^{-1}\in\mathbb N\).
Let \(W\) be a cylindrical Wiener process on a separable Hilbert space
\(\mathfrak U\), and assume that
\[
  \Phi\in L_2(\mathfrak U;\mathbb H^2).
\]
Set
\[
  Z(t):=\Phi W(t),\qquad t\in[0,T].
\]
For \(n=0,\dots,N-1\), define
\[
  \mathcal Q_n^{W^2}
  :=
  \frac{1}{\tau^3}
  \int_{t_n}^{t_{n+1}}\int_{t_n}^{t_{n+1}}\int_{t_n}^{t_{n+1}}
  \bigl(Z(t)-Z(s)\bigr)\otimes
  \bigl(Z(t)-Z(r)\bigr)
  \,{\rm d}s\,{\rm d}t\,{\rm d}r .
\]
Let
\[
  h:=\tau^2,\qquad
  t_{n,\ell}:=t_n+\ell h,\qquad \ell=0,\dots,M,
\]
and define
\[
  \mathcal I_n^{W^2}
  :=
  \frac{h^3}{\tau^3}
  \sum_{\ell=1}^{M}\sum_{k=1}^{M}\sum_{j=1}^{M}
  \bigl(Z(t_{n,\ell})-Z(t_{n,k})\bigr)\otimes
  \bigl(Z(t_{n,\ell})-Z(t_{n,j})\bigr).
\]
Then, for every \(\varepsilon>0\), there exists a constant
\(C_{\varepsilon,\Phi,T}>0\), independent of \(\tau\), such that
\begin{equation}\label{eq:triple-BM-sup}
  \mathbb E\Big[
  \sup_{0\le n\le N-1}
  \|\mathcal Q_n^{W^2}-\mathcal I_n^{W^2}\|_{\mathbb L^2}^{2}
  \Big]
  \le
  C_{\varepsilon,\Phi,T}\,\tau^{3-\varepsilon}.
\end{equation}
\end{lemma}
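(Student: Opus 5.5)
The proof follows the strategy of Lemma~\ref{lem:BM-quadrature-sup}. First we prove a high-moment bound of the form
\[
\mathbb E\big[\|\mathcal Q_n^{W^2}-\mathcal I_n^{W^2}\|_{\mathbb L^2}^{2q}\big]\le C_{q,\Phi}\tau^{3q}
\]
on a single interval. Then we pass to the supremum over $n$ via $\sup_n a_n\le(\sum_n a_n^q)^{1/q}$ and Jensen's inequality. This gives $(N C_{q,\Phi}\tau^{3q})^{1/q}=C_{q,\Phi,T}\tau^{3-1/q}$, and choosing $q>1/\varepsilon$ yields \eqref{eq:triple-BM-sup}. Both $\mathcal Q_n^{W^2}$ and $\mathcal I_n^{W^2}$ depend only on increments of $Z$ inside $[t_n,t_{n+1}]$. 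By stationarity of Brownian increments it therefore suffices to treat $[0,\tau]$ with the fine nodes $t_\ell=\ell h$.

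\emph{Rewriting both quantities.} Integrating out $s$ and $r$ gives
\[
\mathcal Q^{W^2}=\frac1\tau\int_0^\tau A(t)\otimes A(t)\,{\rm d}t,\qquad A(t):=Z(t)-\Phi\mathcal Q^W .
\]
Introduce the piecewise constant interpolant $Z_h(t):=Z(t_\ell)$ for $t\in(t_{\ell-1},t_\ell]$. Its average over $[0,\tau]$ is exactly $\Phi\mathcal I^W$, by the second representation of $\mathcal I_n^{W^2}$ in the definition. Hence
\[
\mathcal I^{W^2}=\frac1\tau\int_0^\tau B(t)\otimes B(t)\,{\rm d}t,\qquad B(t):=Z_h(t)-\Phi\mathcal I^W .
\]
Therefore
\[
\mathcal Q^{W^2}-\mathcal I^{W^2}=\frac1\tau\int_0^\tau\big[(A-B)\otimes A+B\otimes(A-B)\big]\,{\rm d}t ,
\qquad
A-B=(Z-Z_h)-\Phi(\mathcal Q^W-\mathcal I^W).
\]

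\emph{Pathwise estimate.} Use $\|f\otimes g\|_{\mathbb L^2}\le\|f\|_{\mathbb L^\infty}\|g\|_{\mathbb L^2}\le C\|f\|_{\mathbb H^2}\|g\|_{\mathbb L^2}$; this embedding is where $\Phi\in L_2(\mathfrak U;\mathbb H^2)$ is needed. Together with Cauchy--Schwarz in time this gives
\[
\|\mathcal Q^{W^2}-\mathcal I^{W^2}\|_{\mathbb L^2}\le C\Big(\frac1\tau\int_0^\tau\|A-B\|_{\mathbb H^2}^2{\rm d}t\Big)^{1/2}\Big(\frac1\tau\int_0^\tau(\|A\|_{\mathbb H^2}+\|B\|_{\mathbb H^2})^2{\rm d}t\Big)^{1/2}.
\]

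\emph{Moment bounds.} Raise to the power $2q$, apply Hölder in $\omega$, and use Jensen in time ($(\frac1\tau\int f^2)^{2q}\le\frac1\tau\int f^{4q}$). The bound then reduces to Gaussian moment estimates.
\begin{itemize}
\item For $t\in(t_{\ell-1},t_\ell]$, $Z(t)-Z_h(t)$ is a centered Gaussian with variance of order $h\|\Phi\|^2_{L_2(\mathfrak U;\mathbb H^2)}$. Hence $\mathbb E\|Z(t)-Z_h(t)\|_{\mathbb H^2}^{4q}\le C h^{2q}=C\tau^{4q}$.
\item By \eqref{eq:single-BM-high-moment-cyl} with $s=2$, $\mathbb E\|\Phi(\mathcal Q^W-\mathcal I^W)\|^{4q}_{\mathbb H^2}\le C\tau^{6q}$.
\item $A(t)$ and $B(t)$ are centered Gaussians built from increments over a window of length $\tau$, so $\mathbb E\|A\|^{4q}_{\mathbb H^2}+\mathbb E\|B\|^{4q}_{\mathbb H^2}\le C\tau^{2q}$.
\end{itemize}
Combining,
\[
\mathbb E\|\mathcal Q^{W^2}-\mathcal I^{W^2}\|^{2q}_{\mathbb L^2}\le C\big(\tau^{4q}\big)^{1/2}\big(\tau^{2q}\big)^{1/2}=C\tau^{3q}.
\]

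The step needing most care is the first one. We must verify that the discrete triple sum really collapses to $\frac1\tau\int B\otimes B$ with the same centring $\Phi\mathcal I^W$; the paper records this identity in the definition of $\mathcal I^{W^2}_n$. We must also check that the resulting error is first order in the fine-cell size $h^{1/2}=\tau$. If a direct Gaussian bound for $A$ and $B$ felt delicate, the fallback is to write $A$ and $B$ as differences $Z(t)-Z(\cdot)$ averaged over the interval and apply Doob's inequality to $\sup_{t\in[0,\tau]}\|Z(t)\|_{\mathbb H^2}$. That route gives the same $\tau^{1/2}$ scaling.
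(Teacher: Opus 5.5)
Your proof is correct. It reaches the same single-interval bound $\mathbb E\|\mathcal Q_n^{W^2}-\mathcal I_n^{W^2}\|_{\mathbb L^2}^{2q}\le C_{q,\Phi}\tau^{3q}$ and uses the same $\ell^q$-summation over $n$ as the paper, but you organise the single-interval step differently.

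The paper stays on the cube $[0,\tau]^3$. It cuts the cube into the $M^3$ boxes $R_{\ell kj}$ and proves the moment-H\"older bound \eqref{eq:F-diff-high-moment} for $F(t,s,r)=(Z(t)-Z(s))\otimes(Z(t)-Z(r))$ via the splitting $(U-U')\otimes V+U'\otimes(V-V')$. This gives $\|{\bf e}_{\ell kj}\|_{L^{2q}(\Omega;\mathbb L^2)}\le C\tau^{15/2}$ per box, and the paper then sums the $\tau^{-3}$ boxes by the triangle inequality.

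You instead integrate out $s$ and $r$ first, both continuously and on the fine grid. Both objects then become averages of centred tensor squares, $\frac1\tau\int_0^\tau A\otimes A$ and $\frac1\tau\int_0^\tau B\otimes B$. You compare these with a one-dimensional splitting, Cauchy--Schwarz, H\"older in $\omega$ and Jensen in time.

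What your route buys:
\begin{itemize}
\item The bookkeeping is shorter.
\item The exponent becomes transparent: it is the interpolation error $Z-Z_h$, of size $h^{1/2}=\tau$, times the fluctuation size $\tau^{1/2}$ of $A$ and $B$.
\end{itemize}

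What it costs:
\begin{itemize}
\item It relies on the specific structure that lets the triple integral collapse.
\item It needs an extra input for the centring mismatch $\Phi(\mathcal Q^W-\mathcal I^W)$. Lemma~\ref{lem:BM-quadrature-sup} supplies this, and even the crude $O(\tau)$ bound from Jensen would suffice.
\end{itemize}

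The paper's box argument needs no such identity and applies to any integrand on $[0,\tau]^3$ obeying a bound like \eqref{eq:F-diff-high-moment}. Neither argument exploits cancellation across fine cells, so both deliver exactly the $\tau^{3q}$ that is needed.
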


\begin{proof}
We first prove a higher-moment estimate on one fixed interval
\([t_n,t_{n+1}]\). By stationarity of Brownian increments, it is enough
to consider the interval \([0,\tau]\). For simplicity, we drop the index
\(n\).
Define
\[
  B
  :=
  \int_0^\tau\int_0^\tau\int_0^\tau
  F(t,s,r)\,{\rm d}s\,{\rm d}t\,{\rm d}r,
\]
where
\[
  F(t,s,r)
  :=
  \bigl(Z(t)-Z(s)\bigr)\otimes
  \bigl(Z(t)-Z(r)\bigr).
\]
Similarly, define the discrete approximation
\[
  B^{\rm disc}
  :=
  h^3
  \sum_{\ell=1}^{M}\sum_{k=1}^{M}\sum_{j=1}^{M}
  F(t_\ell,t_k,t_j).
\]
Then
\[
  B=\tau^3\mathcal Q_n^{W^2},
  \qquad
  B^{\rm disc}=\tau^3\mathcal I_n^{W^2}.
\]

We partition \([0,\tau]^3\) into the boxes
\[
  R_{\ell k j}
  :=
  (t_{\ell-1},t_\ell]\times
  (t_{k-1},t_k]\times
  (t_{j-1},t_j],
  \qquad \ell,k,j=1,\dots,M.
\]
For each box, set
\[
  {\bf e}_{\ell k j}
  :=
  \int_{R_{\ell k j}}
  \bigl(F(t,s,r)-F(t_\ell,t_k,t_j)\bigr)
  \,{\rm d}s\,{\rm d}t\,{\rm d}r .
\]
Then
\[
  B-B^{\rm disc}
  =
  \sum_{\ell,k,j=1}^{M}{\bf e}_{\ell k j}.
\]
We next estimate the local errors in higher moments. Let \(q\ge1\).
For two points \((t,s,r),(t',s',r')\in[0,\tau]^3\), set
\[
  U:=Z(t)-Z(s),\qquad
  V:=Z(t)-Z(r),
\]
and
\[
  U':=Z(t')-Z(s'),\qquad
  V':=Z(t')-Z(r').
\]
Then
\[
  F(t,s,r)-F(t',s',r')
  =
  U\otimes V-U'\otimes V'
\]
and hence
\[
  F(t,s,r)-F(t',s',r')
  =
  (U-U')\otimes V
  +
  U'\otimes(V-V').
\]
Using the product estimate
\[
  \|{\bf a}\otimes{\bf b}\|_{\mathbb L^2}
  \le
  \|{\bf a}\|_{\mathbb L^2}
  \|{\bf b}\|_{\mathbb L^\infty},
\]
the Sobolev embedding
$  \mathbb H^2(\mathbb T^3)\hookrightarrow \mathbb L^\infty(\mathbb T^3),$
and the Gaussian moment estimate
\[
  \mathbb E
  \|Z(t)-Z(s)\|_{\mathbb H^2}^{p}
  \le
  C_{p,\Phi}|t-s|^{p/2},
  \qquad p\ge2,
\]
we obtain
\begin{align}
  &\mathbb E
  \Big[
  \|F(t,s,r)-F(t',s',r')\|_{\mathbb L^2(\mathbb T^3)^9}^{2q}
  \Big]
  \notag\\
  &\qquad\le
  C_{q,\Phi}\,
  \tau^{q}
  \bigl(
  |t-t'|+|s-s'|+|r-r'|
  \bigr)^{q}.
  \label{eq:F-diff-high-moment}
\end{align}
Indeed, the factors \(U,V,U',V'\) are Brownian increments over intervals
of length at most \(\tau\), while \(U-U'\) and \(V-V'\) are controlled by
increments over intervals of total length
\[
  |t-t'|+|s-s'|+|r-r'|.
\]
Now choose
\[
  (t',s',r')=(t_\ell,t_k,t_j).
\]
If \((t,s,r)\in R_{\ell k j}\), then
\[
  |t-t_\ell|+|s-t_k|+|r-t_j|
  \le 3h.
\]
Therefore, by \eqref{eq:F-diff-high-moment},
\[
  \mathbb E
  \Big[
  \|F(t,s,r)-F(t_\ell,t_k,t_j)\|_{\mathbb L^2(\mathbb T^3)^9}^{2q}
  \Big]
  \le
  C_{q,\Phi}\,\tau^q h^q .
\]
Since \(|R_{\ell k j}|=h^3\), Jensen's inequality gives
\begin{align}\label{eq:local-error-high-moment}
\begin{aligned}
  \mathbb E
  \Big[
  \|{\bf e}_{\ell k j}\|_{\mathbb L^2}^{2q}
  \Big]
  &\le
  |R_{\ell k j}|^{2q-1}
  \int_{R_{\ell k j}}
  \mathbb E
  \Big[
  \|F(t,s,r)-F(t_\ell,t_k,t_j)\|_{\mathbb L^2}^{2q}
  \Big]
  \,{\rm d}s\,{\rm d}t\,{\rm d}r  \\
  &\le
  h^{3(2q-1)} h^3 C_{q,\Phi}\tau^q h^q  \\
  &=
  C_{q,\Phi}\tau^{15q}.
  \end{aligned}
\end{align}
There are \(M^3=\tau^{-3}\) boxes. Hence, using the triangle inequality in
\(L^{2q}(\Omega;\mathbb L^2(\mathbb T^3))\), we obtain
\begin{align*}
  \|B-B^{\rm disc}\|_{L^{2q}(\Omega;\mathbb L^2)}
  &\le
  \sum_{\ell,k,j=1}^{M}
  \|{\bf e}_{\ell k j}\|_{L^{2q}(\Omega;\mathbb L^2)}
  \\
  &\le
  M^3 C_{q,\Phi}\tau^{15/2}
  =
  C_{q,\Phi}\tau^{9/2}.
\end{align*}
Therefore,
\[
  \mathbb E
  \Big[
  \|B-B^{\rm disc}\|_{\mathbb L^2}^{2q}
  \Big]
  \le
  C_{q,\Phi}\tau^{9q}.
\]
Since
\[
  B-B^{\rm disc}
  =
  \tau^3
  \bigl(
  \mathcal Q_n^{W^2}-\mathcal I_n^{W^2}
  \bigr),
\]
we get
\begin{equation}\label{eq:single-step-high-moment}
  \mathbb E
  \Big[
  \|\mathcal Q_n^{W^2}-\mathcal I_n^{W^2}\|_{\mathbb L^2}^{2q}
  \Big]
  \le
  C_{q,\Phi}\tau^{3q}.
\end{equation}
We now pass to the supremum over all time steps. By \eqref{eq:single-step-high-moment},
\begin{align*}
  &\mathbb E
  \Big[
  \sup_{0\le n\le N-1}
  \|\mathcal Q_n^{W^2}-\mathcal I_n^{W^2}\|_{\mathbb L^2(\mathbb T^3)^9}^{2}
  \Big]
  \\
  &\qquad\le
  \left(
  \mathbb E
  \Big[
  \sup_{0\le n\le N-1}
  \|\mathcal Q_n^{W^2}-\mathcal I_n^{W^2}\|_{\mathbb L^2}^{2q}
  \Big]
  \right)^{1/q}
  \\
  &\qquad\le
  \left(
  \sum_{n=0}^{N-1}
  \mathbb E
  \Big[
  \|\mathcal Q_n^{W^2}-\mathcal I_n^{W^2}\|_{\mathbb L^2}^{2q}
  \Big]
  \right)^{1/q}
  \\
  &\qquad\le
  \left(
  N C_{q,\Phi}\tau^{3q}
  \right)^{1/q}.
\end{align*}
Since \(N=T/\tau\), we obtain
\[
  \mathbb E
  \Big[
  \sup_{0\le n\le N-1}
  \|\mathcal Q_n^{W^2}-\mathcal I_n^{W^2}\|_{\mathbb L^2}^{2}
  \Big]
  \le
  C_{q,\Phi,T}\tau^{3-\frac1q}.
\]
Choosing again \(q>1/\varepsilon\)
for \(\varepsilon>0\) given we can conclude as in the proof of Lemma \ref{lem:BM-quadrature-sup}.
\end{proof}

\end{document}